\address{Department of Mathematics, Kyoto
University, Kitashirakawa, Kyoto, 606-8502, Japan } \email{fukaya@math.kyoto-u.ac.jp}
\address{Department of Mathematics, University of
Wisconsin, Madison, WI, USA and
National Institute for Mathematical Sciences, Daeduk
Boulevard 628, Yuseong-gu, Daejeon, 305-340, Korea.} \email{oh@math.wisc.edu}
\address{Graduate School of Mathematics,
Nagoya University, Chikusa, Nagoya, 464-8602, Japan \& 
Korea Institute for Advanced Study, Seoul, Korea,} \email{ohta@math.nagoya-u.ac.jp}
\address{Department of Mathematics,
Hokkaido University, Sapporo, 060-0810, Japan \& 
Korea Institute for Advanced Study, Seoul, Korea,}
\email{ono@math.sci.hokudai.ac.jp}
\def\E{\ifmmode{\mathbb E}\else{$\mathbb E$}\fi} 
\def\N{\ifmmode{\mathbb N}\else{$\mathbb N$}\fi} 
\def\R{\ifmmode{\mathbb R}\else{$\mathbb R$}\fi} 
\def\Q{\ifmmode{\mathbb Q}\else{$\mathbb Q$}\fi} 
\def\C{\ifmmode{\mathbb C}\else{$\mathbb C$}\fi} 
\def\H{\ifmmode{\mathbb H}\else{$\mathbb H$}\fi} 
\def\Z{\ifmmode{\mathbb Z}\else{$\mathbb Z$}\fi} 
\def\P{\ifmmode{\mathbb P}\else{$\mathbb P$}\fi} 
\def\T{\ifmmode{\mathbb T}\else{$\mathbb T$}\fi} 
\def\SS{\ifmmode{\mathbb S}\else{$\mathbb S$}\fi} 
\def\DD{\ifmmode{\mathbb D}\else{$\mathbb D$}\fi} 
\def\K{\ifmmode{\mathbb K}\else{$\mathbb K$}\fi}
\newcommand{\e}{\varepsilon}
\newcommand{\del}{\partial}
\newcommand{\ben}{\begin{enumerate}}
\newcommand{\een}{\end{enumerate}}
\newcommand{\be}{\begin{equation}}
\newcommand{\ee}{\end{equation}}
\newcommand{\bea}{\begin{eqnarray}}
\newcommand{\eea}{\end{eqnarray}}
\newcommand{\beastar}{\begin{eqnarray*}}
\newcommand{\eeastar}{\end{eqnarray*}}
\newcommand{\bc}{\begin{center}}
\newcommand{\ec}{\end{center}}
\theoremstyle{theorem}
\newtheorem{thm}{Theorem}[section]
\newtheorem{cor}[thm]{Corollary}
\newtheorem{lem}[thm]{Lemma}
\newtheorem{sublem}[thm]{Sublemma}
\newtheorem{prop}[thm]{Proposition}
\theoremstyle{definition}
\newtheorem{defn}[thm]{Definition}
\newtheorem{rem}[thm]{Remark}
\newtheorem{exm}[thm]{Example}
\newtheorem{conds}[thm]{Condition}
\newtheorem{prob}[thm]{Problem}
\newtheorem*{thm*}{Theorem}
\numberwithin{equation}{section}
\def\R{{\mathbb R}}
\def\E{{\mathbb E}}
\def\Z{{\mathbb Z}}
\def\C{{\mathbb C}}
\def\R{{\mathbb R}}
\def\P{{\mathbb P}}
\def\N{{\mathbb N}}
\def\11{{\mathbb I}}
\def\C{\mathbb{C}}
\def\Z{\mathbb{Z}}
\def\T{\mathbb{T}}
\def\Q{\mathbb{Q}}
\def\E{\ifmmode{\mathbb E}\else{$\mathbb E$}\fi} 
\def\N{\ifmmode{\mathbb N}\else{$\mathbb N$}\fi} 
\def\R{\ifmmode{\mathbb R}\else{$\mathbb R$}\fi} 
\def\Q{\ifmmode{\mathbb Q}\else{$\mathbb Q$}\fi} 
\def\C{\ifmmode{\mathbb C}\else{$\mathbb C$}\fi} 
\def\H{\ifmmode{\mathbb H}\else{$\mathbb H$}\fi} 
\def\Z{\ifmmode{\mathbb Z}\else{$\mathbb Z$}\fi} 
\def\P{\ifmmode{\mathbb P}\else{$\mathbb P$}\fi} 
\def\SS{\ifmmode{\mathbb S}\else{$\mathbb S$}\fi} 
\def\DD{\ifmmode{\mathbb D}\else{$\mathbb D$}\fi} 
\def\R{{\mathbb R}}
\def\E{{\mathbb E}}
\def\Z{{\mathbb Z}}
\def\C{{\mathbb C}}
\def\R{{\mathbb R}}
\def\N{{\mathbb N}}
\def\e{\varepsilon}
\def\CA{{\mathcal A}}
\def\CK{{\mathcal K}}
\def\CM{{\mathcal M}}
\def\darr#1{\raise1.5ex\hbox{$\leftrightarrow$}
\mkern-16.5mu #1}
\def\roughly#1{\raise.3ex\hbox{$#1$\kern-.75em
\lower1ex\hbox{$\sim$}}}
\def\opname#1{\mathop{\kern0pt{\rm #1}}\nolimits}
\def\dim{\opname{dim}}
\begin{document}

\def\mq{\mathfrak{q}}
\def\mp{\mathfrak{p}}
\def\mH{\mathfrak{H}}
\def\mh{\mathfrak{h}}
\def\ma{\mathfrak{a}}
\def\ms{\mathfrak{s}}
\def\mm{\mathfrak{m}}
\def\mn{\mathfrak{n}}
\def\mz{\mathfrak{z}}
\def\mw{\mathfrak{w}}
\def\Hoch{{\tt Hoch}}
\def\mt{\mathfrak{t}}
\def\ml{\mathfrak{l}}
\def\mT{\mathfrak{T}}
\def\mL{\mathfrak{L}}
\def\mg{\mathfrak{g}}
\def\md{\mathfrak{d}}
\def\mr{\mathfrak{r}}

\title[Lagrangian Floer theory on compact toric manifolds II]
{Lagrangian Floer theory on compact toric manifolds II: Bulk
deformations.}

\author[K. Fukaya, Y.-G. Oh, H. Ohta, K.
Ono]{Kenji Fukaya, Yong-Geun Oh, Hiroshi Ohta, Kaoru Ono}
\thanks{KF is supported partially by JSPS Grant-in-Aid for Scientific Research
No.18104001 and by Global COE Program G08, YO by US NSF grant \# 0904197, HO by JSPS Grant-in-Aid
for Scientific Research No.19340017, and KO by JSPS Grant-in-Aid for
Scientific Research, Nos. 18340014 and 21244002}

\begin{abstract} This is a continuation of part I in the series of
the papers on Lagrangian Floer theory on toric manifolds. Using the
deformations of Floer cohomology by the ambient cycles, which we
call \emph{bulk deformations}, we find a continuum of
non-displaceable Lagrangian fibers on some compact toric manifolds.
We also provide a method of finding all fibers 
with non-vanishing Floer cohomology with bulk deformations in arbitrary
compact toric manifolds, which we call \emph{bulk-balanced}
Lagrangian fibers.
\end{abstract}

\date{Feb. 25, 2011}

\keywords{toric manifolds, Floer cohomology, weakly unobstructed
Lagrangian submanifolds, potential function, Jacobian ring, bulk
deformations, bulk-balanced Lagrangian submanifolds, open-closed
Gromov-Witten invariant}

\subjclass[2000]{Primary~53D12, 53D40, Secondary~53D35, 14M25}

\maketitle \tableofcontents
\section{Introduction}
\label{sec:introduction}

This is the second of series of papers to study Lagrangian Floer
theory on toric manifolds. The main purpose of this paper is to
explore bulk deformations of Lagrangian Floer theory, which we
introduced in Section 3.8 \cite{fooo06} (=Section 13 \cite{fooo06pre}) and draw its applications. In
particular, we prove the following Theorems \ref{thm:bupP2},
\ref{thm:leading}. We call a Lagrangian submanifold $L$ of a
symplectic manifold $X$ {\it non-displaceable} if $\psi(L) \cap L
\ne \emptyset$ for any Hamiltonian diffeomorphism $\psi: X \to X$.

\begin{thm}\label{thm:bupP2}
Let $X_k$ be the $k$-points blow up of $\C P^2$ with $k \ge 2$. Then
there exists a toric K\"ahler structure on $X_k$ such that there
exist a continuum of non-displaceable Lagrangian fibers $L(u)$.
\par
Moreover they have the following property:
If $\psi: X \to X$ is a Hamiltonian isotopy such that $\psi(L(u))$ is
transversal to $L(u)$ in addition, then
$$
\# (\psi(L(u)) \cap L(u)) \ge 4.
$$
\end{thm}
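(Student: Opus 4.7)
The plan is to apply the bulk-deformed non-displaceability criterion from Chapter~3 of \cite{fooo06}: a toric fibre $L(u)$ of $X_k$ is non-displaceable as soon as one can produce a bulk cycle $\mathfrak{b}\in H^{*}(X_k;\Lambda_+)$ and a bounding cochain $b\in H^{1}(L(u);\Lambda_0)$ making the bulk-deformed Floer cohomology $HF\bigl((L(u),b,\mathfrak{b}),(L(u),b,\mathfrak{b})\bigr)$ non-zero. In the toric setting, the $\mathfrak{b}$-deformed version of the main theorem of part~I translates this into the condition that $(u,b)$ is a critical point of the bulk-deformed potential function $\mathfrak{PO}^{\mathfrak{b}}$; at a non-degenerate critical point one moreover has $HF\cong H^{*}(T^{2};\Lambda)$, a free $\Lambda$-module of rank $4$, so both non-displaceability and the transversal estimate $\#(\psi(L(u))\cap L(u))\ge \dim_{\Lambda} HF = 4$ follow as soon as such non-degenerate critical points are produced for all $u$ in a continuum.

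First I would fix the toric K\"ahler structure. For $k=2$ I would take the moment polytope (a pentagon) so that the two exceptional edges have equal size, producing a $\mathbb{Z}/2$-symmetry that cancels leading monomials of $\mathfrak{PO}$ along a one-parameter family of fibres. For $k\ge 3$ I would let the extra blow-ups be so small that up to any prescribed energy no new disc classes contribute, reducing the computation to a small perturbation of the $k=2$ case. Using the Cho--Oh edge formula from part~I, the leading part of the potential takes the form
\[
\mathfrak{PO}^{\mathfrak{b}}(u,y)=\sum_{j}e^{\langle\mathfrak{b},[D_{j}]\rangle}\,y^{v_{j}}T^{\ell_{j}(u)}+(\text{higher-order corrections in }T),
\]
where $v_{j}$ and $\ell_{j}(u)$ are the primitive inward normal and affine defining function of the $j$-th edge, and the exponential records the insertion of $\mathfrak{b}$ on the disc boundary. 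The decisive step is then, for each $u$ in a small interval, to tune $\mathfrak{b}(u)\in H^{2}(X_k;\Lambda_+)$ so that the dominant monomials cancel in opposite pairs---the bulk-balanced condition---giving a leading critical point $(u,b_{0}(u))$, and then to promote it to a genuine non-degenerate critical point of the full $\mathfrak{PO}^{\mathfrak{b}(u)}$ by an inductive implicit-function argument over $\Lambda_0$ controlled by the energy filtration.

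The main technical obstacle will be controlling the higher-order corrections to $\mathfrak{PO}$ coming from Maslov-index-$2$ disc classes that are not of elementary Cho--Oh type; these are encoded by open Gromov--Witten invariants and are not directly computable. The plan is to exploit the symmetry of the polytope, together with the smallness of the extra blow-up parameters for $k\ge 3$, to ensure that these corrections preserve both the bulk-balanced cancellation and the non-degeneracy of the Hessian uniformly in $u$. Once this uniformity is in place, the implicit-function step yields a continuously varying family of non-degenerate critical points, hence a continuum of bulk-balanced non-displaceable fibres with the advertised rank-$4$ intersection lower bound.
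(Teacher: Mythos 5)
Your outline correctly identifies the machinery — bulk deformations, the potential function, the non-displaceability criterion Theorem~\ref{HFdisplace}/Proposition~\ref{prof:bal2} — and the right geometry (symmetric pentagon for $k=2$, small further blow-ups at a distant vertex for $k\ge 3$, which matches the paper's choice of polytope $P_{\alpha,\beta}$ with $\beta=(1-\alpha)/2$ and the term $T^{1-\epsilon-u}y_1^{-1}$ becoming subleading). The lower bound $\ge 4=2^n$ from $HF\cong H(T^2;\Lambda_0)$ is also the right reasoning. But the final technical step is set up incorrectly, in a way that would block the proof.

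The gap is the insistence on a \emph{non-degenerate} critical point of $\mathfrak{PO}^{\mathfrak b}$ followed by an implicit-function argument. First, this is neither available nor needed. In the example, the leading term equation \eqref{LTEblowup} is $1-y_2^{-2}=0$, $1+y_2=0$, with solutions $(y_1,-1)$ for \emph{arbitrary} $y_1\in\C\setminus\{0\}$: the leading-order Hessian has a whole null direction along $y_1$. Precisely because of this degeneracy, Example 10.17 of [FOOO4] could not handle this fiber without bulk, and the main point of this paper's Theorem~\ref{thm:leading}/Theorem~\ref{ellim} is to \emph{remove} any nondegeneracy hypothesis: Theorem~\ref{homologynonzero} gives $HF\cong H(T^n;\Lambda_0)$ at \emph{any} critical point of $\mathfrak{PO}^u_{\frak b}$, degenerate or not. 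Second, the correct inductive step inverts what you propose: rather than solving for a critical point $\frak y$ of a fixed $\mathfrak{PO}^{\mathfrak b}$, one fixes $\frak y=(\frak y_1,-1)$ at the outset (any $\frak y_1\ne 0$ works) and then inductively constructs $\frak b=\lim_k\frak b(k)\in\mathcal A^2(\Lambda_+)$ so that $\frak y$ becomes a critical point of $\mathfrak{PO}^u_{\frak b}$; this is Proposition~\ref{induclema}, where the correction added to $\frak b$ at each stage kills the obstruction at the next $T$-order, using that $\{\vec v_{i(l,r)}\}$ spans each $A_l^\perp$. An implicit-function argument solving for $y$ with $\frak b$ held fixed would stall at the degenerate $y_1$-direction of the leading Hessian, whereas solving for $\frak b$ with $\frak y$ held fixed never sees the Hessian at all. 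Once this inversion is made, and one cites Theorem~\ref{ellim} plus Proposition~\ref{prof:bal2} rather than a nondegenerate critical-point criterion, the rest of your outline goes through.
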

\begin{rem}\label{rem;thm1}
\begin{enumerate}
\item We state Theorem \ref{thm:bupP2} in the case of
the blow up of $\C P^2$. We can construct many similar examples by the same method.
\item We will prove Theorem \ref{thm:bupP2} by proving the
existence of $\mathfrak b \in H^2(X_k;\Lambda_+)$ and
$\mathfrak x \in H(L(u);\Lambda_0)$ such that
\begin{equation}\label{bulkHF;form}
HF((L(u),(\mathfrak b,\mathfrak x)),(L(u),(\mathfrak b,\mathfrak
x));\Lambda_0) \cong H(T^2;\Lambda_0).
\end{equation}
Here
\begin{equation}\label{lambda0}
\Lambda_0 =\left\{\sum_{i=1}^{\infty} a_i T^{\lambda_i}
\in \Lambda \, \Big\vert \,
\lambda_i \ge 0, \lim_{i\to\infty}\lambda_i = \infty, a_i \in R\right\},
\end{equation}
($R$ is a field of characteristic $0$) and
\begin{equation}\label{lambda+}
\Lambda_+ = \left\{ \sum_{i=1}^{\infty} a_i T^{\lambda_i}
\in \Lambda \, \Big\vert \, \lambda_i > 0\right\}
\end{equation}
are the
universal Novikov ring and its maximal ideal. The left hand side
of (\ref{bulkHF;form}) is the Floer cohomology with bulk
deformation. See Section 3.8 \cite{fooo06} (= Section 13 \cite{fooo06pre}) and Section \ref{sec:bulk}
of this paper for its definition.
\item In a sequel of this series of papers \cite{fooospectral}, we will study this
example further and prove that the universal cover
$\widetilde{Ham}(X_k)$ of the group of Hamiltonian diffeomorphisms
allows infinitely many continuous and homogeneous Calabi
quasi-morphisms $\varphi_u: \widetilde{Ham}(X_k) \to \R$ (see
\cite{entov-pol03}) such that for any finitely many $u_1,\ldots,u_N$
there exists a subgroup $\cong \Z^N \subset \widetilde{Ham}(X_k)$ on
which $(\varphi_{u_1},\ldots,\varphi_{u_N}): \Z^N \to \R^N$ is
injective.
\end{enumerate}
\end{rem}
In Sections 9 and 10 \cite{fooo08}, we introduced the notion of
leading term equation for each Lagrangian fiber $L(u)$ of a toric
manifold $X$. See also Section \ref{sec:Ellim} of this paper. The
leading term equation is a system consisting of $n$-elements of the
Laurent polynomial ring
$\C[y_1,\ldots,y_n,y_1^{-1},\ldots,y_n^{-1}]$ of $n$ variables.
(Here $n = \dim L(u)$.) In Section 10 \cite{fooo08}, we proved that
if the leading term equation has a solution in $(\C \setminus
\{0\})^n$ then $L(u)$ has a nontrivial Floer cohomology for some
bounding cochain $\mathfrak x$ in $H^1(L(u);\Lambda_0)$ {\it under
certain nondegeneracy conditions}. The next theorem says that if we
consider more general class of Floer cohomology integrating bulk
deformations into its construction, we can remove this nondegeneracy
condition.
\begin{thm}\label{thm:leading}
Let $X$ be a compact toric manifold and $L(u)$ its Lagrangian
fiber. Suppose that the leading term equation of $L(u)$ has a
solution in $(\C \setminus \{0\})^n$.
\par
Then there exists $\mathfrak b \in H^2(X;\Lambda_+)$ and
$\mathfrak x \in H(L(u);\Lambda_0)$ satisfying
\begin{equation}\label{bulkHF;form2}
HF((L(u),(\mathfrak b,\mathfrak x)),(L(u),(\mathfrak b,\mathfrak
x));\Lambda_0) \cong H(T^n;\Lambda_0).
\end{equation}
\end{thm}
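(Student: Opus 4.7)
The plan is to reduce the theorem to the existence of a critical point of the bulk-deformed potential function $\mathfrak{PO}^{\mathfrak b}_u$ of $L(u)$, using the machinery developed in \cite{fooo06} and in the earlier sections of the present paper. As in the unbulked case, if $\mathfrak x \in H(L(u);\Lambda_0)$ is a critical point of $\mathfrak{PO}^{\mathfrak b}_u$ (in the appropriate sense, with the Maurer--Cartan equation satisfied), then the resulting Floer differential $m_1^{\mathfrak b, \mathfrak x}$ vanishes on the canonical model and the isomorphism (\ref{bulkHF;form2}) follows. Thus the task is to upgrade the given solution $(y_1^0,\ldots,y_n^0) \in (\C \setminus \{0\})^n$ of the leading term equation into an exact critical point of some $\mathfrak{PO}^{\mathfrak b}_u$, using the extra freedom in $\mathfrak b$ to dispense with the nondegeneracy hypothesis imposed in \cite{fooo08}.

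First I would record how bulk deformations enter the potential. The leading summands of $\mathfrak{PO}^{\mathfrak b}_u$, one for each facet of the moment polytope, are multiplied by invertible exponential factors determined by the components of $\mathfrak b$ along the toric divisors; after a harmless substitution in the $y$-variables the leading term equation is therefore unchanged. By contrast, the higher-order summands of $\mathfrak{PO}^{\mathfrak b}_u$ acquire corrections depending nontrivially on the full $\mathfrak b$, and these corrections can be tuned by varying $\mathfrak b$ over the space of ambient cycles in $H^2(X;\Lambda_+)$, including classes built from products or higher-codimension intersections of toric divisors.

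The core of the argument is then a Newton-type iteration in the $T$-adic topology, producing simultaneously $\mathfrak x = \mathfrak x^{(0)} + \mathfrak x^{(1)} + \cdots$ lifting the approximate critical point supplied by the leading term equation, and corrections $\mathfrak b = \mathfrak b^{(1)} + \mathfrak b^{(2)} + \cdots$. At each stage the residual error lies in a strictly deeper piece of the $T$-adic filtration on $\Lambda_+^n$, and it must be killed by a combined correction $(\delta \mathfrak x, \delta \mathfrak b)$. The directions in which the Jacobian of the leading term equation degenerates are precisely where a $\mathfrak x$-correction alone fails, and this is exactly where the bulk parameter comes in: a well-chosen bulk cycle can be designed to affect a prescribed set of higher-order Laurent coefficients of $\mathfrak{PO}^{\mathfrak b}_u$, and in particular to produce a nonzero contribution along any degenerate direction of the Jacobian. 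Each step increases the $T$-valuation of the residual, so convergence in $\Lambda_0$ is automatic.

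The main obstacle is to establish this surjectivity claim rigorously: one must verify that the space of higher-order adjustments obtainable by varying $\mathfrak b \in H^2(X;\Lambda_+)$ is rich enough to span all possible obstructions arising at the degenerate directions of the leading term equation, at every Newton step. This reduces to a combinatorial analysis, based on the toric classification of holomorphic disk classes with boundary on $L(u)$ and on the structure of the ambient cycles, identifying which coefficients of $\mathfrak{PO}^{\mathfrak b}_u$ can be independently controlled by $\mathfrak b$. Once this span property is in hand, the induction is formal, the valuation of the residual strictly increases at each stage, and the iteration converges to the desired pair $(\mathfrak b, \mathfrak x)$.
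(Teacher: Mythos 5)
Your overall strategy matches the paper's: deduce the isomorphism from Theorem \ref{homologynonzero} by producing a critical point of $\frak{PO}^u_{\frak b}$, observe that the leading term equation is $\frak b$-independent (this is Lemma \ref{LTEind}), and then run a $T$-adic iteration that eats the residual using the extra freedom in $\frak b$. However, the proposal has a genuine gap precisely where the proof actually has to do work, and it also makes the iteration more complicated than necessary.

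First, the paper does \emph{not} iterate on $\frak x$ at all. Having chosen a solution $\frak y_{l,s}\in R\setminus\{0\}$ of the leading term equation, it keeps $\frak y$ fixed (so $\frak x = \sum(\log \frak y_i)\text{\bf e}_i$ is constant) and only corrects $\frak b$; see Lemma \ref{indmain} in the Fano/rational model case and Proposition \ref{induclema} in general. Your plan of a coupled Newton step in $(\delta\frak x,\delta\frak b)$ is not wrong in spirit, but it obscures the clean structure and forces you to analyze how a $\delta\frak x$-correction interacts with the (possibly degenerate) Jacobian of the leading term equation, which is exactly the hypothesis you are trying to dispense with. The cleaner observation is that one never needs $\delta\frak x$.

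Second, the surjectivity claim you defer as ``the main obstacle'' is essentially the entire content of the proof, and it is not a soft reduction. The paper establishes it in several pieces: Proposition \ref{POcalcFano} and Proposition \ref{nonfanopert} show that replacing $\frak b$ by $\frak b + cT^{\lambda}D_{i(l,r)}$ changes the potential by $cT^{\lambda+S_l}y^{\vec v_{i(l,r)}}$ plus terms of strictly higher order in the monoid $G_{\text{\rm bulk}}$ (this requires the structure of disc classes from Proposition \ref{copy37.177} (5) and is not automatic in the non-Fano case); Lemma \ref{cor:gapping} and Lemma \ref{2ndthirest} control which slice of $N_R^{G_{\text{\rm bulk}}}(k)$ the residual lives in; and one needs that the vectors $\{\vec v_{i(l',r)}\mid l'\le l\}$ span the increasing filtration $A_l^{\perp}$, which is built into the definition of the stratification of the leading term equation. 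Put together, the logarithmic gradient
\begin{equation*}
\sum_{l',s}\frak y_{l',s}\frac{\partial\frak{PO}^u_{\frak b}}{\partial y_{l',s}}(\frak y)\,\text{\bf e}^*_{l',s}
\end{equation*}
has a residual at step $k$ which is a $\Lambda$-combination of $T^{S_l+\lambda_k^b}\text{\bf e}^*_{l,s}$, and adding $T^{\lambda_k^b}a_{l,r,k}(\frak y^{\vec v_{i(l,r)}})^{-1}D_{i(l,r)}$ to $\frak b$ shifts this by $T^{S_l+\lambda_k^b}a_{l,r,k}\vec v_{i(l,r)}$ to leading order. The span property is then exactly that $\{\vec v_{i(l,r)}\}$ generate $A_l^\perp$ over $R$. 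None of this is a routine ``combinatorial analysis'' that can be taken on faith; it is the core lemma of the section.

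Finally, a smaller point: you suggest using ``products or higher-codimension intersections of toric divisors'' to gain control over higher-order coefficients, but the theorem puts $\frak b\in H^2(X;\Lambda_+)$, and the paper's proof accordingly uses only degree-two cycles $D_{i(l,r)}$ (elements of $\mathcal A^2(\Lambda_+)$). Higher-codimension cycles play no role here; indeed they do not appear in the leading term equation at all (see the remarks after (\ref{bulkLOE})).
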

We remark that it is proved in \cite{fooo08} that there always exists
$u$ such that the leading term equation of $L(u)$ has a
solution in $(\C \setminus \{0\})^n$.
\begin{cor}\label{Cormainapapl}
Let $X$ be a compact toric manifold and $L(u)$ its Lagrangian
fiber. Suppose that the leading term equation of $L(u)$ has a
solution in $(\C \setminus \{0\})^n$. Then $L(u)$ is
non-displaceable.
\par
Moreover $L(u)$ has  the following property.
If $\psi: X \to X$ is a Hamiltonian isotopy such that $\psi(L(u))$ is transversal to $L(u)$, then
\begin{equation}\label{2topowern}
\# (\psi(L(u)) \cap L(u)) \ge 2^n,
\end{equation}
where $n = \dim L(u)$.
\end{cor}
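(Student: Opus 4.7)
The plan is to deduce both statements directly from Theorem \ref{thm:leading} by invoking the Hamiltonian invariance of bulk-deformed Floer cohomology, which is established in the general machinery developed in \cite{fooo06} and recalled in section \ref{sec:bulk}. The role of the leading term equation is exhausted once Theorem \ref{thm:leading} is applied.

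First, apply Theorem \ref{thm:leading} to obtain $\mathfrak{b}\in H^2(X;\Lambda_+)$ and $\mathfrak{x}\in H(L(u);\Lambda_0)$ such that
\[
HF((L(u),(\mathfrak{b},\mathfrak{x})),(L(u),(\mathfrak{b},\mathfrak{x}));\Lambda_0)\cong H(T^n;\Lambda_0).
\]
Now suppose $\psi:X\to X$ is a Hamiltonian diffeomorphism. By the Hamiltonian invariance of bulk-deformed Lagrangian Floer cohomology (applied with the ambient cycle $\mathfrak{b}$ and the bounding cochain $\mathfrak{x}$ transported by $\psi$), there is an isomorphism
\[
HF((\psi(L(u)),(\mathfrak{b},\psi_*\mathfrak{x})),(L(u),(\mathfrak{b},\mathfrak{x}));\Lambda_0)\cong HF((L(u),(\mathfrak{b},\mathfrak{x})),(L(u),(\mathfrak{b},\mathfrak{x}));\Lambda_0).
\]
If $\psi(L(u))\cap L(u)=\emptyset$, then the Floer cochain complex underlying the left-hand side is identically zero, forcing the cohomology to vanish. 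This contradicts the isomorphism with $H(T^n;\Lambda_0)$, which is a free $\Lambda_0$-module of positive rank $2^n$. Therefore $\psi(L(u))\cap L(u)\neq\emptyset$, proving non-displaceability.

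For the quantitative bound (\ref{2topowern}), assume $\psi(L(u))$ intersects $L(u)$ transversally. Then the bulk-deformed Floer cochain complex $CF((\psi(L(u)),(\mathfrak{b},\psi_*\mathfrak{x})),(L(u),(\mathfrak{b},\mathfrak{x}));\Lambda_0)$ is a free $\Lambda_0$-module whose rank equals the geometric intersection number $\#(\psi(L(u))\cap L(u))$. Since $\Lambda_0$ is a local ring and $H(T^n;\Lambda_0)$ is a free $\Lambda_0$-module of rank $2^n$, the cohomology module has rank exactly $2^n$; the rank of cohomology cannot exceed the rank of the cochain complex, hence
\[
\#(\psi(L(u))\cap L(u))\;\ge\;\mathrm{rank}_{\Lambda_0}HF\;=\;2^n,
\]
which is the desired inequality.

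The only delicate point is making sure the Hamiltonian invariance property and the rank comparison are valid in the bulk-deformed setting over $\Lambda_0$; this is the part I expect to require the most care, since $\Lambda_0$ is not a field and the bulk-deformed $A_\infty$-structures carry extra parameters. However, these issues are entirely dealt with by the general theory of \cite{fooo06}: pushing $\mathfrak{x}$ forward by $\psi$ gives a bounding cochain for $\psi(L(u))$ with the same obstruction data, continuation maps yield the required isomorphism, and one may pass to the field of fractions $\Lambda$ of $\Lambda_0$ to compare ranks rigorously, since $H(T^n;\Lambda_0)\otimes_{\Lambda_0}\Lambda$ is a $\Lambda$-vector space of dimension $2^n$.
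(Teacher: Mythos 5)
Your approach is essentially the same as the paper's, just inlined: the paper goes through Theorem~\ref{thm:leading} $\Rightarrow$ Corollary~\ref{weabalanced} (bulk-balanced) $\Rightarrow$ Proposition~\ref{prof:bal2}, and Proposition~\ref{prof:bal2} is itself proved by Hamiltonian invariance (Proposition~\ref{calcFloer}) plus the Betti-number bound of Theorem~\ref{HFdisplace}; you simply carry out that argument directly rather than citing those intermediate results.

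One technical inaccuracy worth flagging: the Hamiltonian-invariance isomorphism you display is written over $\Lambda_0$, but Proposition~\ref{calcFloer} and the remark immediately following it make clear that this isomorphism is established only over $\Lambda$, not $\Lambda_0$. The discrepancy is not cosmetic: with $\Lambda_0$ coefficients there can be energy loss coming from the Hamiltonian isotopy, which shows up as torsion in the Floer cohomology, and the isomorphism genuinely fails at that level. You do fix this at the end by passing to the field of fractions, so the logic of the proof survives; however the displayed $\Lambda_0$-isomorphism should be replaced from the outset by its $\Lambda$-coefficient version, which is both what the paper proves and what the rank-comparison step actually uses. With that correction the argument matches the paper's.
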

\par
The converse to Theorem \ref{thm:leading} also holds. (See Theorem
\ref{ellim}.)
\par
The leading term equation can be easily solved in practice for most of the
compact toric manifolds, which are not necessarily Fano.
Theorem $\ref{thm:leading}$ enables us to reduce the problem to
locate all $L(u)$ such that there exists a pair $(\mathfrak b,\mathfrak x)
\in H^2(X;\Lambda_+)\times H(L(u);\Lambda_0)$ satisfying
(\ref{bulkHF;form2}) to the problem to decide existence of
nonzero solution of explicitly calculable system of polynomial equations.
In \cite{fooo08} we provided such a reduction
for the case $\mathfrak b =0$. If all the solutions of the leading
term equation are weakly nondegenerate (see Definition 10.2
\cite{fooo08}), Floer cohomology with $\mathfrak b=0$ seems to
provide enough information for the general study of non-displacement of Lagrangian fibers.
The method employed in this paper works for arbitrary compact
toric manifolds without nondegeneracy assumption, and the calculation
is actually simpler. We hope that this method might provide an
optimal result on the non-displacement of Lagrangian fibers. (See
Problem \ref{conjdisp}.)
\begin{rem}
In \cite{cho07}, Cho used Floer cohomology with `$B$-field' to study
non-displacement of Lagrangian fibers in toric manifolds.
`$B$-field' which Cho used is parameterized by
$H^2(X;\sqrt{-1}\,\R)$. The bulk deformation we use in this paper is
parameterized by $\mathfrak b \in H^*(X;\Lambda_0)$. If we restrict to $\mathfrak
b \in H^2(X;\sqrt{-1}\,\R)$ our bulk deformation by $\mathfrak b$ in
this paper coincides with the deformation by a `$B$-field' in
\cite{cho07}.
\end{rem}
A brief outline of each section of the paper is now in order. In
Section \ref{sec:bulk}, we review construction of the operator
$\mathfrak q$ given in Section 3.8 \cite{fooo06} (=Section 13 \cite{fooo06pre}) and explain how we use
$\mathfrak q$ to deform Floer cohomology. In Section \ref{sec:Potbul} we
provide a more explicit description thereof for the case of compact
toric manifolds and study its relation to the potential function
with bulk, which is the generating function defined by the structure
constants of $\mathfrak q$. This section also contains various results
on the operator $\mathfrak q$ and on the potential function with bulk.
These results will be used also in sequels of this series
of papers \cite{fooo10} \cite{fooospectral}. In Section \ref{sec:Ellim}, we explain how we use the
results of Section \ref{sec:Potbul} to study Floer cohomology of
Lagrangian fibers of compact toric manifolds. Especially we prove
Theorem \ref{thm:leading} there. Section \ref{sec:Exam} is devoted
to the proof of Theorem \ref{thm:bupP2}.  In this section we discuss
the case of two points blow up of $\C P^2$ in detail. The calculation we
perform in this section can be generalized to arbitrary compact
toric manifolds. In Section \ref{sec:operatorq} we describe the
results on the moduli space of pseudo-holomorphic discs with
boundary on a Lagrangian fiber of a general toric manifold, which are
basically due to \cite{cho-oh}. We use these results in the study of
the operator $\mathfrak q$. In Section \ref{sec:CalPot} we carry out
some calculation of the potential function with bulk deformation
which is strong enough to locate all the Lagrangian fibers with
nontrivial Floer cohomology (after bulk deformation).
\par
In Section \ref{sec:HFbulk} we use the Floer cohomology with bulk
deformation in the study of non-displacement of Lagrangian
submanifolds. For this purpose we define the cohomology between a
pair of Lagrangian submanifolds $L$ and $\psi(L)$ for a Hamiltonian
diffeomorphism $\psi$. We also show that this Floer cohomology of
the pair is isomorphic to the Floer cohomology of $L$ itself. This
is a standard process one takes to use Floer cohomology for the
non-displacement problem dating back to Floer \cite{floer:morse}. We
include bulk deformations and deformations by bounding cochain
there. These results were previously obtained in \cite{fooo06}.
However we give rather detailed account of these constructions here
in order to make this paper as self-contained as possible. To avoid
too much overlap with that of \cite{fooo06}, in this paper we give a
proof using the de Rham cohomology version here which is different
from that of \cite{fooo06} in which we used the singular cohomology
version. In Section \ref{sec:domain} we study the convergence
property of potential functions. Namely we prove that the potential
function is contained in the completion of the ring of Laurent
polynomials over a Novikov ring with respect to an appropriate
non-Archimedean norm. This choice of the norm depends on the
K\"ahler structure (or equivalently on the moment polytope). We discuss
the natural way to take completion (in Section \ref{sec:Potbul}) and
show that our potential function actually converges in that sense
(in Section \ref{sec:domain}).  In Section \ref{sec:euler}, we
discuss the relation of the Euler vector field and the potential
function. In Section \ref{sec:byLambda0}, we slightly enlarge the
parameter space of bulk deformations including $\mathfrak b$ from
$H(X;\Lambda_0)$ not just from $H(X;\Lambda_+)$. In Section
\ref{sec:integration}, we review the construction of smooth
correspondence in de Rham cohomology using continuous family of
multisections and integration along fibers via its zero sets.
\par\smallskip
The authors thank the anonymous referees for 
their careful and thorough reading and 
for their suggestions which help to 
improve presentation of the paper. 
\par\bigskip
\centerline{\bf Notations and conventions}
\par\medskip
We take any field $R$ containing $\Q$.
The universal Novikov ring $\Lambda_0$ is defined as (\ref{lambda0}),
where $a_i \in R$.
Its ideal $\Lambda_+$ is defined as (\ref{lambda+}).
$$
\Lambda = \left\{ \sum_{i=1}^{\infty} a_i T^{\lambda_i}
\, \Big\vert \,  a_i \in R, \lambda_i \in \R, \,
\lambda_i < \lambda_{i+1},\,\lim_{i\to\infty}\lambda_i = \infty \right\}
$$
is the field of fraction of $\Lambda_0$.
\par
In case we need to specify $R$ we write
$\Lambda_0(R)$, $\Lambda_+(R)$, $\Lambda(R)$.
The (non-Archimedean) valuation $\mathfrak v_T$ on them are defined by
$$
\mathfrak v_T\left(\sum_{i=1}^{\infty} a_i T^{\lambda_i}\right)
= \inf \{ \lambda_i \mid a_i \ne 0\}, \quad 
\mathfrak v_T(0)=\infty.
$$
It induces a non-Archimedean norm
$\Vert x\Vert = e^{-\mathfrak v_T(x)}$ and defines a topology on them.
Those rings are complete with respect to this norm.
\par
If $C$ is an $R$ vector space, we denote by $C(\Lambda_0)$ the completion
of $C\otimes \Lambda_0$ with respect to the non-Archimedean topology of $\Lambda_0$.
In other words its elements are of the form
$$
\sum a_i T^{\lambda_i}
$$
such that $a_i \in C$, $\lambda_i < \lambda_{i+1}$, $\lambda_i \ge 0$, $\lim_{i\to\infty}\lambda_i = \infty$.
$C(\Lambda_+)$, $C(\Lambda)$, $C(\Lambda_0(R))$, $C(\Lambda_+(R))$, $C(\Lambda(R))$
are defined in the same way.
\par
Let $N$ be an $n$-dimensional lattice $\Z^n$ and $N_\R = N
\otimes_\Z \R \cong \R^n$. We denote by $X = X_\Sigma$ the compact
toric manifold associated to a complete $n$-dimensional fan of
regular cones in $N_\R$. We denote by $G(\Sigma) = \{v_1,\dots,
v_m\}$ the set of 1-dimensional generators of $\Sigma$. Let $\omega$
be any $T^n \subset (\C^*)^n$-invariant a K\"ahler form $\omega$ on
$X$. We denote its moment map by $\pi: X \to M_\R \cong \R^n$ and
the corresponding moment polytope by $P \subset M_\R \cong \R^n$. The
boundary $\partial P$ is divided into $m$ codimension $1$ faces,
which we denote by $\partial_i P$ for $i=1,\ldots,\, m$. We denote by $D_i = \pi^{-1}(\partial_i P)$
the associated toric divisor and by $\CA^2(\Z)$ the free abelian group
generated by $D_i$'s.
\par
For any given fiber $\pi^{-1}(u) =: L(u)$ for $u \in \text{Int} \,
P$, we have an $\R$ linear isomorphism
\begin{equation}\label{H1toN}
H_1(L(u);\R) \to N_{\R}
\end{equation}
which is defined by tensoring $\R$ with the farthest right column of the
following commutative diagram
\be
\xymatrix{
0 \ar[r] & \ker \pi \ar[r]^i\ar[d]_\cong & \Z^m \ar[r]^\pi\ar[d]_{\cong} & N \ar[r]\ar[d]_{\cong} & 0 \\
0 \ar[r] & H_2(X,\Z) \ar[r] & H_2(X, X \setminus D;\Z) \ar[r]^{\del}
&
H_1(X \setminus D;\Z) \ar[r] & 0 \\
0 \ar[r] & H_2(X,\Z) \ar[r]\ar[u]^= & H_2(X, L(u);\Z)
\ar[r]\ar[u]^\cong & H_1(L(u);\Z) \ar[r]\ar[u]^\cong & 0}
\label{eq:diagram}
\ee
where $\pi$ is defined by $\pi(e_i) = v_i$ for a $\Z$-basis
$\{e_1,\dots, e_m\}$ of $\Z^m$. Here the two isomorphisms in the
lower row are induced by the canonical inclusion map $L(u)
\hookrightarrow X \setminus D$ and the central isomorphism in the top
row is induced by the composition of the isomorphism
$$
\Z^m \cong \CA^2(\Z) \cong H^2(X,X\setminus D;\Z)
$$
in which the first isomorphism is induced by the assignment $e_i \mapsto D_i$.
We denote by $\beta_i \in  H_2(X,L(u)) \cong H_2(X, X \setminus D;\Z)$ the
image of $e_i \mapsto [D_i] \in H^2(X,\Z)$ with $D_i = \pi^{-1}(\partial_i P)$.

In \cite{cho-oh}, \cite{fooo08}, we used the affine functions
$\ell_i: M_{\R} \to \R$ and representation of the moment
polytope and its boundary
\be\label{def;polytope}
P = \{u  \in M_{\R} \mid \ell_i(u) \ge 0,\,\,\, i=1,\ldots,m \},
\qquad \partial_i P = \{ u \in M_{\R} \mid \ell_i(u) = 0\}
\ee
which will also play an important role in the present paper. We have
canonical identification 
\begin{equation}\label{eq:v}
v_i = (v_{i,1},\dots v_{i,n}) = d\ell_i \in N_{\R} \cong
H_1(L(u);\R), \quad i=1,\dots , m.
\end{equation} 
Then $v_i$ becomes an integral vector i.e.,
$v_i\in H_1(L(u);\Z)$ and $v_i = \partial \beta_i$. Furthermore we have the
identity $\omega(\beta_i) = 2\pi \ell_i(u)$ (see Theorem 8.1 \cite{cho-oh}.)
\par

\section{Bulk deformations of Floer cohomology }
\label{sec:bulk}

In this section, we review the results of Section 3.8 \cite{fooo06} (=Section 13 of \cite{fooo06pre}).
\par
Let $(X,\omega)$ be a compact symplectic manifold and $L$ its
Lagrangian submanifold. We take a finite dimensional graded
$R$-vector space $H$ of smooth singular \emph{cycles} of $X$.
(Actually we may consider a subcomplex of the smooth singular chain
complex of $X$ and consider smooth singular {\it chains}. Since
consideration of chain level arguments is not needed in this paper,
we restrict ourselves to the case of {\it cycles}. See \cite{fooo06}
and \cite{fooo10} for relevant explanations.)
\par
We regard an element of $H$ as a cochain (cocycle) by identifying a
$k$-chain with a $(2n-k)$-cochain where $n = \dim L$.
\par
In Section 3.8 \cite{fooo06} (=Section 13 of \cite{fooo06pre}) we introduced a family of operators
denoted by
\begin{equation}\label{qeqation}
\mathfrak q_{\beta;\ell,k}:
E_{\ell} (H[2]) \otimes B_k(H^*(L;R)[1]) \to  H^*(L;R)[1].
\end{equation}
Explanation of the various notations appearing in (\ref{qeqation})
is in order. $\beta$ is an element of the image of $\pi_2(X,L) \to
H_2(X,L;\Z)$. $H[2]$ is the degree shift of $H$ by 2 defined by
$(H[2])^d = H^{d+2}$. $H^*(L;R)[1]$ is the degree shift of the
cohomology group with $R$ coefficient. The notations $E_{\ell}$ and
$B_k$ are defined as follows. Let $C$ be a graded vector space. We
put
$$
B_kC = \underbrace{C\otimes \cdots \otimes C}_{\text{$k$ times}}.
$$
The symmetric group $\mathfrak S_k$ of order $k!$ acts on $B_kC$ by
$$
\sigma \cdot (x_1 \otimes \cdots \otimes x_k)
= (-1)^* x_{\sigma(1)} \otimes \cdots \otimes x_{\sigma(k)},
$$
where
$$
* = \sum_{i<j; \sigma(i)>\sigma(j)} \deg x_i \deg x_j.
$$
$E_kC$ is the set of $\mathfrak S_k$-invariant elements of $B_kC$.
The map (\ref{qeqation}) is a $\Q$-linear map of degree $1
-\mu(\beta)$ here $\mu$ is the Maslov index.
\par
We next describe the main properties of $\mathfrak q_{\beta;\ell,k}$.
Let $B_kC$ be as above and put
$$
BC = \bigoplus_{k=0}^{\infty} B_kC.
$$
(We remark $B_0C = R$.) $BC$ has the structure of coassociative
coalgebra with its coproduct $\Delta: BC \to BC \otimes BC$ defined
by
$$
\Delta(x_1 \otimes \cdots \otimes x_k)
= \sum_{i=0}^k (x_1 \otimes \cdots \otimes x_i)
\otimes (x_{i+1} \otimes \cdots\otimes x_k).
$$
This induces a coproduct $\Delta: EC \to EC \otimes EC$ with
respect to which $EC$ becomes a coassociative and graded
cocommutative.
\par
We also consider a map $\Delta^{n-1}: BC \to (BC)^{\otimes n}$ or $EC \to
(EC)^{\otimes n}$ defined by
$$
\Delta^{n-1} = (\Delta \otimes  \underbrace{id \otimes \cdots
\otimes id}_{n-2}) \circ (\Delta \otimes  \underbrace{id \otimes
\cdots \otimes id}_{n-3}) \circ \cdots \circ \Delta.
$$
For an element $\text{\bf x} \in BC$, it can be
expressed as
$$
\Delta^{n-1}(\text{\bf x}) = \sum_c \text{\bf x}^{n;1}_c \otimes
\cdots \otimes \text{\bf x}^{n;n}_c
$$
where $c$ runs over some index set depending on $\text{\bf x}$. For an element
$$
\text{\bf x} = x_1 \otimes \cdots \otimes x_k \in B_k(H(L;R)[1])
$$
we put the shifted degree $\deg'x_i = \deg x_i - 1$ and
$$
\deg' \text{\bf x} = \sum \deg'x_i = \deg \text{\bf x} - k.
$$
(Recall $\deg x_i$ is the cohomological degree of $x_i$ before
shifted.)
\begin{thm}\label{qproperties}
{\rm (Theorem 3.8.32 \cite{fooo06} = Theorem 13.32 \cite{fooo06pre})} The operators $\mathfrak
q_{\beta;\ell,k}$ have the following properties.
\begin{enumerate}
\item
For each $\beta$ and $\text{\bf x} \in B_k(H(L;R)[1])$,
$\text{\bf y} \in E_{\ell}(H[2])$, we have the following:
\begin{equation}\label{qmaineq}
0 =
\sum_{\beta_1+\beta_2=\beta}\sum_{c_1,c_2}
(-1)^*
\mathfrak q_{\beta_1}(\text{\bf y}^{2;1}_{c_1};
\text{\bf x}^{3;1}_{c_2} \otimes
\mathfrak q_{\beta_2}(\text{\bf y}^{2;2}_{c_1};\text{\bf x}^{3;2}_{c_2})
\otimes \text{\bf x}^{3;3}_{c_2})
\end{equation}
where
$$
* = \deg'\text{\bf x}^{3;1}_{c_2} +
\deg'\text{\bf x}^{3;1}_{c_2} \deg \text{\bf y}^{2;2}_{c_1}
+\deg \text{\bf y}^{2;1}_{c_1}.
$$
In $(\ref{qmaineq})$ and hereafter, we write $\mathfrak q_{\beta}(\text{\bf y};\text{\bf x})$ in place of
$\mathfrak q_{\beta;\ell,k}(\text{\bf y};\text{\bf x})$ if
$\text{\bf y} \in E_{\ell}(H[2])$, $\text{\bf x} \in B_{k}(H(L;R)[1])$.
\item If $1 \in E_0(H[2])$ and $\text{\bf x} \in B_k(H(L;R)[1])$, then
\begin{equation}\label{qism}
\mathfrak q_{\beta;0,k}(1;\text{\bf x}) = \mathfrak m_{\beta;k}(\text{\bf x}).
\end{equation}
Here $\mathfrak m_{\beta;k}$ is the filtered $A_{\infty}$ structure on
$H(L;R)$.
\item Let $\text{\bf e} = PD([L])$ be the Poincar\'e dual to the fundamental
class of $L$. Let $\text{\bf x}_i \in B(H(L;R)[1])$ and we put
$\text{\bf x} = \text{\bf x}_1 \otimes \text{\bf e} \otimes \text{\bf x}_2
\in B(H(L;R)[1])$. Then
\begin{equation}\label{unital}
\mathfrak q_{\beta}(\text{\bf y};\text{\bf x}) = 0
\end{equation}
except the following case.
\begin{equation}\label{unital2}
\mathfrak q_{\beta_0}(1;\text{\bf e} \otimes x) =
(-1)^{\deg x}\mathfrak q_{\beta_0}(1;x \otimes \text{\bf e}) = x,
\end{equation}
where $\beta_0 = 0 \in H_2(X,L;\Z)$ and $x \in H(L;R)[1]
= B_1(H(L;R)[1])$.
\end{enumerate}
\end{thm}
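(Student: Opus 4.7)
The plan is to construct $\frak q_{\beta;\ell,k}$ as a smooth correspondence (integration along the fiber) over the moduli space $\CM_{k+1,\ell}(\beta)$ of stable $J$-holomorphic discs $u:(D^2,\partial D^2)\to (X,L)$ of class $\beta$, with $k+1$ cyclically ordered boundary marked points and $\ell$ unordered interior marked points. Write $\mathrm{ev}^{\mathrm{int}}=(\mathrm{ev}^{\mathrm{int}}_1,\dots,\mathrm{ev}^{\mathrm{int}}_\ell):\CM_{k+1,\ell}(\beta)\to X^{\ell}$ and $\mathrm{ev}^{\mathrm{bd}}=(\mathrm{ev}^{\mathrm{bd}}_0,\mathrm{ev}^{\mathrm{bd}}_1,\dots,\mathrm{ev}^{\mathrm{bd}}_k):\CM_{k+1,\ell}(\beta)\to L^{k+1}$ for the interior and boundary evaluations. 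Given $\text{\bf y}=y_1\otimes\cdots\otimes y_\ell\in E_\ell(H[2])$ and $\text{\bf x}=x_1\otimes\cdots\otimes x_k\in B_k(H(L;R)[1])$, I define $\frak q_{\beta;\ell,k}(\text{\bf y};\text{\bf x})$ by pulling back $y_1\times\cdots\times y_\ell\times x_1\times\cdots\times x_k$ through $(\mathrm{ev}^{\mathrm{int}},\mathrm{ev}^{\mathrm{bd}}_1,\dots,\mathrm{ev}^{\mathrm{bd}}_k)$ and pushing the result forward along $\mathrm{ev}^{\mathrm{bd}}_0$, using the de Rham correspondence machinery of section \ref{sec:integration}. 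Symmetrization of the interior marked points is what forces the input to live in $E_\ell(H[2])$ rather than $B_\ell(H[2])$.

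Item (1), the master equation \eqref{qmaineq}, is then a Stokes-type identity. The codimension-one boundary of $\CM_{k+1,\ell}(\beta)$ decomposes as a union of fibre products
\[
\CM_{k_1+2,\ell_1}(\beta_1)\;{}_{\mathrm{ev}^{\mathrm{bd}}_0}\!\!\times_{\mathrm{ev}^{\mathrm{bd}}_i}\!\CM_{k_2+1,\ell_2}(\beta_2),
\]
with $\beta_1+\beta_2=\beta$, $k_1+k_2=k$, $\ell_1+\ell_2=\ell$, coming from disc bubbling at a boundary node; sphere bubbles give only codimension $\ge 2$ strata and are irrelevant. The distribution of the interior inputs between the two components is governed exactly by the coproduct $\Delta$ on $E(H[2])$ (yielding $\text{\bf y}^{2;1}_{c_1}\otimes \text{\bf y}^{2;2}_{c_1}$), while the distribution of the boundary inputs into the three slots (before, at, and after the node on the outer disc) is given by $\Delta^{2}$ on $B(H(L;R)[1])$ (yielding $\text{\bf x}^{3;1}_{c_2}\otimes\text{\bf x}^{3;2}_{c_2}\otimes\text{\bf x}^{3;3}_{c_2}$). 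Applying Stokes to $\frak q_{\beta}(\text{\bf y};\text{\bf x})$, summing over the codimension-one strata and keeping careful track of the Koszul signs produced by commuting shifted-degree inputs gives \eqref{qmaineq} with the stated sign $*$.

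Item (2) is immediate from the definition: when $\ell=0$ there are no interior marked points, so $\CM_{k+1,0}(\beta)=\CM_{k+1}(\beta)$ is precisely the moduli space defining $\frak m_{\beta;k}$, and the two correspondences coincide. Item (3), unitality, will be proved using the forgetful map $\mathrm{forget}_i:\CM_{k+1,\ell}(\beta)\to\CM_{k,\ell}(\beta)$ dropping the $i$-th boundary marked point (followed by stabilization). In the de Rham model the class $\mathrm{PD}([L])$ is represented by the constant function $1$ on $L$, so evaluation of $\frak q_\beta(\text{\bf y};\text{\bf x}_1\otimes\text{\bf e}\otimes\text{\bf x}_2)$ factors through $\mathrm{forget}_i$; wherever $\mathrm{forget}_i$ has positive-dimensional fibre, integration along that fibre of the pulled back form gives zero, which yields \eqref{unital}. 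The only case in which no forgetful map exists is the degenerate one $\beta=0$, $\ell=0$, $k=1$, where $\CM_{2,0}(0)$ is a single point and the correspondence reduces to the identity, producing \eqref{unital2}.

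The main obstacle is not the geometric picture above but the consistent construction of virtual perturbations. To turn \eqref{qmaineq} into an exact algebraic identity one must choose a system of Kuranishi structures, or continuous families of multisections in the de Rham setup, on the entire tower $\{\CM_{k+1,\ell}(\beta)\}$ whose restriction to each codimension-one boundary stratum is the fibre product of the multisections on the two factors, and which are simultaneously compatible with the forgetful maps $\mathrm{forget}_i$ whenever the $i$-th input is constrained to be $\text{\bf e}$. Producing these compatible choices inductively on the energy $\omega\cap\beta$ and the combinatorial type of the disc, while also handling the symmetrization over interior marked points, is the heart of the technical work; it is the de Rham counterpart of the singular-chain construction carried out in section 3.8 of \cite{fooo06} and will be assembled using the results reviewed in section \ref{sec:integration}.
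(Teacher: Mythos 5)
Your proposal takes essentially the same route as the paper: define $\frak q_{\beta;\ell,k}$ via a de Rham smooth correspondence over $\CM^{\mathrm{main}}_{k+1;\ell}(L(u),\beta;\text{\bf p})$, derive the $A_\infty$ relation \eqref{qmaineq} from the codimension-one boundary decomposition plus Stokes, read off \eqref{qism} from $\ell=0$, and obtain unitality from the forgetful map together with an explicit formula at the unstable stratum. The one point you flag as the ``heart of the technical work'' — inductively building a compatible system of multisections — is exactly where the paper, in this toric situation, has a short-cut you do not mention: the moduli spaces carry $T^n$-equivariant Kuranishi structures (Lemma \ref{kstructure}), so one can and does demand the multisections be $T^n$-invariant (Lemma \ref{37.184}(2)); since $T^n$ acts transitively on $L(u)$, this forces $ev_0$ to be a submersion on the perturbed zero set, which makes the fiber-product perturbations prescribed on boundary strata automatically transversal. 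This collapses the general inductive argument of \cite{fooo06} to a much cheaper one (carried out by passing to $V/T^n$ to extend multisections to the interior). Your account is otherwise in line with the paper: the shuffle coproduct $\Delta(SD(\text{\bf p}))$ governs the distribution of bulk inputs across boundary components exactly as you describe, and the special cases $\frak q_{0;0,1}$, $\frak q_{0;0,2}$ are set by hand (exterior derivative and wedge product) precisely because the forgetful map degenerates there, as you anticipate. One minor imprecision: $\CM_{2;0}(L(u),0)$ is unstable (the automorphism group is $1$-dimensional), not ``a single point,'' which is why those two cases must be defined directly rather than by integration.
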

Theorem \ref{qproperties} is proved in  Sections 3.8 and 7.4 of
\cite{fooo06} (= Sections 13 and 32
of \cite{fooo06pre}). We will recall its proof in Section \ref{sec:CalPot}
in the case when $X$ is a toric manifold, $R=\R$ and $L$ is a
Lagrangian fiber of $X$.
\par
We next explain how we use the map $\mathfrak q$ to deform filtered
$A_{\infty}$ structure $\mathfrak m$ on $L$. In this section we use the
universal Novikov ring
$$
\Lambda_{0,{\text{\rm nov}}} = \left.\left\{ \sum c_i T^{\lambda_i}e^{n_i} \right\vert
c_i \in R, \lambda_i \ge 0, n_i \in \Z, \lim_{i\to\infty}\lambda_i = +\infty \right\}
$$
which was introduced in \cite{fooo00}.
We write $\Lambda_{0,{\text{\rm nov}}}(R)$ in case we need to specify $R$.
The ideal $\Lambda_{0,{\text{\rm nov}}}^+$ of $\Lambda_{0,{\text{\rm nov}}}$
is the set of all elements $\sum c_i T^{\lambda_i}e^{n_i}$ of
$\Lambda_{0,{\text{\rm nov}}}$ such that $\lambda_i > 0$. We put
$F^{\lambda}\Lambda_{0,{\text{\rm nov}}} = T^{\lambda}\Lambda_{0,{\text{\rm nov}}}$. It
defines a filtration on $\Lambda_{0,{\text{\rm nov}}}$, under which
$\Lambda_{0,{\text{\rm nov}}}$ is complete. $\Lambda_{0,{\text{\rm nov}}}$ becomes a graded
ring by putting $\deg e = 2$, $\deg T = 0$.
\par
We choose a basis $\text{\bf f}_a$ ($a=1,\ldots,B$) of $H$ and
consider an element
$$
\mathfrak b = \sum_{a} \mathfrak b_a \text{\bf f}_a \in H(X;\Lambda_{0,{\text{\rm nov}}}^+)
$$
such that $\deg \mathfrak b_a + \deg \text{\bf f}_a = 2$ for each $a$.
We then define
\begin{equation}\label{mkdefeq}
\mathfrak m_{k}^{\mathfrak b}(x_1,\ldots,x_k) = \sum_{\beta,\ell}
e^{\mu(\beta)/2}T^{\omega\cap \beta/2\pi} \mathfrak
q_{\beta;\ell,k}(\mathfrak b^{\otimes\ell};x_1,\ldots,x_k).
\end{equation}
Here $\mu: \pi_2(X,L) \to  \Z$ is the Maslov index.
\begin{lem}\label{bulkdef} The family
$\{\mathfrak m^{\mathfrak b}_{k}\}_{k=0}^{\infty}$ defines a filtered $A_{\infty}$
structure on $H(L;\Lambda_{0,{\text{\rm nov}}})$.
\end{lem}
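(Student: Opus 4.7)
The plan is to derive the filtered $A_\infty$ relations for $\{\frak m_k^{\frak b}\}$ directly from the master equation (\ref{qmaineq}) by substituting the formal sum $\sum_\ell \frak b^{\otimes \ell}$ for the input $\text{\bf y}$. Since $\frak b \in H(X;\Lambda_{0,nov}^+)$ has even total degree (from $\deg \frak b_a + \deg \text{\bf f}_a = 2$), the tensor $\frak b^{\otimes \ell}$ automatically lies in $E_\ell(H[2])$ with shifted degree $0$, so no sign issues arise from permuting copies of $\frak b$.

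First I would verify convergence of the series defining $\frak m_k^{\frak b}$ with respect to the $T$-adic topology on $\Lambda_{0,nov}$. Because $\frak q_{\beta;\ell,k}$ vanishes unless $\omega \cap \beta \ge 0$ and, by Gromov compactness, only finitely many classes $\beta$ satisfy $\omega \cap \beta \le E$ for any fixed bound $E$, the sum over $\beta$ in (\ref{mkdefeq}) is convergent term-by-term. For the infinite sum over $\ell$, the positivity $\frak b \in H(X;\Lambda_{0,nov}^+)$ ensures that each $\frak b_a$ has positive $T$-valuation, so the valuation of the $\ell$-th term tends to $+\infty$. A parallel degree check shows $\frak m_k^{\frak b}$ has degree $1$: $\frak q_{\beta;\ell,k}$ has degree $1-\mu(\beta)$, and the factor $e^{\mu(\beta)/2}$ contributes $\mu(\beta)$ since $\deg e = 2$.

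Next I would set up the main algebraic identity. The coproduct on $E(H[2])$ evaluated on the symmetric tensor $\frak b^{\otimes \ell}$ gives
\begin{equation*}
\Delta\bigl(\frak b^{\otimes \ell}\bigr) = \sum_{\ell_1+\ell_2 = \ell} \frak b^{\otimes \ell_1} \otimes \frak b^{\otimes \ell_2},
\end{equation*}
in the normalization where $E_\ell \subset B_\ell$ is the space of $\mathfrak{S}_\ell$-invariants. Plug $\text{\bf y}^{2;1}_{c_1} \otimes \text{\bf y}^{2;2}_{c_1} = \frak b^{\otimes \ell_1} \otimes \frak b^{\otimes \ell_2}$ into (\ref{qmaineq}), multiply by $e^{\mu(\beta)/2} T^{\omega \cap \beta/2\pi}$, and sum over $\beta$, $\ell$. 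Since $\mu(\beta_1+\beta_2) = \mu(\beta_1) + \mu(\beta_2)$ and $\omega \cap(\beta_1+\beta_2) = \omega\cap\beta_1 + \omega\cap\beta_2$, the weight factors split multiplicatively over $\beta = \beta_1+\beta_2$, and the summation over $(\beta_1,\ell_1)$ and $(\beta_2,\ell_2)$ precisely reassembles two copies of $\frak m^{\frak b}$. This gives
\begin{equation*}
0 = \sum_{c_2} (-1)^{\deg' \text{\bf x}^{3;1}_{c_2}}\, \frak m^{\frak b}\bigl(\text{\bf x}^{3;1}_{c_2} \otimes \frak m^{\frak b}(\text{\bf x}^{3;2}_{c_2}) \otimes \text{\bf x}^{3;3}_{c_2}\bigr),
\end{equation*}
which is exactly the filtered $A_\infty$ relation for $\frak m^{\frak b}$ after unpacking $\Delta^{2}$.

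The main obstacle I anticipate is bookkeeping: verifying that the signs $*$ in (\ref{qmaineq}) reduce to the standard $A_\infty$ signs once the $\text{\bf y}$-factors carry degree $0$ (the terms $\deg \text{\bf y}^{2;1}_{c_1}$ and $\deg'\text{\bf x}^{3;1}_{c_2}\deg \text{\bf y}^{2;2}_{c_1}$ drop out), and that the symmetric coproduct on $E_\ell$ acts on $\frak b^{\otimes \ell}$ without introducing stray combinatorial factors in the present convention. Once these are in place, the identity is immediate from (\ref{qmaineq}); in particular no further geometric input beyond Theorem~\ref{qproperties} is required, which is why this lemma is purely algebraic.
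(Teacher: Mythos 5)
Your proof is correct and takes essentially the same approach as the paper: the paper also observes that $e^{\frak b}=\sum_\ell \frak b^{\otimes\ell}$ is group-like, $\Delta(e^{\frak b})=e^{\frak b}\otimes e^{\frak b}$, and derives the filtered $A_\infty$ relations by feeding it into the master equation (\ref{qmaineq}) of Theorem \ref{qproperties}, referring to \cite{fooo06} for the sign and convergence bookkeeping. Your write-up just makes those deferred details explicit.
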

\begin{proof}
We put
$$
e^{\mathfrak b} = \sum_{\ell=0}^{\infty}  \mathfrak b^{\otimes\ell}.
$$
Then we have
$$
\Delta(e^{\mathfrak b}) = e^{\mathfrak b} \otimes e^{\mathfrak b}.
$$
Lemma \ref{bulkdef} follows from this fact and Theorem
\ref{qproperties}. (See  Lemma 3.8.39 \cite{fooo06} = Lemma 13.39 \cite{fooo06pre} for detail.)
\end{proof}
Let $b \in H^1(L;\Lambda_{0,{\text{\rm nov}}}^+)$. We say
$b$ is a {\it weak bounding cochain} of the filtered $A_{\infty}$
algebra $(H(L;\Lambda_{0,{\text{\rm nov}}}),\{\mathfrak m_{k}^{\mathfrak b}\})$ if it satisfies
$$
\sum_{k=0}^{\infty} \mathfrak m_{k}^{\mathfrak b}(b,\ldots,b) = c PD([L])
$$
where $PD([L]) \in H^0(L;\Q)$ is the Poincar\'e dual to the
fundamental cycle and $c \in \Lambda_{0,{\text{\rm nov}}}^+$. By a degree counting, we find that $\deg c = 2$.
\par
We denote by $\widehat{\mathcal M}_{\text{\rm weak,def}}(L;\Lambda_{0,{\text{\rm nov}}}^+)$ the
set of the pairs $(\mathfrak b,b)$ of elements $\mathfrak b \in H\otimes \Lambda_{0,{\text{\rm nov}}}^+$
of degree $2$ and weak bounding cochain $b$ of $(H(L;\Lambda_{0,{\text{\rm nov}}}),\{\mathfrak m_{k}^{\mathfrak b}\})$.
\par
We define $\mathfrak{PO}(\mathfrak b,b)$ by the equation
$$
\mathfrak{PO}(\mathfrak b,b)e = c.
$$
By definition $\mathfrak{PO}(\mathfrak b,b)$ is an element of
$\Lambda_{0,{\text{\rm nov}}}^+$ of degree $0$ i.e.,
$$
\mathfrak{PO}(\mathfrak b,b) \in \Lambda_+
$$
where we recall \eqref{lambda+} for the definition of $\Lambda_+$.

We call the map
$$
\mathfrak{PO}: \widehat{\mathcal M}_{\text{\rm weak,def}}(L;\Lambda_{0,{\text{\rm nov}}}^+)
\to \Lambda_+
$$
{\it the potential function}. We also define the projection
$$
\pi: \widehat{\mathcal M}_{\text{\rm weak,def}}(L;\Lambda_{0,{\text{\rm nov}}}^+)
\to H\otimes \Lambda_{0,{\text{\rm nov}}}^+
$$
by
$$
\pi(\mathfrak b,b) = \mathfrak b.
$$
Let $\text{\bf b}_1 = (\mathfrak b,b_1), 
\text{\bf b}_0 = (\mathfrak b,b_0) \in 
\widehat{\mathcal M}_{\text{\rm weak,def}}(L;\Lambda_{0,{\text{\rm nov}}}^+)$
with
$$
\pi(\text{\bf b}_1) = \mathfrak b = \pi(\text{\bf b}_0).
$$
We define an operator
$$
\delta^{\text{\bf b}_1,\text{\bf b}_0}: H(L;\Lambda_{0,{\text{\rm nov}}}) \to H(L;\Lambda_{0,{\text{\rm nov}}})
$$
of degree $+1$ by
$$
\delta^{\text{\bf b}_1,\text{\bf b}_0}(x)
= \sum_{k_1,k_0} \mathfrak m^{\mathfrak b}_{k_1+k_0+1}(b_1^{\otimes k_1} \otimes x \otimes b_0^{\otimes k_0}).
$$
\begin{lem}\label{delta2}
$$
(\delta^{\text{\bf b}_1,\text{\bf b}_0} \circ \delta^{\text{\bf b}_1,\text{\bf b}_0})(x)
= (-\mathfrak{PO}(\text{\bf b}_1) + \mathfrak{PO}(\text{\bf b}_0))e x.
$$
\end{lem}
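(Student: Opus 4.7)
The plan is to derive the identity from the filtered $A_\infty$ relation for $\{\frak m_k^{\frak b}\}$ (Lemma \ref{bulkdef}), together with the defining Maurer--Cartan equation for a weak bounding cochain and the strict unital property of $\text{\bf e} = PD([L])$, which is inherited from Theorem \ref{qproperties}(3) through the definition (\ref{mkdefeq}). The underlying mechanism is the standard one by which a curved filtered $A_\infty$ algebra deformed by a weak bounding cochain produces a bimodule differential whose square equals the curvature difference acting as a scalar.

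First I would introduce the formal shorthand $e^{b_i} = \sum_{k\ge 0} b_i^{\otimes k}$ and
$$
\hat\frak m^{\frak b}(e^{b_1}, x, e^{b_0}) := \sum_{k_1,k_0 \ge 0} \frak m^{\frak b}_{k_1+k_0+1}(b_1^{\otimes k_1} \otimes x \otimes b_0^{\otimes k_0}),
$$
so that $\delta^{\text{\bf b}_1,\text{\bf b}_0}(x) = \hat\frak m^{\frak b}(e^{b_1}, x, e^{b_0})$ and consequently
$$
(\delta^{\text{\bf b}_1,\text{\bf b}_0} \circ \delta^{\text{\bf b}_1,\text{\bf b}_0})(x) = \hat\frak m^{\frak b}\bigl(e^{b_1}, \hat\frak m^{\frak b}(e^{b_1}, x, e^{b_0}), e^{b_0}\bigr).
$$
Then I would apply the $A_\infty$ relation for $\{\frak m_k^{\frak b}\}$ to the input $b_1^{\otimes K_1} \otimes x \otimes b_0^{\otimes K_0}$ for each $K_1, K_0 \ge 0$ and sum over $K_1, K_0$. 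Its terms split into three disjoint groups by the position of the inner $\frak m^{\frak b}$-block relative to $x$: Type (II), where the inner block contains $x$, sums to exactly $(\delta^{\text{\bf b}_1,\text{\bf b}_0} \circ \delta^{\text{\bf b}_1,\text{\bf b}_0})(x)$; Type (I), where the inner block lies entirely in the string of $b_1$'s to the left of $x$, has inner piece $\sum_j \frak m_j^{\frak b}(b_1^{\otimes j}) = \frak{PO}(\text{\bf b}_1)\, e\, \text{\bf e}$ by the weak Maurer--Cartan equation defining $b_1$; and symmetrically Type (III) produces $\frak{PO}(\text{\bf b}_0)\, e\, \text{\bf e}$ on the right of $x$.

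For the Type (I) and Type (III) contributions I would then invoke the unital property: since $\frak q_\beta$ vanishes on any bar-argument containing $\text{\bf e}$ except when $\beta = 0$ and the total arity is two, the same is true of $\frak m_k^{\frak b}$ after specializing via (\ref{mkdefeq}). This forces every remaining $b_1$ and $b_0$ in the outer $\frak m^{\frak b}$ to disappear, so Type (I) collapses to $\pm \frak{PO}(\text{\bf b}_1)\, e \cdot \frak m_2^{\frak b}(\text{\bf e}, x)$ and Type (III) to $\pm \frak{PO}(\text{\bf b}_0)\, e \cdot \frak m_2^{\frak b}(x, \text{\bf e})$, each equal to $\pm \frak{PO}(\text{\bf b}_i)\, e\, x$ up to a Koszul sign by (\ref{unital2}).

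The main obstacle is the sign bookkeeping. Carefully tracking the Koszul signs in the $A_\infty$ relation together with the shifted degree convention and the sign $(-1)^{\deg x}$ distinguishing $\frak m_2^{\frak b}(\text{\bf e}, x)$ from $\frak m_2^{\frak b}(x, \text{\bf e})$ in (\ref{unital2}) should yield
$$
0 = (\delta^{\text{\bf b}_1,\text{\bf b}_0} \circ \delta^{\text{\bf b}_1,\text{\bf b}_0})(x) + \frak{PO}(\text{\bf b}_1)\, e\, x - \frak{PO}(\text{\bf b}_0)\, e\, x,
$$
which is equivalent to the claim. Beyond the signs, everything is the standard combinatorics of curved filtered $A_\infty$ bimodules; what is specific to this setting is that the two curvatures are scalars (because $\text{\bf e}$ absorbs every $\frak m^{\frak b}_k$ with $k \ne 2$), given precisely by $\frak{PO}(\text{\bf b}_1)\, e$ and $\frak{PO}(\text{\bf b}_0)\, e$.
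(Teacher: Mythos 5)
Your proposal is correct and is exactly the argument the paper defers to when it cites \cite{fooo06} Proposition 3.7.17: apply the $A_\infty$ relation for $\{\frak m_k^{\frak b}\}$ to $b_1^{\otimes K_1}\otimes x\otimes b_0^{\otimes K_0}$, sum over $K_1,K_0$, identify the terms where the inner block sits entirely to the left (resp. right) of $x$ with the scalar $\frak{PO}(\text{\bf b}_1)e\,\text{\bf e}$ (resp. $\frak{PO}(\text{\bf b}_0)e\,\text{\bf e}$) via the weak Maurer--Cartan equation, and then use the unitality from Theorem \ref{qproperties}(3) to collapse the remaining outer operations to $\frak m_2^{\frak b}(\text{\bf e},x)$ and $\frak m_2^{\frak b}(x,\text{\bf e})$. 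The sign bookkeeping also checks out: since $\deg' b_i = 0$ the Koszul signs in the $A_\infty$ relation are trivial for the left-block contribution and equal $(-1)^{\deg' x}$ for the right-block contribution, and combined with the $(-1)^{\deg x}$ from \eqref{unital2} this produces the asymmetric signs $+\frak{PO}(\text{\bf b}_1)$ and $-\frak{PO}(\text{\bf b}_0)$ as required.
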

\begin{proof}
This is an easy consequence of Theorem \ref{qproperties}. See \cite{fooo06} Proposition 3.7.17 (=\cite{fooo06pre}
Proposition 12.17).
\end{proof}
\begin{defn}\label{FLoercohbulk}(\cite{fooo06} Definition 3.8.61 = \cite{fooo06pre} Definition 13.61.)
For a pair of elements $\text{\bf b}_1, \text{\bf b}_0 \in \widehat{\mathcal M}_{\text{\rm weak,def}}(L;\Lambda_{0,{\text{\rm nov}}}^+)$
with $\pi(\text{\bf b}_1) = \pi(\text{\bf b}_0)$,
$\mathfrak{PO}(\text{\bf b}_1) = \mathfrak{PO}(\text{\bf b}_0)$, we define
$$
HF((L,\text{\bf b}_1),(L,\text{\bf b}_0);\Lambda_{0,{\text{\rm nov}}})
= \frac{\text{\rm Ker}(\delta^{\text{\bf b}_1,\text{\bf b}_0})}
{\text{\rm Im}(\delta^{\text{\bf b}_1,\text{\bf b}_0})}.
$$
This is well defined by Lemma \ref{delta2}.
\end{defn}
By \cite{fooo06} Theorem 6.1.20 (= \cite{fooo06pre} Theorem 24.24), Floer cohomology is of the form
$$
HF((L,\text{\bf b}_1),(L,\text{\bf b}_0);\Lambda_{0,{\text{\rm nov}}})
\cong \Lambda_{0,{\text{\rm nov}}}^a \oplus \bigoplus_{i=1}^k \frac{\Lambda_{0,{\text{\rm nov}}}}{T^{\lambda_i}\Lambda_{0,{\text{\rm nov}}}}.
$$
We call $a$ the {\it Betti number} and
$\lambda_1, \ldots, \lambda_k$ the {\it torsion exponents} of the Floer cohomology.
\par
The following is a consequence of Theorems G \cite{fooo06} (=Theorems G \cite{fooo06pre})
combined. (See also Section \ref{sec:HFbulk}.)
\begin{thm}\label{HFdisplace}
Let $\text{\bf b}_1, \text{\bf b}_0 \in \widehat{\mathcal M}_{\text{\rm weak,def}}(L;\Lambda_{0,{\text{\rm nov}}}^+)$
be as in Definition $\ref{FLoercohbulk}$.
Let $\psi: X \to X$ be a Hamiltonian diffeomorphism.
We assume $\psi(L)$ is transversal to $L$.
Then 
the order of $\psi(L) \cap L$ is not smaller than the Betti number $a$ of
the Floer cohomology
$HF((L,\text{\bf b}_1),(L,\text{\bf b}_0);\Lambda_{0,{\text{\rm nov}}})$.
\end{thm}

\section{Potential function with bulk}
\label{sec:Potbul}
\par
In this section, we specialize the story of the last section to the
case of toric fibers, and make the construction of Section 3.8 \cite{fooo06} (=Section 13
\cite{fooo06pre}) explicit in this case. We also generalize the results
from Section 13 \cite{fooo08} and the story between Floer cohomology
and the potential function to the case with bulk deformations.
\par
Let $X$ be a compact toric manifold and $P$ its moment polytope.
Let $\pi: X \to P$ be the moment map. For each face (of arbitrary
codimension) $P_a$ of $P$ we have a complex submanifold $D_a =
\pi^{-1}(P_a)$ for $a=1,\ldots,B$. We enumerate $P_a$ so that the
first $m$ faces $P_a$ correspond to the $m$ codimension one faces of
$P$. Here we note that the complex codimension of $D_a$ is equal to
the real codimension of $P_a$. Let $\mathcal A=\mathcal A(\Z)$ be the free abelian group
generated by $D_a$. (In this paper we do not consider the case when
$P_a = P$.) It is a graded abelian group $\mathcal A = \oplus_{\ell}
\mathcal A_{\ell}$ with its grading given by the (real) dimension of $D_a$.
We put $D = \pi^{-1}(\partial P) = \cup_{a=1}^m D_a$, that is, the toric
divisor of $X$. We denote
$$
\mathcal A^k = \mathcal A^k(\Z):= \mathcal A_{2n-k}.
$$
We remark that $\mathcal A_{\ell}$ is nonzero only for even $\ell$ and so
$\mathcal A^k$ is nonzero for even $k$. The homomorphism
$: \mathcal A_{2n-k} \to H_{2n-k}(X;\Z)$ and the Poincar\'e duality
induce a surjective homomorphism
$$
i_!: \mathcal A^k(\Z) \to H^k(X;\Z)
$$
for $k \ne 0$. We remark that $i_!$ is not injective. For example
$\mathcal A^{2}(\Z) \cong \Z^m$ (where $m$ is the number of facets of $P$)
and $H^{2}(X;\Z) = \Z^{m-n}$. In fact, we have the exact sequence
$$
0 \to H_2(X;\Z) \to H_2(X,X\setminus D;\Z) \to H_1(X\setminus D;\Z) \cong \Z^n \to
0.
$$
On the other hand, since
$
H_2(N(D);\partial N(D)) \cong H^{2n-2}(N(D)) \cong H^{2n-2}(D)
$, where $N(D)$ is a regular neighborhood of $D$ in $X$, 
we have
$$
H_2(X,X\setminus D;\Q) \cong \Q^{m} \cong \mathcal A^2(\Q)^*.
$$
We also have the natural isomorphism
$$
(i_u)^*: H^1(X \setminus D;\Z) \to H^1(L(u);\Z)
$$
induced by the inclusion map $i_u: L(u) \hookrightarrow X \setminus D$.
Recall that $X \setminus D \cong (\C^*)^n$ and $L(u)$ is a deformation
retract of $X \setminus D$.

We put $\mathcal A^k(\Lambda_+) = \mathcal A^k
\otimes_{\Z} \Lambda_+$, and 
and $\mathcal A^k(\Lambda_0) = \mathcal A^k
\otimes_{\Z} \Lambda_0$. 
The following is a generalization of
Proposition 4.3 \cite{fooo08}. 
Here $\mathcal A(\Lambda_+)$ plays the role of 
$H$ (to be precise, $H \otimes \Lambda_+$) in 
Section \ref{sec:bulk}.
\begin{prop}\label{unobstruct} We have the canonical inclusion
$$
\mathcal A(\Lambda_+) \times  H^1(L(u);\Lambda_{+}) \hookrightarrow
\widehat{\CM}_{\text{\rm weak,def}}(L(u)).
$$
\end{prop}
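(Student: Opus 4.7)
The plan is to extend Proposition 4.3 of \cite{fooo08}, which treats the case $\frak b = 0$, by incorporating bulk insertions from toric cycles. The unifying mechanism is $T^n$-equivariance of the relevant moduli spaces, combined with a Maslov-index degree count. Explicitly, for any $(\frak b, b) \in \mathcal{A}(\Lambda_+) \times H^1(L(u);\Lambda_+)$ one must verify that the weak Maurer--Cartan sum $\sum_k \frak m_k^{\frak b}(b,\ldots,b)$ lies in $\Lambda_{0,nov}^+ \cdot PD([L(u)])$.

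First, I would recall from section \ref{sec:operatorq} that $\frak q_{\beta;\ell,k}$ is constructed from the moduli space $\mathcal{M}_{k+1;\ell}(\beta)$ of stable holomorphic discs of class $\beta$ in $X$ with $k+1$ boundary and $\ell$ interior marked points, equipped with a $T^n$-equivariant Kuranishi structure --- available because $L(u)$, the complex structure on $X$, and every toric stratum $D_a$ are $T^n$-invariant. The operation $\frak q_\beta(\frak b^{\otimes \ell}; b^{\otimes k})$ is then realized as a smooth correspondence: pull back Poincar\'e duals of the $[D_a]$ under the interior evaluations into $X$, pull back harmonic representatives of $b$ under the boundary evaluations into $L(u)$, wedge, and push forward along the output evaluation $\mathcal{M}_{k+1;\ell}(\beta) \to L(u)$. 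Since every input form is $T^n$-invariant (harmonic forms on $L(u) \cong T^n$ and fundamental classes of toric submanifolds of $X$) and every evaluation map is $T^n$-equivariant, the output is a $T^n$-invariant form on $L(u)$. Because $T^n$ acts transitively on $L(u) \cong T^n$, such a form is constant-coefficient in the flat coordinates, hence harmonic.

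A degree count then closes the argument. With $\frak b$ and $b$ both of shifted degree zero, the operator $\frak q_{\beta;\ell,k}$ yields an output in $H^{2-\mu(\beta)}(L(u))$. For a nontrivial class $\beta \neq 0$ one has $\mu(\beta) \geq 2$; nonzero outputs therefore land in $H^0(L(u)) = R \cdot PD([L(u)])$ precisely when $\mu(\beta) = 2$, and the target degree becomes negative (hence the output vanishes) when $\mu(\beta) > 2$. For $\beta = 0$ the moduli space consists of constant maps; since $L(u)$ is disjoint from $D = \cup_a D_a$, every contribution with $\ell \geq 1$ vanishes, while the remaining purely classical terms on the formal space $T^n$ vanish because $b \cup b = 0$ for any $b$ of odd degree (graded commutativity in characteristic zero) and the higher classical $\frak m_k$, $k \geq 3$, vanish on harmonic forms. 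Summing over $\beta$ and $\ell$ therefore produces an element of $\Lambda_{0,nov}^+ \cdot PD([L(u)])$, as required.

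The principal technical issue, which I would delegate to sections \ref{sec:operatorq} and \ref{sec:integration}, is the construction of $T^n$-equivariant Kuranishi structures compatible with the interior constraints imposed by the $[D_a]$, together with the verification that the de Rham smooth-correspondence / integration-along-fiber construction of $\frak q$ preserves $T^n$-invariance. Once these foundations are in place, the ingredients above combine into an identity of the form $\sum_k \frak m_k^{\frak b}(b^{\otimes k}) = \frak{PO}(\frak b, b) \cdot e \cdot PD([L(u)])$ for some $\frak{PO}(\frak b, b) \in \Lambda_+$, which is precisely the weak Maurer--Cartan condition defining the pair $(\frak b, b)$ as an element of $\widehat{\CM}_{\text{\rm weak,def}}(L(u))$.
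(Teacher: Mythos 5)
Your overall plan — use $T^n$-(equi)variance of the operator $\frak q$ and a degree/dimension count to conclude the weak Maurer--Cartan sum lands in $H^0(L(u))$ — is the correct one and matches the paper's, which simply cites Corollary~\ref{37.187} and Lemma~\ref{37.189}. However, the way you deploy $T^n$-equivariance leaves a gap at the crucial step.

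You use $T^n$-invariance only to conclude that the output of $\frak q_{\beta;\ell,k}$ is a harmonic form. That is true but insufficient: $H^d(T^n)$ is nonzero for every $0 \le d \le n$, so harmonicity by itself cannot rule out contributions in positive degree. The argument is then closed by the assertion ``for a nontrivial class $\beta\ne 0$ one has $\mu(\beta)\ge 2$'', but this is not a general fact — in the non-Fano case there are classes with $\mu(\beta)\le 0$ represented by stable maps (sphere bubbles attached to Maslov-$2$ discs), and the compactified moduli space $\mathcal M^{\text{\rm main}}_{k+1;\ell}(L(u),\beta;\text{\bf p})$ for such $\beta$ is generally nonempty. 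Moreover, once bulk insertions are present the relevant inequality is not $\mu(\beta)\ge 2$ but $\mu(\beta)-\sum_i(2n-\dim D_{\text{\bf p}(i)}-2)\ge 2$, which is exactly the condition that the virtual dimension of $\mathcal M^{\text{\rm main}}_{1;\ell}(L(u),\beta;\text{\bf p})$ be at least $n$.

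What actually kills the low-dimensional contributions is $T^n$-equivariance of the \emph{perturbed moduli space}, not of the output form. This is Corollary~\ref{37.187}: because the multisections $\frak s_{\beta,k+1,\ell,\text{\bf p}}$ can be chosen $T^n$-equivariantly (Lemma~\ref{37.184}(2)) and $T^n$ acts freely on $L(u)$ while $ev_0$ is $T^n$-equivariant, the perturbed zero set, if nonempty and $\beta\neq 0$, carries a free $T^n$-action and therefore has dimension $\geq n$. Hence the only surviving terms occur when the virtual dimension equals $n$, and Lemma~\ref{37.189} then identifies the output as a multiple of $PD([L(u)])$. Your proposal invokes the $T^n$-equivariant Kuranishi structures as a background technicality to be delegated, but the emptiness statement for the low-dimensional perturbed moduli spaces is the load-bearing deduction, and it needs to be made explicitly — asserting $\mu(\beta)\geq 2$ is in effect assuming the conclusion.
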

Proposition \ref{unobstruct} will be proved in Section
\ref{sec:CalPot}.
We remark that the map $i_!: \mathcal A^k(\Z) \to H^k(X;\Z)$
is not injective. Therefore, the gauge equivalence relation
(See Definition 4.3.1 \cite{fooo06} = Definition 16.1 \cite{fooo06pre}.) on the left hand side is
nontrivial. Because of this the target of the inclusion cannot descend to ${\CM}_{\text{\rm weak,def}}(L(u))$,
the set of gauge equivalence classes of the elements of $\widehat{\CM}_{\text{\rm weak,def}}(L(u))$.
\par
For $\mathfrak b \in \bigoplus_k \mathcal A^k(\Lambda_+)$, 
$u \in \text{\rm Int}\, P$, we define
$$
\mathfrak {PO}^u_{\mathfrak b} = \mathfrak {PO}^u(\mathfrak b, \cdot): H^1(L(u);\Lambda_+) \to \Lambda_+
$$
by
\begin{equation}\label{PObuldef}
\mathfrak{PO}^u(\mathfrak b,b) = \sum_{\beta;\ell,k} T^{\omega\cap
\beta/2\pi}\mathfrak q_{\beta;\ell,k}(\mathfrak b^{\otimes\ell};
b^{\otimes k}) \cap [L(u)]
\end{equation}
for $b \in H^1(L(u);\Lambda_+)$. 
We remark that the summation on right hand side includes the term
where $\ell =0$. The term corresponding thereto is
$$
\sum_{k,\beta} T^{\omega\cap \beta/2\pi}\mathfrak
m_{\beta;k}(b^{\otimes k}) \cap [L(u)] = \mathfrak{PO}^u(b)
$$
which is nothing but the potential function in the sense of
Section 4 \cite{fooo08}. Namely we have the identity\begin{equation}
\mathfrak{PO}^u(0,b) = \mathfrak{PO}^u(b).
\end{equation}
This function (\ref{PObuldef}) is also a special case of the
potential function we discussed in Section \ref{sec:bulk}. (We will
not use the variable $e$ in this section.) 
We next discuss a generalization of Theorem 4.6 \cite{fooo08}.  

\subsection{Potential function $\mathfrak{PO}^u$}
\label{subsec:W}
Theorem \ref{weakpotential} in this subsection is a generalization of Theorem 4.6 \cite{fooo08}.  
To state it, 
we need some preparations.

\begin{defn}
A discrete submonoid of $\R_{\ge 0}$ is a subset $G \subset \R_{\ge
0}$ such that
\begin{enumerate}
\item $G$ is discrete.
\item If $g_1,g_2 \in G$, then $g_1 + g_2 \in G$. $0 \in G$.
\end{enumerate}
Hereafter we say {\it discrete submonoid} in place of discrete
submonoid of $\R_{\ge 0}$ for simplicity.
\par
For any discrete subset $\mathfrak X$ of $\R_{\ge 0}$ there exists a
discrete submonoid containing it. The discrete submonoid $G$
generated by $\mathfrak X$ is, by definition, the smallest one among
them. We write $G = \langle \mathfrak X\rangle$.
\end{defn}
Compare Condition 3.1.6 \cite{fooo06} (= Condition 6.11
\cite{fooo06pre}). In  \cite{fooo06} we considered $G \subset
\R_{\ge 0} \times 2\Z$. Since we do not use the grading parameter
$e$, we consider $G \subset \R_{\ge 0}$ in this paper.
\par
\begin{defn}
Let $C_i$ be an $R$ vector space. We denote by $C_i(\Lambda_0)$ the
completion of $C_i \otimes \Lambda_0$. Let $G$ be a discrete
submonoid.
\par
\begin{enumerate}
\item An element $x$ of $C_i(\Lambda_0)$ is said to be \emph{$G$-gapped} if
$$
x = \sum_{g\in G} x_{g} T^{g}
$$
where $x_{g} \in C_i$.
\item
A filtered $\Lambda_0$ module homomorphism $f: C_1(\Lambda_0) \to
C_2(\Lambda_0)$ is said to be {\it $G$-gapped} if there exist $R$ linear
maps $f_g: C_1 \to C_2$ for $g\in G$ such that
$$
f(x) = \sum_{{g}\in G} T^{g} f_{g}(x).
$$
Here we extend this map to one $f_{g}: C_1(\Lambda_0) \to
C_2(\Lambda_0)$ in an obvious way.
\end{enumerate}
\end{defn}
The $G$-gappedness of potential functions, of filtered $A_{\infty}$
structures, and etc. can be defined in a similar way.
\par
We define
\begin{equation}\label{gapX}
G(X) = \langle \{\omega \cap \beta / 2\pi \mid  \text{$\beta\in
\pi_2(X)$ is realized by a holomorphic sphere}\} \rangle.
\end{equation}
Denote by $G_{\text{\rm bulk}}$ the discrete submonoid that is
defined in Definition \ref{Gbulk}. It contains $G(X)$ as a subset.
\par
\medskip

By Proposition \ref{unobstruct}, the domain of the 
potential function $\mathfrak{PO}^u$ is 
$\mathcal A(\Lambda_+) \times H^1(L(u);\Lambda_+)$. 
We will extend it to 
$\mathcal A(\Lambda_+) \times H^1(L(u);\Lambda_0)$
as follows: 
For any $z \in \Lambda_+$ we have a function 
$
\exp : \Lambda_+ \to 1+ \Lambda_+
$ 
defined by $\exp z =e^z= \sum _{k=0}^{\infty}\frac {z^k}{k!}$. 
We remark that $1 + \Lambda_+$ is the set of elements $1 + x \in
\Lambda_0$ with $x \in \Lambda_+$. 
It coincides with the image of $\Lambda_+$ by  
the function $\exp$. 
\begin{lem}\label{lem:exp} 
The function 
$\exp : \Lambda_+ \to 1+ \Lambda_+$ 
extends to 
a function 
$$\exp ~:~ \Lambda_0 \to \C^{\ast}(1+ \Lambda_+)
\cong \Lambda_0 \setminus \Lambda_+ . 
$$
We denote 
the extended function by the same symbol: $\exp z=e^z$ for $z \in \Lambda_0$.
\end{lem}
\begin{proof} Take any $z \in \Lambda_0$. 
Note that
$\Lambda_0 = \C \oplus \Lambda_+$ and so we can decompose
$z = \bar z + z_+ \in \C \oplus \Lambda_+$ and define $\exp(z)$ by
\be\label{eq:exp}
\exp(z) : = e^{\bar z} e^{z_+}.
\ee
We note that $e^{\bar z} \in \C^*=\C \setminus 0$ and $e^{z_+} \in 1 + \Lambda_+$
which is well-defined.  
\end{proof}
Now we choose an integral basis 
$\{\text{\bf e}_i\}_{i=1,\dots , n}$ of 
$H^1(L(u);\Z)$. 
We denote by $x_i$ ($i=1,\ldots, n$) the coordinates of
$H^1(L(u);\Lambda_+)$ with respect to a basis 
$\{\text{\bf e}_i\}$.
We put
$$
\mathfrak b = \sum_{a=1}^B w_a [D_a] \in \mathcal{A}(\Lambda_+), \quad 
b = \sum_{i=1}^n x_i {\bf e}_i \in H^1(L(u);\Lambda_+).
$$
Here $B = \sum_k \text{rank} \, \mathcal A^k$ and  
we regard $x_i \in \Lambda_+$. 
However, by Lemma \ref{lem:exp} 
we can consider 
$$y_i=e^{x_i} \in \Lambda_0 \setminus \Lambda_+
$$ for 
$x_i \in \Lambda_0$. 
We remark that the coordinate variables $x_i, y_i$ 
introduced here depend on 
$u \in \text{Int}\,P$. 
(If we use the notation in the next subsection,  $y_i$ here is nothing but $y_i(u)$ in Subsection \ref{subsec:powerseriesring}. 
See Remark \ref{rem:yi} and 
Remark \ref{rem:notation}.) 
\par

We start with the {\it leading order potential function} defined by
\begin{equation}\label{PO0}
\mathfrak{PO}^u_0(b) = \sum_{i=1}^m T^{\ell_i(u)}y_1^{v_{i,1}}\cdots
y_n^{v_{i,n}}.
\end{equation}
Here $v_{i,j}$ is given by (\ref{eq:v}). 
Note that the leading order potential function 
$\mathfrak{PO}^u_0(b)$ is independent of $\mathfrak b \in \mathcal{A}(\Lambda_+)$ and 
a finite sum which can be read-off purely in terms of the
moment polytope of the toric manifold $X$.
\par
\begin{thm}\label{weakpotential}
Let $X$ be an arbitrary compact toric manifold and $L(u)$ as
above and $\mathfrak b \in \mathcal A(\Lambda_+)$ a $G_{\text{\rm bulk}}$-gapped element.
Then there exist $c_{\sigma} \in \Q$, $e_{\sigma}^i \in \Z_{\ge 0}$,
$\rho_{\sigma} \in G_{\text{\rm bulk}}$ and $\rho_{\sigma} > 0$, such that
$\sum_{i=1}^m e_{\sigma}^i > 0$ and
\begin{equation}\label{eq:weakPO}
\mathfrak{PO}^u(\mathfrak b;b) - \mathfrak{PO}_0^u(b)
=
\sum_{{\sigma}=1}^{\infty} c_{\sigma}
y_1^{v'_{{\sigma},1}}\cdots y_n^{v'_{{\sigma},n}}T^{\ell'_{\sigma}(u)+\rho_{\sigma}}, 
\end{equation}
where
\begin{equation}\label{edefform}
v'_{{\sigma},k} = \sum_{i=1}^m e_{\sigma}^iv_{i,k}, \quad \ell'_{\sigma} = \sum_{i=1}^m e_{\sigma}^i \ell_i.
\end{equation}
If there are infinitely many non-zero $c_{\sigma}$'s, we have
\begin{equation}\label{rhogoinfty}
\lim_{{\sigma}\to\infty}
\rho_{{\sigma}} = \infty.
\end{equation}
\end{thm}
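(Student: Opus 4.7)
The plan is to unwind the definition of the bulk-deformed potential $\mathfrak{PO}^u(\frak b; b)$ via the operator $\frak q$, use the classification of pseudo-holomorphic discs with boundary on $L(u)$ (to be reviewed in section \ref{sec:operatorq}, following \cite{cho-oh}) to enumerate the homotopy classes $\beta$ that can contribute, and then read off the monomial shape and $T$-exponent of each summand from Cho--Oh's area and Maslov-index formulas.

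First I would expand (\ref{PObuldef}) with $\frak b = \sum_a w_a [D_a]$ and $b = \sum_i x_i \text{\bf e}_i$. Each summand is $T^{\omega\cap\beta/2\pi} \frak q_{\beta;\ell,k}(\frak b^{\otimes\ell}; b^{\otimes k}) \cap [L(u)]$, which corresponds to a moduli space of bordered stable discs of class $\beta$ with $\ell$ interior marked points (constrained to lie in the cycles $D_{a_1},\dots,D_{a_\ell}$ appearing in $\frak b^{\otimes\ell}$) and $k$ boundary marked points (evaluated against $b$). By the Cho--Oh classification, any disc class carrying a nontrivial contribution decomposes as $\beta = \sum_{i=1}^m e_i\,\beta_i + \alpha$, where $\beta_i$ are the basic Maslov-$2$ classes corresponding to the facets $\partial_i P$, $e_i\in\Z_{\ge 0}$, $\mu(\beta)=2\sum_i e_i$, and $\alpha$ is a class of holomorphic spheres (possibly zero). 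A dimension count forces $\mu(\beta)>0$ after pairing with $[L(u)]$, so $\sum_i e_i>0$. The boundary class $\partial\beta=\sum_i e_i \vec v_i$ then produces the monomial $\prod_k y_k^{\sum_i e_i v_{i,k}}$ from the $k$ boundary insertions of $b$, while the symplectic area splits as $\omega\cap\beta/2\pi = \sum_i e_i\,\ell_i(u) + \omega\cap\alpha/2\pi$, which is precisely $\ell'_\sigma(u)+\rho_\sigma$ with $\ell'_\sigma$ as in (\ref{edefform}).

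Next I would isolate the \emph{leading} part: the summands with $\ell=0$, $\alpha=0$, and $\beta=\beta_i$ a single basic class. These reproduce $\mathfrak{PO}^u_0(b)$ in (\ref{PO0}) exactly, because the moduli $\CM_1(\beta_i)$ of Maslov-$2$ discs with one boundary marked point is a copy of $L(u)$ and contributes $T^{\ell_i(u)}y_1^{v_{i,1}}\cdots y_n^{v_{i,n}}$ after evaluation. Every remaining nonzero summand must satisfy at least one of: (i) $\alpha\ne 0$, contributing $\omega\cap\alpha/2\pi>0$ which lies in $G(X)\subset G_{\text{bulk}}$; (ii) $\sum_i e_i\ge 2$, in which case the extra $T$-weight generated by gluing/degeneration also lies in $G(X)$; (iii) $\ell>0$, in which case at least one coefficient $w_a$ from $\frak b$ is inserted, and since $\frak b\in\mathcal A(\Lambda_+)$ is $G_{\text{bulk}}$-gapped, this contributes a positive element of $G_{\text{bulk}}$ to the $T$-exponent. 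In all three cases the excess $T$-exponent $\rho_\sigma$ is strictly positive and lies in $G_{\text{bulk}}$, which establishes (\ref{eq:weakPO}) and (\ref{edefform}).

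The step I expect to be the main technical obstacle is bookkeeping the interior constraints from $\frak b$: when an interior marked point of a disc of class $\beta$ is forced onto a divisor $D_a$, one has to account correctly for the intersection-theoretic restriction $\beta\cdot D_a$, separate out sphere-bubble versus disc-bubble energy after Gromov limits, and verify that the total $T$-exponent lies in $G_{\text{bulk}}$ as defined in Definition \ref{Gbulk}. This is precisely where the explicit description of the moduli of holomorphic discs on toric fibers and the explicit form of the structure constants of $\frak q$ for such fibers (the content of sections \ref{sec:operatorq} and \ref{sec:CalPot}) is needed. Finally, (\ref{rhogoinfty}) is an immediate consequence of Gromov compactness: for each fixed $E>0$ only finitely many homotopy classes $\beta$ admit a bordered stable holomorphic representative with $\omega\cap\beta/2\pi\le E$, so only finitely many summands in (\ref{eq:weakPO}) can have $T$-exponent at most $E$, forcing $\rho_\sigma\to\infty$.
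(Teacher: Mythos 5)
Your overall plan (expand $\mathfrak{PO}^u(\frak b;b)$ via the operator $\frak q$, appeal to the Cho--Oh classification of disc classes, isolate the leading terms $\beta=\beta_i$, $\ell=0$, and then recognize that the remainder has the stated shape) matches the paper's proof in section~\ref{sec:CalPot}, which indeed passes through formula~(\ref{calcPO}) and Proposition~\ref{copy37.177}~(5). However, two steps need repair. First, your case~(ii) -- ``$\sum_i e_i\ge 2$, in which case the extra $T$-weight generated by gluing/degeneration also lies in $G(X)$'' -- is not a valid argument: there is no intrinsic extra $T$-weight attached to a decomposable class. What actually happens is a dimension count: when $\ell=0$ the nonvanishing of $\frak q^{can}_{\beta;0,k}$ forces $\mu(\beta)=2$, so $\sum_i e_i + c_1(X)\cdot\alpha = 1$, and hence $\sum_i e_i\ge 2$ already forces $\alpha\neq 0$ (reducing to your case~(i)); when $\ell>0$ you are in case~(iii). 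So case~(ii) is vacuous as stated and should be removed or replaced by the dimension argument.

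The more serious gap is your proof of~(\ref{rhogoinfty}). Invoking Gromov compactness to bound the number of $\beta$ with $\omega\cap\beta/2\pi\le E$ controls the \emph{total} $T$-exponent $\ell'_\sigma(u)+\rho_\sigma$, but the theorem asserts $\rho_\sigma\to\infty$, and the gap between the two is exactly the part $\ell'_\sigma(u)=\sum_i e_\sigma^i\ell_i(u)$, which is a priori unbounded. (This distinction is precisely what gives the convergence statement Lemma~\ref{lem:extend} -- for $b\in H^1(L(u);\Lambda_0)$, the monomial $y^{\vec v'_\sigma}$ carries no $T$-power, so convergence depends on $\rho_\sigma$ alone.) One must therefore rule out the scenario where $\sum_i e^i_\sigma\to\infty$ while $\rho_\sigma$ stays bounded. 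The paper does this in two steps: (1) for a \emph{fixed} $\beta$, each bulk insertion of $\frak b\in\mathcal A(\Lambda_+)$ contributes at least $\rho_0=\inf(G_{\text{\rm bulk}}\setminus\{0\})>0$ to $\rho_\sigma$, so $\ell\to\infty$ forces $\rho_\sigma\to\infty$; (2) for infinitely many distinct $\beta_\gamma$, if $(k_{i,\gamma})$ is bounded then $\sum_j\alpha_{\gamma,j}$ is unbounded so the sphere energy in $\rho_\gamma$ diverges, and if $(k_{i,\gamma})$ is unbounded then, via $\mu(\beta_\gamma)=2\sum_i k_{i,\gamma}+2\sum_j c_1(X)\cap\alpha_{\gamma,j}$, either the sphere classes are unbounded (sphere energy $\to\infty$) or $\mu(\beta_\gamma)\to\infty$ forces $\ell_\gamma\to\infty$ by the dimension formula, bringing back step~(1). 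Your proposal needs this dichotomy -- without it, $\rho_\sigma\to\infty$ does not follow from Gromov compactness alone.
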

Theorem \ref{weakpotential} is proved in Section \ref{sec:CalPot}.
The condition (\ref{rhogoinfty}) slightly improves the
corresponding statement in Theorem 4.6 \cite{fooo08}.
\par
Using the isomorphism $\CA(\Lambda_+) \cong (\Lambda_+)^B$,
we regard $\mathfrak{PO}^u$ as a function of $w_i$ and $y_i$ and
define a function
\be\label{defPObulk}
\mathfrak{PO}^u : (\Lambda_+)^B \times (\Lambda_0\setminus \Lambda_+)^n \to \Lambda_+
\ee
by
$$
\mathfrak{PO}^u(w_1,\ldots, w_B; y_1,\ldots,y_n) := \mathfrak{PO}^u(\mathfrak b;b)
$$
where $\mathfrak b =\sum_{a=1}^B w_a [D_a]$ and 
$b = \sum_{i=1}^n x_i {\bf e}_i$. 
(More formally speaking, we define
$$
\mathfrak{PO}^u(w_1,\ldots,w_B;y_1,\ldots, y_n) = \mathfrak{PO}^u(\mathfrak b; b(\text{Log}(y)))
$$
where $b(\text{Log}(y))$ is the weak bounding cochain corresponding to the coordinates
$x = (x_1,\ldots, x_n)$ with $y_i = e^{x_i}$. Namely, $\text{Log}$ is the inverse of
the exponential $\text{Exp}: (\Lambda_0)^n \to 
(\Lambda_0 \setminus \Lambda_+)^n$,
$\text{Exp}(x) = (e^{x_1},\ldots,e^{x_n})$.)

\begin{lem}\label{lem:extend} 
The potential function $\mathfrak {PO}^u$ 
in (\ref{PObuldef}) 
$$
\mathfrak {PO}^u ~:~ \mathcal A(\Lambda_+) \times 
H^1(L(u);\Lambda_+) \to \Lambda_+
$$
extends to a function
$$
\mathcal A(\Lambda_+) \times H^1(L(u);\Lambda_0)
\cong (\Lambda_+)^B \times (\Lambda_0)^n 
\to \Lambda_+
$$
with variable $w_1, \dots ,w_B$ and $x_1, \dots ,x_n$. 
If we regard it as a function of 
$w_1, \dots ,w_B$ and $y_1, \dots ,y_n$, 
its domain is 
$
\mathcal (\Lambda_+)^B \times (\Lambda_0 \setminus \Lambda_+)^n.
$
\end{lem}
\begin{proof}
By assumption $\mathfrak v_T(y_i) =0$. Therefore
$$
\mathfrak v_T
\left(
c_{\sigma}
y_1^{v'_{{\sigma},1}}\cdots y_n^{v'_{{\sigma},n}}T^{\ell'_{\sigma}(u)+\rho_{\sigma}}
\right)
= \ell_{\sigma}'(u) + \rho_{\sigma},
$$
which goes to infinity as $\sigma$ goes to infinity 
by  (\ref{rhogoinfty}).
\end{proof}
We denote the extension by the same symbol 
$\mathfrak {PO}^u$. 
\par
In order to
describe the function space which the potential function belongs to, 
we need some digression on
the notion of \emph{strictly convergent power series ring}. See
\cite{BGR} for detailed discussion on strictly
convergent power series ring over semi-normed ring. In our case, the
semi-normed ring is $\Lambda_0$ with its norm induced by the
valuation $\mathfrak v_T$.

\subsection{Strictly convergent power series}
\label{subsec:powerseriesring}

Here we will explain in what sense our potential function
$\mathfrak{PO}^u$ can be regarded as a convergent power series. 
The main result of this subsection is 
Theorem \ref{extendth}. 
\par
Let $w_1,\ldots,w_{m}$ be the parameters corresponding to $\mathcal A^2$. (Here $m$
is the number of facets of $P$.) We put $\mathfrak w_i =
e^{w_i}$ and consider the polynomial ring
\begin{equation}\label{stconvserring}
\Lambda_0[\mathfrak w_1,\ldots,\mathfrak w_m,
\mathfrak w_1^{-1},\ldots,\mathfrak w_m^{-1},w_{m+1},\ldots,w_{B},y_1,y_1^{-1},
\ldots,y_n,y_n^{-1}].
\end{equation}

The following definition is taken from Definition 1 in Section 1.4.1 \cite{BGR}.
\begin{defn}
Let $e_{k,i} \in \Z$ ($i\le m$), $e_{k,i} \in \Z_{\ge 0}$
($i> m$), $f_{k,i} \in \Z$, $a_k \in \Lambda_{0}$. An infinite power series
$$
\sum_k a_k \, \mathfrak w_1^{e_{k,1}}\cdots\mathfrak w_m^{e_{k,m}}w_{m+1}^{e_{k,m+1}}
\cdots w_{B}^{e_{k,B}} y_1^{f_{k,1}}
\cdots y_n^{f_{k,n}}
$$
is called \emph{strictly convergent} if
$
\lim_{k\to\infty} \mathfrak v_T(a_k) = \infty.
$
\end{defn}
We denote the set of strictly convergent  power series by
$$
\Lambda_0 \langle\!\langle \mathfrak w,
\mathfrak w^{-1},w,y,y^{-1}\rangle\!\rangle.
$$
By definition it provides a completion of $\Lambda_0[\mathfrak w,\mathfrak w^{-1},w,y,y^{-1}]$
with respect to the norm $e^{-\mathfrak v_T}$ of $\Lambda_0$.

We will show that the function $\mathfrak{PO}^u$ is a strictly convergent power series
contained in
\begin{equation}\label{strongconvincluded}
\Lambda_0\langle\!\langle\mathfrak w,
\mathfrak w^{-1},w,y,y^{-1}\rangle\!\rangle.
\end{equation}
\begin{rem}\label{rem:yi} Note that 
the variables $y_i$ used in this subsection 
(and only in this subsection and 
Remark \ref{anounce}) 
are 
different from $y_i$ used in the previous subsection 
(e.g. in the formula (\ref{eq:weakPO})) and other  
sections. 
The variables $y_i(u)$ in (\ref{yiu}) are exactly 
same as the variables $y_i$ in other sections. 
See also Remark \ref{rem:notation}.  
\end{rem}

In fact, we will prove a stronger statement in Theorem \ref{extendth}.
To make the precise statement on this we need some digression on the
coordinate changes associated to the moment polytope $P$.
\par
First recall that $P$ is convex and so $\text{Int}\,P$ is contractible.
Therefore we have a $T^n$-bundle isomorphism
$$
\Psi: \pi^{-1}(\text{Int}\,P) \cong T^n \times \text{Int}\,P.
$$
For example, we can construct such an isomorphism by first picking a
reference point $u_{\text{\rm ref}}$ and identifying a fiber
$\pi^{-1}(u_{\text{\rm ref}}) = L(u_{\text{\rm ref}})$ with $T^n$
and then using the parallel transport with respect to the natural
affine connection associated the Lagrangian smooth fibration
$\pi^{-1}(\text{Int}\,P) \to \text{Int}\,P$. (See \cite{weinst71},
\cite{duister80}.) Then $\Psi$ induces a natural isomorphism
$$
\psi_u: = (\Psi|_{\pi^{-1}(u)})^*:  H^1(T^n;\Z) \to H^1(L(u);\Z).
$$
Now we choose a basis $\{{\bf e}_i\}$ of $H^1(T^n;\Z)$ and $x_i$ for
$i = 1,\ldots, n$ the associated coordinates. We then denote $y_i =
e^{x_i}$. We note that $\{{\bf e}_i\}$ and $x_i$ (and so $y_i$)
depend only on $T^n$. Using the isomorphism $\psi_u$ we can
push-forward them to $H^1(L(u);\Z)$ which are nothing but the
coordinates associated to the basis
$$
\{\psi_u({\bf e}_i) \}_{1 \leq i \leq n}
$$
of $H^1(L(u);\Z)$ mentioned in the end of Section \ref{sec:introduction}.

We denote the variable
\be\label{yiu}
y_i(u) = T^{-u_i} y_i \circ \psi_u^{-1}, \quad i=1,\dots ,n
\ee
and consider the ring
$$
\Lambda[\mathfrak w_1,\ldots, \mathfrak w_m^{-1},w_{m+1},\ldots,w_{B},y_1(u),\ldots,y_n(u)^{-1}].
$$
By definition we have a ring isomorphism, again denoted by $\psi_u$,
$$
\psi_u: \Lambda[\mathfrak w,\mathfrak w^{-1}, w,y,y^{-1}] \to
\Lambda[\mathfrak w,\mathfrak w^{-1},w,y(u),y(u)^{-1}];
\quad y_i \mapsto T^{u_i}y_i(u).
$$
Furthermore by definition, we have a ring isomorphism
$$
\aligned
\psi_{u',u} &: \Lambda[\mathfrak w_1,\ldots,\mathfrak w_m^{-1},w_{m+1},\ldots,w_{B},y_1(u),\ldots,y_n(u)^{-1}] \\
&\to
\Lambda[\mathfrak w_1,\ldots,\mathfrak w_m^{-1},w_{m+1},\ldots,w_{B},y_1(u'),\ldots,y_n(u')^{-1}]
\endaligned
$$
given by $\psi_{u',u} = \psi_{u'} \circ \psi_u^{-1}$ or more explicitly by
$$
\psi_{u',u}(y_i(u)) =  T^{u'_i-u_i} y_i(u')
$$
for any two $u, \, u' \in \text{Int}\,P$. (Compare this discussion with
the one right after Remark 6.14 \cite{fooo08}.)
Clearly $\psi_{u'',u'}\circ \psi_{u',u} = \psi_{u'',u}$.
\par
Now we define a family of valuations $\mathfrak v_T^u$ parameterized by
$u \in \text{Int}\, P$ on the ring $\Lambda[\mathfrak w,\mathfrak w^{-1},w,y,y^{-1}]$ by
the formula
\begin{equation}
\aligned &\mathfrak v_T^{u}\left(\sum_k a_k \, \mathfrak
w_1^{e_{k,1}}\cdots \mathfrak w_m^{e_{k,m}}w_{m+1}^{e_{k,m+1}}
\cdots w_{B}^{e_{k,B}} y_1^{f_{k,1}}
\cdots y_n^{f_{k,n}}\right) \\
& = \inf_k \{\mathfrak v_T(a_k) + \langle f_k, u\rangle \mid
a_k \ne 0\}.
\endaligned
\end{equation}
This valuation can be regarded as a deformation of the valuation $\mathfrak v_T$
incorporating the `instanton correction' associated to the Lagrangian
fiber $L(u)$. By definition we have
\be\label{vTyi}
\mathfrak v_T^{u}(y_i) = u_i.
\ee
Then by (\ref{yiu}) the variable $y_i(u)$ satisfies 
\be\label{vtyiu}
\mathfrak v_T^u(y_i(u)) = 0.
\ee
\par
\begin{defn}\label{def:vTP}
We define a function
$$
\mathfrak v_T^P(x) = \inf\{ \mathfrak v_T^u(x) \mid u \in \text{\rm Int}\, P\}
$$
on the ring $\Lambda[\mathfrak w,\mathfrak w^{-1},w,y,y^{-1}]$. It does not
define a valuation but $e^{-\mathfrak v_T^P}$ defines a norm.
We denote its completion by
$
\Lambda^P\langle\!\langle\mathfrak w,\mathfrak w^{-1},w,y,y^{-1}\rangle\!\rangle.
$
We put
$$
\Lambda^P_0\langle\!\langle\mathfrak w,\mathfrak w^{-1},w,y,y^{-1} \rangle\!\rangle
= \{ x \in \Lambda^P \langle\!\langle\mathfrak w,\mathfrak w^{-1},w,y,y^{-1}\rangle\!\rangle
\mid \mathfrak v_T^P(x) \ge 0\}.
$$
\end{defn}

Next we define the variable
\be\label{def:zj}
z_j(u) = T^{\ell_j(u)} y_1(u)^{v_{j,1}}\cdots y_n(u)^{v_{j,n}}
\ee
for $j = 1, \ldots, m$.

The following lemma is a consequence of the definition \eqref{yiu} of
$y_i(u)$.

\begin{lem}\label{lem:zjupsiu} The expression
$$
z_j(u) \circ \psi_u \in \Lambda\langle\!\langle y,y^{-1} \rangle\!\rangle
$$
is independent of $u \in \text{\rm Int}\,P$. We denote this common
variable by $z_j$. Furthermore we have
\be\label{vTuzj}
\mathfrak v_T^u(z_j) = \ell_j(u).
\ee
In particular, $z_j$ lies in $\Lambda_0^P\langle\!\langle y,y^{-1}\rangle\!\rangle$
which is defined in a way similar to 
$$
\Lambda^P_0\langle\!\langle \mathfrak w,\mathfrak w^{-1},w,y,y^{-1} \rangle\!\rangle.
$$
\end{lem}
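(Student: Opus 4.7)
The proof is essentially an explicit computation in three steps, and I would present it as follows.

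First, I would unfold the definitions. By \eqref{yiu} we have $y_i(u) \circ \psi_u = T^{-u_i} y_i$, so composing $z_j(u) = T^{\ell_j(u)} y_1(u)^{v_{j,1}}\cdots y_n(u)^{v_{j,n}}$ with $\psi_u$ gives
\[
z_j(u)\circ \psi_u \;=\; T^{\ell_j(u)}\prod_{i=1}^{n} \bigl(T^{-u_i} y_i\bigr)^{v_{j,i}}
\;=\; T^{\,\ell_j(u) - \langle \vec v_j, u\rangle}\;\prod_{i=1}^n y_i^{v_{j,i}}.
\]
Now since $\ell_j$ is affine with $d\ell_j = \vec v_j$ (by the convention fixed in the notations), the quantity $\ell_j(u)-\langle \vec v_j,u\rangle$ is precisely the constant term of $\ell_j$, which does not depend on $u$. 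Calling this constant $c_j$, we get the $u$-independent expression
\[
z_j \;:=\; z_j(u)\circ \psi_u \;=\; T^{c_j}\,y_1^{v_{j,1}}\cdots y_n^{v_{j,n}}
\;\in\; \Lambda\langle\!\langle y,y^{-1}\rangle\!\rangle,
\]
which proves the first assertion.

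For \eqref{vTuzj} I would simply evaluate: using \eqref{vTyi} and the additivity of $\frak v_T^u$ on monomials,
\[
\frak v_T^u(z_j) \;=\; c_j + \sum_{i=1}^n v_{j,i}\, u_i \;=\; \ell_j(u).
\]

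Finally, for the $\Lambda_0^P$-membership, I would take the infimum over $\text{Int}\,P$: by definition of $\frak v_T^P$ and the previous step,
\[
\frak v_T^P(z_j) \;=\; \inf_{u\in \text{Int}\,P} \ell_j(u) \;\ge\; 0,
\]
where the last inequality is the defining property of $P = \{u \mid \ell_i(u)\ge 0,\ i=1,\dots,m\}$. Hence $z_j \in \Lambda_0^P\langle\!\langle y,y^{-1}\rangle\!\rangle$, completing the proof. There is no genuine obstacle here; the only point requiring care is bookkeeping the cancellation $\ell_j(u)-\langle \vec v_j,u\rangle = c_j$, which is exactly the mechanism by which the shifted variables $y_i(u)$ were designed to make $z_j$ globally defined on $\text{Int}\,P$.
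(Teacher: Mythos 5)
Your proof is correct and matches the paper's argument step for step; the paper simply names your constant $c_j$ explicitly as $-\lambda_j$ using the convention $\ell_j(u) = \langle v_j, u\rangle - \lambda_j$.
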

\begin{proof} From \eqref{yiu}, we have
$y_i(u) \circ \psi_u = T^{-u_i} y_i$.
Therefore we have
$$
(y_1(u)^{v_{j,1}}\cdots y_n(u)^{v_{j,n}}) \circ \psi_u = T^{-\langle
v_j,u \rangle} \prod_{i=1}^n y_i^{v_{j,i}}
$$
for $j = 1, \dots, m$. Recalling $\ell_j(u) = \langle v_j, u \rangle - \lambda_j$,
we obtain
$$
z_j(u) \circ \psi_u = T^{\ell_j(u)} (y_1(u)^{v_{j,1}}\cdots
y_n(u)^{v_{j,n}}) \circ \psi_u = T^{-\lambda_j} \prod_{i=1}^n
y_i^{v_{j,i}}
$$
which shows that that $z_j(u)\circ \psi_u$ is independent of
$u$.
\par
The formula (\ref{vTuzj}) immediately follows from 
(\ref{vtyiu}) and (\ref{def:zj}).
 Finally since $\ell_j(u) > 0$ for all $u \in
\text{Int}\,P$ and so $\mathfrak v_T^P(z_j) \geq 0$, $z_j \in \Lambda_0^P
\langle\!\langle y,y^{-1}\rangle\!\rangle$ by definition.
This finishes the proof.
\end{proof}
We remark that
\begin{equation}\label{vTzj}
\mathfrak v_T^P(z_j) = 0
\end{equation}
for $j=1,\dots,m$.
\begin{rem}
\begin{enumerate}
\item We note that the variables $z_j, \, j = 1, \ldots, m$ depend on
$\lambda_j$'s (i.e. on the polytope $P$) and the vectors
$\{v_j\}_{j=1,\ldots, m}$. Recall that the latter is the set of one
dimensional generators of the fan $\Sigma$ associated to the toric
manifold $X = X_\Sigma$ which is related to the \emph{complex
structure} on $X$. On the other hand,  
$\lambda_j$'s are 
related to the \emph{symplectic structure} of $X$. In other
words, the variables depend on both complex structure and symplectic
structure. 
Note that $\{v_{j}\}_{j=1,\dots ,m}$ are also uniquely determined by the structure of $X$ as a Hamiltonian $T^{n}$ space. 
\item These variables $z_j, \, j=1,\ldots,m$ correspond to the standard
coordinates of $H_2(X,X\setminus D;\Lambda_0) \cong H_2(X,L(u);\Lambda_0) \cong
(\Lambda_0)^m$. This is the standard homogeneous coordinates of the
toric varieties which also appear as the natural coordinates in the
linear sigma model \cite{hori-vafa}.
\end{enumerate}
\end{rem}
\par
Now we consider strictly convergent formal power series of the form
\begin{equation}\label{elecomp}
\sum_{k=1}^{\infty}b_k \mathfrak w_1^{e_{k,1}}\cdots \mathfrak
w_m^{e_{k,m}}w_{m+1}^{e_{k,m+1}} \cdots w_{B}^{e_{k,B}}
z_1^{h_{k,1}}\cdots z_m^{h_{k,m}},
\end{equation}
with the conditions
\begin{eqnarray*}
b_k & \in & \Lambda_0, \qquad \lim_{k \to \infty} \mathfrak v_T(b_k) = \infty , \\
e_{k,i} & \in & \begin{cases} \Z \quad & i\le m,\\
\Z_{\ge 0} \quad & i> m,
\end{cases} \\
h_{k,j} & \in & \Z_{\geq 0}.
\end{eqnarray*}
We denote by
\be\label{ring-z}
\Lambda_0\langle\!\langle\mathfrak w,\mathfrak w^{-1},w,z\rangle\!\rangle
\ee
the set of such power series. This is the completion of the polynomial ring
$$\Lambda_0[\mathfrak w,\mathfrak w^{-1}, w, z]$$ with respect to the
norm $\mathfrak v_T$.

The following lemma shows that the ring $
\Lambda_0^P\langle\!\langle\mathfrak w,\mathfrak w^{-1},w,y,y^{-1}\rangle\!\rangle$
we have introduced is obtained by a natural reduction of this canonical
ring $\Lambda_0\langle\!\langle\mathfrak w,\mathfrak w^{-1},w,z\rangle\!\rangle$.

\begin{lem}\label{characterizationlamda0} The relation
\be\label{zjformula}
z_j =  T^{\ell_j(u)}  (y_1(u))^{v_{j,1}} \cdots (y_n(u))^{v_{j,n}} = T^{-\lambda_j} \prod_{i=1}^n
y_i^{v_{j,i}}
\ee
defines a continuous surjective ring homomorphism
$$
\Lambda_0\langle\!\langle\mathfrak w,\mathfrak w^{-1},w,z\rangle\!\rangle
\to
\Lambda_0^P\langle\!\langle\mathfrak w,\mathfrak w^{-1},w,y,y^{-1}\rangle\!\rangle
$$
with respect to the the topology induced by $\mathfrak v_T^P$ and $\mathfrak v_T$,
respectively.
\end{lem}
\begin{proof} 
First of all, using the relation (\ref{zjformula}), we show that 
any element
$$
\sum_{k=1}^{\infty}b_k \mathfrak w_1^{e_{k,1}}\cdots \mathfrak
w_m^{e_{k,m}}w_{m+1}^{e_{k,m+1}} \cdots w_{B}^{e_{k,B}}
z_1^{h_{k,1}}\cdots z_m^{h_{k,m}} \in
\Lambda_0\langle\!\langle\mathfrak w,\mathfrak w^{-1},w,z\rangle\!\rangle
$$
as in (\ref{elecomp}) 
indeed defines an element of 
$\Lambda_0^P\langle\!\langle\mathfrak w,\mathfrak w^{-1},w,y,y^{-1}\rangle\!\rangle$.
\par
Let $\eta_k$ be the $k$-th monomial of the given power series above. 
Then by (\ref{vTzj}) we obtain
$$
\mathfrak v_T^P(\eta_k) = \inf_{u\in \text{Int}P} \mathfrak v_T^u(\eta_k) \geq \mathfrak v_T(b_k).
$$
Therefore we obtain
$$
\lim_{k \to \infty} \mathfrak v_T^P(\eta_k) \geq \lim_{k \to\infty} \mathfrak v_T(b_k).
$$
By the hypothesis $\lim_{k \to \infty} \mathfrak v_T(b_k) = \infty.$
This proves $\lim_{k \to \infty} \mathfrak v_T^P(\eta_k) =\infty$ and hence 
(\ref{elecomp}) defines an element of 
$\Lambda_0^P\langle\!\langle\mathfrak w,\mathfrak w^{-1},w,y,y^{-1}\rangle\!\rangle$. Thus   
we can define the homomorphism in 
Lemma \ref{characterizationlamda0}.
\par
Next we prove the surjectivity. 
Let
\be\label{eq:series-P}
\sum_k a_k \mathfrak w_1^{e_{k,1}}\cdots \mathfrak
w_m^{e_{k,m}}w_{m+1}^{e_{k,m+1}}
\cdots w_{B}^{e_{k,B}} y_1^{f_{k,1}} \cdots y_n^{f_{k,n}}
\in \Lambda_0^P\langle\!\langle\mathfrak w,\mathfrak w^{-1},w,y,y^{-1}\rangle\!\rangle.
\ee

We first show that each monomial $\xi_k$ in this term satisfies
$\mathfrak v_T^u(\xi_k) \geq 0$ for all $u \in \operatorname{Int}P$.

Consider any monomial
$$
\xi = a \, \mathfrak w_1^{e_{1}}\cdots \mathfrak
w_m^{e_{m}}w_{m+1}^{e_{m+1}}
\cdots w_{B}^{e_{B}} y_1^{f_{1}} \cdots y_n^{f_{n}}
$$
appearing in this series. Since the value of $\mathfrak v_T^P$ of this series
is nonnegative by definition of the ring
$\Lambda_0^P\langle\!\langle\mathfrak w,\mathfrak w^{-1},w,y,y^{-1}\rangle\!\rangle$,
$\xi$ satisfies $\mathfrak v_T^u(\xi) \geq 0$ for all $u \in \operatorname{Int}P$.
By definition of the valuation $\mathfrak v_T^u$, we have
$$
\mathfrak v_T^{u}(\xi) = \mathfrak v_T(a) + \langle f, u \rangle.
$$
Denote
\begin{equation}\label{cminimal}
c=\inf \left\{ \langle f, u \rangle \mid u \in \text{\rm Int}\, P\right\}.
\end{equation}
Since $P$ is a convex polytope, we can take a vertex $u^0$ of $P$ such that
$$
\langle f, u^0 \rangle = c.
$$
There exist $n$ faces $\partial_{j_i}P$, $i = 1,\ldots, n$ such that
\begin{equation}\label{u0isintersection}
\{u^0\} = \bigcap_{i =1}^{n} \partial_{j_i}P.
\end{equation}
Since $X$ is a smooth toric manifold, the corresponding fan is regular and so
$\vec v_{j_i}$ $i=1,\ldots,n$ forms a $\Z$-basis of $N$. (See
Section 2.1 \cite{fulton}, for example.) Therefore we have
\be\label{eq:f}
f = (f_1,\ldots,f_n) = \sum_{i=1}^n h_i \vec v_{j_i}
\ee
for some $h_i \in \Z$.
We choose $\vec v_{k}^* \in M = N^*$ such that
$$
\langle \vec v_{j_i}, \vec v_{k}^* \rangle
=
\begin{cases}
1 \quad &\text{if $i=k$}, \\
0  \quad &\text{otherwise.}
\end{cases}
$$
For $u\in M_{\R}=M\otimes \R$ we define $c_k(u)$ by 
$$
u - u^0 = \sum_{k=1}^n c_k(u) \vec v^*_{k}.
$$
If $u \in \operatorname{Int}P$, we have
$c_k(u) > 0$ for all $k$. And at least one $c_k(u) \to 0$ as $u$ approaches
$\bigcup_{i =1}^{n} \partial_{j_i}P$.
By definition of $c$ and $u^0$, we also have
$\langle f,u \rangle \geq \langle f, u^0 \rangle$.
In other words, we have
\begin{equation}\label{fu-u0}
\langle f, u-u^0 \rangle \geq 0
\end{equation}
for all $u \in \operatorname{Int}P$.
It follows that $h_i \ge 0$ for all $i$: Note that we have
$$
\langle f, u-u^0 \rangle \to c_k(u') h_k
$$
as we let $u \in \operatorname{Int}P$ approach to a point $u' \in
\bigcap_{i \neq k }^{n} \partial_{j_i}P\setminus \{u^0\}$.
In fact, 
$
u' \in \partial_{j_i}P$ is equivalent to $c_i(u') = 0$.
Therefore if $h_{k} < 0$ for some $k = 1,\ldots, n$, we would have 
$c_k(u') h_k  < 0$ and so $\langle f,
u-u^0 \rangle < 0$ for $u$ sufficiently close to $u'$, a contradiction to \eqref{fu-u0}.
\par
And we can express
$$
\xi = 
aT^{\langle f, u^0 \rangle}
\mathfrak w_1^{e_{1}}\cdots \mathfrak
w_m^{e_{m}}w_{m+1}^{e_{m+1}}
\cdots w_{B}^{e_{B}}z_{j_1}^{h_1}\cdots z_{j_n}^{h_n}.
$$
Here we use the identity $z_{j_k} = T^{-\lambda_{j_k}} \prod_{i=1}^n y_i^{v_{j_k,i}}$
and \eqref{eq:f}, and the facts that $z_{j_k}$
do not depend on $u$ and $\ell_{j_k}(u^0) = 0$ (by \eqref{u0isintersection}). Therefore we have
$$
\mathfrak v_T(aT^{\langle f, u^0 \rangle}) = \mathfrak v_T(a) + \langle f,
u^0 \rangle = \mathfrak v_T(a) + c.
$$
If $\mathfrak v_T^{u}(\xi) \ge 0$ for all 
$u \in \text{Int}\,P$, then
$$
\mathfrak v_T(aT^{\langle f, u^0  \rangle}) = \mathfrak v_T(a) + c =
\inf_{u \in \text{Int}\,P} \mathfrak v_T^u(\xi) \ge 0.
$$
This proves that $\xi $ is of the form (\ref{elecomp}).

Applying the above arguments to each monomial $\xi_k$ appearing in \eqref{eq:series-P}, we can express
$$
\xi_k = a_k T^{\langle f, u^0_k \rangle}
\mathfrak w_1^{e_{k,1}}\cdots \mathfrak
w_m^{e_{k,m}}w_{m+1}^{e_{k,m+1}}
\cdots w_{B}^{e_{k,B}}z_{j_1}^{h_{k,1}}\cdots z_{j_n}^{h_{k,n}}.
$$
Now it remains to show that
$$
\lim_{k \to \infty} \mathfrak v_T(\xi_k) = \lim_{k \to \infty} \mathfrak v_T(a_k T^{\langle f_k, u^0_k  \rangle}) = \infty.
$$
However we have
$$
\mathfrak v_T\left(a_k T^{\langle f_k, u^0_k  \rangle}\right) =
\inf_{u \in \text{Int}\,P} \mathfrak v_T^u(\xi_k)
$$
which converges to $\infty$ as $k \to \infty$ by the hypothesis \eqref{eq:series-P}.
Therefore we have finished the proof of 
the surjectivity of the homomorphism.
\end{proof}
\begin{rem}\label{remtocharacter}
\begin{enumerate}
\item
We remark that the representation $(\ref{elecomp})$ of an element
$x \in \Lambda_0^P\langle\!\langle\mathfrak w, \mathfrak w^{-1},w,y,y^{-1} \rangle\!\rangle$
is {\it not} unique. 
This non-uniqueness is related to the fact that $z_i$'s in
$\Lambda_0^P\langle\!\langle\mathfrak w,\mathfrak w^{-1},w,y,y^{-1}\rangle\!\rangle$
satisfy the quantum Stanley-Reisner relation. (See Proposition 6.7 \cite{fooo08}.)
\item The proof of Lemma  \ref{characterizationlamda0} implies the following:
A {\it monomial} of the form (\ref{elecomp}) is a {\it monomial} in
$\Lambda_0^P\langle\!\langle\mathfrak w,\mathfrak w^{-1},w,y,y^{-1}\rangle\!\rangle$ and vice versa.
\item The discussion above shows that the
moment polytope $P$ is closely related to the Berkovich spectrum
\cite{Ber}, \cite{KS} of $\Lambda_0^P\langle\!\langle\mathfrak w,\mathfrak w^{-1},w,y,y^{-1}\rangle\!\rangle$.
\end{enumerate}
\end{rem}

Now we can state the following theorem whose proof will be
postponed until Section \ref{sec:domain}.
\begin{thm}\label{extendth} Let $u \in \operatorname{Int}\,P$.
\begin{enumerate}
\item The function $\mathfrak{PO}^u \circ \psi_u$
does not depend on $u$. We denote the common function by 
$\mathfrak{PO}$.
\item
$\mathfrak{PO}$ lies in $\Lambda_0^P\langle\!\langle \mathfrak w,\mathfrak
w^{-1},w,y,y^{-1}\rangle\!\rangle $ as a function on $\CA(\Lambda_+)
\times H^1(T^n;\Lambda_0)$.
\end{enumerate}
\end{thm}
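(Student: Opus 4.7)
The plan is to combine the explicit expansion from Theorem \ref{weakpotential} with the identity $z_j(u)\circ\psi_u = z_j$ from Lemma \ref{lem:zjupsiu}, and then to identify the resulting series as an element of $\Lambda_0^P\langle\!\langle\frak w,\frak w^{-1},w,y,y^{-1}\rangle\!\rangle$ via Lemma \ref{characterizationlamda0}.

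Including the leading terms (\ref{PO0}) as the cases $\rho_\sigma = 0$ with a single $e^i_\sigma = 1$, every monomial appearing in (\ref{eq:weakPO}) has the form $c_\sigma T^{\ell'_\sigma(u) + \rho_\sigma} y_1^{v'_{\sigma,1}}\cdots y_n^{v'_{\sigma,n}}$ with $e^i_\sigma \in \Z_{\ge 0}$. Using the relations (\ref{edefform}) this can be rewritten as
$$
c_\sigma\, T^{\rho_\sigma}\, \prod_{i=1}^m \bigl(T^{\ell_i(u)}\, y_1^{v_{i,1}}\cdots y_n^{v_{i,n}}\bigr)^{e^i_\sigma} = c_\sigma\, T^{\rho_\sigma}\, \prod_{i=1}^m z_i(u)^{e^i_\sigma}.
$$
Since $c_\sigma$ is a function of $\frak b$ alone and does not depend on $u$, composing with $\psi_u$ and invoking Lemma \ref{lem:zjupsiu} yields
$$
\mathfrak{PO}^u \circ \psi_u \;=\; \sum_\sigma c_\sigma(\frak b)\, T^{\rho_\sigma}\, \prod_{i=1}^m z_i^{e^i_\sigma},
$$
which visibly does not depend on $u$; this establishes Part (1).

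For Part (2), Lemma \ref{characterizationlamda0} identifies $\Lambda_0^P\langle\!\langle\frak w,\frak w^{-1},w,y,y^{-1}\rangle\!\rangle$ with the completion of the ring of expressions of the form (\ref{elecomp}). Part (1) already writes $\mathfrak{PO}$ as a series in the $z_j$'s with non-negative integer exponents; what remains is to organise the dependence on $\frak b = \sum_a w_a [D_a]$ so that codimension-one divisors ($a \le m$) produce the exponential factors $\frak w_a = e^{w_a}$ while higher-codimension divisors ($a > m$) produce polynomial factors in $w_a$, in accordance with the divisor-type axiom for the operator $\frak q$. Convergence, i.e.\ $\frak v_T(\text{coefficient}) \to \infty$, then follows by combining (\ref{rhogoinfty}) with the $G_{\text{\rm bulk}}$-gappedness asserted in Theorem \ref{weakpotential} and Gromov compactness applied to the bulk insertions.

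The main obstacle is the rigorous control of Part (2): verifying the divisor-type equation that produces the exponentials $e^{w_a}$ for codimension-one bulk insertions, and establishing uniform convergence of the resulting double sum over disc classes $\beta \in \pi_2(X,L)$ and over orders $\ell$ of bulk insertions $\frak b^{\otimes\ell}$. These steps rest on a careful analysis of the moduli spaces of pseudo-holomorphic discs with interior marked points mapped to the toric divisors $D_a$, and are postponed to section \ref{sec:domain}.
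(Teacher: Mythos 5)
Your outline tracks the paper's proof --- rewrite $\frak{PO}^u$ in the $u$-invariant variables $z_j = z_j(u)\circ\psi_u$, and place the result in $\Lambda_0^P\langle\!\langle\frak w,\frak w^{-1},w,y,y^{-1}\rangle\!\rangle$ --- but it defers the technical content rather than proving it, and one intermediate assertion in Part (1) is unsupported. Your claim that $c_\sigma$ is \emph{a function of $\frak b$ alone and does not depend on $u$} is the entire content of Part (1). Theorem \ref{weakpotential} is formulated at a fixed $u$, and the coefficients $c_\sigma$ arise from open GW-type invariants of discs bordered on $L(u)$; it is not automatic that they coincide for different $u$. The paper invokes the structure theorem, Proposition \ref{copy37.177}, to canonically identify, for each fixed disc class $\beta$, the moduli spaces across all $u \in \text{Int}\,P$; that identification is what makes the assertion true, and without it Part (1) is declared rather than proved.

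For Part (2) the argument is circular: you flag the divisor axiom and the convergence estimate as \emph{the main obstacle} and then postpone them to section \ref{sec:domain}, which is exactly the section in which Theorem \ref{extendth} is to be proved. The paper fills these with two lemmas that your proposal must also supply. Lemma \ref{deg2divixopr} proves $c(\text{\bf p};\beta) = c(\text{\bf p}_+;\beta)\prod_{i=1}^m(\beta\cap D_i)^{\ell_i}$ by a moduli-theoretic degree computation that uses the well-definedness of the invariant (Lemma \ref{cbetawell}); it is this identity that resums the symmetric-factorial series over codimension-one insertions into $\prod_{i=1}^m\frak w_i^{\beta\cap D_i}$ with $\frak w_i = e^{w_i}$, producing formula (\ref{94}). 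Lemma \ref{highestimate} bounds $\vert\text{\bf p}\vert_{\text{\rm high}}$ by $\mu(\beta)$ via the dimension constraint (\ref{dimensionformula2}), so that for each energy level only finitely many pairs $(\beta,\text{\bf p}_+)$ contribute. Neither (\ref{rhogoinfty}) (a $u$-fixed statement in the ordinary $\Lambda_0$-topology, not the $\frak v_T^P$-topology) nor a blanket appeal to Gromov compactness on the bulk insertions substitutes for this dimension bound: without it one cannot rule out infinitely many high-codimension insertion patterns at a single energy level, and the reorganized series would fail to converge in $\frak v_T^u$-adic topology for any $u$. Both lemmas must be established for the theorem to stand.
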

The $\C$-reduction of $\mathfrak{PO}$ corresponds to the precise
form of the physicists' Landau-Ginzburg potential function
associated to toric manifolds. (See \cite{hori-vafa}.)
\begin{rem}\label{rem:notation} 
This is a notational remark.
When we write the potential function as
$\mathfrak{PO}^u(y_1,\dots ,y_n)$ by specifying 
the dependence of $u$, 
the variables $y_i$ also depend on $u$ and 
stand for $y_i(u)$ in 
(\ref{yiu}). 
However, to simplify the notation, we will also 
write $y_i$ for the variables of 
$\mathfrak{PO}^u(y_1,\dots ,y_n)$, when 
no confusion can occur.   
On the other hand, as for the function  
$\mathfrak{PO}(y_1,\dots ,y_n)$ introduced 
in Theorem \ref{extendth},  
the variables $y_i$ do not depend on $u$. 
In this paper we do not use the potential function $\mathfrak{PO}$, except  
for Theorem \ref{extendth} and Remark 
\ref{anounce}. 
(In Section \ref{sec:bulk} we use 
the notaion $\mathfrak{PO}$ in general context.)   
The function $\mathfrak{PO}$ independent of $u$ 
is used in \cite{fooo10}.  
\end{rem}
\subsection{Application to Lagrangian intersections}

Now we can generalize the result of Section 4 \cite{fooo08} as
follows. Using Lemma \ref{lem:extend} and the idea of Cho (see
Section 12 \cite{fooo08}) this time applied to ambient
toric manifold $X$, instead of Lagrangian submanifolds, we can define Floer cohomology
$$
HF((L(u),\mathfrak b,\mathfrak x),(L(u),\mathfrak b,\mathfrak x);\Lambda_0)
$$
for any $(\mathfrak b,\mathfrak x) \in \mathcal A(\Lambda_0) \times
H^1(L(u);\Lambda_0) \cong (\Lambda_0)^{B} \times (\Lambda_0)^n$. See
Sections \ref{sec:HFbulk}
(the case 
$\mathfrak b \in \mathcal A(\Lambda_+)$)
and \ref{sec:byLambda0} (the case $\mathfrak b \in \mathcal A(\Lambda_0)$)). The following is
a generalization of Theorem 4.10 \cite{fooo08}.
\begin{thm}\label{homologynonzero}
Let $(\mathfrak b,\mathfrak x) \in 
\mathcal A(\Lambda_0) \times
H^1(L(u);\Lambda_0) \cong (\Lambda_0)^B \times (\Lambda_0)^n$ and
denote $\mathfrak y = \exp(\mathfrak x) = (e^{\mathfrak x_1},\ldots,e^{\mathfrak x_n})$.
If $(\mathfrak b,\mathfrak x)$ satisfies
\begin{equation}\label{crit}
\mathfrak y_i \frac{\partial \mathfrak{PO}^u}{\partial y_i}(\mathfrak b,\mathfrak y) = 0
\end{equation}
for $i=1,\ldots,n$, then we have
\begin{equation}\label{eq:homoiso}
HF((L(u_0),\mathfrak b,\mathfrak x),(L(u_0),\mathfrak b,\mathfrak x);\Lambda_0)
\cong H(T^n;\Lambda_{0}).
\end{equation}
\par
If $(\mathfrak b,\mathfrak x)$ satisfies
\begin{equation}\label{crit2}
\mathfrak y_i \frac{\partial \mathfrak{PO}^u}{\partial y_i}(\mathfrak b,\mathfrak y) \equiv 0 \mod T^{\mathcal N},
\end{equation}
then we have
\begin{equation}\label{eq:homoiso2}
HF((L(u_0),\mathfrak b,\mathfrak x),(L(u_0),\mathfrak b,\mathfrak x);\Lambda_0/T^{\mathcal N})
\cong H(T^n;\Lambda_0/T^{\mathcal N}).
\end{equation}
\end{thm}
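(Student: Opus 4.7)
The plan is to follow the strategy of Theorem 4.10 of \cite{fooo08}, now incorporating the bulk parameter $\frak b$. Since $\text{\bf b}_1 = \text{\bf b}_0 = (\frak b,\frak x)$, Definition \ref{FLoercohbulk} identifies the Floer boundary operator $\delta^{(\frak b,\frak x),(\frak b,\frak x)}$ with the deformed differential of the filtered $A_\infty$ algebra on $H(L(u_0);\Lambda_0)$, namely $\frak m_1^{\frak b,\frak x}$, where $\frak m_k^{\frak b,\frak x}(x_1,\ldots,x_k) = \sum_{k_1,k_0} \frak m_{k_1+k+k_0}^{\frak b}(\frak x^{\otimes k_1}\otimes x_1\otimes\cdots\otimes x_k\otimes \frak x^{\otimes k_0})$. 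It therefore suffices to prove $\frak m_1^{\frak b,\frak x} = 0$ on all of $H^*(L(u_0);\Lambda_0)$, which identifies the Floer complex with ordinary cohomology and yields (\ref{eq:homoiso}).

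The key identity is
\[
\frak m_1^{\frak b,\frak x}(\text{\bf e}_i) \;=\; \frac{\partial \frak{PO}}{\partial x_i}(\frak b,\frak x) \cdot PD[L(u_0)], \qquad i = 1,\ldots,n.
\]
It is obtained by differentiating the defining relation $\sum_k \frak m_k^{\frak b}(\frak x^{\otimes k}) = \frak{PO}(\frak b,\frak x)\cdot PD[L(u_0)]$ with respect to $x_i$, using $\partial \frak x/\partial x_i = \text{\bf e}_i$. The chain rule produces a sum of $k$ terms corresponding to the positions into which $\text{\bf e}_i$ can be inserted; the $T^n$-equivariance of the moduli spaces of holomorphic discs bounding $L(u_0)$ (section \ref{sec:operatorq}, following \cite{cho-oh}) permits the use of $T^n$-invariant chain representatives, under which the $PD[L(u_0)]$-component of $\frak q_{\beta;\ell,k}$ is cyclically symmetric in the $b$-slots, so the $k$ summands collapse into a single one matching the definition of $\frak m_1^{\frak b,\frak x}$. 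Under hypothesis (\ref{crit}) this yields $\frak m_1^{\frak b,\frak x}(\text{\bf e}_i) = 0$ for every $i$.

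To extend the vanishing to higher-degree classes I would use that $H^*(T^n;\Lambda_0) \cong \bigwedge^* H^1(T^n;\Lambda_0)$ is generated as an algebra by $H^1$, together with the fact that $\frak m_1^{\frak b,\frak x}$ is a derivation of $\frak m_2^{\frak b,\frak x}$ up to $A_\infty$-corrections by $\frak m_{\geq 3}^{\frak b,\frak x}$, and that modulo $\Lambda_+$ the product $\frak m_2^{\frak b,\frak x}$ agrees with the classical cup product on $H^*(T^n;R)$. A non-Archimedean induction on the valuation $\frak v_T$ then propagates the vanishing from the generators $\text{\bf e}_i$ to every wedge $\text{\bf e}_{i_1}\wedge\cdots\wedge \text{\bf e}_{i_p}$, and hence to all of $H^*(L(u_0);\Lambda_0)$.

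The main obstacle is the key identity of the second paragraph: one must carefully track the $A_\infty$-signs while commuting $\partial/\partial x_i$ past the $\frak q$-insertions and exploit $T^n$-symmetry of the Cho--Oh moduli spaces to reduce cyclically symmetric sums to single insertions. A secondary technical issue is handling the Bott--Morse nature of the $A_\infty$ algebra on the toric fiber during the propagation step, which will require auxiliary harmonic or torus-invariant representatives and valuation-based control when relating $\frak m_2^{\frak b,\frak x}$ to the cup product. The mod $T^{\mathcal N}$ statement (\ref{eq:homoiso2}) then follows by executing the same argument modulo $T^{\mathcal N}$: hypothesis (\ref{crit2}) forces $\frak m_1^{\frak b,\frak x}(\text{\bf e}_i) \equiv 0 \pmod{T^{\mathcal N}}$, and the derivation-based propagation preserves this congruence throughout.
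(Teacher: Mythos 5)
Your strategy is the right one, and it is the same approach the paper uses (via its reference to Theorem 4.10 of \cite{fooo08}): show $\frak m_1^{\frak b,can,\frak x}$ vanishes on $H^1(L(u_0);\Lambda_0)$ by identifying it with the gradient of $\frak{PO}$, then propagate to all of $H^*(L(u_0);\Lambda_0)$ using the derivation property of $\frak m_1$ over $\frak m_2$ (valid since the canonical model has a strict unit) together with the fact that $H^*(T^n)$ is generated in degree one. Two points, however, need repair.

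First, the formulas you use do not literally converge. Writing $\frak m_k^{\frak b,\frak x}(\cdots) = \sum_{k_1,k_0}\frak m_{k_1+k+k_0}^{\frak b}(\frak x^{\otimes k_1}\otimes\cdots\otimes\frak x^{\otimes k_0})$ and the "defining relation" $\sum_k\frak m_k^{\frak b}(\frak x^{\otimes k}) = \frak{PO}(\frak b,\frak x)\,PD[L(u_0)]$ requires $\frak x\in H^1(L(u_0);\Lambda_+)$ for the series to converge $T$-adically; here $\frak x$ may have a nontrivial constant part in $\C$. The paper handles this via Cho's trick (Definitions \ref{rhoyx} and \ref{mkbwithtwist}): split $\frak x=\frak x_0+\frak x_+$, fold the constant part $\frak x_0$ into a non-unitary holonomy $\rho$ weighting each $\beta$ by $\prod_i\frak y_{i,0}^{\partial\beta\cap\text{\bf e}_i^*}$, and insert only $e^{\frak x_+}$ into the $\frak q$-slots. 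Your key identity then reads $\frak m_1^{\frak b,can,\frak x}(\text{\bf e}_i)=\bigl(\partial_{\frak x_{i,+}}\frak{PO}_\rho^u\bigr)(\frak b,\frak x_+)\cdot PD[L(u_0)]$, and since $\partial_{\frak x_{i,+}}\frak{PO}_\rho^u = y_i\,\partial_{y_i}\frak{PO}=\partial_{x_i}\frak{PO}$, the hypothesis \eqref{crit} gives vanishing. Without this repackaging the central identity is not meaningful over $\Lambda_0$.

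Second, the intermediate reasoning is cleaner than you suggest. After differentiating the defining relation with respect to $x_i$ (in the $\rho$-twisted form), the chain rule already produces precisely the sum over all positions into which $\text{\bf e}_i$ is inserted, which coincides term by term with the expansion of $\frak m_1^{\frak b,can,\frak x}(\text{\bf e}_i)$; no cyclic symmetry of $\frak q$ and no collapse of $k$ summands into one is used or needed. Likewise the propagation step needs no valuation induction: by the dimension count of Corollary \ref{37.187}, for harmonic inputs with one factor of degree one and $\beta\ne 0$ the output of $\frak m_2^{\frak b,can}(\text{\bf e}_i,h)$ lies in degree strictly below $\deg(\text{\bf e}_i\wedge h)$, so $\text{\bf e}_i\wedge h=\frak m_2^{\frak b,can}(\text{\bf e}_i,h)-(\text{lower degree})$; an ordinary induction on cohomological degree then propagates $\frak m_1^{\frak b,can,\frak x}(\text{\bf e}_i)=0$ to all of $H^*(T^n;\Lambda_0)$, and the same argument read modulo $T^{\mathcal N}$ gives \eqref{eq:homoiso2}.
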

Theorem \ref{homologynonzero} will be proved in Sections  \ref{sec:HFbulk} and  \ref{sec:byLambda0}.
\par
We next define:
\begin{defn}\label{balanced}
Let $L(u)$ be a Lagrangian fiber of a compact toric manifold
$(X,\omega)$. We say that $L(u)$ is {\it bulk-balanced} if there
exist sequences $\omega_i$, $P_i$, $u_i$, 
$\mathfrak b_i$, $\mathfrak x_i$ and
$\mathcal N_i$ with the following properties.
\begin{enumerate}
\item $(X,\omega_i)$ is a sequence of toric manifolds such that $\lim_{i\to\infty}\omega_i = \omega$.
\item
$P_i$ is a moment polytope of $(X,\omega_i)$. It converges to the
moment polytope $P$ of $(X,\omega)$.
\item $u_i \in P_i$ and $\lim_{i\to\infty} u_i =u$.
\item $\mathfrak b_i \in \mathcal A(\Lambda_+(\C))$, $\mathfrak x_i \in H^1(L(u_i);\Lambda_0(\C))$,
$\mathcal N_i \in \R_+$.
\item
$$
HF((L(u_i),\mathfrak b_i,\mathfrak x_i),((L(u_i),\mathfrak b_i,\mathfrak x_i);\Lambda_0(\C)/T^{\mathcal N_i})
\cong H(T^n;\Lambda_0(\C)/T^{\mathcal N_i}).
$$
\item
$\lim_{i\to\infty}\mathcal N_i = \infty$.
\end{enumerate}
\end{defn}
\begin{rem}
\begin{enumerate}
\item  Definition \ref{balanced} is related to Definitions 4.11 \cite{fooo08}.
Namely it is easy to see that
$$
\mbox{``Strongly balanced'' $\Rightarrow$ ``balanced''$\Rightarrow$
``bulk-balanced''.}
$$
On the other hand the three notions  are all different. (See Example 10.17 \cite{fooo08} and
Section \ref{sec:Exam} of the present paper.)
\item In Section \ref{sec:byLambda0}  
we generalize Theorem \ref{homologynonzero} to 
Proposition \ref{lambda0General} for the case 
$\mathfrak b \in \mathcal A(\Lambda_0(\C))$ in place of
$\mathfrak b \in \mathcal A(\Lambda_+)$. 
We note that we assume $R =\C$ in the generalization, while we do not assume it in Theorem \ref{homologynonzero}. 
See Remark \ref{Rrationality}.
\end{enumerate}
\end{rem}
The next result is a generalization of Proposition 4.12 \cite{fooo08}
which will be proved in Section \ref{sec:HFbulk}.
\begin{prop}\label{prof:bal2}
Suppose that $L(u) \subset X$ is bulk-balanced. Then $L(u)$ is
non-displaceable.
\par
Moreover if $\psi: X \to X$ is a Hamiltonian diffeomorphism such that
$\psi(L(u))$ is transversal to $L(u)$, then
\begin{equation}\label{estimateint}
\#(\psi(L(u))  \cap L(u)) \ge 2^n.
\end{equation}
\end{prop}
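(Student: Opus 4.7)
The plan is to combine the approximating data supplied by Definition \ref{balanced} with Theorem \ref{HFdisplace}, using a Moser--Weinstein interpolation to transport the given Hamiltonian diffeomorphism from $(X,\omega)$ to the approximants $(X,\omega_i)$. Fix a Hamiltonian diffeomorphism $\psi$ of $(X,\omega)$ with $\psi(L(u))$ transversal to $L(u)$, and let $E$ denote its Hofer distance from the identity.

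First I would transport the problem to $(X,\omega_i)$. Since $\omega_i \to \omega$ in $C^\infty$, Moser's trick (applied, after a cohomological rescaling, either globally or inside a Weinstein neighbourhood of $L(u)$) produces diffeomorphisms $\Phi_i\colon X \to X$ with $\Phi_i^*\omega_i$ converging to $\omega$ and $\Phi_i \to \mathrm{id}$ smoothly. Set $\psi_i := \Phi_i^{-1}\circ\psi\circ\Phi_i$ and $L_i := \Phi_i^{-1}(L(u))$. Then $\psi_i$ is Hamiltonian on $(X,\omega_i)$ with Hofer norm tending to $E$, and $L_i$ is a Lagrangian torus of $(X,\omega_i)$ converging to $L(u_i)$ (since $u_i \to u$). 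For $i$ large, Weinstein's neighbourhood theorem supplies a Hamiltonian diffeomorphism $\chi_i$ of $(X,\omega_i)$ with $\chi_i(L(u_i)) = L_i$ and Hofer norm $o(1)$. Setting $\psi'_i := \chi_i^{-1}\psi_i\chi_i$, one obtains
\begin{equation*}
\#\bigl(\psi(L(u))\cap L(u)\bigr) = \#\bigl(\psi'_i(L(u_i))\cap L(u_i)\bigr),
\end{equation*}
with $\psi'_i(L(u_i))$ transversal to $L(u_i)$ and the Hofer norm of $\psi'_i$ at most $E + o(1)$. Choose $i$ so large that $\mathcal N_i$ exceeds this Hofer norm.

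Next I would apply Theorem \ref{HFdisplace} to the pair $(L(u_i), \frak b_i, \frak x_i)$ in $(X,\omega_i)$. Writing the underlying Floer cohomology over the Novikov ring in its canonical form $\Lambda_{0,nov}(\C)^a \oplus \bigoplus_j \Lambda_{0,nov}(\C)/T^{\lambda_j}$, reducing modulo $T^{\mathcal N_i}$ and matching the result against $H(T^n;\Lambda_0(\C)/T^{\mathcal N_i})$ (which has free rank $2^n$, by Definition \ref{balanced}(5)) forces
\begin{equation*}
a + \#\{\,j : \lambda_j \ge \mathcal N_i\,\} = 2^n
\end{equation*}
and $\lambda_j \ge \mathcal N_i$ for every surviving torsion exponent. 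Since $\mathcal N_i$ exceeds the Hofer norm of $\psi'_i$, Theorem \ref{HFdisplace}(2) gives
\begin{equation*}
\#\bigl(\psi'_i(L(u_i))\cap L(u_i)\bigr) \ge a + 2\bigl(2^n - a\bigr) \ge 2^n,
\end{equation*}
and combining with the preceding equality yields (\ref{estimateint}). The non-displaceability statement is the special case that the intersection is empty, which is immediately excluded by the same inequality (or by Theorem \ref{HFdisplace}(1) together with $a \ge 1$).

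The main technical obstacle will be the Moser--Weinstein step with uniform Hofer control. The forms $\omega_i$ need not lie in a fixed cohomology class, so one must either rescale in a Moser-controlled way or work inside a Weinstein neighbourhood of $L(u)$, and in either case verify that $\Phi_i$ and $\chi_i$ can be chosen with Hofer norms tending to zero while $\chi_i(L(u_i)) = \Phi_i^{-1}(L(u))$. This is a parametric refinement of the comparison arguments already appearing in section \ref{sec:HFbulk}, so the work is careful bookkeeping rather than a new analytic difficulty.
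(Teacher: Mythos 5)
Your overall strategy is the right one and matches the paper's intent: pass from $(X,\omega)$ to $(X,\omega_i)$, invoke the Betti--number/torsion--exponent form of the intersection estimate (Theorem \ref{HFdisplace}, whose underlying mechanism this section re-derives via Proposition \ref{calcFloer} and the energy-loss discussion at its end), and let $\mathcal N_i \to \infty$ outrun the Hofer norm. The paper is terse about the $\omega_i\to\omega$ transfer --- it points back to Proposition 4.12 of [FOOO4] and concentrates on the bulk-deformed comparison isomorphism --- but the shape of the argument is the same as yours.

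The Moser--Weinstein step, however, is a genuine gap and not just bookkeeping. For the conjugate $\psi_i := \Phi_i^{-1}\circ\psi\circ\Phi_i$ to be Hamiltonian for $\omega_i$ you need $\Phi_i$ to be a symplectomorphism, $\Phi_i^*\omega = \omega_i$, not merely a diffeomorphism with $\Phi_i^*\omega_i \to \omega$. Since the moment polytopes $P_i$ and $P$ differ, $[\omega_i]\neq[\omega]$ in $H^2(X;\R)$ (and $H^2$ has rank $m-n>1$ in general, so a scalar rescaling cannot repair this), hence no such $\Phi_i$ exists; and the local Weinstein variant does not apply because $\psi$ is a \emph{global} Hamiltonian diffeomorphism that will not preserve a Weinstein neighbourhood of $L(u)$. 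The correct and in fact simpler route avoids conjugation entirely: pick a Hamiltonian $H_t$ generating $\psi$ with $\int_0^1(\max H_t - \min H_t)\,dt$ close to $E$, and let $\psi_t^i$ be the flow of the $\omega_i$-Hamiltonian vector field of the \emph{same} $H_t$. This is automatically Hamiltonian for $\omega_i$, its Hofer norm for $\omega_i$ is bounded by the same $\int(\max-\min)$, and $\psi_t^i \to \psi_t$ in $C^\infty$ as $\omega_i\to\omega$. Since $L(u_i)\to L(u)$ as well, transversality is open and $\#(\psi_1^i(L(u_i))\cap L(u_i)) = \#(\psi(L(u))\cap L(u))$ for $i$ large. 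This replaces your first paragraph and makes the rest of the argument go through.

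Separately, a small algebraic slip: from $HF(\,\cdot\,;\Lambda_0)\cong\Lambda_0^a\oplus\bigoplus_j\Lambda_0/T^{\lambda_j}$ and universal coefficients over $\Lambda_0$, each torsion summand contributes to $HF(\,\cdot\,;\Lambda_0/T^{\mathcal N})$ \emph{twice} (once from $\otimes$ and once from $\operatorname{Tor}$), so freeness of $H(T^n;\Lambda_0/T^{\mathcal N_i})$ forces $\lambda_j\ge\mathcal N_i$ for \emph{all} $j$ and gives $a+2\#\{j\}=2^n$, not $a+\#\{j\}=2^n$. The conclusion $\#(\psi(L(u))\cap L(u))\ge a+2\#\{j\}=2^n$ is then immediate from Theorem \ref{HFdisplace}(2) once $\mathcal N_i$ exceeds the relevant Hofer norm; your inequality $a+2(2^n-a)\ge 2^n$ happens to land on the same bound, but for the wrong reason.
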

\par
It seems reasonable to expect the following converse to this proposition.
\par
\begin{prob}\label{conjdisp}
Are all the non-displaceable fibers  $L(u)$ of a compact toric manifold
bulk-balanced?
\end{prob}

\section{Elimination of higher order terms by bulk deformations}
\label{sec:Ellim}
\par
The purpose of this section is to apply the result of the last
section to locate bulk-balanced Lagrangian fibers. We first recall
the notion of leading term equation which was introduced in Section 9 \cite{fooo08} for the case 
$\mathfrak b=0$. We denote by
$\Lambda_0\langle\!\langle y,y^{-1}\rangle\!\rangle$ the completion of the Laurent polynomial
ring $\Lambda_0[y_1,y_1^{-1}, \ldots,y_n,y_n^{-1}]$ with respect to
the non-Archimedean norm. For each fixed $\mathfrak b \in
\CA(\Lambda_+)$ and $u$, we put
$$
\mathfrak{PO}^u_{\mathfrak b}(y_1,\ldots,y_n) = 
\mathfrak{PO}^u(\mathfrak b;y_1,\dots ,y_n) = 
\mathfrak{PO}^u(\mathfrak b;b)
$$
where $b = \sum_{i=1}^n x_i \text{\bf e}_i$ and $y_i = \exp(x_i)$. Then
we have
$$
\mathfrak{PO}^u_{\mathfrak b}(y_1,\dots,y_n) \in \Lambda_0\langle\!\langle y,y^{-1}\rangle\!\rangle
$$
and so regard $\mathfrak{PO}^u_{\mathfrak b}$ as an element of $\Lambda_0\langle\!\langle y,y^{-1}\rangle\!\rangle$.
\par

Henceforth we write $y^{\vec v}$ for $y_1^{v_1}\cdots y_n^{v_n}$
with $\vec v = (v_1,\ldots,v_n)$.
\par
Let $\vec v_i = d\ell_i = (v_{i,1},\ldots,v_{i,n}) \in H_1(L(u);\Z)
\cong \Z^n \cong N_{\Z}$ $(i=1,\ldots,m)$ as in 
(\ref{eq:v}). We define $S_{l} \in \R_{+}$ by $S_{l} <
S_{l+1}$ and
\begin{equation}\label{lamdai}
\{S_{l} \mid l = 1,2,\ldots, \mathcal L\} = \{\ell_i(u) \mid i=1,2,\ldots, m\}.
\end{equation}
We re-enumerate the set $\{\vec v_k \mid \lambda_k = S_l \}$ as
\begin{equation}\label{vljdef}
\{\vec v_{l,1},\ldots,\vec v_{l,a(l)}\}.
\end{equation}
\par
Let $A_{l}^\perp \subset N_{\R} \cong \R^n$ be the $\mathbb R$-vector space
generated by $\vec v_{l',r}$ for $l' \le l$, $r=1,\ldots,a(l')$. We
remark that $A_{l}^\perp$ is defined over $\Q$.
Namely $A_{l}^\perp \cap \Q^n$ generates $A_{l}^\perp$ as an $\R$ vector space.
Denote by $K$ the
smallest integer $l$ such that $A_l^\perp = N_{\R}$. We put
$d(l) = \dim A_{l}^\perp - \dim A_{l-1}^\perp$,
$d(1) = \dim A_1^\perp$. 
In particular, we have 
$n=\sum_{l=1}^K d(l).$
\par
We remark
$$
\{\vec v_{l,r} \mid l=1,\ldots,K, r=1,\ldots,a(l)\} \subset \{\vec v_i \mid i=1,\ldots,m\}.
$$
Henceforth we assume $l\le K$ whenever we write $\vec v_{l,r}$. For
each $(l,r)$ we define the integer $i(l,r) \in \{1,\ldots,m\}$ by
\begin{equation}\label{ilj}
\vec v_{l,r} = \vec v_{i(l,r)}.
\end{equation}
Renumbering $\vec v_i$, if necessary, we can enumerate them so that
\begin{equation}\label{vrest}
\aligned
&\{\vec v_i \mid i=1,\ldots,m\} \\
&= \{\vec v_{l,r}
\mid
l=1,\ldots,K, r=1,\ldots,a(l)\}
\cup \{ \vec v_{i} \mid i = \mathcal K+1 ,\ldots, m\}
\endaligned\end{equation}
for some $1 \leq \mathcal K \leq m$ with
$$
\mathcal K = \sum_{l =1}^K a(l).
$$
\par
Recall that we have chosen a basis $\text{\bf e}_i$ of $H^1(L(u);\Z)$ in
the end of Section \ref{sec:introduction}. It can be identified with
a basis of $M_{\R} \cong H^1(L(u);\R)$. Denote its dual basis on
$N_{\R}$ by $\text{\bf e}^*_i$.
\par
We choose a basis $\text{\bf e}^*_{l,s}$ of $N_{\R}$ such that
$\text{\bf e}^*_{1,1},\ldots,\text{\bf e}^*_{l,d(l)}$ forms a
$\Q$-basis of $A_{l}^\perp$ and that each of $\vec v_i$ lies in
$\bigoplus_{l,s} \Z \text{\bf e}^*_{l,s}$.
\par
We put
$$
\text{\bf e}_{i}^* = \sum_{l=1}^K\sum_{s=1}^{d(l)} a_{i;(l,s)} \text{\bf e}_{l,s}^*.
$$
(Here $a_{(l,s);i} \in \Z$ since $\vec v_i$ generates $H^2(L(u);\Z)$.)
Regarding $\text{\bf e}_i^*$ and $\text{\bf
e}_{l,s}^*$ as functions on $M_\R$, this equation can be written as
$$
x_i =\sum_{l=1}^K\sum_{s=1}^{d(l)} a_{i;(l,s)} x_{l,s}
$$
with $x_i =  \text{\bf e}_{i}^*$ and $x_{l,s} = \text{\bf
e}_{l,s}^*$. If we associate $y_{l,s} = e^{x_{l,s}}$, it is
contained in a finite field extension of
$\Q[y_1,y_1^{-1},\ldots,y_n,y_n^{-1}]$ and satisfies
\begin{equation}\label{yandy}
y_i= \prod_{l=1}^K\prod _{s=1}^{d(l)} y_{l,s}^{a_{i;(l,s)}}.
\end{equation}
Here we note that since 
$\sum_{l=1}^K d(l)=n$, the number of variables
$y_{l,s}$'s is equal to the number of variables 
$y_i$'s which is $n$. 
We put $\vec v_{l,r} = (v_{l,r;1},\ldots,v_{l,r;n}) \in \Z^n$.

\begin{lem}\label{stratifiedstr} The product
$$
y^{\vec v_{l,r}} = y_1^{v_{l,r;1}}\cdots y_n^{v_{l,r;n}}
$$
is a monomial of $y_{l',s}$ for $l' \le l$, $s\le d(l')$.
\end{lem}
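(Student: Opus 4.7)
The plan is a direct linear algebra computation tracing through the change of basis from $\{\text{\bf e}^*_i\}$ to $\{\text{\bf e}^*_{l,s}\}$, combined with two structural facts built into the setup: that $\vec v_{l,r}$ lies in $A_l^\perp$, and that the chosen basis $\{\text{\bf e}^*_{l,s}\}$ is compatible with the flag $A_1^\perp \subset A_2^\perp \subset \cdots$ and has the integrality property that every $\vec v_i$ lies in $\bigoplus_{l,s}\Z\,\text{\bf e}^*_{l,s}$.

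First I would expand $\vec v_{l,r}$ in the basis $\{\text{\bf e}^*_{l',s}\}$. Using $\text{\bf e}^*_i = \sum_{l',s} a_{i;(l',s)} \text{\bf e}^*_{l',s}$ and writing $\vec v_{l,r} = \sum_i v_{l,r;i}\,\text{\bf e}^*_i$, one obtains
\begin{equation*}
\vec v_{l,r} = \sum_{l',s}\Bigl(\sum_i a_{i;(l',s)} v_{l,r;i}\Bigr)\,\text{\bf e}^*_{l',s}.
\end{equation*}
Now by the very definition (\ref{vljdef}) of $\vec v_{l,r}$ we have $\vec v_{l,r}\in A_l^\perp$, and by the choice of the basis $\{\text{\bf e}^*_{l,s}\}$ the subspace $A_l^\perp$ is spanned (over $\Q$) by $\text{\bf e}^*_{l',s}$ with $l'\le l$, $s\le d(l')$. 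Hence $\sum_i a_{i;(l',s)} v_{l,r;i}=0$ whenever $l'>l$, and the expansion of $\vec v_{l,r}$ only involves indices $l'\le l$. Moreover the remaining coefficients $c^{(l,r)}_{l',s}:=\sum_i a_{i;(l',s)} v_{l,r;i}$ are integers, because the assumption that each $\vec v_i$ lies in $\bigoplus_{l,s}\Z\,\text{\bf e}^*_{l,s}$ implies the same for $\vec v_{l,r}$ (it is one of the $\vec v_i$'s via (\ref{ilj})).

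Finally I would substitute into (\ref{yandy}) and compute
\begin{equation*}
y^{\vec v_{l,r}} \;=\; \prod_i y_i^{v_{l,r;i}} \;=\; \prod_i\Bigl(\prod_{l',s} y_{l',s}^{a_{i;(l',s)}}\Bigr)^{v_{l,r;i}}
\;=\; \prod_{l'\le l,\; s\le d(l')} y_{l',s}^{c^{(l,r)}_{l',s}},
\end{equation*}
which exhibits $y^{\vec v_{l,r}}$ as a monomial in $y_{l',s}$ for $l'\le l$ and $s\le d(l')$ with integer exponents, as required. There is no substantive obstacle here; the only subtle point is the integrality of the coefficients $c^{(l,r)}_{l',s}$, which is why the basis $\{\text{\bf e}^*_{l,s}\}$ was chosen at the outset so as to contain every $\vec v_i$ in its $\Z$-span.
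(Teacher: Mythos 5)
Your proof is correct and follows essentially the same route as the paper's: both observe that $\vec v_{l,r}\in A_l^\perp$ expands with integer coefficients as a combination of $\text{\bf e}^*_{l',s}$ with $l'\le l$ (the integrality coming precisely from the requirement built into the choice of $\{\text{\bf e}^*_{l,s}\}$ that each $\vec v_i$ lie in $\bigoplus \Z\,\text{\bf e}^*_{l,s}$), and then convert that expansion into a monomial identity via (\ref{yandy}). You simply spell out the change-of-basis computation that the paper's one-sentence proof takes as read.
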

\begin{proof} By the definition of $A_\ell^\perp$,
$\vec v_{l,r}$ is an element of $A_{l}^\perp$ and so
$$
\vec v_{l,r} = \sum_{l' \le l, s\le d(l')} c_{l,r;l',s} \text{\bf e}^*_{l',s}
$$
for some integers $c_{l,r;l',s}$. Therefore
$$
y^{\vec v_{l,r}} = \prod_{l' \le l, s\le d(l')}
y_{l',s}^{c_{l,r;l',s}}
$$
and the lemma follows.
\end{proof}

We note that the leading order potential function 
of $\mathfrak{PO}^u_{\mathfrak b}$ does not depend on $\mathfrak b \in \mathcal A(\Lambda_+)$.
We denote this leading order potential function 
 by $\mathfrak{PO}^u_0$. To distinguish the notation
for $\mathfrak{PO}^u_{\mathfrak b}$ for $\mathfrak b = 0$, we will denote the latter by
$\mathfrak{PO}^u_{\bf 0}$ when the latter appears.

We put
\begin{equation}\label{POsital}
(\mathfrak{PO}^u_0)_l = \sum_{r=1}^{a(l)} y^{\vec v_{l,r}}, 
\quad l=1,\dots ,K.
\end{equation}
By Lemma \ref{stratifiedstr}, 
$(\mathfrak{PO}^u_0)_l$ can be
written as a Laurent polynomial of $y_{l',s}$ for $l' \le l$, $s\le
d(l')$ with its coefficients are scalars i.e., \emph{elements of $R$}.
\par
Now we consider the system of equations
\begin{equation}\label{critPP}
y_k\frac{\partial \mathfrak{PO}^u_{\mathfrak b}}{\partial y_k} = 0
\end{equation}
with $k = 1,\ldots,n$ for $y_k$ \emph{from $\Lambda_0$}. By changing the
coordinates to $y_{l,s}$ $(l=1,\ldots,K, s=1,\ldots,d(l))$,
(\ref{critPP}) becomes
\begin{equation}\label{critPP2}
y_{l,s}\frac{\partial \mathfrak{PO}^u_{\mathfrak b}}{\partial y_{l,s}} = 0.
\end{equation}
(We recall that $y_k \frac{\del}{\del y_k}$ is the logarithmic derivative.)
Note $a_{(l,s);i} \in \Z$ implies
$\Q[y_1,y_1^{-1},\ldots,y_n,y_n^{-1}]
\subset \Q[y_{1,1},y_{1,1}^{-1},\ldots,y_{K,d(K)},y_{K,d(K)}^{-1}]$.
\begin{lem}\label{cofcorlem} 
Suppose that $R$ is an algebraically closed filed. 
The system of equations
$(\ref{critPP2})$ has a solution with $y_{l,s}$ 
$(l=1,\dots , K, s=1,\dots ,d(l))$ from $\Lambda_0(R) \setminus
\Lambda_+(R)$ if and only if 
the system of equations $(\ref{critPP})$ has a solution with $y_k \in
\Lambda_{0}(R) \setminus \Lambda_{+}(R)$, 
$k=1,\dots,n$.
\par
The ratio between the numbers
of solutions counted with multiplicity is equal to the degree of
field extension
$$
\left[\Q[y_{1,1},y_{1,1}^{-1},\ldots,y_{K,d(K)},y_{K,d(K)}^{-1}]:
\Q[y_1,y_1^{-1},\ldots,y_n,y_n^{-1}] \right].
$$
\end{lem}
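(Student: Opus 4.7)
The plan is to use the chain rule to relate the two systems linearly, and then analyze the monomial map linking the two sets of variables. First, differentiating the identity $x_i = \sum_{l,s} a_{i;(l,s)} x_{l,s}$ (which comes from $\text{\bf e}_i^* = \sum_{l,s} a_{i;(l,s)} \text{\bf e}_{l,s}^*$) and combining with $y \, \partial/\partial y = \partial/\partial x$ yields the chain-rule identity
\[
y_{l,s} \frac{\partial \frak{PO}^u_{\frak b}}{\partial y_{l,s}} = \sum_{i=1}^n a_{i;(l,s)} \, y_i \frac{\partial \frak{PO}^u_{\frak b}}{\partial y_i},
\]
where $\frak{PO}^u_{\frak b}$ is regarded as a Laurent series in the $y_{l,s}$ via Lemma \ref{stratifiedstr}. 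Since $\{\text{\bf e}_i^*\}$ and $\{\text{\bf e}_{l,s}^*\}$ are both $\Q$-bases of $N_\R$, the integer matrix $A = (a_{i;(l,s)})$ lies in $GL(n,\Q)$; consequently, after the substitution $y_i = \prod_{l,s} y_{l,s}^{a_{i;(l,s)}}$, the systems \eqref{critPP} and \eqref{critPP2} cut out the same vanishing locus of $d\frak{PO}^u_{\frak b}$.

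The $(\Leftarrow)$ direction of the iff is then immediate: the monomial map
\[
\phi \colon (\Lambda_0(R)^\times)^n \to (\Lambda_0(R)^\times)^n, \qquad (y_{l,s}) \longmapsto \Big(\prod y_{l,s}^{a_{i;(l,s)}}\Big)_i
\]
preserves units and sends a solution of \eqref{critPP2} to a solution of \eqref{critPP}. For $(\Rightarrow)$, given a solution $(y_k) \in (\Lambda_0(R)^\times)^n$, I would write $y_k = \zeta_k(1+\epsilon_k)$ with $\zeta_k \in R^\times$ and $\epsilon_k \in \Lambda_+(R)$, then solve the leading-term system $\prod \zeta_{l,s}^{a_{i;(l,s)}} = \zeta_i$ for $\zeta_{l,s} \in R^\times$ (possible whenever $R$ contains the requisite roots, in particular when $R$ is algebraically closed as in the applications with $R=\C$), and solve uniquely for $\epsilon_{l,s} \in \Lambda_+(R)$ by passing to logarithms and inverting $A^T$ over $\Q$, using that $\log \colon 1+\Lambda_+(R) \to \Lambda_+(R)$ is a bijection in characteristic zero.

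For the counting statement with $R$ algebraically closed, the same decomposition identifies $\ker \phi$ with $\mathrm{Hom}(\Z^n/A^T\Z^n, R^\times)$, a group of order $|\det A|$; hence $\phi$ is a $|\det A|$-to-$1$ surjection, and since $A$ is invertible this correspondence preserves multiplicities of critical points. On the other hand, the ring extension $\Q[y^{\pm}] \hookrightarrow \Q[y_{l,s}^{\pm}]$ corresponds on character lattices to the embedding $A^T \colon \Z^n \hookrightarrow \Z^n$ of index $|\det A|$, so the degree of the field extension also equals $|\det A|$, matching the count. The main technical obstacle will be the non-Archimedean lifting step in the $(\Rightarrow)$ direction: verifying that every choice of leading-term root $(\zeta_{l,s})$ extends uniquely to a solution in $\Lambda_0(R)$ requires a Hensel-type argument combining invertibility of $A$ over $\Q$ with the characteristic-zero hypothesis on $R$.
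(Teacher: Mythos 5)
Your argument is essentially the one the paper treats as ``obvious from the form of the change of coordinate $(\ref{yandy})$'': you make the chain-rule identity $y_{l,s}\frac{\partial}{\partial y_{l,s}} = \sum_i a_{i;(l,s)}\,y_i\frac{\partial}{\partial y_i}$ explicit, observe that $A=(a_{i;(l,s)})$ is an integer matrix invertible over $\Q$, interpret $(\ref{yandy})$ as a monomial covering $\phi$ of degree $|\det A|$, and identify $|\det A|$ both with the lattice index $[\bigoplus\Z\,\text{\bf e}_{l,s}^*:N_\Z]$ and with the degree of the Laurent-ring extension. That is all correct and fills in what the paper leaves to the reader.

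You have also isolated, correctly, the one genuinely nontrivial point: the harder implication of the ``iff'' is to produce a $y_{l,s}$-solution from a given $y_k$-solution, i.e.\ to lift a point of $(\Lambda_0(R)\setminus\Lambda_+(R))^n$ along $\phi$. In your decomposition $y_k=\zeta_k(1+\epsilon_k)$, the $1+\Lambda_+(R)$ part lifts uniquely via $\exp/\log$ and $(A^T)^{-1}\in GL(n,\Q)$ (characteristic zero is used here), but the $R^\times$ part $(\zeta_k)$ lifts only if $R$ contains the requisite roots---automatic when $R$ is algebraically closed, but not for a general field $R\supset\Q$ once $|\det A|>1$. The paper invokes algebraic closedness only in the second (counting) assertion, so the first sentence of the lemma has a small gap as literally stated. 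The repair is implicit in the setup: each $A_l^\perp\cap N_\Z$ is a saturated sublattice of $N_\Z$ (its quotient embeds in $N_\R/A_l^\perp$, hence is torsion-free), so one may always choose $\{\text{\bf e}_{l,s}^*\}$ to be a $\Z$-basis of $N_\Z$ adapted to the flag $A_1^\perp\subset\cdots\subset A_K^\perp$; then $\det A = \pm 1$, $\phi$ is an isomorphism of tori, and both halves of the lemma become tautological over any $R$. With that normalization, or with $R$ algebraically closed as in the actual applications ($R=\C$), your proof is complete.
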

\begin{proof}
This is obvious from the form of the change of coordinate
(\ref{yandy}).
\end{proof}
\begin{defn}\label{LOEdef}
The {\it leading term equation} of $(\ref{critPP})$ or of
$(\ref{critPP2})$ is the system of equations
\begin{equation}\label{LTE}
\frac{\partial (\mathfrak{PO}^u_0)_l}{\partial y_{l,s}} = 0, \qquad 
l=1,\ldots,K;\, s=1,\ldots,d(l)
\end{equation}
with $y_{l,s}$ from $R\setminus  \{0\}$.
\end{defn}
We emphasize that the leading term equation does not depend on $\mathfrak b$ but
depends only on the leading order potential $\mathfrak{PO}^u_0$.
We remark that (\ref{critPP}) or \eqref{critPP2} is an equation for $y_1,\ldots,y_n$
or $y_{l,s}$'s in $\Lambda_0$ respectively. On the other hand, the equation (\ref{LTE}) is one for
$y_{l,s} \in R\setminus \{0\}$.
\par
The following lemma describes the
relation between these two equations.
\begin{lem}\label{relLTE}
Let $y_{l,s} \in \Lambda_{0}(R) \setminus \Lambda_{+}(R)$, 
$l=1,\dots , K, s=1,\dots ,d(l)$ be a solution of $(\ref{critPP2})$.
We define $\overline y_{l,s} \in R\setminus\{0\}$ by $y_{l,s} \equiv \overline y_{l,s} \mod \Lambda
_+(R)$. Then $\overline y_{l,s}$ solves the leading term equation $(\ref{LTE})$.
\end{lem}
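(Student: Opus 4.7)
The plan is to isolate, for each pair $(l,s)$ with $1 \le l \le K$, $1 \le s \le d(l)$, the contribution to $y_{l,s}\,\partial \frak{PO}^u_{\frak b}/\partial y_{l,s}$ of lowest $T$-valuation, and identify it, after dividing by $T^{S_l}$ and reducing modulo $\Lambda_+(R)$, with the $(l,s)$-th equation in the leading term system \eqref{LTE}. Using Theorem \ref{weakpotential} I would first decompose
$$
\frak{PO}^u_{\frak b} \;=\; \frak{PO}_0^u + \sum_{\sigma} c_\sigma\, y^{\vec v'_\sigma}\, T^{\ell'_\sigma(u)+\rho_\sigma},
$$
with $\rho_\sigma>0$ and $\vec v'_\sigma = \sum_i e_\sigma^i \vec v_i$. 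The first summand further splits, in view of \eqref{vrest} and \eqref{POsital}, into $\sum_{l'=1}^{K} T^{S_{l'}}(\frak{PO}^u_{\frak b})_{l'}$ plus the remaining leading monomials coming from $\vec v_i$ with $i>\CK$, which all have $T$-valuation $>S_K$.

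The crucial claim is that, after passing to the coordinates $y_{l',s'}$, every monomial appearing in $\frak{PO}^u_{\frak b}$ with nonzero exponent of $y_{l,s}$ has $T$-valuation at least $S_l$, the bound being attained precisely on the monomials in $T^{S_l}(\frak{PO}^u_{\frak b})_l$. For a leading monomial $T^{\ell_i(u)}y^{\vec v_i}$ with $\ell_i(u)=S_{l'}$, Lemma \ref{stratifiedstr} shows that $y^{\vec v_i}$ involves only $y_{l'',s''}$ with $l''\le l'$, so it touches $y_{l,s}$ only when $l'\ge l$, forcing $\ell_i(u)\ge S_l$ with equality exactly at $l'=l$. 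For an error monomial $y^{\vec v'_\sigma}$, its exponent of $y_{l,s}$ equals $\sum_i e_\sigma^i a_{i;(l,s)}$; if this is nonzero, some $i_0$ satisfies $e_\sigma^{i_0}>0$ and $a_{i_0;(l,s)}\ne 0$. The second condition means $\vec v_{i_0}\notin A_{l-1}^\perp$, and since the enumeration of the $\vec v_i$ ensures $\{\vec v_i\mid \ell_i(u)\le S_{l-1}\}\subset A_{l-1}^\perp$, we deduce $\ell_{i_0}(u)\ge S_l$. Therefore $\ell'_\sigma(u)=\sum_i e_\sigma^i \ell_i(u)\ge \ell_{i_0}(u)\ge S_l$, and the full $T$-valuation $\ell'_\sigma(u)+\rho_\sigma$ is strictly greater than $S_l$.

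Granting this, the equation \eqref{critPP2} becomes
$$
T^{S_l}\, y_{l,s}\,\frac{\partial (\frak{PO}^u_{\frak b})_l}{\partial y_{l,s}}(y) \;+\; R_l \;=\; 0,
$$
with $\frak v_T(R_l)>S_l$. Dividing by $T^{S_l}$ and reducing modulo $\Lambda_+(R)$, and noting that $(\frak{PO}^u_{\frak b})_l$ is a Laurent polynomial in the variables $y_{l',s'}$ with $l'\le l$ whose coefficients lie in $R$, we obtain
$$
\overline{y}_{l,s}\,\frac{\partial (\frak{PO}^u_{\frak b})_l}{\partial y_{l,s}}(\overline{y}) \;=\; 0.
$$
Since $\overline{y}_{l,s}\ne 0$ by the hypothesis $y_{l,s}\in\Lambda_0(R)\setminus\Lambda_+(R)$, this is precisely the $(l,s)$-equation of \eqref{LTE}. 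The principal technical obstacle is the $T$-valuation bookkeeping carried out in the second paragraph; in particular, ruling out that some error term in Theorem \ref{weakpotential} sneaks in at valuation $\le S_l$ after the logarithmic derivative with respect to $y_{l,s}$ rests on the interplay between the ascending flag $A_0^\perp\subset A_1^\perp\subset \cdots \subset A_K^\perp = N_{\R}$ used to define the coordinates $y_{l,s}$ and the strict positivity $\rho_\sigma>0$ supplied by Theorem \ref{weakpotential}.
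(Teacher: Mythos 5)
Your argument is correct, and it supplies the valuation bookkeeping that the paper leaves to the reader (the text cites sections 9--10 of \cite{fooo08} rather than proving the lemma in place). Your decomposition of $\frak{PO}^u_{\frak b}$ via Theorem \ref{weakpotential}, the use of the flag $A_1^\perp\subset\cdots\subset A_K^\perp$ to show that only the summand $T^{S_l}(\frak{PO}^u_{\frak b})_l$ contributes at order $T^{S_l}$ to the logarithmic derivative in $y_{l,s}$, and the appeal to $\rho_\sigma>0$ to push the error terms strictly above $T^{S_l}$ --- all of this is the intended argument, and your treatment of the residual Fano-leading terms $i>\CK$ (valuation $>S_K\ge S_l$) correctly closes the one case not covered by Lemma \ref{stratifiedstr}.

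One notational slip worth fixing: the paper reserves $a_{i;(l,s)}$ for the coefficient of $\text{\bf e}^*_{l,s}$ in the expansion of $\text{\bf e}^*_i$ (so that $y_i=\prod_{l,s} y_{l,s}^{a_{i;(l,s)}}$), whereas in the sentence ``its exponent of $y_{l,s}$ equals $\sum_i e_\sigma^i a_{i;(l,s)}$'' you need the coefficient of $\text{\bf e}^*_{l,s}$ in the expansion of $\vec v_i$, which is $\sum_j v_{i,j}\,a_{j;(l,s)}$ and deserves a separate symbol. With that substitution, the chain ``nonzero exponent $\Rightarrow\ \vec v_{i_0}\notin A_{l-1}^\perp\ \Rightarrow\ \ell_{i_0}(u)>S_{l-1}\ \Rightarrow\ \ell_{i_0}(u)\ge S_l$'' is exactly right, the last step using that $\ell_{i_0}(u)$ is one of the discrete values $S_{l'}$.
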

\begin{proof}
By Theorem \ref{weakpotential}, each exponent 
$\ell'_{\sigma}(u)+\rho_{\sigma}$ 
of $T$ in the right hand side of 
(\ref{eq:weakPO}) is strictly positive.
Therefore Lemma \ref{relLTE} follows directly form 
the definition of the leading term equation.
\end{proof}
\par
We remark that the discussion above applies to the leading order
potential function $\mathfrak {PO}_0^u$ (See (\ref{PO0})) without
changes. See Sections 9, 10 \cite{fooo08}.
We denote by $\Lambda_0\langle\!\langle y_{**},y_{**}^{-1}\rangle\!\rangle$ the completion of
$\Lambda_0[y_{1,1},y_{1,1}^{-1},\ldots,y_{K,d(K)},y_{K,d(K)}^{-1}]$
with respect to the non-Archimedean norm. It is a finite field
extension of $\Lambda_0\langle\!\langle y,y^{-1}\rangle\!\rangle$.
\par
Now we state the main result of this section.
\begin{thm}\label{ellim}
The following three conditions on $u$ are equivalent to each other:
\begin{enumerate}
\item The leading term equation of $\mathfrak{PO}^u$ has a solution
$y_{l,s} \in R \setminus \{0\}$ 
$(l=1,\dots , K, s=1,\dots ,d(l))$.
\item There exists $\mathfrak b \in \mathcal A(\Lambda_+(R))$ such that $\mathfrak{PO}^u_{\mathfrak b}$
has a critical point on $(\Lambda_{0}(R) \setminus \Lambda_+(R))^n$.
\item There exists $\mathfrak b \in \mathcal A(\Lambda_+(R))$ such that $y_{l,s} \in R \setminus \{0\}$ 
$(l=1,\dots , K, s=1,\dots ,d(l))$ in the item (1) above is a critical point of $\mathfrak{PO}^u_{\mathfrak b}$. 
\end{enumerate}
\end{thm}
By Theorem \ref{homologynonzero} we obtain 
the following corollary. 
\begin{cor}\label{weabalanced}
If the leading term equation of $\mathfrak{PO}_0^u$ has a solution, 
then $L(u)$ is bulk-balanced.
\end{cor}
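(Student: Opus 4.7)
The plan is to chain together Theorem \ref{ellim} (which converts a solution of the leading term equation into an honest critical point of $\frak{PO}^u_{\frak b}$ for some bulk class $\frak b$) with Theorem \ref{homologynonzero} (which turns such a critical point into a Floer-cohomological isomorphism), and then to verify the clauses of Definition \ref{balanced} using \emph{constant} approximating sequences.

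First, since the definition of bulk-balanced is phrased over $R=\C$, I would apply Theorem \ref{ellim} with $R=\C$. By hypothesis the leading term equation has a solution in $R\setminus\{0\}=\C\setminus\{0\}$, so condition (1) of Theorem \ref{ellim} holds. The theorem then produces $\frak b \in \mathcal A(\Lambda_+(\C))$ and $\frak x \in H^1(L(u);\Lambda_0(\C))$ (corresponding to a critical point with $y$-coordinates in $(\Lambda_0(\C)\setminus\Lambda_+(\C))^n$ via $y_i = e^{x_i}$) such that
\[
\frac{\partial \frak{PO}^u_{\frak b}}{\partial x_i}(\frak x) \;=\; 0 \quad \text{for } i=1,\dots,n.
\]
By Theorem \ref{extendth} this critical-point identity can be read inside the ring $\Lambda^P_0\langle\!\langle \frak w,\frak w^{-1},w,y,y^{-1}\rangle\!\rangle$, and in particular gives condition \eqref{crit} of Theorem \ref{homologynonzero} at the pair $(\frak b,\frak x)$.

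Next, Theorem \ref{homologynonzero} applied with this $(\frak b,\frak x)$ yields the isomorphism
\[
HF\bigl((L(u),\frak b,\frak x),(L(u),\frak b,\frak x);\Lambda_0(\C)\bigr)\;\cong\; H(T^n;\Lambda_0(\C)).
\]
Reducing this isomorphism modulo $T^{\mathcal N}$ for any $\mathcal N>0$ produces the mod-$T^{\mathcal N}$ statement
\[
HF\bigl((L(u),\frak b,\frak x),(L(u),\frak b,\frak x);\Lambda_0(\C)/T^{\mathcal N}\bigr)\;\cong\; H(T^n;\Lambda_0(\C)/T^{\mathcal N}).
\]

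Finally, to check Definition \ref{balanced} I would simply take constant sequences: set $\omega_i \equiv \omega$, $P_i \equiv P$, $u_i \equiv u$, $\frak b_i \equiv \frak b$, $\frak x_i \equiv \frak x$, and choose any sequence $\mathcal N_i \to \infty$ (for instance $\mathcal N_i = i$). Clauses (1)--(4) are trivially satisfied, clause (5) is the displayed isomorphism above, and clause (6) holds by choice. Hence $L(u)$ is bulk-balanced. There is no serious obstacle in this deduction; the only thing to be careful about is making sure that the critical point produced by Theorem \ref{ellim} genuinely satisfies the hypothesis of Theorem \ref{homologynonzero} in the coordinate system used there (via the substitution $y_i=e^{x_i}$ on $(\Lambda_0\setminus\Lambda_+)^n$ and the extension of $\frak{PO}$ provided by Lemma \ref{lem:extend} and Theorem \ref{extendth}), but this compatibility is built into the formulation of the two theorems.
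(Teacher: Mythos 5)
Your proof is correct and follows the route the paper itself intends (the paper gives no separate argument for this corollary precisely because it is an immediate consequence of Theorem \ref{ellim} combined with Theorem \ref{homologynonzero} and the definition of bulk-balanced via constant sequences). One small stylistic simplification worth noting: rather than arguing that the $\Lambda_0$-level isomorphism \eqref{eq:homoiso} "reduces modulo $T^{\mathcal N}$," it is cleaner and avoids any worry about the behavior of cohomology under base change to simply observe that the exact critical-point equation \eqref{crit} trivially implies the congruence \eqref{crit2} for every $\mathcal N$, and then quote the second clause of Theorem \ref{homologynonzero} directly to obtain \eqref{eq:homoiso2}; this is exactly what Definition \ref{balanced}(5) requires. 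Everything else — the use of $R=\C$ so that solutions land where the definition demands, the passage from $y$-coordinates on $(\Lambda_0\setminus\Lambda_+)^n$ to $\frak x\in H^1(L(u);\Lambda_0(\C))$ via the logarithmic change of variables, and the constant choices $\omega_i\equiv\omega$, $P_i\equiv P$, $u_i\equiv u$, $\frak b_i\equiv\frak b$, $\frak x_i\equiv\frak x$, $\mathcal N_i=i$ — is exactly right.
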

\begin{proof}[Proof of Corollary  
\ref{weabalanced}] 
This follows from Theorem \ref{homologynonzero} and Theorem \ref{ellim}.
\end{proof}
\begin{proof}[Proof of Theorem \ref{ellim}]
Since $R \setminus \{ 0\} \subset \Lambda_0(R) \setminus \Lambda_+(R)$, (3) $\Rightarrow$ (2) 
is obvious. 
The proof of (2) $\Rightarrow$ (1) is a consequence of
Lemmata \ref{cofcorlem}, \ref{relLTE}. 
The rest of this section is devoted to the proof of (1) $\Rightarrow$ (3).

Let $\mathfrak y_{1,1},\ldots,\mathfrak y_{K,d(K)}$ be a solution of the
leading term equation. We remark $\mathfrak y_{l,s} \in R \setminus
\{0\} \subset \Lambda_{0}(R) \setminus \Lambda_+(R)$. 
We will fix 
$\mathfrak y_{l,s}$ during the proof of 
(1) $\Rightarrow$ (3). 
We will construct, by induction on energy, a suitable bulk deformation 
$\mathfrak b$ such that 
$\mathfrak y_{l,s}$ 
$(l=1,\dots , K, s=1,\dots ,d(l))$ 
is a critical point of $\mathfrak{PO}^u_{\mathfrak b}$. 
We also require $\mathfrak b$ to have the form
\begin{equation}\label{formfrakb}
\mathfrak b = \sum_{l=1}^{K}\sum_{r=1}^{a(l)} \mathfrak b_{l,r} D_{i(l,r)}
\end{equation}
where $\mathfrak b_{l,r} \in \Lambda_+$.
(Here and hereafter in this section we omit $R$ in $\Lambda_+(R)$ and etc.)
\par
Note $i(l,r) \le m$ and so $\deg D_{i(l,r)} = 2$. In other words, we
use $\mathfrak b$ only in the second cohomology $H^2(X;\Lambda_+)$ (more
precisely $\mathfrak b \in \mathcal A^2(\Lambda_+)$) to prove Theorem
\ref{ellim}.
\par
We divide the rest of the proof into the two cases, one the rational Fano
and the other the rest.

\subsection{The rational Fano case}

Assume $X$ is Fano. In this case we can
calculate  $\mathfrak{PO}^u_{\mathfrak b}(y)$ explicitly as follows.
\begin{prop}\label{POcalcFano}
Suppose $X$ is Fano and 
$\mathfrak b\in \mathcal A^2(\Lambda_+)$ is as in $(\ref{formfrakb})$.
Then
\begin{equation}\label{POFanoformula}
\mathfrak{PO}^u_{\mathfrak b}(y)= \sum_{l=1}^{K}\sum_{r=1}^{a(l)}
\exp(\mathfrak b_{l,r}) T^{S_l} y^{\vec v_{i(l,r)}}
+ \sum_{i= \mathcal K+1}^m T^{\ell_i(u)} y^{\vec v_i}.
\end{equation}
\end{prop}
We will prove Proposition \ref{POcalcFano} in Section
\ref{sec:CalPot}.
\par
We put
$$
\vec v_{i(l,r)} = \sum_{l' = 1}^l\sum_{s=1}^{d(l')} v_{l,r;l',s} \text{\bf e}_{l',s}^*,
\quad
\vec v_{i} = \sum_{l=1}^K\sum_{s=1}^{d(l)} v_{i;l,s} \text{\bf e}_{l,s}^*.
$$
\begin{lem}\label{derivforFano}
If $\{\mathfrak y_{l,r}\}_{1 \leq l \leq K,1 \leq r \leq d(l)} \subset R \setminus \{0\}$
is a solution of the system of leading term equations, then we have
\begin{equation}\label{critFano}
\aligned \mathfrak y_{l',s} \frac{\partial \mathfrak{PO}^u_{\mathfrak b}}{\partial
y_{l',s}} (\mathfrak y) = &\sum_{l=l'}^{K}\sum_{r=1}^{a(l)}
\left(\mathfrak b_{l,r} + \sum_{h=2}^{\infty} \frac{1}{h!} \mathfrak
b_{l,r}^h \right) T^{S_l} v_{l,r;l',s} \mathfrak y^{\vec v_{i(l,r)}} \\
& +\sum_{l=l'+1}^{K}\sum_{r=1}^{a(l)} T^{S_l}v_{l,r;l',s} \mathfrak y^{\vec v_{i(l,r)}}+
\sum_{i= \mathcal K+1}^m  v_{i;l',s} T^{\ell_i(u)} \mathfrak y^{\vec v_i}
\endaligned
\end{equation}
for all  $1 \leq l' \leq K$ and $1 \leq s \leq d(l')$. 
Here
$\mathfrak x = \text{\rm Log}(\mathfrak y) = \sum (\log \mathfrak y_i)\text{\bf e}_i$ and $\mathfrak y_i$ is determined from
$\mathfrak y_{l,s}$ by $(\ref{yandy})$.
\end{lem}
\begin{proof} Differentiating \eqref{POFanoformula}, we obtain
$$
\aligned y_{l',s} \frac{\partial \mathfrak{PO}^u_{\mathfrak b}}{\partial
y_{l',s}}(y) = &\sum_{l=l'}^{K}\sum_{r=1}^{a(l)}
\left(1+\mathfrak b_{l,r} + \sum_{h=2}^{\infty} \frac{1}{h!}  \mathfrak
b_{l,r}^h \right)
T^{S_l} v_{l,r;l',s} y^{\vec v_{i(l,r)}} \\
&+ \sum_{i= \mathcal K+1}^m  v_{i;l',s} T^{\ell_i(u)} y^{\vec v_i}\\
= &\sum_{l=l'}^{K}\sum_{r=1}^{a(l)}
\left(\mathfrak b_{l,r} + \sum_{h=2}^{\infty} \frac{1}{h!}  \mathfrak
b_{l,r}^h \right)
T^{S_l} v_{l,r;l',s} y^{\vec v_{i(l,r)}} \\
&+
T^{S_{l'}} \sum_{r=1}^{a(l')} v_{l',r;l',s} y^{\vec v_{i(l',r)}}
+ \sum_{l=l'+1}^{K}\sum_{r=1}^{a(l)}T^{S_l} v_{l,r;l',s} y^{\vec v_{i(l,r)}}\\
&+ \sum_{i= \mathcal K+1}^m  v_{i;l',s} T^{\ell_i(u)} y^{\vec v_i}.
\endaligned
$$
On the other hand, the leading term equation provides vanishing
$$
0 = \sum_{r=1}^{a(l')} v_{l',r;l',s} \mathfrak y^{\vec v_{i(l',r)}}.
$$
Therefore (\ref{critFano}) follows.
\end{proof}
\par
To highlight the idea of the proof of Theorem \ref{ellim}, we first consider the
\emph{rational} case.
We recall the definition of rational Lagrangian submanifolds.

\begin{defn}
We say that $(X,\omega)$ is rational if $c [\omega] \in H^2(X;\Q)$ for
some $c\in \R \setminus \{0\}$.
We say that a Lagrangian submanifold $L \subset X$ is rational if the subgroup
$
\{\omega \cap \beta \mid  \beta \in H_2(X,L;\Z)\} \subset \R
$
is isomorphic to $\Z$ or $\{0\}$.
\end{defn}
We remark that only rational symplectic manifold $(X,\omega)$
carries a rational Lagrangian submanifold $L$. (In the general
situation $\pi_2(X,L)$ is used sometimes in the definition of
rationality of $L$. In our case of toric fibers, they are
equivalent.)
\par
In this rational case, by rescaling the symplectic form
$\omega$ to $c \omega$ by some $c\in \R_+$, we may assume that $\omega$ is
integral, i.e.,
$$
\{ \omega \cap \beta/2\pi \mid \beta \in H_2(X,L(u);\Z) \} \subset \Z.
$$
It follows that $S_l, \ell_i(u) \in \Z$. Thus, we can reduce the coefficient
rings from the universal Novikov rings $\Lambda_0$, $\Lambda_+$, $\Lambda$
to the following rings respectively:
$$
\Lambda_0^{\text{int}}: = R[[T]], \quad \Lambda_+^{\text{int}}: =
TR[[T]], \quad \Lambda^{\text{int}}: = R[[T]][T^{-1}].
$$
Here $R[[T]]$ is the formal power series ring.
\par
We also consider the pairs $(\mathfrak b,b)$ only from
$\mathcal A^2(\Lambda_+^{\text{int}}) \times H^1(L(u);\Lambda_0^{\text{int}})$.
Under these restrictions, the exponents of $T$ appearing in the
following discussion always become integers.
\begin{lem}\label{indmain}
Suppose $X$ is Fano and $L(u)$ is rational.
We also assume that $\mathfrak y_{l,s} \in R \setminus \{0\}$ ($l=1,\ldots,K$, $s=1,\ldots,d(l)$) is a solution of the leading
term equation.  Then,
for each $k,l,r$, there exist $\mathfrak b_{l,r}(k) \in
\Lambda_+^{\text{\rm int}}$ such that the sum
$$
\mathfrak b(k) = \sum_{l=1}^{K}\sum_{r=1}^{a(l)} \mathfrak b_{l,r}(k) D_{i(l,r)}
$$
satisfies the equation
\begin{equation}\label{concindmain}
\mathfrak y_{l,s}\frac{\partial \mathfrak{PO}^{u}_{\mathfrak b(k)}}{\partial y_{l,s}}(\mathfrak y) \equiv 0 \mod T^{k}
\end{equation}
for $l=1,\ldots,K$, $s=1,\ldots,d(l)$.
We also have
\begin{equation}\label{bkconv}
\mathfrak b_{l,r}(k+1) \equiv \mathfrak b_{l,r}(k) \mod T^{k - S_l}.
\end{equation}
\end{lem}
\begin{proof}
The proof is by an induction over $k$.
If
$k \le S_1$,
we apply Lemma \ref{derivforFano} to $\mathfrak b = \mathfrak b(k)=0$
and obtain
$$
\mathfrak y_{l',s}\frac{\partial \mathfrak{PO}^{u}_{\mathfrak b(k)}}
{\partial y_{l',s}}(\mathfrak y) \equiv 0
\mod T^{S_1}.
$$
Hence (\ref{concindmain}) holds for $k
\le S_1$.
\par
Now suppose $k > S_1$ and assume $\mathfrak b(k-1)$ with the required
property.  By the induction hypothesis we may put
\begin{equation}\label{Eljform}
\mathfrak y_{l',s}\frac{\partial \mathfrak{PO}^{u}_{\mathfrak b(k-1)}}{\partial y_{l',s}}(\mathfrak y)
\equiv T^{k} E_{l',s} \mod T^{k+1},
\end{equation}
for some $E_{l',s} \in R$. Let
$$
\vec E = \sum_{l'=1}^K \sum_{s=1}^{d(l')} E_{l',s}\text{\bf
e}^*_{l',s} \in N_R = N \otimes_\Z R.
$$
\begin{sublem}\label{whereisEin}
$\vec E$ is contained in the vector space generated by
$\{ e^*_{l,s} \mid S_l < k, \,\, s = 1,\ldots,d(l)\}$.
\end{sublem}
\begin{proof}
This is a consequence of (\ref{critFano}).
In fact,  (\ref{critFano}) implies that only the first term of the right hand side 
contributes to $E_{l',s}$. (Note that the coefficients of
the first term do not vanish.)
\end{proof}
By Sublemma \ref{whereisEin}, we can express $\vec E$ as
\begin{equation}\label{cljform}
-\vec E = \sum_{S_l < k} c_{l,r}  \vec v_{i(l,r)}
\end{equation}
for some $c_{l,r} \in R$. Note $\vec v_{i(l,r)}$, $1 \le l\le l_0$, $1\le r \le
a(l)$ span the vector space
$$
A^{\perp}_{l_0} = \operatorname{span}_R\{e^*_{l,s} \mid l \le
l_0,\,\, s=1,\ldots,d(l) \}.
$$
We define $\mathfrak b_{l,r}(k)$ by
$$
\mathfrak b_{l,r}(k) = \mathfrak b_{l,r}(k-1) + c_{l,r}(\mathfrak y^{\vec v_{i(l,r)}})^{-1}T^{k-S_l} D_{i(l,r)}.
$$
Since $k-S_l > 0$, it follows $k-S_l \in \Z_+$ by the integrality
hypothesis of $\omega$. Namely $\mathfrak b_{l,r}(k) \in
\Lambda_+^{\text{\rm int}}$. Therefore Lemma \ref{derivforFano},
(\ref{Eljform}) and (\ref{cljform}) imply (\ref{concindmain}).
This finishes the induction steps and so the proof of Lemma
\ref{indmain} is complete.
\end{proof}
Now we consider
$$
\mathfrak b = \lim_{k\to\infty} \mathfrak b(k).
$$
The right hand side converges by (\ref{bkconv}) and so $\mathfrak b$ is
well-defined as an element of $\CA(\Lambda^{\text{\rm int}}_+)$ and satisfies
$$
\mathfrak y_{l',s}\frac{\partial \mathfrak{PO}^{u}_{\mathfrak b}}{\partial y_{l',s}}(\mathfrak y) = 0
$$
as required. Thus Theorem \ref{ellim} is  proved for the case where
$X$ is Fano and $L(u)$ is rational.
\par\medskip
\subsection{General case}

We now turn to the case where $X$ is not necessarily Fano or $L(u)$
is not necessarily rational. We will still use an induction argument
but we need to carefully choose the discrete submonoids of $\R$
to carry out the induction argument.
\par
Let $G(X)$ be as in (\ref{gapX}).
We define
\begin{equation}\label{gapL}
G(L(u)) =
\langle \{\omega[\beta]/2\pi \mid  \text{$\beta\in
\pi_2(X,L(u))$ realized by a holomorphic disc}\} \rangle.
\end{equation}
\begin{defn}\label{Gbulk} Let $G(X)$ be as in \eqref{gapX}. We
define $G_{\text{\rm bulk}}$ to be the discrete submonoid of $\R$ generated by
$G(X)$ and the subset
$$
\{\lambda - S_l \mid \lambda \in G(L(u)), \quad l=1,\ldots,K,
\lambda > S_l \} \subset \R_+ \subset \R.
$$
\end{defn}
We remark that $G_{\text{\rm bulk}} \supset G(L(u))$.
\begin{conds}\label{gapcondb}
We consider
\begin{equation}\label{bkconv2}
\mathfrak b = \sum_{l=1}^{K}\sum_{r=1}^{a(l)} \mathfrak b_{l,r}
D_{i(l,r)} \in \mathcal A^2(\Lambda_+)
\end{equation}
such that all $\mathfrak b_{l,r}$ are $G_{\text{\rm bulk}}$-gapped.
\end{conds}
\par
The main geometric input to the proof of the non-Fano case of Theorem \ref{ellim}
is the following.
\begin{prop}\label{nonfanopert}
Suppose that $\mathfrak b$ satisfies Condition $\ref{gapcondb}$ and
consider
\begin{equation}\label{babdbprime}
\mathfrak b^{\prime} = \mathfrak b + cT^{\lambda} D_{i(l,r)},
\end{equation}
with $c \in R$, $\lambda \in G_{\text{\rm bulk}}$, $l\le K$. Then we have
\begin{equation}\label{firstapprox}
\aligned
\mathfrak{PO}^u_{\mathfrak b^{\prime}}(y)
- \mathfrak{PO}^u_{\mathfrak b}(y)
&= & c T^{\lambda + \ell_{i(l,r)}(u)} y^{\vec v_{i(l,r)}}  +
\sum_{h=2}^{\infty} c_h T^{h\lambda + \ell_{i(l,r)}(u)} y^{\vec v_{i(l,r)}}\\
&{}&  + \sum_{h=1}^{\infty}\sum_{\sigma}
c_{h,\sigma} T^{h\lambda + \ell'_{\sigma}(u) + \rho_{\sigma}}
y^{\vec v_{\sigma}}.
\endaligned
\end{equation}
Here $c_h, c_{h,\sigma} \in R$, $\rho_{\sigma} \in G_{\text{\rm bulk}}$. Moreover
there exist $e_{\sigma}^{i} \in \Z_{\ge 0}$ such that $\vec v_{\sigma} = \sum e_{\sigma}^{i} \vec v_i$,
$\ell'_{\sigma} = \sum e_{\sigma}^{i} \ell_i$ and $\sum_{i}e_{\sigma}^{i} > 0$.
\end{prop}

We will prove Proposition \ref{nonfanopert} in Section \ref{sec:CalPot}.

\begin{defn}
We enumerate the elements of $G_{\text{\rm bulk}}$ so that
$$
G_{\text{\rm bulk}} = \{ \lambda^b_j \mid j=0,1,2,\ldots\}
$$
where $0 = \lambda^b_0 < \lambda^b_1 < \lambda^b_2 < \cdots$.
\begin{enumerate}
\item For $k \geq 1$, we define $\Lambda^{G_{\text{\rm bulk}}}_0\langle\!\langle  y_{**},y_{**}^{-1}
\rangle\!\rangle _k$ to be a subspace of
$\Lambda_0\langle\!\langle y_{**},y_{**}^{-1}\rangle\!\rangle$ consisting of elements of the form
\begin{equation}\label{LamGbulk}
\sum_{l=1}^K\sum_{j=k}^{\infty} T^{S_l + \lambda^b_j}
P_{j,l}(y_{1,1},y_{1,1}^{-1},\ldots,y_{l,d(l)},y_{l,d(l)}^{-1})
\end{equation}
where each $P_{j,l}$ is a Laurent polynomial of
$y_{1,1},\ldots,y_{l,d(l)}$ with $R$ coefficients, i.e.,
$$
P_{j,l} \in
R[y_{1,1},y_{1,1}^{-1},\ldots,y_{l,d(l)},y_{l,d(l)}^{-1}].
$$
We put $\Lambda^{G_{\text{\rm bulk}}}_0\langle\!\langle  y_{**},y_{**}^{-1}\rangle\!\rangle _0: =
\Lambda^{G_{\text{\rm bulk}}}_0\langle\!\langle y_{**},y_{**}^{-1}\rangle\!\rangle$.
\par
\item We define $N_R^{G_{\text{\rm bulk}}}(k)$ to be the set of elements
of the form
$$
\sum_{l=1}^K\sum_{s=1}^{d(l)}\left(\sum_{j=k}^{\infty} c_{l,s,j}
T^{S_l+\lambda_j^b}\right) \text{\bf e}^*_{l,s}
$$
from $N_R \otimes_R \Lambda_0$ with $c_{l,s,j} \in R$.
\end{enumerate}
\end{defn}

\begin{lem}\label{cor:gapping}
If $\mathfrak b$ satisfies Condition $\ref{gapcondb}$, then
$\mathfrak{PO}^u_{\mathfrak b}\in \Lambda^{G_{\text{\rm bulk}}}_0\langle\!\langle  y_{**},y_{**}^{-1}\rangle\!\rangle $.
\end{lem}
\begin{proof}
It is easy to see $\mathfrak{PO}^u_0 \in \Lambda^{G_{\text{\rm bulk}}}_0\langle\!\langle y_{**},y_{**}^{-1}\rangle\!\rangle$
from the definitions of $\mathfrak{PO}^u_0$ and $G_{\text{\rm bulk}}$.

So it suffices to show that the right hand side of (\ref{eq:weakPO}) in Theorem
\ref{weakpotential} lies in $\Lambda^{G_{\text{\rm bulk}}}_0\langle\!\langle y_{**},y_{**}^{-1}\rangle\!\rangle$.
We consider a term $c_{\sigma}y^{\vec
v'_{\sigma}}T^{\ell'_{\sigma}(u) + \rho_{\sigma}}$ thereof. Let
$\vec v'_{\sigma} = \sum_{i=1}^m e^i_{\sigma}\vec v_i$ as in
(\ref{edefform}). We put
$$
l_0 = \sup \{ l \mid \exists r\,\, e^{i(l,r)}_{\sigma} \ne 0\}.
$$
Then
$$
c_{\sigma}y^{\vec v'_{\sigma}} \in R[y_{1,1},y_{1,1}^{-1},\ldots,y_{l_0,d(l_0)},y_{l_0,d(l_0)}^{-1}].
$$
On the other hand
$$
\ell'_{\sigma}(u) = \sum e^{i}_{\sigma}\ell_i(u) \ge
\ell_{i(l_0,r)}(u) = S_{l_0}
$$
because $e_\sigma^i \geq 0$ and $\sum_i e_\sigma^i > 0$ and
$e_\sigma^{i(l_0,r)} \neq 0$ for some $r$. Therefore
$$
\ell'_{\sigma}(u) - S_{l_0} \in G_{\text{\rm bulk}}
$$
and so $\ell'_{\sigma}(u) + \rho_{\sigma} = S_{l_0} +
(\ell'_{\sigma}(u) - S_{l_0}) + \rho_\sigma \in G_{\text{\rm bulk}}$.
This implies
$$
c_{\sigma}y^{\vec v'_{\sigma}}T^{\ell'_{\sigma}(u) + \rho_{\sigma}}
\in \Lambda^{G_{\text{\rm bulk}}}_0\langle\!\langle y_{**},y_{**}^{-1}\rangle\!\rangle
$$
as required.
\end{proof}
We now state the following lemma:
\begin{lem}\label{lem418}
If a formal power series $\mathfrak P$ lies in $\Lambda^{G_{\text{\rm
bulk}}}_0\langle\!\langle y_{**},y_{**}^{-1}\rangle\!\rangle_k$ for some $k \in \Z_{\geq 0}$, so
does  $\frac{\partial\mathfrak P}{\partial y_{l',s}}$ for the same $k$
and so
\begin{equation}\label{inMR}
\sum_{l'=1}^K\sum_{s=1}^{d(l')} \mathfrak c_{l',s} \frac{\partial \mathfrak
P}{\partial y_{l',s}}(\mathfrak c) \text{\bf e}^*_{l',s} \in
N_R^{G_{\text{\rm bulk}}}(k)
\end{equation}
for $\mathfrak c = (\mathfrak c_{1,1},\cdots,\mathfrak c_{K,d(K)}) \in (R
\setminus \{0\})^n$.
\end{lem}
\begin{proof} By the form \eqref{LamGbulk}
of the elements from $\Lambda^{G_{\text{\rm
bulk}}}_0\langle\!\langle y_{**},y_{**}^{-1}\rangle\!\rangle_k$, the first statement immediately
follows.
Then the last statement follows from the definition of $N_R^{G_{\text{\rm
bulk}}}(k)$.
\end{proof}
\begin{prop}\label{induclema}
There exists a sequence
\begin{equation}\label{formfrakbk}
\mathfrak b(k) = \sum_{l=1}^{K}\sum_{r=1}^{a(l)} \mathfrak b_{l,r}(k) D_{i(l,r)}
\end{equation}\label{inducconcl}
for $k = 0, \ldots, $
that satisfies Condition $\ref{gapcondb}$ and
\begin{equation}\label{b(k)jyoken}
\sum_{l'=1}^K\sum_{s=1}^{d(l')} \mathfrak y_{l',s}\frac{\partial
\mathfrak{PO}^u_{\mathfrak b(k)}}{\partial y_{l',s}}(\mathfrak y)
\text{\bf e}^{*}_{l',s} \in N_R^{G_{\text{\rm bulk}}}(k)
\end{equation}
and
\begin{equation}\label{congruentK}
\mathfrak b(k+1) - \mathfrak b(k)  \equiv 0 \mod T^{\lambda^b_k}\Lambda_0.
\end{equation}
\end{prop}
\begin{proof} We prove this by induction over $k$.
The case $k=0$ follows from Lemma \ref{cor:gapping} for $\mathfrak b(0) = \bf 0$.
\par
Suppose we have found $\mathfrak b(k)$ as in the proposition. Then we
have
$$
\sum_{l'=1}^K\sum_{s=1}^{d(l')} \mathfrak y_{l',s}\frac{\partial
\mathfrak{PO}^u_{\mathfrak b(k)}}{\partial y_{l',s}}(\mathfrak y)
\text{\bf e}^{*}_{l',s} \equiv \sum_{l=1}^K\sum_{s=1}^{d(l)}
c_{l,s,k} T^{S_l+\lambda_k^b}\text{\bf e}^{*}_{l,s} \mod
N_R^{G_{\text{\rm bulk}}}(k+1)
$$
for some $c_{l,s,k} \in R$.
Since $\{\vec v_{i(l',r)}\mid l'\le l\}$ spans $A^{\perp}_{l}$ for
all $l \leq K$ by definition, we can find $a_{l,r,k} \in R$ such
that
$$
\sum_{s=1}^{d(l)} c_{l,s,k}\text{\bf e}^{*}_{l,s}
- \sum_{r=1}^{a(l)}  a_{l,r,k} \vec v_{i(l,r)} \in A^{\perp}_{l-1}.
$$
Therefore by definition of $N_R^{G_{\text{\rm bulk}}}(k)$ we have
$$
\sum_{l=1}^K\sum_{s=1}^{d(l)} c_{l,s,k} T^{S_l+\lambda_k^b}\text{\bf
e}^{*}_{l,s} - \sum_{l=1}^K\sum_{r=1}^{a(l)} a_{l,r,k}
T^{S_l+\lambda_k^b}\vec v_{i(l,r)} \in N_R^{G_{\text{\rm bulk}}}(k+1).
$$
Thus
\begin{equation}\label{ubdyctuib}
\aligned
&\sum_{l'=1}^K\sum_{s=1}^{d(l')}
\mathfrak y_{l',s}\frac{\partial \mathfrak{PO}^u_{\mathfrak b(k)}}{\partial y_{l',s}}(\mathfrak y)
\text{\bf e}^{*}_{l',s} \\
&\equiv \sum_{l=1}^K\sum_{r=1}^{a(l)} a_{l,r,k}
T^{S_l+\lambda_k^b}\vec v_{i(l,r)} \mod N_R^{G_{\text{\rm bulk}}}(k+1).
\endaligned\end{equation}
We now put
$$
\mathfrak b_{l,r}(k+1) = \mathfrak b_{l,r}(k) - T^{\lambda^b_k}a_{l,r,k}(\mathfrak y^{\vec v_{i(l,r)}})^{-1}.
$$
\begin{lem}\label{2ndthirest} Let $\mathfrak b(k)$ be as in the
induction hypothesis above. If $\lambda = \lambda_k^b$, then the
second and the third terms of $(\ref{firstapprox})$ are contained in
$\Lambda^{G_{\text{\rm bulk}}}_0\langle\!\langle y_{**},y_{**}^{-1}\rangle\!\rangle_{k+1}$.
\end{lem}
\begin{proof}
We first consider, for $h\ge 2$,
\begin{equation}\label{secondterm20}
c_h T^{h\lambda_k^b + \ell_{i(l,r)}(u)} y^{\vec v_{i(l,r)}}
\end{equation}
which is in the second term of $(\ref{firstapprox})$. We
remark that
$$
c_h y^{\vec v_{i(l,r)}} \in
R[y_{1,1},y_{1,1}^{-1},\ldots,y_{l,d(l)},y_{l,d(l)}^{-1}].
$$
On the other hand,
$$
h\lambda_k^b + \ell_{i(l,r)}(u) - S_l = h\lambda_k^b
$$
is contained in $G_{\text{\rm bulk}}$ and so must be equal to $\lambda_{k'}^b$ for
some $k'> k$ since $h \geq 2$. Therefore (\ref{secondterm20}) is
contained in $\Lambda^{G_{\text{\rm bulk}}}_0\langle\!\langle y_{**},y_{**}^{-1}\rangle\!\rangle_{k+1}$, as
required.
\par
We next consider
\begin{equation}\label{thirdterm20}
c_{h,\sigma} T^{h\lambda_k^b + \ell'_{\sigma}(u) +
\rho_{\sigma}}y^{\vec v_{\sigma}}
\end{equation}
which is in the third term of $(\ref{firstapprox})$. ($h\ge 1$.) We
have $\vec v_{\sigma} = \sum e_{\sigma}^i\vec v_i$, $\ell'_{\sigma}
= \sum e_{\sigma}^i\ell_i$. We put
$$
l_0 = \sup\{ l \mid \exists r \,\, e_{\sigma}^{i(l,r)} \ne 0\}.
$$
Then
$$
c_{h,\sigma} y^{\vec v_{\sigma}} \in
R[y_{1,1},y_{1,1}^{-1},\ldots,y_{l_0,d(l_0)},y_{l_0,d(l_0)}^{-1}].
$$
On the other hand, since
$$
\ell'_{\sigma}(u) = \sum_i e_{\sigma}^i\ell_i(u) \ge
\ell_{i(l_0,r)}(u) = S_{l_0},
$$
it follows that
$$
\ell'_{\sigma}(u) + \rho_{\sigma} - S_{l_0} \in G_{\text{\rm bulk}} \setminus \{0\}.
$$
Therefore since $h \geq 2$ (in fact $h \geq 1$ is enough for this), we have
$$
h\lambda^b_k + \ell'_{\sigma}(u) + \rho_{\sigma} - S_{l_0} >
\lambda^b_{k}
$$
and so equal to $\lambda^b_{k'}$ for some $k' > k$. Therefore we find  that 
(\ref{thirdterm20}) is contained in 
$\Lambda^{G_{\text{\rm bulk}}}_0\langle\!\langle y_{**},y_{**}^{-1}\rangle\!\rangle_{k+1}$, as required.
\par
The proof of Lemma \ref{2ndthirest} is complete.
\end{proof}

Then Proposition \ref{nonfanopert}, (\ref{ubdyctuib}), Lemma
\ref{lem418} and Lemma \ref{2ndthirest} imply that
(\ref{b(k)jyoken}) is satisfied for $k+1$. The proof of Proposition
\ref{induclema} is complete.
\end{proof}

Now we are ready to complete the proof of Theorem \ref{ellim}. By
(\ref{congruentK})
$$
\lim_{k\to\infty} \mathfrak b(k) = \mathfrak b
$$
converges. Then (\ref{b(k)jyoken}) implies
$$
\mathfrak y_{l,s}\frac{\partial \mathfrak{PO}^u_0}{\partial y_{l,s}}(\mathfrak y)
= 0,
$$
as required.
\end{proof}

\section{Two points blow up of $\C P^2$: an example}
\label{sec:Exam}

Our main example is the two-points blow up $X_2$ of $\C P^2$. We take
its K\"ahler form $\omega_{\alpha,\beta}$ such that
the moment polytope is
\begin{equation}\label{momentpoly}
P_{\alpha,\beta} = \{ (u_1,u_2) \mid 0\le u_1\le 1, ~0 \le u_2 \le 1-\alpha,~
\beta \le u_1+u_2 \le 1\}.
\end{equation}
Here
\begin{equation}\label{Kcone}
(\alpha,\beta) \in \Delta = \{ (\alpha,\beta) \mid 0 \le \alpha,\beta, \quad
\alpha + \beta \le 1\}.
\end{equation}
We remark that $\R_+ \Delta$ is the K\"ahler cone of $X_2$.
\par
In Example 10.17 \cite{fooo08} we studied this example in the case
\begin{equation}\label{cond}
\beta = \frac{1-\alpha}{2}, \quad \frac{1}{3} < \alpha.
\end{equation}
We continue the study this time involving bulk deformations.
\par
We consider the point
\begin{equation}\label{ucond}
\text{\bf u} = \left(u,\beta\right), \qquad u \in
\left(\beta,\frac{1-\beta}{2}\right) = \left(\frac{1-\alpha}{2},\frac{1+\alpha}{4}\right)
\end{equation}
and compute
\begin{equation}\label{POwithoutbulk}
\aligned
\mathfrak{PO}^{\text{\bf u}}(0;y_1,y_2) = T^{\beta}(y_2+y_2^{-1})
+ T^{u} (y_1+y_1y_2) + T^{1-\beta-u}y_1^{-1}y_2^{-1}.
\endaligned
\end{equation}
We note that \eqref{ucond} implies
\begin{equation}\label{beta<u<}
\beta < u < 1-\beta-u.
\end{equation}
Therefore the leading term equation is
\begin{equation}\label{LTEblowup}
1 - y_2^{-2} = 0, \qquad
1+ y_2 = 0.
\end{equation}
Namely $(y_1,-1)$ is its solution for any $y_1$. Therefore
Theorem \ref{ellim} implies:

\begin{prop}\label{2ptsblowup}
$L(\text{\bf u}) \subset (X_2,\omega_{\alpha,\beta})$ is bulk-balanced
if $(\ref{cond})$ and $(\ref{ucond})$ are satisfied.
\end{prop}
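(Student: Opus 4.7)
The proof is a direct application of Corollary \ref{weabalanced}, so the plan is simply to verify that the hypotheses of that corollary are satisfied at $\text{\bf u} = (u, \beta)$, using the explicit computations already made in the excerpt. I would first record that the five primitive inward normals to the moment polytope $P_{\alpha,\beta}$ are
$$
\vec v_1 = (1,0),\quad \vec v_2 = (0,1),\quad \vec v_3 = (0,-1),\quad \vec v_4 = (1,1),\quad \vec v_5 = (-1,-1),
$$
with $\ell_2(\text{\bf u}) = \beta$, $\ell_3(\text{\bf u}) = 1-\alpha-\beta$, $\ell_1(\text{\bf u}) = \ell_4(\text{\bf u}) = u$, and $\ell_5(\text{\bf u}) = 1-\beta-u$. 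Since $X_2$ is Fano, Proposition \ref{POcalcFano} with $\frak b = 0$ shows that $\frak{PO}^{\text{\bf u}}(0;y_1,y_2) = \frak{PO}^{\text{\bf u}}_0(y_1,y_2)$, so the displayed expression (\ref{POwithoutbulk}) is precisely the leading order potential. Here the hypothesis $\beta = (1-\alpha)/2$ from (\ref{cond}) is what collapses $\ell_2 = \ell_3 = \beta$, forcing $y_2$ and $y_2^{-1}$ to share a common $T$-valuation.

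Next I would read off the stratification from Section \ref{sec:Ellim}. The range restriction (\ref{ucond}) gives the strict valuation ordering $\beta < u < 1-\beta-u$ as recorded in (\ref{beta<u<}), so
$$
S_1 = \beta,\qquad S_2 = u,\qquad S_3 = 1-\beta-u,
$$
with $\{\vec v_{1,1},\vec v_{1,2}\} = \{(0,1),(0,-1)\}$, $\{\vec v_{2,1},\vec v_{2,2}\} = \{(1,0),(1,1)\}$, and $\vec v_{3,1} = (-1,-1)$. Thus $A_1^{\perp} = \R(0,1)$, $A_2^{\perp} = \R^2$, so $K = 2$, $d(1) = d(2) = 1$, and after choosing $\text{\bf e}^*_{1,1} = (0,1)$, $\text{\bf e}^*_{2,1} = (1,0)$ the change of coordinates (\ref{yandy}) becomes trivial: $y_{1,1} = y_2$ and $y_{2,1} = y_1$. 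Consequently
$$
(\frak{PO}^{\text{\bf u}}_{\frak b})_1 = y_{1,1} + y_{1,1}^{-1},\qquad (\frak{PO}^{\text{\bf u}}_{\frak b})_2 = y_{2,1}(1 + y_{1,1}),
$$
and the leading term equation (\ref{LTE}) is exactly (\ref{LTEblowup}).

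Finally, one checks that $(y_{2,1},y_{1,1}) = (\eta, -1)$ solves both equations for any $\eta \in R \setminus \{0\}$ (here we use $\mathrm{char}\, R = 0$ so that $-1 \neq 0$ in $R$). This verifies condition (1) of Theorem \ref{ellim}, hence by Corollary \ref{weabalanced}, $L(\text{\bf u})$ is bulk-balanced, as required. The argument is essentially a bookkeeping exercise built on the machinery of Sections 3 and 4; no genuine obstacle arises beyond correctly identifying the stratification $\{S_l\}$ of $T$-valuations, and the only place the hypotheses (\ref{cond}) and (\ref{ucond}) enter is through this identification—(\ref{cond}) to merge the $y_2^{\pm}$ monomials into a single layer, and (\ref{ucond}) to guarantee that this layer is strictly below the next, so that the first leading term equation is nondegenerate in $y_{1,1}$.
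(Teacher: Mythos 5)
Your proposal is correct and follows the paper's own route: compute $\frak{PO}^{\text{\bf u}}(0;\cdot)$ as in \eqref{POwithoutbulk}, use the ordering \eqref{beta<u<} to read off the leading term equation \eqref{LTEblowup}, observe that $(y_1,-1)$ solves it, and invoke Theorem \ref{ellim} (via Corollary \ref{weabalanced}). You have simply made explicit the bookkeeping — the normal fan, the stratification $S_1=\beta<S_2=u<S_3=1-\beta-u$, the choice $\text{\bf e}^*_{1,1}=(0,1)$, $\text{\bf e}^*_{2,1}=(1,0)$ with $K=2$, $d(1)=d(2)=1$ — that the paper leaves implicit in passing from \eqref{POwithoutbulk} and \eqref{beta<u<} to \eqref{LTEblowup}.
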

Theorem \ref{thm:bupP2} for $k=2$ will then follow from Proposition
\ref{prof:bal2}.
\begin{proof}[Proof of Theorem \ref{thm:bupP2}]
The case $k=2$ (the two points blow up) is already proved.
We consider $k=3$. We blow up $(X_2,\omega_{\alpha,\beta})$
at the fixed point corresponding to $(1,0) \in P_{\alpha,\beta}$.
Then we have a toric K\"ahler structure on $X_3$ whose
moment polytope is
$$
\{ (u_1,u_2)\in P_{\alpha,\beta} \mid u_1 \le 1-\epsilon \}.
$$
We have
\begin{equation}\label{POwithbulk3}
\aligned
\mathfrak{PO}^{\text{\bf u}}_0(y_1,y_2) =
T^{\beta}(y_2+y_2^{-1}) &+ T^{u} (y_1+y_1y_2) \\
&+ T^{1-\beta-u}y_1^{-1}y_2^{-1} + T^{1-\epsilon-u}y_1^{-1}.
\endaligned
\end{equation}
We remark that
$$
1-\beta-u < 1-\epsilon-u
$$
if $\epsilon$ is sufficiently small. Therefore
the leading term equation at (\ref{ucond}) is again (\ref{LTEblowup}).
Therefore we can
apply Theorem \ref{ellim} to show that
all $L(\text{\bf u})$ satisfying (\ref{ucond}) are bulk-balanced.
Thus Theorem \ref{thm:bupP2} is proved for $k=3$.
\par
We can blow up again at the fixed point corresponding to $(1-\epsilon,0)$.
We can then prove the case $k=4$. (We remark that this time our
toric manifold is not Fano. We never used the property $X$ to be Fano
in the above discussion.) We can repeat this process a arbitrarily many times to complete
the proof of Theorem \ref{thm:bupP2}.
\end{proof}

Below we will
examine the effect of bulk deformations more explicitly for the example of two
points blow up. 
We consider the divisor
$$
D_1 = \pi^{-1}(\{(u_1,u_2) \in P \mid u_2 = 0\})
$$
and let
\begin{equation}\label{defbulk}
\mathfrak b_{w,\kappa} = wT^{\kappa} [D_1] \in \mathcal A^2(\Lambda_+), \quad w \in R \setminus \{0\}, ~\kappa >0.
\end{equation} 
During the calculation below, we fix 
$\mathfrak b_{w,\kappa}$.
By Proposition \ref{POcalcFano}, we have:
\begin{equation}\label{POwithbulk}
\aligned
\mathfrak{PO}^{\text{\bf u}}(\mathfrak b_{w,\kappa};y_1,y_2) =
&T^{\beta}(\exp(wT^{\kappa})y_2+y_2^{-1}) \\
&+ T^{u} (y_1+y_1y_2) + T^{1-\beta-u}y_1^{-1}y_2^{-1}.
\endaligned
\end{equation}
We study the equation
\begin{equation}\label{JacPOwithbulk}
\frac{\partial \mathfrak{PO}^{\text{\bf u}}}{\partial y_1}(\mathfrak b_{w,\kappa};y_1,y_2)
= \frac{\partial \mathfrak{PO}^{\text{\bf u}}}{\partial y_2}(\mathfrak b_{w,\kappa};y_1,y_2) = 0.
\end{equation}

We put $y_2 = -1 + c T^{\mu}$, $y_1 = d$, with $c,d \in \Lambda_0
\setminus \Lambda_+$. Taking the inequality \eqref{beta<u<} into
account, we obtain
\begin{equation}
\aligned c T^{\mu} + d^{-2}T^{1-\beta-2u} &\equiv 0 \mod
T^{\max\{\mu,1-\beta-2u\}}\\
-2c T^{\mu} + wT^{\kappa} + d
T^{u-\beta} &\equiv 0 \mod T^{\max\{\mu,\kappa,u-\beta\}}.
\endaligned
\end{equation}
In the following calculation we take $\overline c, \overline d \in R$ such that
$c \equiv \overline c$, $d \equiv \overline  d$ $\mod \Lambda_+$.
\par\smallskip
\noindent(Case 1)
$\mu = \kappa < u-\beta$.
\par
We have $\overline c=w/2$, $\mu = 1-\beta-2u$. $\overline d = \pm\sqrt{-2/w}$.
$u = (1-\beta)/2 - \kappa/2 = (1+\alpha)/4 - \kappa/2$. It implies
$1/3 < u < (1+\alpha)/4$. The equation for $(\overline c,\overline d)$ has 2 solutions.
They are both simple. Hence in the same way as the proof of
Theorem 10.4 \cite{fooo08} (the strongly non-degenerate case)
we can show that these two solutions correspond to the
solutions of (\ref{JacPOwithbulk}).
\par\medskip
\noindent(Case 2)
$\mu =  u-\beta < \kappa$.
\par
We have $\overline d=2\overline c$, $1-\beta-2u = \mu$.
Hence $u =1/3$. We can show that there are 3 solutions of (\ref{JacPOwithbulk})
in the same way.
\par\medskip
\noindent(Case 3)
$\kappa = u-\beta < \mu$.
\par
We have $\overline d=-w$. Then $\mu = 1-\beta-2u$. $\overline c=-w^{-2}$. Hence $u <
1/3$. We can show that there is 1 solution of (\ref{JacPOwithbulk})
in the same way.
\par\medskip
\noindent(Case 4)
$\kappa = u-\beta = \mu$.
\par
We have $-2\overline c + w + \overline d = 0$ and $1-\beta-2u = \mu$. Hence $u =1/3$.
$\kappa = \alpha/2 - 1/6$.
\begin{equation}\label{solte}
\overline d^2(\overline d+w) + 2 =0.
\end{equation}
This has three simple roots unless
\begin{equation}\label{descri}
\frac{4}{27}w^3 + 2 = 0.
\end{equation}
\par\medskip
When $\kappa$ is small Case 1 and Case 3 occur. There are
two fibers with nontrivial Floer cohomology
(on $(\ref{ucond})$), that is
($(\beta+\kappa,\beta)$ and $((1+\alpha)/4 - \kappa/2,\beta)$). They move
from $(\beta,\beta)$, $(1+\alpha)/4,\beta)$ to $(1/3,\beta)$.
Then, when $\kappa = \alpha/2 - 1/6$, Case 4 occurs.
If $\kappa > \alpha/2 - 1/6$ then Case 2 occurs and
bulk deformation does not change the `secondary' leading term equation
(\ref{solte}).
\par
It might be interesting to observe that it actually occurs that the
`secondary' leading term equation (\ref{solte}) has multiple roots.
That is the case where (\ref{descri}) is satisfied. (We remark that
the example where there is a multiple root for the leading term
equation was found in \cite{osty}.)
\par
\par\medskip
\section{Operator $\mathfrak q$ in the toric case}
\label{sec:operatorq} In this section and the next, we study the
moduli space of holomorphic discs and its effects on the operator
$\mathfrak q$ and on the potential function $\mathfrak{PO}^u(\mathfrak
b;y_1,\ldots,y_n)$.
\par
Let $u \in \text{\rm Int}\,P$ and $\beta \in H_2(X,L(u);\Z)$. We
denote by $\mathcal M_{k+1;\ell}^{\text{\rm main}}(L(u),\beta)$ the
moduli space of stable maps from bordered Riemann surfaces of genus
zero with $k+1$ boundary marked points and $\ell$ interior marked
points, in homology class $\beta$. (See Section 3 \cite{fooo00} 
= Subsection 2.1.2 \cite{fooo06}.) We
require the boundary marked points to respect the cyclic order of
$S^1 =
\partial D^2$. (In other words, we consider the \emph{main
component} in the sense of Section 3 \cite{fooo00}.)) We assume
$k\ge 0$. Then $\mathcal M_{k+1;\ell}^{\text{\rm main}}(L(u),\beta)$
is compact. (See Subsections 3.8.2 and 7.4.1 \cite{fooo06} = Sections 13.2 and 32.1 \cite{fooo06pre}, for the
reason why we need to assume $k\ge 0$ for compactness.)
\par
We denote an element of $\mathcal M^{\text{\rm main}}_{k+1;\ell}(L(u),\beta)$ by
$$
(\Sigma,\varphi,\{z^+_i  \mid 1 = 1, \ldots, \ell\}, \{z_i \mid
i=0,1,\ldots,k\})
$$
where $\Sigma$ is a connected genus zero bordered semi-stable curve,
$\varphi: (\Sigma,\partial \Sigma) \to (X,L(u))$ is a holomorphic
map and $z^+_i \in \text{Int} \Sigma$ and $z_i \in
\partial \Sigma$.
Let $\mathcal M^{\text{\rm main},\text{\rm
reg}}_{k+1;\ell}(L(u),\beta)$ be its subset consisting of all maps
from a \emph{smooth} disc. (Namely the stable map without disc or
sphere bubbles.)
\par
We have the following proposition.
Let
$\beta_i \in H_2(X,L(u);\Z)$ $(i=1,\ldots,m)$ be the
classes with
$\mu(\beta_i) = 2$ and
$$
\beta_i \cap D_j =
\begin{cases}
1  &\text{$i=j$,} \\
0  &\text{$i\ne j$}.
\end{cases}
$$
We recall from \cite{cho-oh} that the spin structure of $L(u)$
induced from the torus $T^n = \R^n/\Z^n$ as its orbit is called the
\emph{standard} spin structure.
\begin{prop}\label{copy37.177}
\begin{enumerate}
\item If $\mu(\beta) < 0$, or $\mu(\beta) = 0$, $\beta \ne 0$, then the moduli space $\mathcal M^{\text{\rm main},\text{\rm
reg}}_{k+1;\ell}(L(u),\beta)$  is empty.
\par
\item If $\mu(\beta) = 2$, $\beta \ne \beta_1,\cdots,\beta_m$, then $\mathcal M^{{\text{\rm main}},\text{\rm
reg}}_{k+1;\ell}(L(u),\beta)$ is also empty.
\par
\item  For $i=1,\cdots,m$, we have
\begin{equation}\label{178}
\aligned
\mathcal M^{{\text{\rm main}},\text{\rm reg}}_{1;0}(L(u),\beta_i) &=
\mathcal M^{\text{\rm main}}_{1;0}(L(u),\beta_i), \\
\mathcal M^{{\text{\rm main}}}_{1;\ell}(L(u),\beta_i) &= \mathcal
M^{\text{\rm main},\text{\rm reg}}_{1;0}(L(u),\beta_i) \times
\overline{\text{\rm Conf}}(\ell;D^2).
\endaligned
\end{equation}
Here $\overline{\text{\rm Conf}}(\ell;D^2)$ is a compactification of
the configuration space:
$$
\{ (z_1^+,\ldots,z_{\ell}^+) \mid  z_i^+ \in \text{\rm Int} D^2,
z_i^+ \ne z_j^+ \,\,\,\text{for $i\ne j$}\}=:\text{\rm Conf}(\ell;D^2).
$$
(See Remark $\ref{configrem}$.)
Moreover $\mathcal M_{1;0}^{\text{\rm main}}(L(u),\beta_i)$ is Fredholm regular.
Furthermore the evaluation map
$$
ev: \mathcal M^{\text{\rm main}}_{1;0}(L(u),\beta_i) \to  L(u)
$$
is an orientation preserving diffeomorphism if we equip $L(u)$ with
the standard spin structure.
\par
\item For any $\beta$, the moduli space $\mathcal M^{{\text{\rm main}},\text{\rm reg}}_{1;\ell}(L(u),\beta)$ is
Fredholm regular. Moreover
$$
ev: \mathcal M^{\text{\rm main,reg}}_{1;\ell}(L(u),\beta) \to  L(u)
$$
is a submersion.
\par
\item  If $\mathcal M^{\text{\rm main}}_{1;\ell}(L(u),\beta)$ is not empty, then
there exist $k_i \in \Z_{\ge 0}$ and $\alpha_j \in H_2(X;\Z)$ such that
$$
\beta = \sum_i k_i\beta_i + \sum_j \alpha_j
$$
and $\alpha_j$ is realized by a holomorphic sphere.
There is at least one nonzero $k_i$.
\end{enumerate}
\end{prop}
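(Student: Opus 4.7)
The plan is to reduce essentially everything to the explicit Cho--Oh classification of holomorphic discs in a toric manifold with boundary on a Lagrangian torus fiber, combined with the Maslov index formula and the transitive action of the torus $T^n$ on $L(u)$.

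The first step is to recall the Cho--Oh formula: for any holomorphic map $\varphi:(D^2,\partial D^2)\to(X,L(u))$ representing a class $\beta$, one has
\[
\mu(\beta)\;=\;2\sum_{i=1}^{m} k_i(\beta),\qquad k_i(\beta)=\beta\cdot D_i\in\Z_{\geq 0},
\]
with all $k_i(\beta)\ge 0$ and equal to zero only if $\varphi$ is constant. This immediately gives (1), since $\mu(\beta)<0$ is impossible and $\mu(\beta)=0$ forces all $k_i(\beta)=0$, hence $\beta=0$. It also gives (2): if $\mu(\beta)=2$, exactly one $k_i(\beta)$ equals $1$ and all others vanish, and Cho--Oh further show that in every such class the disc is (up to reparametrization) one of their explicit basic discs representing $\beta_i$.

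For (3), the same classification shows $\mathcal M^{\text{main,reg}}_{1;0}(L(u),\beta_i)$ consists of a single $T^n$-orbit of basic discs parametrized by the boundary value, and there is no room for bubbling because $\mu(\beta_i)=2$ is the minimum positive Maslov index (any sphere or disc bubble would have nonnegative Maslov index summing to $2$, and the Cho--Oh list forbids the degenerate possibilities; this uses Proposition 4.3 and the argument in \cite{cho-oh} together with the fact that the class cannot split nontrivially into a disc class plus an effective sphere class while remaining $\beta_i$ on boundary with $\mu=2$). Hence $\mathcal M^{\text{main,reg}}_{1;0}(L(u),\beta_i)=\mathcal M^{\text{main}}_{1;0}(L(u),\beta_i)$, and the explicit formulas show $ev$ is a diffeomorphism, which is orientation preserving with respect to the standard spin structure (this is proved in \cite{cho-oh} by a direct computation on the basic disc). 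The product decomposition in \eqref{178} then follows at once, because once the underlying disc is smooth, adding $\ell$ interior marked points is equivalent to choosing $\ell$ pairwise distinct points in $\operatorname{Int}D^2$, giving the configuration space factor.

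For (4), Fredholm regularity of $\mathcal M^{\text{main,reg}}_{1;\ell}(L(u),\beta)$ is proved by noting that the linearized Cauchy--Riemann operator at any basic disc splits under the complex frame induced by the torus action into Fredholm-regular summands, exactly as in \cite{cho-oh}; the general case reduces to this by gluing, but for the smooth stratum it is direct. Submersivity of $ev$ then follows because $T^n$ acts on both $L(u)$ and $\mathcal M^{\text{main,reg}}_{1;\ell}(L(u),\beta)$, and it acts transitively on $L(u)$; this forces $d(ev)$ to be surjective at every point. Finally (5) is obtained by taking a stable map in $\mathcal M^{\text{main}}_{1;\ell}(L(u),\beta)$ and decomposing it into its disc components and sphere components; by (1) each disc component has positive Maslov index and by (2) its class lies in $\{\beta_1,\ldots,\beta_m\}$, while the remaining components are holomorphic spheres, proving the claimed decomposition. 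The main obstacle is the bubbling analysis needed for the equality $\mathcal M^{\text{main,reg}}_{1;0}(L(u),\beta_i)=\mathcal M^{\text{main}}_{1;0}(L(u),\beta_i)$, i.e., ruling out disc and sphere bubbles in a Maslov-$2$ class; this is where one really uses that the Maslov index of every disc component is a strictly positive even integer and that nonconstant holomorphic spheres in $X$ contribute nonnegative symplectic area, so no nontrivial splitting of $\beta_i$ is possible.
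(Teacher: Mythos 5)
Your approach matches the paper's, which simply reduces the Proposition to Theorem 11.1 of \cite{fooo08} (in turn derived from the Cho--Oh classification). That said, two steps in your write-up are stated in a way that would not hold up if challenged.

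First, in your argument for $(5)$ you write that each disc component's class lies in $\{\beta_1,\ldots,\beta_m\}$ ``by (2).'' This is false: $(2)$ applies only to disc components of Maslov index exactly $2$, but a disc component of a stable map in class $\beta$ with $\mu(\beta) > 2$ can itself have $\mu > 2$. What you should say is that the Cho--Oh intersection formula $\mu(\gamma) = 2\sum_i \gamma\cdot D_i$ with $\gamma\cdot D_i \ge 0$ for any nonconstant holomorphic disc, together with the identification $H_2(X,L(u);\Z)\cong\Z^m$ via $\gamma\mapsto(\gamma\cdot D_1,\ldots,\gamma\cdot D_m)$, shows each disc component's class is a nonnegative integer combination $\sum_i k_i\beta_i$. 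Summing over components still gives the conclusion of $(5)$, but the route through $(2)$ does not.

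Second, your reason for compactness in $(3)$ --- ``nonconstant holomorphic spheres in $X$ contribute nonnegative symplectic area'' --- is not sufficient outside the Fano case: positivity of area gives Gromov compactness but does not by itself empty the boundary strata of $\overline{\mathcal M}_{1;0}(L(u),\beta_i)$, and in the non-Fano case a sphere bubble can have $c_1<0$ so a decomposition $\beta_i=\beta'+\alpha$ with $\mu(\beta')>2$ is not excluded by Maslov-index bookkeeping alone. The correct mechanism is positivity of intersection with the toric divisors: if $\beta_i=\beta'+\alpha$ with $\alpha\ne 0$ an effective sphere class and $\beta'$ a disc class, then $\beta'\cdot D_j\ge 0$ and $\alpha\cdot D_j\ge 0$ for all $j$ while $\beta_i\cdot D_j=\delta_{ij}$; since every nonconstant holomorphic sphere in $X$ must meet $D$ positively and the complement $X\setminus D$ carries no nonconstant holomorphic spheres, this quickly yields a contradiction. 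Your overall structure is right, but this is the point you singled out as ``the main obstacle,'' so the reason offered should be the one that actually works in general.
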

\begin{rem}\label{configrem}
We define the compactification of $\text{\rm Conf}(\ell;D^2)$ as
follows. We consider $X = \C$, $L = S^1$. 
Let $\beta_1$ be the
generator of $H_2(X;L)$ which is represented by a holomorphic disc. 
We consider the moduli space 
$\mathcal M_{1;\ell}^{\text{main}}(L; \beta_1)$ and the evaluation map at the boundary marked point 
$ev_0 :  \mathcal M_{1;\ell}^{\text{main}}(L; \beta_1) \to L=S^1$. 
We fix a point $p_0 \in L=S^1 \subset \C$ and 
put $\overline{\text{\rm Conf}}(\ell;D^2) = ev_0^{-1}(p_0)$. 
We use $\overline{\text{\rm Conf}}(\ell;D^2)$ as a compactification of $\text{\rm Conf}(\ell;D^2)$. 
\end{rem}
Proposition \ref{copy37.177} follows easily from Theorem 11.1
\cite{fooo08} which in turn follows from \cite{cho-oh} as we
explained in Section 11 \cite{fooo08}.
\par
We next discuss Kuranishi structure of $\mathcal M^{\text{\rm
main}}_{k+1;\ell}(L(u),\beta)$. In Sections 17,18 \cite{fooo00}
or in Section 7.1 \cite{fooo06} (=Section 29 \cite{fooo06pre}), we equipped
$\mathcal M^{\text{\rm main}}_{k+1;\ell}(L(u),\beta)$ with a Kuranishi structure.
In our toric
case, this structure can be chosen to be $T^n$ equivariant in the
following sense: Let $(V,E,\Gamma,\psi,s)$ be a Kuranishi chart (see
Section 5 \cite{FO} and Section A1 \cite{fooo06} = Section A1 \cite{fooo06pre}). 
Here $V$ is a smooth manifold with an action of a finite group
$\Gamma$, $E \to V$ is a $\Gamma$ equivariant vector bundle, $s$ its
$\Gamma$-equivariant section and $\psi: s^{-1}(0)/\Gamma \to
\mathcal M^{\text{\rm main}}_{k+1;\ell}(L(u),\beta)$ is a
homeomorphism onto an open set. Then  we have a $T^n$ action on
$V,E$ which commutes with $\Gamma$ action, such that $s$ is $T^n$
equivariant. Moreover $\psi$ is $T^n$ equivariant. Here the $T^n$
action is induced by one on $X$. (We recall that $L(u)$ is $T^n$
invariant.) The construction of such Kuranishi structure is in Appendix 2 \cite{fooo08}.
\par
Let $\{ D_a \mid a = 1,\cdots, B\}$ be the basis of $\mathcal
A(\Z)$. (Each $D_a$ corresponds to a face of $P$.) We note that each
of $D_a$ is a $T^n$ invariant submanifold. Let
$$
ev_i^{\text{\rm int}}: \mathcal M^{\text{\rm main}}_{k+1;\ell}(L(u),\beta) \to X
$$
be the evaluation map at the $i$-th interior marked point.
($i=1,\ldots,\ell$.)
Namely
$$
ev_i^{\text{\rm int}}((\Sigma,\varphi,\{z^+_i\}, \{z_i\})) = \varphi(z^+_i).
$$
We put $\underline B = \{1,\ldots,B\}$ and denote the set of all maps $\text{\bf p}:
\{1,\ldots,\ell\} \to \underline B$ by
$Map(\ell,\underline B)$. We write $\vert\text{\bf
p}\vert = \ell$ if $\text{\bf p} \in Map(\ell,\underline B)$.
\par
We define a fiber product
\begin{equation}
\mathcal M^{\text{\rm main}}_{k+1;\ell}(L(u),\beta;\text{\bf p})
=
\mathcal M^{\text{\rm main}}_{k+1;\ell}(L(u),\beta)
{}_{(ev_1^{\text{int}},\ldots,ev_{\ell}^{\text{int}})}
\times_{X^{\ell}} \prod_{i=1}^{\ell} D_{\text{\bf p}(i)}.
\end{equation}
Here the right hand side is the set of all
$((\Sigma,\varphi,\{z^+_i\}, \{z_i\}),(p_1,\ldots,p_{\ell}))$ such
that $(\Sigma,\varphi,\{z^+_i\}, \{z_i\}) \in \mathcal M^{\text{\rm
main}}_{k+1;\ell}(L(u),\beta)$, $p_i \in D_{\text{\bf p}(i)}$, and
that $ \varphi(z^+_i) = p_i $.
\par
We define
$$
ev_i:
\mathcal M^{\text{\rm main}}_{k+1;\ell}(L(u),\beta)
\to L(u)
$$
by
$$
ev_i((\Sigma,\varphi,\{z^+_i\}, \{z_i\})) = \varphi(z_i).
$$
It induces
$$
ev_i:
\mathcal M^{\text{\rm main}}_{k+1;\ell}(L(u),\beta;\text{\bf p})
\to L(u)
$$
in an obvious way.
\begin{lem}\label{kstructure}
$\mathcal M^{\text{\rm main}}_{k+1;\ell}(L(u),\beta;\text{\bf p})$
has a Kuranishi structure such that each Kuranishi chart is
$T^n$-equivariant and the coordinate change preserves the $T^n$
action. Moreover the evaluation map
$$
ev = (ev_0,ev_1,\ldots,ev_k):
\mathcal M^{\text{\rm main}}_{k+1;\ell}(L(u),\beta;\text{\bf p})
\to L(u)^{k+1}
$$
is weakly submersive and $T^n$-equivariant. Our Kuranishi structure
has a tangent bundle and is oriented.
\end{lem}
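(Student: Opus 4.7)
The plan is to obtain the Kuranishi structure on $\mathcal M^{\text{\rm main}}_{k+1;\ell}(L(u),\beta;\text{\bf p})$ by taking a fiber product construction starting from the $T^n$-equivariant Kuranishi structure on the unconstrained moduli space $\mathcal M^{\text{\rm main}}_{k+1;\ell}(L(u),\beta)$ that was described in the paragraph preceding the lemma (constructed in Appendix 2 of \cite{fooo08}). Concretely, on each Kuranishi chart $(V,E,\Gamma,\psi,s)$ of $\mathcal M^{\text{\rm main}}_{k+1;\ell}(L(u),\beta)$, the $T^n$-action commutes with $\Gamma$, the Kuranishi map $s$ is $T^n$-equivariant, all coordinate changes are $T^n$-equivariant, and the interior evaluation maps $ev^{\text{int}}_i:V\to X$ and boundary evaluation maps $ev_j:V\to L(u)$ are $T^n$-equivariant and (weakly) submersive. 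The $T^n$-equivariance holds because the $T^n$-action on maps $\varphi$ by post-composition with the $T^n$-action on $X$ commutes with the Cauchy--Riemann equation associated to a $T^n$-invariant almost complex structure.

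Next, I would take the fiber product chart-by-chart. Since each divisor $D_a = \pi^{-1}(P_a)$ is a $T^n$-invariant complex submanifold of $X$, and since $ev^{\text{int}} = (ev^{\text{int}}_1, \ldots, ev^{\text{int}}_\ell): V \to X^\ell$ is a $T^n$-equivariant submersion, the transverse fiber product
$$
V(\text{\bf p}) := V \times_{X^\ell} \prod_{i=1}^\ell D_{\text{\bf p}(i)}
$$
is a smooth $T^n$-invariant submanifold of $V \times \prod_i D_{\text{\bf p}(i)}$, with $\Gamma$-action inherited from the first factor. Pulling the obstruction bundle $E$ back to $V(\text{\bf p})$ and restricting $s$ gives a $T^n$-equivariant Kuranishi chart on $\mathcal M^{\text{\rm main}}_{k+1;\ell}(L(u),\beta;\text{\bf p})$. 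The coordinate changes between such charts are obtained by restricting the coordinate changes on the unconstrained moduli space, and remain $T^n$-equivariant because $ev^{\text{int}}$ is $T^n$-equivariant and each $D_{\text{\bf p}(i)}$ is $T^n$-invariant. The tangent bundle structure is preserved by transverse fiber product of smooth manifolds.

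For the weak submersivity of $ev = (ev_0,\ldots,ev_k): V(\text{\bf p}) \to L(u)^{k+1}$, the point is that although we restrict to the submanifold $V(\text{\bf p})\subset V$, the restriction of $ev$ remains submersive. This can be arranged by selecting the obstruction bundle $E$ in the original construction with sections supported away from a neighborhood of $\partial \Sigma$ (so that interior fiber product conditions do not interact with the boundary evaluation), as is done in Section 7.1 of \cite{fooo06}. Orientations are induced as follows: the $D_a$ carry their canonical complex orientations, while the unconstrained moduli space is oriented via the $T^n$-invariant relative spin structure on $L(u)$ (the standard spin structure of Proposition \ref{copy37.177}); fiber product along the submersion $ev^{\text{int}}$ then gives a canonical induced orientation on $V(\text{\bf p})$ by the usual convention of Chapter 8 of \cite{fooo06}.

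The principal obstacle is the compatibility of all these choices: one must simultaneously arrange the $T^n$-equivariance of the Kuranishi data, the submersivity of the boundary evaluations, and the global consistency of coordinate changes on $\mathcal M^{\text{\rm main}}_{k+1;\ell}(L(u),\beta)$ before passing to the fiber product. This is exactly the content of the $T^n$-equivariant construction in Appendix 2 of \cite{fooo08}, where the obstruction bundles are chosen compatibly under forgetful and gluing maps. Given that input, the lemma is essentially a verification that fiber product with $T^n$-invariant divisors preserves all the required properties, which follows from the standard functoriality of Kuranishi structures under fiber product along weakly submersive maps, developed in Appendix A1 of \cite{fooo06}.
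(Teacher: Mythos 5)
Your proof takes the same route as the paper's: the Kuranishi structure is obtained as a fiber product (citing section A1.2 of \cite{fooo06}), $T^n$-equivariance is inherited because all maps used in the fiber product are $T^n$-equivariant, orientability comes from Chapter 8 of \cite{fooo06}, and weak submersivity of $ev$ is traced to section 7.1 of \cite{fooo06}. The paper's own proof is essentially these same four observations stated tersely, so your more detailed version is a faithful expansion. One small imprecision: your mechanism for preserving weak submersivity of the boundary evaluation after cutting down by the interior fiber product condition --- ``sections of $E$ supported away from a neighborhood of $\partial\Sigma$'' --- is not really the operative point. What one actually needs is that the combined map $(ev_0,\ldots,ev_k,ev_1^{\rm int},\ldots,ev_\ell^{\rm int}):V\to L(u)^{k+1}\times X^\ell$ is a submersion on the Kuranishi chart $V$; then $V(\mathbf p)=V\times_{X^\ell}\prod_i D_{\mathbf p(i)}$ is smooth and $ev|_{V(\mathbf p)}$ remains submersive by the elementary fact about restricting a submersion $f:M\to A\times B$ to $f^{-1}(A\times C)$. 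Arranging the combined map to be submersive is part of the construction of the chart (by enlarging $V$ with enough deformations of $\varphi$), not a statement about where the obstruction bundle's sections are supported. This does not affect the correctness of your overall argument, since the cited sources carry out this step.
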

\begin{proof}
The fiber product of Kuranishi structures is defined in Section A1.2
\cite{fooo06} (= Section A1.2 \cite{fooo06pre}). 
Since the maps we used here to define the fiber
product are all $T^n$-equivariant it follows that the Kuranishi
structure on the fiber product is $T^n$-equivariant. The
orientability is proved in Chapter 8 \cite{fooo06} (= Chapter 9 \cite{fooo06pre}). The fact that
$ev$ is well defined and is weakly submersive is proved in  
Section 7.1 \cite{fooo06} (=Section
29 \cite{fooo06pre}) also.
\end{proof}

We next describe the boundary of our Kuranishi structure. For the
description, we need to prepare some notations. We denote the set of
shuffles of $\ell$ elements by
\begin{equation}\label{shuff}
\text{\rm Shuff}(\ell) = \{ (\mathbb L_1,\mathbb L_2) \mid \mathbb
L_1 \cup \mathbb L_2 = \{1,\ldots,\ell\}, \,\,\mathbb L_1 \cap
\mathbb L_2 = \emptyset \}.
\end{equation}
We will define a map
\begin{equation}\label{split}
\text{Split}: \text{\rm Shuff}(\ell) \times Map(\ell,\underline B)
\longrightarrow \bigcup_{\ell_1+\ell_2 = \ell} Map(\ell_1,\underline
B) \times Map(\ell_2,\underline B),
\end{equation}
as follows: Let $\text{\bf p} \in Map(\ell,\underline B)$ and
$(\mathbb L_1,\mathbb L_2) \in \text{\rm Shuff}(\ell)$. We put
$\ell_j = \# (\mathbb L_j)$ and let $\mathfrak i_j: \{1,\ldots,\ell_j\}
\cong \mathbb L_j$ be the order preserving bijection. We consider
the map $\text{\bf p}_j: \{1,\ldots,\ell_j\} \to \underline B$
defined by $ \text{\bf p}_j(i) = \text{\bf p}(\mathfrak i_j(i)) $, and
set
$$
\text{Split}((\mathbb L_1,\mathbb L_2),\text{\bf p}): = (\text{\bf
p}_1,\text{\bf p}_2).
$$
\par
We now define a gluing map 
\begin{equation}\label{glueing}
\aligned \text{Glue}^{(\mathbb L_1,\mathbb L_2),\text{\bf
p}}_{\ell_1,\ell_2;k_1,k_2;i;\beta_1,\beta_2}: &\mathcal
M^{\text{\rm main}}_{k_1+1;\ell_1}(L(u),\beta_1;\text{\bf p}_1)
{}_{ev_0}\times_{ev_i} \mathcal M^{\text{\rm
main}}_{k_2+1;\ell_2}(L(u),\beta_2;\text{\bf p}_2)
\\
&\to
\mathcal M^{\text{\rm main}}_{k+1;\ell}(L(u),\beta;\text{\bf p})
\endaligned
\end{equation}
associated to below.
Here $k = k_1 + k_2 -1$, $\ell = \ell_1 + \ell_2$,
$\beta = \beta_1 + \beta_2$, and $i = 1,\ldots,k_2$.
Let
$$
\mathbb S_j = ((\Sigma_{(j)},\varphi_{(j)},\{z^+_{i,{(j)}}\},
\{z_{i,{(j)}}\}) \in \mathcal M^{\text{\rm
main}}_{k_j+1;\ell_j}(L(u),\beta_j;\text{\bf p}_j)
$$
$j = 1,2$. We glue $z_{0,(1)} \in \partial \Sigma_1$ with $z_{i,(2)}
\in
\partial \Sigma_2$ to obtain
$$
\Sigma = \Sigma_1 \#_{i} \Sigma_2.
$$
Suppose $(\mathbb S_1,\mathbb S_2)$ is an element of the fiber
product in the left hand side of (\ref{glueing}). Namely we assume
$$
\varphi_{(1)}(z_{0,(1)}) = \varphi_{(2)}(z_{i,(2)}).
$$
This defines a holomorphic map
$$
\varphi = \varphi_{(1)} \#_i \varphi_{(2)}
: \Sigma \to X
$$
by putting $\varphi = \varphi_{(j)}$ on $\Sigma_j$.
\par
We define the $m$-th interior marked point
$z_m^{+}$ of $\varphi$ as follows: 
If $m \in \mathbb L_j$, we have 
$c \in \{1,\ldots,\ell_j\}$ such that $\mathfrak i_j(c) = m$ 
where $\mathfrak i_j:
\{1,\ldots,\ell_j\} \to \mathbb L_j$ is the order preserving
bijection. Then we put $z_m^+=z_{c;(j)}^+  \in
\Sigma_j \subset \Sigma$.
\par
We define the boundary marked points
$(z_0,z_1,\ldots,z_k)$ by
$$
(z_0,z_1,\ldots,z_k)
= (z_{0,(2)},\ldots,z_{i-1,(2)},z_{1,(1)},\ldots,
z_{k_1,(1)},z_{i+1,(2)},\ldots,z_{k_2,(2)}).
$$
Now we put
$$
\mathbb S = ((\Sigma,\varphi,\{z^+_{i}\}, \{z_{i}\})
$$
and
$$
\text{Glue}^{(\mathbb L_1,\mathbb L_2),\text{\bf
p}}_{\ell_1,\ell_2;k_1,k_2;i;\beta_1,\beta_2} (\mathbb S_1,\mathbb
S_2) = \mathbb S.
$$
\begin{lem}\label{bdry}
The boundary of $\mathcal M^{\text{\rm
main}}_{k+1;\ell}(L(u),\beta;\text{\bf p})$ is isomorphic to the
union of  the images of $\text{\rm Glue}^{(\mathbb L_1,\mathbb
L_2),\text{\bf p}}_{\ell_1,\ell_2;k_1,k_2;i;\beta_1,\beta_2}$ for $k
= k_1 + k_2 -1$, $\ell = \ell_1 + \ell_2$, $\beta = \beta_1 +
\beta_2$, and $i = 1,\cdots,k_2$ as a space with Kuranishi
structure. The isomorphism preserves the $T^n$ action.
\par
The isomorphism commutes with the evaluation maps at the boundary marked
points.
\end{lem}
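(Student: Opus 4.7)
The plan is to reduce Lemma \ref{bdry} to the corresponding statement for $\mathcal M^{\text{\rm main}}_{k+1;\ell}(L(u),\beta)$ without the divisor constraints, which is already established in \cite{fooo00} (sections 17, 18) and section 7.1 \cite{fooo06} (= section 29 \cite{fooo06pre}). Recall that the boundary of $\mathcal M^{\text{\rm main}}_{k+1;\ell}(L(u),\beta)$ as a Kuranishi space is the union, over decompositions $k = k_1+k_2-1$, $\ell_1+\ell_2 = \ell$, $\beta = \beta_1+\beta_2$, and over choices of which boundary marked point of the second factor gets identified with the $0$-th marked point of the first factor, of fiber products
$$
\mathcal M^{\text{\rm main}}_{k_1+1;\ell_1}(L(u),\beta_1) {}_{ev_0}\times_{ev_i} \mathcal M^{\text{\rm main}}_{k_2+1;\ell_2}(L(u),\beta_2),
$$
together with a combinatorial datum distributing the $\ell$ unordered interior marked points among the two bubble components. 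This last datum is exactly the shuffle $(\mathbb L_1,\mathbb L_2) \in \text{\rm Shuff}(\ell)$.

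Next I would take the fiber product with $\prod_{i=1}^\ell D_{\text{\bf p}(i)}$ via $(ev_1^{\text{\rm int}},\ldots,ev_\ell^{\text{\rm int}})$. Since each $D_{\text{\bf p}(i)}$ is $T^n$-invariant and the Kuranishi structure constructed in Appendix 2 of \cite{fooo08} is $T^n$-equivariant with $ev_i^{\text{\rm int}}$ being $T^n$-equivariant weak submersions on the strata where they are defined, the formation of the fiber product commutes with the decomposition of the boundary. Given a shuffle $(\mathbb L_1, \mathbb L_2)$, the interior marked points indexed by $\mathbb L_j$ end up on the $j$-th bubble component, and the induced constraint there is precisely $\text{\bf p}_j$, which is the $j$-th component of $\text{\rm Split}((\mathbb L_1,\mathbb L_2),\text{\bf p})$. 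Thus after taking the fiber product, each boundary piece becomes
$$
\mathcal M^{\text{\rm main}}_{k_1+1;\ell_1}(L(u),\beta_1;\text{\bf p}_1) {}_{ev_0}\times_{ev_i} \mathcal M^{\text{\rm main}}_{k_2+1;\ell_2}(L(u),\beta_2;\text{\bf p}_2),
$$
and the combined gluing map is identified with $\text{\rm Glue}^{(\mathbb L_1,\mathbb L_2),\text{\bf p}}_{\ell_1,\ell_2;k_1,k_2;i;\beta_1,\beta_2}$ by construction.

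The $T^n$-equivariance of the resulting isomorphism of Kuranishi spaces follows because the original boundary decomposition is built out of gluing two bordered holomorphic maps at a boundary node in a $T^n$-equivariant way (the $T^n$ action acts on the target $X$ and passes through all evaluation maps), and because fiber products of $T^n$-equivariant Kuranishi charts carry the induced $T^n$ action. Compatibility with the boundary evaluation maps $ev_0,\ldots,ev_k$ is built into the way the boundary marked points of $\Sigma = \Sigma_1 \#_i \Sigma_2$ are labelled in the definition of $\text{\rm Glue}$.

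The main obstacle I anticipate is bookkeeping rather than geometry: one must verify that the shuffle map $\text{\rm Split}$ accounts for exactly the right combinatorics of distributing interior marked points, so that no boundary stratum is counted twice and none is missed; and one must check that the $T^n$-equivariant Kuranishi structure on the divisor-constrained moduli space is compatible with the fiber-product-of-Kuranishi-structures construction of section A1.2 \cite{fooo06} at the boundary (which requires the divisors $D_a$ to be transverse, in the Kuranishi sense, to $ev_i^{\text{\rm int}}$ on all strata). The latter is ensured by the weak submersivity of $ev_i^{\text{\rm int}}$ on each stratum of $\mathcal M^{\text{\rm main}}_{k+1;\ell}(L(u),\beta)$, which is part of the equivariant Kuranishi construction cited above.
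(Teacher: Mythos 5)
Your argument is correct and matches the paper's approach: the paper's own proof is the one-line remark that the lemma ``directly follows from our construction of the Kuranishi structure we gave in section 7.1 \cite{fooo06} (= section 29 \cite{fooo06pre})'', which is exactly the reduction you spell out — take the known boundary decomposition of $\mathcal M^{\text{\rm main}}_{k+1;\ell}(L(u),\beta)$, fiber-product with $\prod D_{\text{\bf p}(i)}$, and observe that the shuffle/Split combinatorics track the distribution of interior marked points. One minor over-caution: the fiber product of Kuranishi structures from section A1.2 \cite{fooo06} does not require any transversality of $ev_i^{\text{\rm int}}$ against the $D_a$ (that is one of the points of working with Kuranishi structures), so the concern raised in your final paragraph is not actually an obstacle.
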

The lemma directly follows from our construction of the Kuranishi
structure we gave in Section 7.1 \cite{fooo06} (= Section 29 \cite{fooo06pre}).
\par
Let $\mathfrak S_{\ell}$ be the symmetric group of $\ell$ elements. It acts
on $\mathcal M^{\text{\rm main}}_{k+1;\ell}(L(u),\beta)$ by changing
the indices of interior marked points.  It also acts on
$Map(\ell,\underline B)$ by $\sigma \cdot \text{\bf p} = \text{\bf
p} \circ \sigma^{-1}$. Then for $\sigma \in \mathfrak S_{\ell}$ we have
\begin{equation}\label{Saction}
\sigma_*:
\mathcal M^{\text{\rm main}}_{k+1;\ell}(L(u),\beta;\text{\bf p})
\to
\mathcal M^{\text{\rm main}}_{k+1;\ell}(L(u),\beta;\sigma\cdot\text{\bf p}).
\end{equation}
\par
We next generalize Lemma 11.2 \cite{fooo08} to our situation.
Let
\begin{equation}
\mathfrak{forget}_0:  \mathcal M^{\text{\rm
main}}_{k+1;\ell}(L(u),\beta;\text{\bf p}) \to  \mathcal
M^{\text{\rm main}}_{1;\ell}(L(u),\beta;\text{\bf p})
\label{37.183}\end{equation} be the forgetful map which forgets all
the boundary marked points except the $0$-th one. We may choose our
Kuranishi structures so that (\ref{37.183}) is compatible with
$\mathfrak{forget}_0$ in the same sense as in Lemma 7.3.8 \cite{fooo06} (= Lemma 31.8 \cite{fooo06pre}).
\begin{lem}\label{37.184}
\!For each given $E > 0$, there exists a system of
multisections $\mathfrak s_{\!\beta,k+1,\ell,\text{\bf p}}$ on
$\mathcal M^{\text{\rm main}}_{k+1;\ell}(L(u),\beta;\text{\bf p})$
for $\beta \cap \omega < E$, $\text{\bf p} \in
Map(\ell,\underline B)$. They have the following properties:
\begin{enumerate}
\item They are transversal to $0$.
\par
\item They are invariant under the $T^n$ action.
\par
\item The multisection $\mathfrak s_{\beta,k+1,\ell,\text{\bf p}}$
is the pull-back of the multisection $\mathfrak s_{\beta,1,\ell,\text{\bf p}}$
by the forgetful map $(\ref{37.183})$.
\par
\item The restriction of $\mathfrak s_{\beta,k+1,\ell,\text{\bf p}}$
to the image of $\text{\rm Glue}^{(\mathbb L_1,\mathbb
L_2),\text{\bf p}}_{\ell_1,\ell_2;k_1,k_2;i; \beta_1,\beta_2}$ is
the fiber product of the multisections $\mathfrak
s_{\beta_j,k_j+1,\ell_j,\text{\bf p}_j}$ $j=1,2$ with respect to the
identification of the boundary given in Lemma $\ref{bdry}$.
\par
\item For $\ell=0$ the multisection
$\mathfrak s_{\beta,k+1,0,\emptyset}$ coincides with one
defined in Lemma $11.2$ \cite{fooo08}.
\item The map $(\ref{Saction})$ preserves our system of multisections.
\end{enumerate}
\end{lem}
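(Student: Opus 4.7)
The plan is to build the system by nested induction, mirroring the $\ell = 0$ construction in Lemma 11.2 of \cite{fooo08} but enriched to account for the interior constraints $\mathbf{p}$ and the extra symmetries. The outer induction runs on the energy $\beta\cap\omega < E$; within each energy level we only construct $\mathfrak s_{\beta,1,\ell,\mathbf{p}}$ directly on $\mathcal M^{\text{\rm main}}_{1;\ell}(L(u),\beta;\mathbf{p})$, and then \emph{define} $\mathfrak s_{\beta,k+1,\ell,\mathbf{p}}$ for $k \ge 1$ as the pull-back under the forgetful map $\mathfrak{forget}_0$ in (\ref{37.183}). Property (3) is thus built in by fiat, and the pull-back is meaningful because the Kuranishi structures themselves are compatible with $\mathfrak{forget}_0$ by the construction in Section 7.3 of \cite{fooo06}.

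For the inductive step at a given $\beta$, Lemma \ref{bdry} describes the boundary of $\mathcal M^{\text{\rm main}}_{1;\ell}(L(u),\beta;\mathbf{p})$ as a union of images of gluing maps. In each such piece the two factors $\mathcal M^{\text{\rm main}}_{k_j+1;\ell_j}(L(u),\beta_j;\mathbf{p}_j)$ have $\beta_j\cap\omega < \beta\cap\omega$, so by the outer induction a multisection is already defined on the $k_j = 0$ level, and by the pull-back convention it is defined for all $k_j$. I then prescribe $\mathfrak s_{\beta,1,\ell,\mathbf{p}}$ on the boundary to be the fiber-product multisection of Lemma A1.47 of \cite{fooo06}; this verifies property (4). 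The first thing to check is that the resulting boundary data is coherent on the codimension$\ge 2$ corners where several boundary strata meet, which reduces to the associativity of fiber products together with the already-secured forgetful-map compatibility of the lower-energy data. Once the boundary multisection is fixed, I extend it to the interior using a partition-of-unity argument on $T^n$-invariant Kuranishi neighborhoods, which is possible because $\mathcal M^{\text{\rm main}}_{1;\ell}(L(u),\beta;\mathbf{p})$ is compact.

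The remaining properties are obtained by averaging. For property (2) I average the extension over $T^n$ with respect to Haar measure; the boundary data is already $T^n$-invariant by Lemma \ref{kstructure} and by induction, so averaging does not disturb it. For property (6), I orbit-average the family $\{\mathfrak s_{\beta,1,\ell,\mathbf{p}}\}_{\mathbf{p}}$ over $\mathfrak S_\ell$ using the maps (\ref{Saction}); this commutes with the forgetful map and with the gluing maps, so (3) and (4) survive. Transversality (property 1) is then achieved by perturbing within the space of $T^n$- and $\mathfrak S_\ell$-invariant multisections: since each $ev_0$ is $T^n$-equivariant and weakly submersive by Lemma \ref{kstructure}, and since $L(u)$ is a free $T^n$-orbit, the $T^n$-invariant perturbations are still abundant enough to achieve transversality, as in the construction of Appendix 2 of \cite{fooo08}. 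Property (5) is automatic because when $\ell = 0$ the construction above reduces verbatim to that of Lemma 11.2 of \cite{fooo08}.

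The main obstacle is the corner-compatibility of the boundary data: at a deep stratum of $\partial \mathcal M^{\text{\rm main}}_{1;\ell}(L(u),\beta;\mathbf{p})$ the same multisection must arise as a fiber product in several different orders (corresponding to different shuffles $(\mathbb L_1,\mathbb L_2)$ and different attaching indices $i$). This forces us to propagate property (3) through the whole induction so that every pull-back multisection for $k_j \ge 1$ is unambiguously defined from the $k_j = 0$ data; only then does the fiber-product coherence on corners follow from pure associativity, rather than requiring a further compatibility check. The decision to construct directly only at $k = 0$ and to use pull-back everywhere else is what makes (3) and (4) stay in sync throughout the induction.
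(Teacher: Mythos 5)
Your overall architecture --- inducting on energy, constructing only at $k=0$, imposing (3) by defining $\mathfrak s_{\beta,k+1,\ell,\text{\bf p}}$ as a pull-back under the forgetful map, prescribing the boundary via fiber products, and noting that $T^n$-equivariance plus the submersivity of $ev_0$ are the levers that make the boundary data automatically consistent --- matches the paper. The observation about corner coherence reducing to associativity once (3) is enforced throughout is a correct and helpful elaboration of a step the paper leaves implicit.

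There is, however, a genuine gap in how you achieve $T^n$-invariance. Averaging a \emph{multisection} over $T^n$ with respect to Haar measure is not a well-defined operation: a multisection at $x$ is an element of $\mathcal S^l(E_{\alpha,x}) = E_{\alpha,x}^l/\mathfrak S_l$, and as $g$ ranges over $T^n$ there is no canonical matching of branches at $x$ with branches at $g\cdot x$, so the integral $\int_{T^n} g^{-1}\mathfrak s(g\cdot x)\,dg$ has no meaning as a multisection (and even for single-valued sections, averaging a transversal section need not produce a transversal one). The paper avoids this entirely: it passes to the quotient $(V/T^n, E/T^n)$, which is legitimate precisely because the $T^n$-action on $V$ is \emph{free}, runs the standard Kuranishi-theoretic transversality-and-extension argument on the quotient, and then lifts. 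This simultaneously delivers (1) and (2) without ever needing an averaging step. Your proposal would be repaired by replacing ``average over $T^n$'' with this free-quotient argument. Note also that the finite-group case is different: for (6), ``averaging'' over $\mathfrak S_\ell$ does make sense (it amounts to taking the multiset union over the $\mathfrak S_\ell$-orbit, equivalently working on the orbifold quotient), which is what the paper does, so that part of your argument is fine.

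One smaller point: the paper stresses that the fiber-product multisection (4) assigns on the boundary is \emph{already transversal} because $ev_0$ on the perturbed lower-energy moduli spaces is a submersion, so no boundary perturbation is needed and the extension problem is purely to the interior. You invoke the submersivity of $ev_0$ only in passing; making that observation do the work it does in the paper would simplify your proposal's boundary step and explain why the induction closes.
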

\begin{proof}
The proof is similar to the proof of Lemma $11.2$ \cite{fooo08}. We
define $\mathfrak s_{\beta,k+1,\ell,\text{\bf p}}$ for $\text{\bf p}
\in Map(\ell,\underline B)$ by an induction over $\omega \cap \beta$. The case $\ell=0$ is proved in Lemma $11.2$
\cite{fooo08}. Condition (4) above determines the multisection on
the boundary. $T^n$ equivariance implies that $ ev_0: \mathcal
M^{\text{\rm main,reg}}_{k+1;\ell}(L(u),\beta;\text{\bf p})
^{\mathfrak s_{\beta,k+1,\ell,\text{\bf p}}} \to L(u) $ is a
submersion. Here
$$
\mathcal M^{\text{\rm main,reg}}_{k+1;\ell}(L(u),\beta;\text{\bf p})
^{\mathfrak s_{\beta,k+1,\ell,\text{\bf p}}}
=(\mathfrak s_{\beta,k+1,\ell,\text{\bf p}})^{-1}(0).
$$
This fact and the induction hypothesis imply that the multisection
we defined by (4) on the boundary of our moduli space is
automatically transversal. (This is the important point that makes
the proof of Lemma \ref{37.184} easier than corresponding general
discussion given in Section 7.1 \cite{fooo06} (= Section 30 \cite{fooo06pre}). See Section 11
\cite{fooo08} for more discussion about this point.)
\par
Thus we have defined a multisection on a neighborhood of the
boundary. We can extend it to the interior so that it satisfies (1)
and (2) in the following way: We first take the quotient
$(V/T^n,E/T^n)$ of our Kuranishi chart. Since the $T^n$ action is free
on $V$, the quotient space is a manifold on which $\Gamma$ acts. Thus
we can use the standard result of the theory of Kuranishi structure
to define a transversal multisection on this chart where the
multisection is already defined. We lift it to $V$ and obtain a
required multisection there. In this way we can construct the
multisection inductively on the Kuranishi charts using the good
coordinate system. (See Corollary 15.15 \cite{fooo08}.)
\par
To show (6), it suffices to take the quotient by the action of
symmetric group and work out the induction on the quotient spaces.
The proof of Lemma \ref{37.184} is now complete.
\end{proof}

\begin{cor}\label{37.187}
$\mathcal M^{\text{\rm main}}_{k+1;\ell}(L(u),\beta;\text{\bf p})
^{\mathfrak s_{\beta,k+1,\ell,\text{\bf p}}}$ is empty,
if one of the following conditions are satisfied.
\begin{enumerate}
\item
$\mu(\beta) - \sum_i (2n - 
\dim D_{\text{\bf p}(i)} -2) < 0. $
\item
$\mu(\beta) - \sum (2n - 
\dim D_{\text{\bf p}(i)} -2) = 0 $ and
$\beta\ne 0$.
\end{enumerate}
\end{cor}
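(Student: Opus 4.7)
The plan is to reduce to $k=0$ via the forgetful map, then contradict a virtual dimension upper bound against a $T^n$-orbit lower bound on the actual dimension of the transversal zero set.

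First, by property (3) of Lemma \ref{37.184}, the multisection $\mathfrak s_{\beta,k+1,\ell,\text{\bf p}}$ is pulled back from $\mathfrak s_{\beta,1,\ell,\text{\bf p}}$ along the forgetful map $\mathfrak{forget}_0$ of \eqref{37.183}. Hence $\mathfrak{forget}_0$ sends the zero set of the former into that of the latter, and it suffices to show emptiness of $\mathcal M^{\text{\rm main}}_{1;\ell}(L(u),\beta;\text{\bf p})^{\mathfrak s_{\beta,1,\ell,\text{\bf p}}}$ under hypothesis (1) or (2).

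Next, I compute the virtual real dimension. The space $\mathcal M^{\text{\rm main}}_{1;\ell}(L(u),\beta)$ has virtual dimension $n+\mu(\beta)+2\ell-2$, and taking the fiber product over $X^\ell$ with $\prod_i D_{\text{\bf p}(i)}$ subtracts $\sum_i(2n-\dim D_{\text{\bf p}(i)})$, yielding
\[
\dim \mathcal M^{\text{\rm main}}_{1;\ell}(L(u),\beta;\text{\bf p})
= n-2+\mu(\beta)-\sum_{i=1}^{\ell}\bigl(2n-\dim D_{\text{\bf p}(i)}-2\bigr).
\]
Under either hypothesis (1) or (2) this is at most $n-2$, hence strictly less than $n$.

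The key step is to invoke $T^n$-equivariance. By Lemma \ref{kstructure} and property (2) of Lemma \ref{37.184}, each Kuranishi chart $(V,E,\Gamma,\psi,s)$, its obstruction bundle, and the chosen multisection are all $T^n$-equivariant, and $ev_0$ is $T^n$-equivariant as well. Since the $T^n$-action on $L(u)$ is free and commutes with the finite $\Gamma$-action, the pointwise stabilizer in $T^n$ of any zero of $\mathfrak s$ lying over $p\in L(u)$ injects into the $T^n$-stabilizer of $p$, which is trivial. Thus $T^n$ acts with finite isotropy on the zero set, so every nonempty connected component has orbifold dimension at least $n$. Combined with the virtual dimension bound above, the zero set must be empty, as desired. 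The main delicate point I anticipate is verifying that the $T^n$-action really descends to orbits of full dimension $n$ after passing to the $\Gamma$-quotient of Kuranishi charts; this is guaranteed by the commutativity of the two actions together with the freeness of $T^n$ on $L(u)$, as used above.
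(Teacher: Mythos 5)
Your core argument is the same as the paper's: reduce to $k=0$ via the forgetful map (Lemma \ref{37.184}\,(3)), compute the virtual dimension, and note that a transversal, $T^n$-invariant perturbation with free $T^n$-orbits would force the zero set, if nonempty, to have dimension at least $n$, contradicting the virtual-dimension bound $< n$ under either hypothesis. This is exactly equation \eqref{dimensionformula} together with Lemma \ref{37.184}\,(1),(2).

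The one thing you should not elide is the separate treatment of $\beta = 0$, which the paper handles deliberately and differently. Hypothesis (1) can hold with $\beta = 0$: take $\ell > 0$ with some $D_{\mathbf p_i}$ of complex codimension at least two, so that $\sum_i(2n-\dim D_{\mathbf p_i}-2)>0$. The paper disposes of this case by the elementary observation that constant discs land in $L(u)$ and $L(u) \cap D = \emptyset$, so the fiber product $\mathcal M^{\text{main}}_{1;\ell}(L(u),0;\mathbf p)$ is already empty before perturbation. It then invokes the $T^n$-equivariance argument \emph{only} for $\beta \neq 0$, and the parenthetical right after \eqref{dimensionformula} states why: $\mathcal M^{\text{main}}_{1;0}(L(u),\beta_0)$ has virtual dimension $n-2$ yet is nonempty. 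In other words, the implication ``nonempty zero set $\Rightarrow$ dimension $\geq n$'' is \emph{not} an unconditional consequence of transversality plus $T^n$-invariance — for $\beta_0 = 0$ the domain is unstable (a constant disc with one boundary marked point has a positive-dimensional automorphism group), and the usual perturbation framework is degenerate there. Your argument reaches the right conclusion in the $\beta=0$, $\ell>0$ case only vacuously, because the unperturbed fiber product is already empty, and you should say so rather than appeal to the equivariant dimension count in the regime where the paper itself flags that it fails. The split between $\beta=0$ and $\beta\neq 0$ is thus not merely stylistic.
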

\begin{proof}
We may assume $k = 0$, by Lemma \ref{37.184} (3).

We first consider the case of $\beta = 0$. All the holomorphic
curves in this homotopy class are constant maps. Then our moduli
space is empty for $\ell >0$, since $L(u) \cap D = \emptyset$. This
implies the lemma for the case $\beta = 0$.

We next consider the case $\beta \ne 0$. The virtual dimension of
$\mathcal M^{\text{\rm main}}_{1;\ell}(L(u),\beta;\text{\bf p})$
(which is, by definition, its dimension as a space with Kuranishi
structure) is
\begin{equation}\label{dimensionformula}
n + \mu(\beta) - \sum (2n - 
\dim D_{\text{\bf p}(i)}-2) - 2.
\end{equation}
By the transversality (Lemma \ref{37.184} (1)) and $T^n$
equivariance (Lemma \ref{37.184} (2)), we find that
(\ref{dimensionformula}) is not smaller than $\dim L(u) = n$ if the
perturbed moduli space is nonempty. (We use $\beta \ne 0$ here: If
$\beta = 0$ the virtual dimension of $\mathcal M^{\text{\rm
main}}_{1;0}(L(u),\beta_0)$ is $n-2$ but it is nonempty.) This
finishes the proof of the lemma for the case $\beta \ne 0$.
\end{proof}

We now assume
\begin{equation}\label{dimensionformula2}
\mu(\beta) - \sum (2n - 
\dim D_{\text{\bf p}(i)}-2) = 2,
\end{equation}
and $\beta\ne 0$. Then
$$
\mathcal M^{\text{\rm main}}_{1;\ell}(L(u),\beta;\text{\bf p})
^{\mathfrak s_{\beta,1,\ell,\text{\bf p}}}
$$
has a virtual fundamental {\it cycle}, by Corollary \ref{37.187}. We
introduce the following invariant

\begin{defn}\label{cdef}
We define $c(\beta;\text{\bf p}) \in \Q$ by
$$
c(\beta;\text{\bf p})[L(u)]
= ev_{0*}([\mathcal M^{\text{\rm main}}_{1;\ell}(L(u),\beta;\text{\bf p})
^{\mathfrak s_{\beta,1,\ell,\text{\bf p}}}]).
$$
\end{defn}
\begin{lem}\label{cbetawell}
The number $c(\beta;\text{\bf p})$ is independent of the choice of the system of
multisections $\mathfrak s_{\beta,k+1}$ satisfying $(1)$ - $(6)$ of
Proposition $\ref{37.184}$.
\end{lem}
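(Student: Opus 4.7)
The plan is to apply the standard cobordism argument for two choices of perturbation. Given two systems $\{\mathfrak{s}^0_{\beta,k+1,\ell,\text{\bf p}}\}$ and $\{\mathfrak{s}^1_{\beta,k+1,\ell,\text{\bf p}}\}$ both satisfying (1)--(6) of Lemma \ref{37.184}, first I would construct an interpolating $[0,1]$-parameterized family $\mathfrak{s}^{\mathrm{para}}_{\beta,k+1,\ell,\text{\bf p}}$ of multisections on $\mathcal{M}^{\mathrm{main}}_{k+1;\ell}(L(u),\beta;\text{\bf p}) \times [0,1]$ that restricts to $\mathfrak{s}^j$ on $\{j\}\times\mathcal{M}$ for $j=0,1$ and satisfies the obvious parameterized analogs of (1)--(6). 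This construction runs parallel to the proof of Lemma \ref{37.184}: induct on $\omega\cap\beta < E$, use (4) to force the interpolation on the normal-crossing boundary in terms of the lower-energy data already built, and extend transversally to the interior using the $T^n$-quotient trick, all while preserving properties (2), (3), (5), and (6).

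With $\mathfrak{s}^{\mathrm{para}}$ in hand, the perturbed parameterized moduli space is a compact virtual chain of dimension $n+1$, and its $ev_0$-pushforward is an $(n{+}1)$-chain $C$ in $L(u)$ whose boundary decomposition reads
\[
\partial C \;=\; ev_{0*}[\mathcal{M}^{\mathrm{main}}_{1;\ell}(L(u),\beta;\text{\bf p})^{\mathfrak{s}^1}] - ev_{0*}[\mathcal{M}^{\mathrm{main}}_{1;\ell}(L(u),\beta;\text{\bf p})^{\mathfrak{s}^0}] + B,
\]
where $B$ collects the $ev_0$-pushforwards over all gluing strata enumerated by Lemma \ref{bdry} (with $k_1+k_2=1$, $\beta_1+\beta_2=\beta$, $\ell_1+\ell_2=\ell$, and shuffles $(\mathbb{L}_1,\mathbb{L}_2)$). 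Since $L(u)\cong T^n$ has $H_n(L(u);\Q) = \Q\cdot [L(u)]$, taking the homology class of this identity reduces the lemma to proving $[B] = 0$.

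The main obstacle is this vanishing. The key input is the forgetful-map compatibility (3). For a gluing stratum with $(k_1,k_2) = (0,1)$, the component $\mathcal{M}^{\mathrm{main}}_{2;\ell_2}(L(u),\beta_2;\text{\bf p}_2)$ carries two boundary marked points but its multisection is pulled back from the one-marked-point moduli by $\mathfrak{forget}_0$, so the perturbed moduli splits as
\[
\mathcal{M}^{\mathrm{main}}_{2;\ell_2}(L(u),\beta_2;\text{\bf p}_2)^{\mathfrak{s}^{\mathrm{para}}} \;\cong\; \mathcal{M}^{\mathrm{main}}_{1;\ell_2}(L(u),\beta_2;\text{\bf p}_2)^{\mathfrak{s}^{\mathrm{para}}}\times S^1,
\]
where $S^1 \cong \partial D^2$ parameterizes the position of the forgotten marked point; the gluing evaluation $ev_1$ is $(y_2,\theta)\mapsto\varphi_{y_2}(e^{i\theta})$, whereas the surviving $ev_0 = ev_{0,(2)}$ depends only on $y_2$ and not on $\theta$. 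Consequently the $ev_0$-pushforward of the bubble stratum factors through the projection that forgets both the first fiber-product factor and the $S^1$-direction, and since integration along the $S^1$-fiber sends an $(n{+}1)$-dimensional product chain to zero in $H_n$ of an $n$-dimensional base, each stratum contributes a null-homologous cycle to $L(u)$. The case $(k_1,k_2) = (1,0)$ is handled symmetrically. Summing over all gluing strata yields $[B] = 0$, hence $c(\beta;\text{\bf p})^1 = c(\beta;\text{\bf p})^0$.
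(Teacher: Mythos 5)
The high-level scaffolding is right: a cobordism by a $T^n$-invariant interpolating family of multisections built by the same induction as in Lemma~\ref{37.184}, followed by Stokes. The difficulty is entirely in the boundary contribution, and that is where the argument goes wrong.

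The paper handles the boundary terms much more directly than you do, and your substitute for its argument contains a genuine gap. Look at the sentence immediately preceding Definition~\ref{cdef}: under the hypothesis \eqref{dimensionformula2} (so $\mu(\beta)-\sum(2n-\dim D_{\text{\bf p}_i}-2)=2$ and $\beta\ne 0$), the perturbed moduli space carries a virtual fundamental \emph{cycle} \emph{by Corollary~\ref{37.187}} --- i.e.\ the boundary of the zero set is already empty. The mechanism is the $T^n$-equivariance of Lemma~\ref{37.184}(2): any nonempty $T^n$-invariant transversal zero set carries a free $T^n$-action (because $ev_0$ is $T^n$-equivariant and the $T^n$-action on $L(u)$ is free), hence has dimension $\ge n$. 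Applying this to each factor of a boundary stratum $\mathcal M^{\text{main}}_{1;\ell_1}(\beta_1;\text{\bf p}_1)\,{}_{ev_0}\times_{ev_1}\mathcal M^{\text{main}}_{2;\ell_2}(\beta_2;\text{\bf p}_2)$, together with the forgetful-map compatibility (3), forces $\mu(\beta_j)-\sum_j\ge 2$ for both $j$ (the $\beta_j=0$ options are killed by stability), so $\mu(\beta)-\sum\ge 4$, contradicting $=2$. Thus the perturbed boundary is empty. Exactly the same slice-wise argument applies to a $T^n$-invariant interpolating family: each slice $\{t\}\times\mathcal M$ is $T^n$-invariant, so $(\partial\mathcal M)\times[0,1]$ perturbed is empty, the cobordism has boundary exactly $\mathcal M^{\mathfrak s^1}\sqcup(-\mathcal M^{\mathfrak s^0})$, and the pushed-forward fundamental classes agree. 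There is no term $B$ to deal with.

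Your replacement for this step does not work. First, the identification
\[
\mathcal{M}^{\mathrm{main}}_{2;\ell_2}(L(u),\beta_2;\text{\bf p}_2)^{\mathfrak{s}} \;\cong\; \mathcal{M}^{\mathrm{main}}_{1;\ell_2}(L(u),\beta_2;\text{\bf p}_2)^{\mathfrak{s}}\times S^1
\]
is false: the fiber of $\mathfrak{forget}_0$ over a smooth disc is the arc $\partial D^2\setminus\{z_0\}$, and its compactification is an interval (closed cell), not a circle; the compactified forgetful fibration is also not a product when disc bubbles are present. Second, even granting some $S^1$ (or interval) factor, the inference that the pushforward of the bubble stratum is ``a null-homologous cycle in $L(u)$'' is not a consequence of degeneracy of the evaluation map as you argue --- the stratum could a priori contribute a nonzero multiple of $[L(u)]$ to the Stokes identity, and one must actually kill it. Both issues disappear once one observes the stratum is simply empty. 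In short: your cobordism setup is the right idea and matches the paper, but you overlooked that Corollary~\ref{37.187} together with the $T^n$-invariance already makes all the bubble strata empty; the attempted forgetful-map/circle-fiber argument you use instead is incorrect as stated.
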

The proof is the same as the proof of  Lemma 11.7 \cite{fooo08} and
so is omitted.
\begin{rem}\label{Remark69}
\begin{enumerate}
\item
The independence of an open Gromov-Witten invariant similar to
$c(\beta;\text{\bf p})$ was proved in \cite{KL} by taking
equivariant perturbations in the situation where an appropriate
$S^1$-action exists.
\item Since $\mathcal M^{\text{\rm main}}_{1;\ell}(L(u),\beta)$ 
is independent under the permutation of the interior marked points, 
it follows that 
$\mathcal M^{\text{\rm main}}_{1;\ell}(L(u),\beta;\text{\bf p})$ 
is invariant under the permutation of the factors of $\text{\bf p}$.
We take our multisection so that it is invariant under this permutation.
\end{enumerate}
\end{rem}
We use the above moduli spaces to define the operators $\mathfrak
q_{\beta;k,\ell}$ as follows. 
\begin{rem}\label{rem610}
Actually we need to fix $E_0$ and construct $\mathfrak
q_{\beta;k,\ell}$ for $\beta\cap\omega < E_0$. 
We then take inductive limit. This construction is explained in detail in 
Sections 7.2 and 7.4 of \cite{fooo06} 
(= Sections 30 and 32 \cite{fooo06pre}) so is omitted.
For the application in this paper,
we can use $A_{n,K}$ structure in place of $A_{\infty}$ structure, so the
process to take inductive limit is not necessary for applications.
\end{rem}
Let $\text{\bf p} \in
Map(\ell,\underline B)$.  We put
$$\aligned
D(\text{\bf p}) &= D_{\text{\bf p}(1)}\otimes \cdots \otimes D_{\text{\bf p}(\ell)} 
\in \mathcal{A}^{\otimes \ell}, \\
SD(\text{\bf p}) &= \frac{1}{\ell !}\sum_{\sigma \in \mathfrak S_{\ell}}
D_{\text{\bf p}(\sigma(1))}\otimes \cdots \otimes D_{\text{\bf p}(\sigma(\ell))}.
\endaligned$$
Here we recall that 
$D_a =\pi^{-1}(P_a), (a=1,\dots ,B)$ are $T^n$-invariant complex 
submanifolds of $X$ which generate the free abelian group 
$\mathcal {A}$. 
(See the beginning of Section \ref{sec:Potbul}.)
Let  $h_1, \ldots, h_k$ be differential forms on $L(u)$.
We put
$$
\sum (\deg h_i -1) -\mu(\beta) + \sum (2n - 
\dim D_{\text{\bf p}(i)}-2) + 2 = d
$$
where we note that
$$
\aligned \text{deg} [\mathcal M^{\text{\rm
main}}_{1;\ell}(L(u),\beta;\text{\bf p}) ^{\mathfrak
s_{\beta,1,\ell,\text{\bf p}}}] & = \text{codim}[\mathcal
M^{\text{\rm main}}_{1;\ell}(L(u),\beta;\text{\bf p}) ^{\mathfrak
s_{\beta,1,\ell,\text{\bf p}}}] \\
& = -\mu(\beta) + \sum (2n - 
\dim D_{\text{\bf p}(i)}-2) + 2.
\endaligned
$$
(See \eqref{dimensionformula}.) We then define a differential form
of degree $d$ on $L(u)$ by
\begin{equation}
\mathfrak q_{\beta;\ell,k}^{dR}(D(\text{\bf p});h_1,\ldots,h_k) 
= \frac{1}{\ell !}(ev_0)_! (ev_1,\ldots,ev_k)^*(h_1 \wedge \cdots \wedge h_k),
\label{37.188}\end{equation}
where $ev_0$, $ev_i$ are the maps
$$
(ev_0,\ldots,ev_k):
\mathcal M^{\text{\rm main}}_{k+1;\ell}(L(u),\beta;\text{\bf p})^{\mathfrak s_{\beta,1,\ell,\text{\bf p}}}
\longrightarrow L(u)^{k+1}
$$
and $(ev_0)_!$ is the integration along the fiber. 
(Actually we need to put an appropriate sign.  See the end of Section \ref{sec:integration} for sign.)
More precisely we
use the formula (\ref{37.188}) for $(\beta,\ell,k) \ne (0,0,0), (0,0,1)$ and we
put
$$
\mathfrak q_{0;0,1}(h) = (-1)^{n+\deg h+1} dh, \quad \mathfrak q_{0;0,2}(h_1,h_2)
= (-1)^{\deg h_1(\deg h_2+1)} h_1 \wedge h_2.
$$
(\ref{37.188}) defines a map
$
B_{\ell}(\mathcal A[2]) \otimes B_k(H(L(u);\R)[1]) \to  H(L(u);\R)[1].
$
We restrict  it to 
$E_{\ell}(\mathcal A[2]) \otimes B_k(H(L(u);\R)[1])$ and denote the 
restriction by the same symbol 
$\mathfrak q_{\beta;\ell,k}$. Remark \ref{Remark69} (2) 
implies
$$
\mathfrak q_{\beta;\ell,k}^{dR}(SD(\text{\bf p});h_1,\ldots,h_k)
=
\mathfrak q_{\beta;\ell,k}^{dR}(D(\text{\bf p});h_1,\ldots,h_k). 
$$
We use $T^n$-equivariance to show that
$$
ev_0: \mathcal M^{\text{\rm main}}_{k+1;\ell}(L(u),\beta;\text{\bf p})^{\mathfrak s_{\beta,1,\ell,\text{\bf p}}}
\to L(u)
$$
is a proper submersion. Hence the integration along the fiber is
well-defined and gives rise to smooth forms. (It is fairly obvious
that the integration along the fiber on the zero set of a
transversal multisection is well defined and that it satisfies
Stokes' theorem. See Section 16
\cite{fooo08} or Section \ref{sec:integration} of present paper.)
Let $\Omega(L(u))$ be the de Rham complex of $L(u)$.
\begin{defn}\label{def610}
We put
$$
\mathfrak q_{\ell,k}^{dR} = \sum_{\beta} T^{\omega \cap \beta/2\pi}
\mathfrak q_{\beta;\ell,k}^{dR}.
$$
By restricting $\mathfrak q_{\ell,k}^{dR}$ to $E_{\ell}\mathcal A \subset
B_{\ell}\mathcal A$ we obtain
$$
\mathfrak q_{\beta;\ell,k}^{dR}
: E_{\ell}(\mathcal A[2]) \otimes B_k(\Omega(L(u))[1]) \to \Omega(L(u))[1]
$$
of degree $1-\mu(\beta)$ and
$$
\mathfrak q_{\ell,k}^{dR}: E_{\ell}(\mathcal A(\Lambda_+(\R))[2]) \otimes
B_k((\Omega(L(u))\,\widehat\otimes\, \Lambda_0(\R)) [1]) \to (\Omega(L(u))\,\widehat\otimes\, \Lambda_0(\R))[1].
$$
\end{defn}
\begin{prop}\label{qtoric}
$\mathfrak q_{\beta;\ell,k}^{dR}$ satisfies $(\ref{qmaineq})$.
\end{prop}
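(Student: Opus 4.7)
The plan is to derive the $A_\infty$-type identity (\ref{qmaineq}) from Stokes' theorem for integration along fibers, combined with the boundary description of the perturbed moduli space given in Lemma \ref{bdry}. More precisely, for given $\text{\bf y} \in E_\ell(\mathcal A[2])$ and $\text{\bf x} = h_1\otimes\cdots\otimes h_k$, I would consider the integration along the fiber of $ev_0$ applied to $ev^*(h_1\wedge\cdots\wedge h_k)$ over the moduli space $\mathcal M^{\text{\rm main}}_{k+1;\ell}(L(u),\beta;\text{\bf p})^{\mathfrak s}$ with one extra copy of the de Rham differential $d$ inserted. Since $ev_0$ is a proper submersion (thanks to $T^n$-equivariance and Lemma \ref{37.184}(2)), integration along the fiber is well-defined and satisfies the Stokes formula
\[
d (ev_0)_! \omega = (ev_0)_!(d\omega) \pm (ev_0|_{\partial})_!(\omega|_{\partial}),
\]
as recorded in section \ref{sec:integration}.

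The interior contribution $(ev_0)_!(d\omega)$ produces the terms in (\ref{qmaineq}) where the inner $\frak q_{\beta_2}$ is $\frak q_{0;0,1} = \pm d$, applied to a single $h_i$; similarly, the exceptional case $\frak q_{0;0,2}(h_i,h_{i+1}) = \pm h_i\wedge h_{i+1}$ arises from the diagonal term in the wedge product $d(h_1\wedge\cdots\wedge h_k)$. The boundary contribution, via Lemma \ref{bdry}, is a union of images of $\text{\rm Glue}^{(\mathbb L_1,\mathbb L_2),\text{\bf p}}_{\ell_1,\ell_2;k_1,k_2;i;\beta_1,\beta_2}$ with $\beta_1+\beta_2=\beta$, $k_1+k_2 = k+1$, $\ell_1+\ell_2 = \ell$. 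By Lemma \ref{37.184}(4), the restriction of $\mathfrak s_{\beta,k+1,\ell,\text{\bf p}}$ to this image is the fiber product of $\mathfrak s_{\beta_j,k_j+1,\ell_j,\text{\bf p}_j}$, $j=1,2$, so the push-forward over the boundary factors through the fiber product of evaluation maps
$\mathcal M^{\text{\rm main}}_{k_1+1;\ell_1}(L(u),\beta_1;\text{\bf p}_1) {}_{ev_0}\times_{ev_i} \mathcal M^{\text{\rm main}}_{k_2+1;\ell_2}(L(u),\beta_2;\text{\bf p}_2)$. The composition formula for integration along fibers (base change / Fubini) then expresses the boundary contribution precisely as
\[
\sum_{\beta_1+\beta_2=\beta}\sum_{c_1,c_2} \pm\,\mathfrak q_{\beta_1}^{dR}\bigl(\text{\bf y}^{2;1}_{c_1};\text{\bf x}^{3;1}_{c_2}\otimes \mathfrak q_{\beta_2}^{dR}(\text{\bf y}^{2;2}_{c_1};\text{\bf x}^{3;2}_{c_2})\otimes \text{\bf x}^{3;3}_{c_2}\bigr),
\]
once we correctly distribute the bulk insertions across the two factors. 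The distribution of the elements of $\text{\bf y} \in E_\ell(\mathcal A[2])$ into the two factors is exactly the coproduct $\Delta(\text{\bf y}) = \sum \text{\bf y}^{2;1}_{c_1}\otimes \text{\bf y}^{2;2}_{c_1}$ on $E(\mathcal A[2])$, because of the symmetrization $SD(\text{\bf p})$ built into our convention (and Remark \ref{Remark69}(2)), while the splitting of $\text{\bf x}$ into three consecutive blocks corresponds to the ordered boundary marked points and yields $\Delta^2(\text{\bf x})$.

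The main obstacle I expect is sign verification. The signs in (\ref{qmaineq}) mix the shifted degree $\deg'$ on $B(H(L;R)[1])$, the unshifted $\deg$ on $E(H[2])$, and the orientation of the codimension-one boundary of the Kuranishi space with its fiber-product orientation. These signs were fixed in the singular chain model of Chapter 8 \cite{fooo06}; here I would pull them across to the de Rham model by checking compatibility of orientation conventions between (i) integration along fibers, (ii) the Koszul sign from reordering $ev^*(h_1\wedge\cdots\wedge h_k)$ across a fiber product, and (iii) the boundary orientation of $\mathcal M^{\text{\rm main}}_{k+1;\ell}(L(u),\beta;\text{\bf p})$. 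A secondary (but more routine) point is to isolate the two exceptional cases $(\beta,\ell,k)=(0,0,1),(0,0,2)$ where the definition of $\frak q$ is by hand rather than via $(ev_0)_!$; these exceptional terms precisely account for the pieces of $d(h_1\wedge\cdots\wedge h_k)$ and for $\frak q_{0;0,2}$ appearances in (\ref{qmaineq}), so the identity closes up. Modulo these sign and exceptional-case verifications, the proof is a direct Stokes–plus–gluing argument of the same type used throughout Chapter 7 of \cite{fooo06}.
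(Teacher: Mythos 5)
Your proposal is correct and follows the same Stokes-plus-gluing argument as the paper, which likewise combines Lemma \ref{bdry}, Lemma \ref{37.184}, and the explicit coproduct formula for $\Delta(SD(\text{\bf p}))$ over shuffles. One minor imprecision worth flagging: the $\frak q_{0;0,2}$ contributions to $(\ref{qmaineq})$ do not come from a ``diagonal term'' in $d(h_1\wedge\cdots\wedge h_k)$ (there is none; $d$ obeys the Leibniz rule), but from the boundary stratum of Lemma \ref{bdry} with $\beta_2=0$, $k_2=2$, where a constant-disc bubble carrying two adjacent boundary marked points detaches, and in that case the general formula $(\ref{37.188})$ already produces $\pm h_i\wedge h_{i+1}$ because $\mathcal M^{\text{\rm main}}_{3;0}(L(u),0)\cong L(u)$ with $ev_0$ the identity.
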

\begin{proof}
For $\text{\bf p} \in Map(\ell,\underline B)$, $(\mathbb L_1,\mathbb
L_2) \in \text{\rm Shuff}(\ell)$ we put
$$
\text{Split}((\mathbb L_1,\mathbb L_2),\text{\bf p})
= (\text{Split}((\mathbb L_1,\mathbb L_2),\text{\bf p})_1,
\text{Split}((\mathbb L_1,\mathbb L_2),\text{\bf p})_2).
$$
It is easy to see that the coproduct $\Delta(SD(\text{\bf p}))$ is
given by the formula
$$\aligned
\Delta(SD(\text{\bf p})) = 
\sum_{(\mathbb L_1,\mathbb L_2) \in
\text{\rm Shuff}(\ell) } 
\frac{\#|\mathbb L_1|! \#|\mathbb
L_2|!} {\ell!}
&SD(\text{Split}((\mathbb L_1,\mathbb L_2),\text{\bf p})_1) \\
&\otimes SD(\text{Split}((\mathbb L_1,\mathbb L_2),\text{\bf p})_2).
\endaligned$$
Then (\ref{bdry}) and (\ref{37.184}) imply $(\ref{qmaineq})$ in
the same way as Section 7.1 \cite{fooo06} (= Section 13 \cite{fooo06pre}).
\end{proof}
Now for $\mathfrak b\in \mathcal A^2(\Lambda_+)$, we define
\begin{equation}\label{misq}
\mathfrak m_{k}^{dR,\mathfrak b}(h_1,\ldots,h_k) = \mathfrak q^{dR}(e^{\mathfrak b};
h_1,\ldots, h_k) 
= \sum_{\ell\ge 0} \mathfrak q_{\ell,k}^{dR}({\mathfrak b}^{\otimes\ell};
h_1,\ldots, h_k).
\end{equation}
Here
$$
e^{\mathfrak b} = 1 + \mathfrak b + \mathfrak b \otimes \mathfrak b + \cdots.
$$
Proposition \ref{qtoric} implies that
$\mathfrak m_{k}^{dR,\mathfrak b}$ defines a structure of
filtered $A_{\infty}$ algebra on $\Omega(L(u))$.
\par
\begin{rem}\label{37.190}
We can prove that
$(\Omega(L(u))\,\widehat\otimes\, \Lambda_0(\R),\{\mathfrak
m_{k}^{dR,\mathfrak b}\}_{k=0}^{\infty})$ is homotopy equivalent to the
filtered $A_{\infty}$ algebra defined by $(\ref{mkdefeq})$ where we use smooth singular chains.
The proof is a straight forward generalization of that of  Lemma 37.55
\cite{fooo06pre} and is omitted here. We refer readers thereto for the
details. In fact we do not need to use this equivalence for our
applications in this paper. We can just use the de Rham version without involving the
singular homology version.
\end{rem}
We take a canonical model of $(\Omega(L(u))\widehat\otimes \Lambda_0(\R)),\{\mathfrak
m_{k}^{\mathfrak b}\}_{k=0}^{\infty})$ to obtain a filtered
$A_{\infty}$ algebra $(H(L(u);\Lambda_0(\R)),\{\mathfrak
m_{k}^{\mathfrak b,{\text{\rm can}}}\}_{k=0}^{\infty})$. 
Namely we define
\begin{equation}
\mathfrak m_k^{\mathfrak b,{\text{\rm can}}} : (H(L(u);\Lambda_0(\R))^{\otimes k} \to (H(L(u);\Lambda_0(\R))
\end{equation}
as follows. We fix a $T^n$-equivariant Riemannian metric on $L(u)$.
Then a differential form is harmonic with respect to this metric if and only if it 
is $T^n$ invariant. We identify $H^k(L(u);\R)$ with the set of $T^n$-invariant $k$-forms.
\par
Suppose $h_1,\cdots,h_k \in H(L(u);\R)$ then 
$\mathfrak q^{dR}_{\beta;\ell,k}(\text{\bf p};
h_1,\cdots, h_k)$ is $T^n$ invariant and so is an element of $H(L(u);\R)$.
In fact, all the moduli spaces and evaluation maps involved are $T^n$ equivariant.
Thus the restriction of   $\mathfrak q^{dR}_{\beta;\ell,k}$ to the harmonic forms 
defines an operator 
$$
\mathfrak q^{{\text{\rm can}}}_{\beta;\ell,k} :  E_{\ell}(\mathcal A[2]) \otimes B_k(H(L(u);\R)[1]) \to H(L(u);\R)[1].
$$
It induces
$$
\mathfrak q^{{\text{\rm can}}}_{\ell,k} :  
E_{\ell}(\mathcal A(\Lambda_+(\R))[2]) \otimes B_k(H(L(u);\Lambda_0(\R))[1]) \to H(L(u);\Lambda_0(\R))[1]
$$
in the same way as Definition \ref{def610}. Proposition \ref{qtoric} 
holds for $\mathfrak q^{{\text{\rm can}}}_{\ell,k}$. Therefore 
by the same formula as (\ref{misq}) we define $\mathfrak m_k^{\mathfrak b,{\text{\rm can}}}$.
\begin{lem}\label{caneqcannasi}
The filtered $A_{\infty}$ algebra
$(H(L(u);\Lambda_0(\R)),\{\mathfrak
m_{k}^{\mathfrak b,{\text{\rm can}}}\}_{k=0}^{\infty})$ is strict and unital homotopy equivalent to 
$(\Omega(L(u))\,\widehat\otimes\, \Lambda_0(\R),\{\mathfrak
m_{k}^{dR,\mathfrak b}\}_{k=0}^{\infty})$.
\end{lem}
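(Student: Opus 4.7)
The plan is to exhibit the tautological inclusion
$$
i: (H(L(u);\Lambda_0(\R)),\{\frak m_k^{\frak b,can}\}) \hookrightarrow (\Omega(L(u))\,\widehat\otimes\,\Lambda_0(\R),\{\frak m_k^{dR,\frak b}\})
$$
of harmonic forms into all forms as the desired equivalence, and then show that it is a strict unital $A_\infty$-morphism which happens to be a homotopy equivalence.

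First I would verify that $i$ is a strict $A_\infty$-homomorphism. Since we fix a $T^n$-invariant Riemannian metric on $L(u)$, Hodge theory combined with the transitivity of the $T^n$-action identifies the space of harmonic forms with the $T^n$-invariant forms, which is canonically $H^*(L(u);\R)$. By the observation recorded in the excerpt immediately preceding the statement, $\frak q^{dR}_{\beta;\ell,k}(\text{\bf p};h_1,\ldots,h_k)$ is $T^n$-invariant whenever each $h_i$ is, because the moduli spaces, Kuranishi structures, perturbing multisections and evaluation maps used in its definition are all $T^n$-equivariant by Lemmata \ref{kstructure} and \ref{37.184}(2). This gives the identity $i\circ\frak m_k^{\frak b,can} = \frak m_k^{dR,\frak b}\circ i^{\otimes k}$ on the nose, making $i$ a strict $A_\infty$-morphism. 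Unitality is then automatic: the Poincar\'e dual $PD([L(u)])$ is represented by the constant function $1$, which is $T^n$-invariant, so $i$ sends the strict unit to the strict unit in the sense of (\ref{unital2}).

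Next, to upgrade $i$ from a strict morphism to a homotopy equivalence of filtered $A_\infty$-algebras, I would invoke the filtered $A_\infty$-Whitehead theorem (Theorem 4.2.45 \cite{fooo06}): a strict filtered $A_\infty$-morphism between $G$-gapped filtered $A_\infty$-algebras over $\Lambda_0(\R)$ is a homotopy equivalence if and only if its reduction modulo $\Lambda_+$ is a quasi-isomorphism of the underlying $A_\infty$-algebras over $\R$. Reducing modulo $\Lambda_+$ discards all contributions with $\omega\cap\beta>0$, so $\bar{\frak m}_1$ on the left becomes the zero differential on $H^*(L(u);\R)$, while $\bar{\frak m}_1$ on the right becomes, up to the sign specified by $\frak q_{0;0,1}(h)=(-1)^{n+\deg h+1}dh$, the de Rham differential. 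The induced $\bar\imath$ is the inclusion of harmonic representatives into $\Omega(L(u))$, which is a quasi-isomorphism by the classical Hodge decomposition on the compact manifold $L(u)$. Gappedness of both structures is inherited from the discrete submonoid generated by $\omega\cap\beta/2\pi$ and $G_{\text{\rm bulk}}$ built into the construction.

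The genuinely non-trivial point, and in my view the only real input, is the strict identity $i\circ\frak m_k^{\frak b,can} = \frak m_k^{dR,\frak b}\circ i^{\otimes k}$ itself: in a general situation the passage to the canonical model requires the homological perturbation machinery with Green's operator and sum-over-trees corrections, but here those corrections vanish because $\frak q^{dR}$ already lands in the harmonic subspace whenever it is evaluated on harmonic inputs. This strict closure is particular to the toric setting and hinges crucially on (i) the equality between harmonic and $T^n$-invariant forms on the torus fiber and (ii) the $T^n$-equivariance of the entire construction of $\frak q^{dR}$. Once this structural identity is granted, the Whitehead-type statement immediately yields the claimed strict unital homotopy equivalence.
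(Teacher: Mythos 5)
Your proof is correct and follows essentially the same route as the paper: both take $\frak f_1$ to be the inclusion of harmonic ($=T^n$-invariant) forms, set higher $\frak f_k = 0$, observe that the $T^n$-equivariance of the whole construction makes this a strict unital filtered $A_\infty$-homomorphism inducing an isomorphism on $\overline{\frak m}_1$-cohomology, and then invoke the filtered $A_\infty$ Whitehead theorem to conclude. The only slip is a citation: the Whitehead-type theorem you want is Theorem 5.2.45 of \cite{fooo06} (= Theorem 15.45 of \cite{fooo06pre}), not 4.2.45.
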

\begin{proof}
Let $\mathfrak f_{1} : H(L(u);\Lambda_0(\R)) \to \Omega(L(u))\,\widehat\otimes\, \Lambda_0(\R)$
be the inclusion as harmonic forms. We set other $\mathfrak f_k$ to be $0$. 
By definition they define filtered $A_{\infty}$ homomorphism, which is strict and unital.  
(See Definitions 3.2.29,  3.3.11 \cite{fooo06} = Definitions 7.20, 8.17 \cite{fooo06pre}.)
It induces isomorphism on $\overline{\mathfrak m}_1$ cohomology.
The lemma now follows from Theorem 5.2.45 \cite{fooo06} (= Theorem 15.45 \cite{fooo06pre}.)
\end{proof}
\begin{rem}
In the general situation the construction of the $A_{\infty}$ operators of the canonical model 
and the homotopy equivalence in Lemma \ref{caneqcannasi} is by summation over trees and is more 
involved.  (See Section 5.4 \cite{fooo06} = Section 23 \cite{fooo06pre}.)
Here we take a short cut using the fact $L(u)$ is a torus and the $T^n$ 
equivariance.
\end{rem}
\section{Calculation of potential function with bulk}
\label{sec:CalPot}
\par
We have defined a potential function $\mathfrak{PO}^u$
in Section \ref{sec:Potbul}. In this section we will
partially calculate it and prove various results stated in that section
whose proofs have been postponed until this section.
The following is the key lemma for this purpose.
\begin{lem}\label{37.189}
Let $\mathfrak x \in H^1(L(u);\Lambda_{+})$, $\beta \in
H_2(X,L;\Z)$, and $\text{\bf p} \in Map(\ell,\underline B)$.  We
assume $(\ref{dimensionformula2})$. Then we have
$$
\mathfrak q_{\beta;\ell,k}^{{\text{\rm can}}}(D(\text{\bf p});\mathfrak x,\ldots,\mathfrak x)
= \frac{c(\beta;\text{\bf p})}{\ell!k!}
(\partial \beta\cap \mathfrak x)^k \cdot PD([L(u)]),
$$
where $PD([L(u)])$ denotes the Poincar\'e dual to the fundamental class
$[L(u)] \in H_n(L(u);\Z)$ and 
$c(\beta;\text{\bf p})$ is defined by 
Definition $\ref{cdef}$.
\end{lem}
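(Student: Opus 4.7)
\smallskip\noindent\textbf{Proof plan for Lemma \ref{37.189}.}
The plan is to exploit the forgetful map compatibility of our multisections (Lemma \ref{37.184}(3)) to factor the fiber integration defining $\frak q^{dR}_{\beta;\ell,k}$ into a ``moduli part'' that produces $c(\beta;\text{\bf p})$ and a ``configuration part'' that produces $(\partial\beta\cap\frak x)^k/k!$. Since the output will be $T^n$-invariant, it agrees with $\frak q^{can}_{\beta;\ell,k}$.

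First, since $\frak x$ is of degree $1$ we may, and will, represent $\frak x$ by a $T^n$-invariant (hence harmonic) $1$-form on $L(u)$, still denoted $\frak x$. By Lemma \ref{37.184}(3) the multisection $\frak s_{\beta,k+1,\ell,\text{\bf p}}$ is the pullback of $\frak s_{\beta,1,\ell,\text{\bf p}}$ under
$$
\mathfrak{forget}_0 : \CM^{\text{\rm main}}_{k+1;\ell}(L(u),\beta;\text{\bf p})\longrightarrow \CM^{\text{\rm main}}_{1;\ell}(L(u),\beta;\text{\bf p}).
$$
Hence $\mathfrak{forget}_0$ restricts to a fiber bundle between the perturbed zero loci, whose fiber over a class $[(\Sigma,\varphi,\{z^+_i\},\{z_0\})]$ (with $\Sigma$ generically a smooth disc) consists of the ordered $k$-tuples $(z_1,\dots,z_k)$ on $\partial\Sigma\setminus\{z_0\}$ respecting the cyclic order. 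With $z_0$ fixed this is an open $k$-simplex $\Delta^k$. Moreover $ev_0$ on the upstairs moduli space factors through $\mathfrak{forget}_0$.

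Next I would perform the fiber integral in two stages. On a fixed fiber over $[(\Sigma,\varphi,\ldots)]$ the map $ev_j$ is simply $z_j\mapsto \varphi(z_j)$ for $j=1,\dots,k$, so the restriction of $ev_j^*\frak x$ is the pullback of the $1$-form $\varphi|_{\partial\Sigma}^*\frak x$ on $\partial\Sigma\cong S^1$ by the $j$-th coordinate projection $\Delta^k\to S^1$. Writing $\varphi|_{\partial\Sigma}^*\frak x = f(\theta)\,d\theta$ and using that the class of $\varphi|_{\partial\Sigma}$ in $H_1(L(u);\Z)$ is $\partial\beta$, we get $\int_{S^1} f(\theta)\,d\theta = \partial\beta\cap\frak x$. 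Integrating the product $\bigwedge_{j=1}^k p_j^*(f\,d\theta)$ over the open simplex $\{0<\theta_1<\cdots<\theta_k<2\pi\}$ and using the $\mathfrak S_k$-symmetry of the integrand gives
$$
\int_{\Delta^k}\bigwedge_{j=1}^k p_j^*(f\,d\theta)=\frac{1}{k!}\bigl(\partial\beta\cap\frak x\bigr)^k,
$$
which is a (fiberwise) constant. Consequently
$$
(\mathfrak{forget}_0)_!\bigl(ev_1^*\frak x\wedge\cdots\wedge ev_k^*\frak x\bigr)=\frac{1}{k!}\bigl(\partial\beta\cap\frak x\bigr)^k
$$
as a function (in fact $T^n$-invariant) on $\CM^{\text{\rm main}}_{1;\ell}(L(u),\beta;\text{\bf p})^{\frak s}$.

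Finally, pushing forward by $ev_0$ on the downstairs moduli space, and recalling the $\tfrac{1}{\ell!}$ in (\ref{37.188}) together with Definition \ref{cdef},
$$
(ev_0)_!\bigl(1\bigr)=c(\beta;\text{\bf p})\,PD([L(u)]),
$$
so composing the two pushforwards yields
$$
\frak q_{\beta;\ell,k}^{dR}(D(\text{\bf p});\frak x,\dots,\frak x)
=\frac{c(\beta;\text{\bf p})}{\ell!\,k!}\bigl(\partial\beta\cap\frak x\bigr)^k\,PD([L(u)]).
$$
Because this output is $T^n$-invariant and $\frak x$ was already taken harmonic, the same expression equals $\frak q^{can}_{\beta;\ell,k}(D(\text{\bf p});\frak x,\dots,\frak x)$ by the construction of the canonical model. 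The main technical obstacle is the first stage: legitimizing the splitting of the fiber integration under the presence of multisection perturbations on both moduli spaces. This is exactly where Lemma \ref{37.184}(3) is used, since the pullback structure of the perturbations ensures $\mathfrak{forget}_0$ restricts to an honest submersion of zero loci with the asserted simplex fiber, and the integration-along-fibers formalism reviewed in section \ref{sec:integration} then applies to give the factorization and the correct sign.
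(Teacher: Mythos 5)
Your proposal is correct and follows essentially the same route as the paper. The paper's proof is a one-liner deferring to Lemma 11.8 and Formula (10.10) of [FOOO4], but the content of those references is exactly what you spelled out: take a harmonic ($T^n$-invariant) representative $h$ of $\frak x$, use the pullback structure of the multisections (Lemma \ref{37.184}(3)) so that $\mathfrak{forget}_0$ restricts to a submersion of perturbed zero loci with generic fiber the open $k$-simplex of cyclically ordered boundary marked points, perform the fiberwise integral of $\bigwedge_j ev_j^* h$ over the simplex to produce $(\partial\beta\cap\frak x)^k/k!$, and then identify the remaining pushforward $(ev_0)_!(1)$ on $\CM^{\text{\rm main}}_{1;\ell}(L(u),\beta;\text{\bf p})^{\frak s}$ with $c(\beta;\text{\bf p})\cdot PD([L(u)])$ via Definition \ref{cdef} and the dimension condition (\ref{dimensionformula2}). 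The final identification with $\frak q^{can}$ via $T^n$-invariance and the definition of the canonical model is also what the paper does.
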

\begin{proof}
The proof is similar to that of Lemma 11.8 \cite{fooo08}
and proceed as follows.
Let $h$ be a harmonic representative of the class $\mathfrak x$.
We have:
\begin{equation}\label{eq:mkonharmonic}
\int_{L(u)} \mathfrak q^{dR}_{\beta;\ell,k}(D(\text{\bf p});h,\ldots,h)
=
\frac{c(\beta;\text{\bf p})}{\ell!k!}(\partial \beta\cap \mathfrak x)^k.
\end{equation}
The proof is the same as that of Formula (11.10) \cite{fooo08},
using Definition \ref{cdef}.
The lemma follows immediately, by using Lemma \ref{lem72}.
\end{proof}
We put
$$
C_{n} = \{(t_1,\dots,t_n) \in [0,1]^{n} \mid 
0 \le t_1 \le  \dots \le  t_n \le 1.
\}
$$
\begin{lem}\label{lem72}
Let $\theta$ be a one form on $[0,1]$.
It induces $n$ form $\theta_n = \theta\times \dots \times \theta$ on $[0,1]^{n}$.
We then have
$$
\int_{C_{n}}\theta_n = \frac{1}{n!} \left(\int_{[0,1]}\theta\right)^n.
$$
\end{lem}
\begin{proof}
Let $\mathfrak S_{n}$ be the group of permutations of $\{1,\dots,n\}$.
It acts on $[0,1]^{n}$ as permutations of the factors.
It is then easy to see that
$
\sigma^* \theta_n = \text{\rm sign}\,\sigma \theta_n,
$
and $\sigma$ is orientation preserving if and only if 
$\text{\rm sign}\,\sigma = 1$.
Moreover $C_{n}$ is a fundamental domain of $\mathfrak S_{n}$
action on $[0,1]^{n}$.
It implies
$$
\int_{C_{n}}\theta_n = 
\frac{1}{n!} \int_{[0,1]^{n}}\theta_n = \frac{1}{n!} \left(\int_{[0,1]}\theta\right)^n.
$$
\end{proof}
\begin{proof}[Proof of Proposition \ref{unobstruct}]
This is an immediate consequence of Corollary \ref{37.187} and Lemma \ref{37.189}. In fact, 
it implies that $ \mathfrak m_{k}^{{\text{\rm can}}, \mathfrak b}(b,\ldots,b) $ can be
only degree $0$, that is proportional to $PD[L(u)]$.
\end{proof}
\begin{proof}[Proof of Theorem \ref{weakpotential}]
Let $\mathfrak b = \sum_{a=1}^{B}\mathfrak b_a D_a$ with 
$\mathfrak b_a \in \Lambda_+$.
We assume $\mathfrak b_a$ is $G_{\text{\rm bulk}}$-gapped.
We have
$$
e^{\mathfrak b} = \sum_{\ell}\sum_{\text{\bf p} \in Map(\ell,\underline
B)} \mathfrak b^{\text{\bf p}}D(\text{\bf p}).
$$
Here
$$
\mathfrak b^{\text{\bf p}} = \prod_j \mathfrak b_{\text{\bf p}(j)}.
$$
We have:
$$
\mathfrak{PO}^u(\mathfrak b;b)
=
\sum_{\beta,\text{\bf p},k}\mathfrak b^{\text{\bf p}}
T^{\beta\cap\omega/2\pi}
\mathfrak q^{{\text{\rm can}}}_{\beta;\vert\text{\bf p}\vert,k}(D(\text{\bf p});b,\ldots,b).
$$
By the degree counting the sum is nonzero only when $(\ref{dimensionformula2})$
is satisfied. Therefore by Lemma \ref{37.189} we have
\begin{equation}\label{calcPO}
\aligned
\mathfrak{PO}^u(\mathfrak b;b)
&= \sum_{\beta,\text{\bf p},k} \mathfrak b^{\text{\bf p}} T^{\beta\cap\omega/2\pi}\frac{c(\beta;\text{\bf p})}{k!\vert\text{\bf p}\vert!}
(b\cap\partial \beta)^k
\\
&= \sum_{\beta,\text{\bf p}}\frac{1}{\vert\text{\bf p}\vert!}\mathfrak b^{\text{\bf p}}T^{\beta\cap\omega/2\pi}
c(\beta;\text{\bf p}) \exp(b \cap \partial \beta).
\endaligned
\end{equation}

The sum of the cases $\beta = \beta_i$ $(i=1,\ldots,m)$ and
$\vert\text{\bf p}\vert = 0$ is $\mathfrak{PO}^u_0(b)$.
\par
We next study other terms for $\vert\text{\bf p}\vert \neq 0$.
We first consider the case $\beta = \beta_i$, $\ell \ne 0$.
Then the corresponding term is a sum of the terms written as
\begin{equation}\label{term3.41}
c T^{\ell_i(u) + \rho} y^{\vec v_i}.
\end{equation}
Here $c\in \Q$ and $\rho$ is a sum of the numbers which appear as exponents of $\mathfrak b_a$
for various $a$. It is nonzero since $\ell \ne 0$ and  $\mathfrak b_a \in \Lambda_+$.
Therefore $\rho \in G_{\text{\rm bulk}} \setminus\{0\}$. Therefore (\ref{term3.41}) is of the
form appearing in the right hand side of (\ref{eq:weakPO}).
\par
We next consider the case $\beta \ne \beta_i$
$(i=1,\ldots,m)$. We assume $c(\beta;\text{\bf p}) \ne 0$ in addition. Then
by Proposition \ref{copy37.177} (5) we have $e^i$ and $\rho$ such that
$$
\frac{\beta\cap\omega}{2\pi} = \sum_{i} e^i\ell_i(u) + \rho.
$$
Here $e^i \in \Z_{\ge 0}$ and $\sum e^i > 0$ and $\rho$ is a sum of symplectic
areas of holomorphic spheres divided by $2\pi$. Thus the corresponding term is a sum of the terms
$$
c T^{\sum_{i} e^i\ell_i(u) + \rho+\rho'} y^{\sum e^i\vec v_i}.
$$
Here $c \in \Q$ and $\rho'$ is a sum of the numbers that appear
as the exponents of $\mathfrak b_a$ for various $a$. This is exactly of the
form in the right hand side of (\ref{eq:weakPO}).
\par
Finally we prove (\ref{rhogoinfty}) by contradiction.
\par
We put
$$
F(\beta;\ell)
=\sum_{\vert \text{\bf p}\vert = \ell}
\mathfrak b^{\text{\bf p}}T^{\beta\cap\omega/2\pi}
c(\beta;\text{\bf p}) \exp(b \cap \partial \beta)
= 
\sum_{\vert \text{\bf p}\vert = \ell}
\mathfrak b^{\text{\bf p}}T^{\beta\cap\omega/2\pi}
c(\beta;\text{\bf p})y^{\vec v_i}.
$$
Let the $\sigma$-th term of (\ref{eq:weakPO}) 
appear in $F(\beta(\sigma);\ell(\sigma))$.
We assume that $\rho_{\sigma}$, the $T$-exponent of 
the $\sigma$-th term, is bounded and will deduce a contradiction.
\par
We first consider the case where 
there are infinitely many  $\sigma$'s with the same
$\beta = \beta(\sigma)$. 
The $T$-exponent of $F(\beta(\sigma);\ell(\sigma))$ is not smaller than 
\begin{equation}\label{exponenttoinfinity1}
\frac{\beta \cap \omega}{2\pi} + \ell(\sigma) \rho_0
\end{equation}
where
$$
\rho_0 = \inf (G_{\text{\rm bulk}} \setminus \{0\}).
$$
(\ref{exponenttoinfinity1}) and the boundedness of $\rho_{\sigma}$ imply that 
$\ell(\sigma)$ is bounded. This is impossible.
\par
We next consider the case where 
there are infinitely many different $\beta(\sigma)$'s 
such that 
$\rho_{\sigma}$ is bounded.
Each term of $F(\beta(\sigma);\ell(\sigma))$ is of the form:
\begin{equation}\label{exponenttoinfinity2}
c_{\sigma} T^{\ell'_{\beta(\sigma)}(u) + \rho} y_1^{v'_{\sigma,1}}\cdots  y_n^{v'_{\sigma,n}}
\end{equation}
such that $d\ell'_{\beta(\sigma)}= (v'_{\sigma,1},\ldots,v'_{\sigma,n})$
and $\rho \ge 0$ are in the discrete 
monoid generated by the exponents of $\mathfrak b$.
\par
We apply Proposition \ref{copy37.177} (5) and obtain
$$
\beta(\sigma) = \sum_{i=1}^m k_{i,\sigma}\beta_i + \sum_j \alpha_{\sigma,j}
$$
with $k_{i,\sigma} \geq 0$.
We have
$$
\ell'_{\beta(\sigma)}(u) = \sum_{i=1}^m k_{i,\sigma}\ell_i(u)
$$
and
$$
\rho_{\sigma} = \sum_j \frac{\alpha_{\sigma,j}\cap [\omega]}{2\pi} +
(\text{a sum of exponents appearing in $\mathfrak b$}).
$$
If $(k_{1,\sigma},\ldots,k_{n,\sigma}) \in \Z^n$ is bounded as $\sigma \to \infty$,
then $\sum_j \alpha_{\sigma,j} \in H_2(X;\Z)$ is necessarily unbounded. 
(This is because $\beta(\sigma)$ are assumed to be different to each other.)
Therefore
$$
\rho_{\sigma} \ge \sum_j \frac{\alpha_{\sigma,j}\cap [\omega]}{2\pi}
$$
goes to infinity, a contradiction.
\par
We next assume that $(k_{1,\sigma},\ldots,k_{n,\sigma}) \in \Z^n$ is unbounded. Then
the sum of its Maslov indices
$$
\sum_{i=1}^n k_{i,\sigma} \mu(\beta_i) = 2\sum_{i=1}^n k_{i,\sigma}
$$
is unbounded. (We recall $k_{i,\sigma} \ge 0$.) Therefore thanks to the formula
$$
\mu(\beta({\sigma})) = \sum_{i=1}^m k_{i,\sigma}\mu(\beta_i)  + 2\sum_j c_1(X) \cap \alpha_{\sigma,j},
$$
one of the following alternatives (not-necessarily exclusive to each other)
must occur:
\par\medskip
\noindent(a) $\vert\sum_j c_1(X) \cap \alpha_{\sigma,j}\vert$ is unbounded.
\par
\noindent(b) $\mu(\beta({\sigma}))$ is unbounded.
\par\medskip
For the case (a), $\sum_j \alpha_{\sigma,j} \in H_2(X;\Z)$ is unbounded.
Therefore
$$
\rho_{\sigma} \ge \sum_j \frac{\alpha_{\sigma,j}\cap [\omega]}{2\pi}
$$
goes to infinity similarly as before. This is a contradiction.
\par
For the case (b), we recall the general dimension formula
$$
\dim \mathcal M^{\text{\rm main}}_{1;\ell({\sigma})}(L(u);\beta({\sigma}))
= 2 \ell({\sigma}) + n + \mu(\beta({\sigma})) -2.
$$
On the other hand, we must have
$$
\dim \mathcal M^{\text{\rm main}}_{1;\ell({\sigma})}(L(u);\beta({\sigma}):\text{\bf p})
= n,
$$
since $\mathfrak q_{\beta({\sigma});\ell({\sigma});k}(\text{\bf p};b)$ is nonzero.
Therefore we should have
$$
\sum_{j=1}^{\ell({\sigma})} (\deg 
\text{\bf p}(j) - 2) = \mu(\beta({\sigma}))-2.
$$
This goes to infinity and so does $\ell({\sigma})$ as $\sigma \to \infty$.
Since we have
$$
\rho_{\sigma} \ge \ell({\sigma}) \rho_0,
$$
$\rho_{\sigma}$ goes to infinity. This is a contradiction.
The proof of Theorem \ref{weakpotential} is now complete.
\end{proof}

\begin{proof}[Proof of Proposition \ref{POcalcFano}]
We assume that $\mathfrak b$ is as in (\ref{formfrakb}).
We note that $D_{i(l,r)} \in H^2(D;\Z)$.
Therefore a dimension counting argument shows that only $\beta$ with
$\mu(\beta) =2$ contributes to $\mathfrak{PO}^u(\mathfrak b;b)$.
Then by the assumption that $X$ is Fano we derive that
only $\beta_i$'s for $(i=1,\ldots,m)$ contribute among those $\beta$'s.
\par
Thus we have obtained
\begin{equation}\label{calcPOFano}
\mathfrak{PO}^u(\mathfrak b;b)
= \sum_{i=1}^m\sum_{\text{\bf p}}\frac{1}{\vert\text{\bf p}\vert !}
\mathfrak b^{\text{\bf p}}T^{\ell_i(u)}
c(\beta_i;\text{\bf p}) y^{\vec v_i}.
\end{equation}
We next calculate $c(\beta_i;\text{\bf p})$.
By definition we have
$$
c(\beta_i;\text{\bf p})[{L(u)}]
= ev_{0*}(\mathcal M^{\text{\rm main}}_{1;\vert\text{\bf p}\vert}(L(u),\beta_i;\text{\bf p})
^{\mathfrak s_{\beta,1,\ell,\text{\bf p}}})
$$
and
$$
\mathcal M^{\text{\rm main}}_{1;\vert\text{\bf p}\vert}(L(u),\beta_i;\text{\bf p})
= \mathcal M^{\text{\rm main}}_{1;\vert\text{\bf p}\vert}(L(u),\beta_i)
\times_{X^{\vert\text{\bf p}\vert}} \prod_{j=1}^{\vert\text{\bf p}\vert} D_{\text{\bf p}(j)}.
$$
We consider
$$
ev_0: \mathcal M^{\text{\rm main}}_{1;0}(L(u),\beta_i) \to L(u).
$$
It is a diffeomorphism by Proposition \ref{copy37.177}.
We fix $p_0 \in L(u)$ and let $ev_0^{-1}(p_0)=\{\varphi\}$.
Since $[\varphi] = \beta_i$ it follows that
\begin{equation}
[\varphi] \cap D(\text{\bf p}(j)) =
\begin{cases}
1 &\text{$j = i$}, \\
0 &\text{$j\ne i$}.
\end{cases}
\nonumber
\end{equation}
We remark that the number $c(\beta_i;\text{\bf p}) $ is well defined,
that is, independent of the perturbation.
So we can perform the calculation in the homology level to find that
\begin{equation}\label{cpj}
c(\beta_i;\text{\bf p})
=
\begin{cases}
1 &\text{$\text{\bf p}(j) = i$ for all $j$}, \\
0 &\text{otherwise}.
\end{cases}
\end{equation}
Thus (\ref{calcPOFano}) is equal to
$$
\sum_{i=1}^m\exp(\mathfrak b_i)T^{\ell_i(u)} y^{\vec v_i}.
$$
Since $\mathfrak b_i = 0$ by definition for $i \geq \CK +1$ in the decomposition of $\mathfrak b$
in \eqref{formfrakb}, this sum can be rewritten as \eqref{POFanoformula}
which finishes the proof of Proposition \ref{POcalcFano}.
\end{proof}
\begin{proof} [Proof of Proposition \ref{nonfanopert}.]
We assume that $\mathfrak b$ satisfies Condition \ref{gapcondb}.
Again by dimension counting only $\beta$ with
$\mu(\beta) =2$ contributes to $\mathfrak{PO}^u(\mathfrak b;b)$.
In Proposition \ref{nonfanopert} we do not assume
that $X$ is Fano. So the homology classes $\beta$ other than $\beta_i$ ($i=1,\cdots,m$)
may contribute.
\par
We first study the contribution of $\beta_i$'s. We put
$$
\mathfrak P_0(\mathfrak b;b)
= \sum_{i=1}^m\sum_{\text{\bf p}}\frac{1}{\vert\text{\bf p}\vert !}
\mathfrak b^{\text{\bf p}}T^{\ell_i(u)}
c(\beta_i;\text{\bf p}) y^{\vec v_i}.
$$
Substituting (\ref{cpj}) into $\mathfrak P_0$, we obtain
$$\aligned
\mathfrak P_0(\mathfrak b';b)  - \mathfrak P_0(\mathfrak b;b)
&=  \sum_{i=1}^m(\exp({\mathfrak b'_i}) - \exp({\mathfrak b_i}))
T^{\ell_i(u)} y^{\vec v_i} \\
& = \sum_{i=1}^m(\exp({\mathfrak b_{i(l,r)}}+c T^{\lambda} ) - \exp({\mathfrak b_{i(l,r)}}))
T^{S_l} y^{\vec v_{i(l,r)}}.
\endaligned$$
This is in the form of the sum of the first 2 terms of
the right hand side of (\ref{firstapprox}).
\par
We next study the contribution of  $\beta \ne \beta_i$.
We have only to consider $\beta$'s satisfying that
$\mu(\beta) = 2$ and $\mathcal M^{\text{\rm main}}(L(u);\beta) \neq \emptyset$. We put
$$
\mathfrak P_{\beta}(\mathfrak b;b)
= \sum_{\text{\bf p}}\frac{1}{\vert\text{\bf p}\vert !}
\mathfrak b^{\text{\bf p}}T^{\beta \cap \omega/2\pi}
c(\beta;\text{\bf p}) \exp(b \cap \partial \beta).
$$
If we write
$$
\beta = \sum_{i=1}^m e^i_{\beta}\beta_i + \sum_j\alpha_{\beta,j}
$$
as in Proposition \ref{copy37.177} (5),
then we have
$$\aligned
\frac{\beta \cap [\omega]}{2\pi} &= \sum_{i=1}^m e^i_{\beta} \ell_i(u) + \sum_j
\frac{\alpha_{\beta,j} \cap [\omega]}{2\pi} \\
\exp(b\cap \partial\beta) &= y^{\sum_{i=1}^m e^i_{\beta}\vec v_i}.
\endaligned$$
We have $e_{\beta}^i \ge 0$ and
$\sum_i e_{\beta}^i > 0$. Moreover, since
$\beta \ne \beta_i$ ($i = 1,\ldots,m$) it follows that
$\sum_j \alpha_{\beta,j} \ne 0$. (We use $\mu(\beta) = 2$ to  prove this.)
Therefore
$$
\rho_{\beta} = \sum_j \alpha_{\beta,j} \cap [\omega]/2\pi > 0.
$$
Hence
$$
\mathfrak P_{\beta}(\mathfrak b';b) - \mathfrak P_{\beta}(\mathfrak b;b)
= \sum_{\sigma}\sum_{h=1}^{\infty} c_{\sigma,h}
T^{\sum_i e_{\beta}^i \ell_i(u) + \rho_{\beta} + h\lambda
+\rho'_{\sigma}} y^{\sum_i e_{\beta}^i \vec v_i},
$$
where $c_{\sigma,h} \in R$ and $\rho'_{\sigma}$ is a sum
of exponents of $T$ in $\mathfrak b$. This corresponds to the
third term of (\ref{firstapprox}).
In fact $\ell'_{\sigma} = \sum_i e_{\beta}^i \ell_i$,
$\rho_{\sigma} = \rho'_{\sigma} + \rho_{\beta}$ therein.
\par
Now Proposition  \ref{nonfanopert} follows if we rewrite
$$
\mathfrak{PO}^u(\mathfrak b';b) - \mathfrak{PO}^u(\mathfrak b;b)
= (\mathfrak P_0(\mathfrak b';b)  - \mathfrak P_0(\mathfrak b;b) )
+ \sum_{\beta} (\mathfrak P_{\beta}(\mathfrak b';b) - \mathfrak P_{\beta}(\mathfrak b;b) ).
$$
\end{proof}

\section{Floer cohomology and non-displacement of Lagrangian submanifolds}
\label{sec:HFbulk}
\par
In this section we discuss how we apply Floer cohomology and
the potential function to the study of non-displacement property of
Lagrangian submanifolds. Especially we will prove Proposition
\ref{prof:bal2}. The argument of this
section is a minor modification and combination of the one given in
\cite{fooo08} except that we integrate
\emph{bulk deformations} into the argument therein.
(The way to use bulk deformation in the study of
non-displacement of Lagrangian submanifold is described in 
Section 3.8 \cite{fooo06} (= Section 13 \cite{fooo06pre}.)) This
generalization is quite straightforward. However we  gives details in
order to make this paper as self-contained as possible for readers'
convenience. To avoid too much repetition of the materials from
\cite{fooo06}, we will use the de Rham cohomology
version instead of the singular cohomology version of the filtered $A_\infty$
algebra associated to a Lagrangian submanifolds in this section.
The de Rham version is suitable for the purpose of the present paper since
we can easily realize {\it strict} unit in de Rham theory. We are
using weak bounding cochains which are easier to handle in case
strict unit (rather than homotopy unit) exists.
\par
In this section we put $R=\C$. We write $\Lambda_0$, $\Lambda_+$,
$\Lambda$ in place of $\Lambda_0(\C)$, $\Lambda_+(\C)$,
$\Lambda(\C)$ respectively, in this section.
\par
We first explain how we enlarge the deformation parameters
$(\mathfrak b,\mathfrak x)$ of Floer cohomology to
$$
\mathcal A(\Lambda_+) \times H^1(L(u);\Lambda_0) \supset \mathcal A(\Lambda_+)
\times H^1(L(u);\Lambda_+),
$$
by including $b \in H^1(L(u);\Lambda_0) \supset H^1(L(u);\Lambda_+)$
as in \cite{fooo08} where we borrowed the idea of Cho \cite{cho07}
of considering Floer cohomology twisted with flat \emph{non-unitary} line bundles
in the study of non-displacement problem of Lagrangian submanifolds.
\begin{defn}\label{rhoyx}
Let
\begin{equation}\label{frakxdef}
\mathfrak x = \sum_i \mathfrak x_i \text{\bf e}_i \in H^1(L(u);\Lambda_{0})
\end{equation}
and
\begin{equation}\label{frakxdef2}
\mathfrak x_i = \mathfrak x_{i,0} + \mathfrak x_{i,+}
\end{equation}
where $\mathfrak x_{i,0} \in \C$ and
$\mathfrak x_{i,+} \in \Lambda_{+}$.
We put
$$
\mathfrak y_{i,0} = \exp(\mathfrak x_{i,0})
= \sum_{n=0}^{\infty}\frac{\mathfrak x_{i,0}^n}{n!} \in \C.
$$
Let
$
\rho: H_1(L(u);\Z) \to \C\setminus \{0\}
$
be the representation defined by
$
\rho(\text{\bf e}_i) = \mathfrak y_{i,0}.
$
\end{defn}
\begin{defn}\label{mkbwithtwist}
We define
$$
\mathfrak q_{\ell,k}^{{\text{\rm can}},\rho}: E_{\ell}\mathcal A(\Lambda_+)[2] \otimes
B_k(H(L(u);\Lambda_0)[1] \to H(L(u);\Lambda_0)[1]
$$
by
\begin{equation}\label{qrhodef}
\mathfrak q^{{\text{\rm can}},\rho}_{\ell,k} = \sum_{\beta}
\mathfrak y_{1,0}^{\partial \beta \cap \text{\bf e}_1^*}
\cdots
\mathfrak y_{n,0}^{\partial \beta \cap \text{\bf e}_n^*}
T^{\beta\cap \omega/2\pi}
\mathfrak q^{{\text{\rm can}}}_{\beta;\ell,k}.
\end{equation}
We then define:
\begin{equation}
\mathfrak m_k^{\mathfrak b,{\text{\rm can}},\mathfrak x}(x_1,\ldots,x_k) = \sum_{\ell}
\mathfrak q^{{\text{\rm can}},\rho}_{\ell}(\mathfrak b^{\ell};e^{\mathfrak x_+} x_1
e^{\mathfrak x_+} \cdots e^{\mathfrak x_+} x_k e^{\mathfrak x_+}),
\end{equation}
and
\begin{equation}\label{POwithtwist}
\mathfrak{PO}^u_{\rho}(\mathfrak b,\mathfrak x_+) = \sum_{\ell,k} \mathfrak
q^{{\text{\rm can}},\rho}_{\ell,k}(\mathfrak b^{\ell};\mathfrak x_+^k).
\end{equation}
We also define
$$
\mathfrak q_{\ell,k}^{\rho}: E_{\ell}(\mathcal A(\Lambda_+)[2]) \otimes
B_k((\Omega(L(u)\,\widehat\otimes\,\Lambda_0)[1]) \to (\Omega(L(u))\,\widehat\otimes\,\Lambda_0)[1]
$$
and $\mathfrak m_k^{\mathfrak b,\mathfrak x}$ in the same way.
\end{defn}
\begin{lem}
\begin{enumerate}
\item
$\mathfrak m_k^{\mathfrak b,\mathfrak x}$, $\mathfrak m_k^{\mathfrak b,{\text{\rm can}},\mathfrak x}$ define the structures of
filtered $A_{\infty}$ algebras on $\Omega(L(u)) \widehat{\otimes} \Lambda_0$ and on
$H(L(u);\Lambda_0)$, respectively.
\item Let $\mathfrak{PO}^u(\mathfrak b; \cdot): H^1(L(u);\Lambda_0) \to \Lambda_+$ be the extended 
potential function as in Lemma $\ref{lem:extend}$. Then we have
$$
\mathfrak{PO}^u_{\rho}(\mathfrak b;\mathfrak x_+) = \mathfrak{PO}^u(\mathfrak b;\mathfrak x)
$$
if $(\ref{frakxdef2})$ holds.
\end{enumerate}
\end{lem}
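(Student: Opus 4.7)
The plan is to treat the two statements separately; both reduce to formal manipulations layered on top of the $A_\infty$ relations already established for the operators $\mathfrak q^{can}_{\beta;\ell,k}$ in Proposition \ref{qtoric}.

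For part (1), the first step is to observe that the new operations differ from $\mathfrak m_k^{\mathfrak b, can}$ of Definition \ref{def610} only by two modifications: inserting $e^{\mathfrak x_+}$ between boundary inputs, and weighting each homotopy class $\beta$ by the scalar factor $\rho(\partial\beta)\in\mathbb C^\ast$. The insertion of $e^{\mathfrak x_+}$ is the standard passage to a weak bounding cochain, and the $A_\infty$ relations survive it because $e^{\mathfrak x_+}$ is group-like, $\Delta(e^{\mathfrak x_+}) = e^{\mathfrak x_+}\otimes e^{\mathfrak x_+}$, so one applies verbatim the argument of Lemma \ref{bulkdef} (borrowed from \cite{fooo06} Lemma 3.8.39) but on the boundary side. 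To incorporate the $\rho$-twisting, I would use that $\rho\colon H_1(L(u);\mathbb Z)\to\mathbb C^\ast$ is a multiplicative character, so under the boundary decomposition of Lemma \ref{bdry} with $\beta=\beta_1+\beta_2$ one has $\rho(\partial\beta)=\rho(\partial\beta_1)\rho(\partial\beta_2)$; hence the scalar factors distribute across the gluing and the $A_\infty$ relation for $\mathfrak q^{can,\rho}_{\ell,k}$ is inherited termwise from that of $\mathfrak q^{can}_{\ell,k}$. The identical argument works verbatim on $\Omega(L(u))\,\widehat\otimes\,\Lambda_0$ using $\mathfrak q^{dR}_{\beta;\ell,k}$, giving the statement for $\mathfrak m_k^{\mathfrak b,\mathfrak x}$.

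For part (2), I would compute both sides directly using Lemma \ref{37.189}. The calculation \eqref{calcPO} carried out in the proof of Theorem \ref{weakpotential} gives
$$
\mathfrak{PO}^u(\mathfrak b;\mathfrak x)=\sum_{\beta,\mathbf p}\frac{1}{|\mathbf p|!}\mathfrak b^{\mathbf p}\,T^{\beta\cap\omega/2\pi}c(\beta;\mathbf p)\exp(\partial\beta\cap\mathfrak x).
$$
Writing $\mathfrak x=\mathfrak x_0+\mathfrak x_+$ and factoring $\exp(\partial\beta\cap\mathfrak x)=\exp(\partial\beta\cap\mathfrak x_0)\cdot\exp(\partial\beta\cap\mathfrak x_+)$, the first factor is $\prod_i\mathfrak y_{i,0}^{\partial\beta\cap\mathbf e_i^\ast}=\rho(\partial\beta)$ by Definition \ref{rhoyx}. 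Comparing term by term with the expansion produced from \eqref{qrhodef} and \eqref{POwithtwist} via another application of Lemma \ref{37.189} identifies the two series.

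The substantive issue, and the main thing that needs care, is non-Archimedean convergence. Since $\mathfrak x_{i,0}\in\mathbb C$ is \emph{not} in $\Lambda_+$, the individual exponential $\exp(\partial\beta\cap\mathfrak x_0)$ is a priori only a formal expression; what rescues the argument is that it collapses to the finite scalar $\rho(\partial\beta)\in\mathbb C^\ast$ before any summation, and that after this collapse the outer sum over $\beta$ is controlled by the factor $T^{\beta\cap\omega/2\pi}$, whose exponents go to $+\infty$ by Gromov compactness. The clean statement that the rearrangement is legitimate is precisely the extension property recorded in Lemma \ref{lem:extend} and refined in Theorem \ref{extendth}, so once one invokes those results, part (2) reduces to the formal identity above and both sides are seen to define the same element of $\Lambda_0$.
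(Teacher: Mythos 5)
Your proof is correct, and it fills in the details of exactly the argument the paper has in mind: the paper simply cites Proposition 12.2 and Lemma 4.9 of \cite{fooo08} for parts (1) and (2), and what you have written out (the group-like property of $e^{\mathfrak x_+}$, the multiplicativity of $\beta\mapsto\rho(\partial\beta)$ under the boundary splitting $\beta=\beta_1+\beta_2$, and the factorization $\exp(\partial\beta\cap\mathfrak x)=\rho(\partial\beta)\exp(\partial\beta\cap\mathfrak x_+)$ applied to \eqref{calcPO}) is precisely the content of those references.
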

\begin{proof}
The proof of (1) is the same as that of Proposition 12.2 \cite{fooo08}.
The proof of (2) is the same as the proof of Lemma 4.9 \cite{fooo08}.
\end{proof}
\begin{defn}
$$
HF((L(u),\mathfrak b,\mathfrak x),(L(u),\mathfrak b,\mathfrak x);\Lambda_0)
:= \frac{\text{\rm Ker}\,\,\mathfrak m_1^{\mathfrak b,{\text{\rm can}},\mathfrak x}}
{\text{\rm Im}\,\,\mathfrak m_1^{\mathfrak b,{\text{\rm can}},\mathfrak x}}.
$$
\end{defn}
In the same way as Lemma \ref{caneqcannasi} we can prove
\begin{equation}\label{caniso}
\frac{\text{\rm Ker}\,\,\mathfrak m_1^{\mathfrak b,{\text{\rm can}},\mathfrak x}}
{\text{\rm Im}\,\,
\mathfrak m_1^{\mathfrak b,{\text{\rm {\text{\rm can}}}},\mathfrak x}}
\cong
\frac{\text{\rm Ker}\,\,\mathfrak m_1^{\mathfrak b,\mathfrak x}}
{\text{\rm Im}\,\,\mathfrak m_1^{\mathfrak b,\mathfrak x}}.
\end{equation}
\begin{proof}[Proof of Theorem \ref{homologynonzero}]
Based on the above definition the proof goes in the same way as the
proof of Theorem 4.10 \cite{fooo08}.
\end{proof}
We next prove Proposition \ref{prof:bal2}.
Again the proofs will be similar to the proofs of
Proposition 4.12 and Theorem 5.11 \cite{fooo08} in which we use
a variant of Theorem \ref{HFdisplace} that also employs Floer
cohomology twisted by non-unitary flat bundles (whose holonomy is $\rho$ as above).
\par
Now we provide the details of the above mentioned proofs.
\par
Let $\psi_t: X \to X$ be a Hamiltonian isotopy with $\psi_0 =$ identity.
We put $\psi_1 = \psi$. We consider the pair
$$
L^{(0)}=L(u), \quad L^{(1)} = \psi(L(u))
$$
such that $L^{(1)}$ is transversal to $L^{(0)}$.
By perturbing $\psi_t$ a bit, we may assume the following:
\begin{conds}\label{butukarazu}
If $p \in L(u) \cap \psi(L(u))$, then
\begin{equation}
\psi_t(p) \notin \pi^{-1}(\partial P)
\end{equation}
for any $t \in [0,1]$.
\end{conds}
We put $\psi_{t *} J = J_t$ where $J$ is the standard complex structure of $X$. Then
$J_0 = J$ and $J_1 = \psi_*(J)$.
\par
Let $p,q \in L^{(0)}\cap L^{(1)}$. We consider the
homotopy class of maps
\begin{equation}\label{conncorbitmap}
\varphi: \R \times [0,1] \to X
\end{equation}
such that
\begin{enumerate}
\item
$\lim_{\tau \to -\infty} \varphi(\tau,t) = p$,
$\lim_{\tau \to +\infty} \varphi(\tau,t) = q$.
\item
$\varphi(\tau,0) \in L^{(0)}$, $\varphi(\tau,1) \in L^{(1)}$.
\end{enumerate}
We denote by $\pi_2(L^{(1)},L^{(0)};p,q)$ the set of all such homotopy
classes. We then define maps
\begin{equation}\label{joinhomotopy}
\aligned
& \pi_2(L^{(1)},L^{(0)};p,r) \times \pi_2(L^{(1)},L^{(0)};r,q)
\to \pi_2(L^{(1)},L^{(0)};p,q), \\
& \pi_2(X,L^{(1)}) \times \pi_2(L^{(1)},L^{(0)};p,q)
\to \pi_2(L^{(1)},L^{(0)};p,q), \\
&  \pi_2(L^{(1)},L^{(0)};p,q)  \times \pi_2(X,L^{(0)})
\to \pi_2(L^{(1)},L^{(0)};p,q),
\endaligned
\end{equation}
as follows. The map in the first line is an
obvious concatenation.
To define the map in the second line we first fix a
base point $p_0 \in L^{(1)}$. Let $\varphi: \R \times [0,1] \to X$ represent an
element of $\pi_2(L^{(1)},L^{(0)};p,q)$ and
$\phi: D^2 \to X$ an element of $\pi_2(X,L^{(1)})$, respectively.
($\phi(1) = p_0$ and $\phi(\partial D^2) \subset L^{(1)}$.) We take
a path $\gamma$ joining $p_0$ and $\varphi(0,1)$ in $L^{(1)}$.
We take the boundary connected sum $(\R \times [0,1])\# D^2$
of $\R \times [0,1]$ and $D^2$ along $(0,1)$ and $1$,
which is nothing but  $\R \times [0,1]$.
We use $\gamma$ to obtain the map $\varphi \#_{\gamma} \phi
: \R \times [0,1] \cong (\R \times [0,1])\# D^2 \to X$ joining $\varphi$ and $\phi$.
The homotopy class of
$\varphi \#_{\gamma} \phi$ is independent of $\gamma$ since $\pi_1(L^{(1)})$ acts trivially on $\pi_2(X,L^{(1)})$.
(We use the fact that $L^{(1)}$ is a torus here.) Thus we have defined the map
in the second line. The map in the third line is defined in the same way.
\par
We denote the maps in (\ref{joinhomotopy}) by $\#$.
\begin{rem}
\begin{enumerate}
\item We here use the set $\pi_2(L^{(1)},L^{(0)};p,q)$ of
\emph{homotopy} classes. In the last two sections we use homology
group $H_2(X,L(u);\Z)$. In fact $H_2(X,L(u);\Z) \cong \pi_2(X,L(u))$
in our situation and so we can instead use the latter.
\item
The definition of $\#$ above is rather ad hoc since we use the fact that $L^{(1)}$ is a torus.
In the general case we use the set of $\Gamma$ equivalence classes of the elements of
$\pi_2(L^{(1)},L^{(0)};p,q)$ in place of $\pi_2(L^{(1)},L^{(0)};p,q)$ itself.
(See Definition-Proposition 2.3.9 \cite{fooo06} = Definition-Proposition 4.9 \cite{fooo06pre}.)
\end{enumerate}
\end{rem}
\begin{defn}
We consider the moduli space of maps (\ref{conncorbitmap})
satisfying (1), (2) above, in homotopy class $B \in
\pi_2(L^{(1)},L^{(0)};p,q)$, and satisfying the equation:
\begin{equation}\label{CReqn-Jt}
\frac{\partial\varphi}{\partial \tau}
+ J_t \left(\frac{\partial\varphi}{\partial t}\right) = 0.
\end{equation}
We denote it by
$$
\widetilde{\mathcal M}^{\text{reg}}(L^{(1)},L^{(0)};p,q;B).
$$
We put $k_1$ marked points $(\tau_i^{(1)},1)$ on $\{(\tau,1)\mid
\tau \in \R\}$, $k_0$ marked points $(\tau^{(0)}_i,0)$ on
$\{(\tau,0)\mid \tau \in \R\}$, and $\ell$ marked points
$(\tau_i,t_i)$ on $\R \times (0,1)$. We number the $k_1 + k_0$
marked points so that it respects to the counter-clockwise cyclic
order. The totality of such
$(\varphi,\{(\tau^{(1)}_i,1)\},\{(\tau^{(0)}_i,0)\},
\{(\tau_i,t_i)\})$ is denoted by
$$
\widetilde{\mathcal M}^{\text{reg}}_{k_1,k_0;\ell}(L^{(1)},L^{(0)};p,q;B).
$$
We divide this space by the $\R$ action induced by the translation of
$\tau$ direction to obtain
$
{\mathcal M}^{\text{reg}}(L^{(1)},L^{(0)};p,q;B),
$
and
$
{\mathcal M}^{\text{reg}}_{k_1,k_0;\ell}(L^{(1)},L^{(0)};p,q;B).
$
Finally we compactify them to obtain
$
{\mathcal M}(L^{(1)},L^{(0)};p,q;B),
$
and
$
{\mathcal M}_{k_1,k_0;\ell}(L^{(1)},L^{(0)};p,q;B).
$
\end{defn}

See Definition 3.7.24 \cite{fooo06} (= Definition 12.24 \cite{fooo06pre})  (the case $\ell=0$) and 
Subsection 3.8.8 \cite{fooo06} (= Section 13.8 \cite{fooo06pre}) for the detail.

\begin{rem}
In \cite{fooo06} we defined ${\mathcal
M}_{k_1,k_0}(L^{(1)},L^{(0)};[\ell_p,w_1],[\ell_q,w_2])$. The choice
of $[w_1]$ and $B$ uniquely determines $[w_2]$ by the relation
$[w_1] \# B = [w_2]$, but there could be more than one element $B
\in \pi_2(L^{(1)},L^{(0)};p,q)$ satisfying $[w_1] \# B = [w_2]$.
This is because the equivalence class $[\ell_p,w]$ is not the
homotopy class but the equivalence class of a weaker relation. But
the number of such classes $B$ for which ${\mathcal
M}(L^{(1)},L^{(0)};p,q;B) \neq \emptyset$ is finite by Gromov's
compactness. Therefore ${\mathcal
M}_{k_1,k_0}(L^{(1)},L^{(0)};[\ell_p,w_1],[\ell_q,w_2])$ is a finite
union of ${\mathcal M}(L^{(1)},L^{(0)};p,q;B)$ with $B$ satisfying
$[w_1] \# B = [w_2]$.
\end{rem}
We define the evaluation map
$$
ev = (ev^{\text{int}},ev^{(1)},ev^{(0)}): 
{\mathcal M}_{k_1,k_0;\ell}^{\text {reg}}
(L^{(1)},L^{(0)};p,q;B) \to X^{\ell} \times
(L(u))^{k_1} \times (L(u))^{k_0},
$$
by
\begin{equation}\label{evaluationFH}
\aligned
ev_i^{(0)}(\varphi,\{(\tau^{(1)}_i,1)\},\{(\tau^{(0)}_i,0)\},
\{(\tau_i,t_i)\})
&= \varphi((\tau^{(0)}_i,0)), \\
ev_i^{(1)}(\varphi,\{(\tau^{(1)}_i,1)\},\{(\tau^{(0)}_i,0)\},
\{(\tau_i,t_i)\})
&= \varphi((\tau^{(1)}_i,1)), \\
ev_i^{\text{int}}(\varphi,\{(\tau^{(1)}_i,1)\},\{(\tau^{(0)}_i,0)\},
\{(\tau_i,t_i)\})
&= \varphi((\tau_i,t_i)).
\endaligned
\end{equation}
Proposition 3.7.26 \cite{fooo06} (= Proposition 12.26 
\cite{fooo06pre}) shows that the evaluation maps 
extend to the compactification 
$$
ev = (ev^{\text{int}},ev^{(1)},ev^{(0)}): 
{\mathcal M}_{k_1,k_0;\ell}
(L^{(1)},L^{(0)};p,q;B) \to X^{\ell} \times
(L(u))^{k_1} \times (L(u))^{k_0}
$$
so that 
they are weakly submersive.
 
We have diffeomorphisms $L(u) \cong L^{(0)}$ and
$L(u) \cong L^{(1)}$. (The former is the identity and the latter is $\psi$.)

\begin{lem}\label{boundaryN}
${\mathcal M}_{k_1,k_0;\ell}(L^{(1)},L^{(0)};p,q;B)$ has an oriented
Kuranishi structure with corners. 
The evaluation map $(\ref{evaluationFH})$ 
extends to
${\mathcal M}_{k_1,k_0;\ell}(L^{(1)},L^{(0)};p,q;B)$ so that 
it is smooth strongly continuous and weakly submersive. 
(See Definition {\rm A1.13} \cite{fooo06} for the definitions.)
Its boundary is isomorphic to the
union of the following three kinds of fiber products as spaces with
Kuranishi structure.
\begin{enumerate}
\item
$${\mathcal M}_{k'_1,k'_0;\ell'}(L^{(1)},L^{(0)};p,r;B')
\times
{\mathcal M}_{k''_1,k''_0;\ell''}(L^{(1)},L^{(0)};r,q;B'')$$
where $k'_j + k''_j = k_j$, $\ell' + \ell'' = \ell$,
$B' \# B'' = B$. The product is the direct product.
\item
$${\mathcal M}_{k'_1+1;\ell'}(L(u);\beta')
\,\,{}_{ev_0}\times_{ev^{(1)}_i} \,\,
{\mathcal M}_{k''_1,k_0;\ell''}(L^{(1)},L^{(0)};p,q;B'').$$
Here $\beta' \in \pi_2(X,L^{(1)}) \cong \pi_2(X,L(u))$,
$k'_1 + k''_1 = k_1+1$, $\ell' + \ell'' = \ell$,
$\beta' \# B'' = B$. The fiber product is taken over
$L^{(1)} \cong L(u)$ by using $ev_0: {\mathcal M}_{\ell';k'_1+1}(L(u);\beta')
\to L(u)$ and
$ev^{(1)}_i: {\mathcal M}_{k''_1,k_0;\ell''}(L^{(1)},L^{(0)};p,q;B'')
\to L^{(1)}$. Here $i=1,\ldots,k''_1$.
\item
$$
{\mathcal M}_{k_1,k'_0;\ell'}(L^{(1)},L^{(0)};p,q;B')
\,\,{}_{ev^{(0)}_i}\times_{ev_0} \,\,
{\mathcal M}_{k''_0+1;\ell''}(L(u);\beta'') .$$
Here $\beta'' \in \pi_2(X,L^{(0)}) \cong \pi_2(X,L(u))$,
$k'_0 + k''_0 = k_0+1$, $\ell' + \ell'' = \ell$,
$B' \# \beta'' = B$. The fiber product is taken over
$L^{(0)} \cong L(u)$ by using $ev_0: {\mathcal M}_{k''_0+1;\ell''}(L(u);\beta'')
\to L(u)$ and
$ev^{(0)}_i: {\mathcal M}_{k_1,k'_0;\ell'}(L^{(1)},L^{(0)};p,q;B')
\to L^{(0)}$.
\end{enumerate}
\end{lem}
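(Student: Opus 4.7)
The plan is to establish Lemma \ref{boundaryN} by combining the Kuranishi structure construction for Lagrangian intersection Floer theory from Chapter 7 of \cite{fooo06} with the bulk deformation machinery from Section 3.8 of \cite{fooo06} already reviewed in Section \ref{sec:bulk} of the present paper. The moduli space ${\mathcal M}_{k_1,k_0;\ell}(L^{(1)},L^{(0)};p,q;B)$ is a natural generalization of the moduli spaces treated there: we merely add $\ell$ free interior marked points with no incidence constraint, and the evaluation maps $ev^{\text{int}}_i$ are defined after pulling back by the time-dependent family $\psi_t^{-1}$ so as to land in the fixed target $X$ rather than in a varying fiber. Since the Cauchy-Riemann equation (\ref{CReqn-Jt}) in the moving frame $\psi_t^*J = J_t$ is equivalent, after the coordinate change $v(\tau,t) = \psi_t^{-1}(\varphi(\tau,t))$, to a standard Floer equation for the pair $(L(u),L(u))$ with a Hamiltonian perturbation, all the standard machinery (Gromov compactness, gluing, transversality up to multisections, tangent bundle and orientation) carries over verbatim from \cite{fooo06}. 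This gives the oriented Kuranishi structure with corners.

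For the boundary description, the key input is Gromov-type compactness analysis. A sequence in the open stratum can degenerate in exactly one of the following ways: (i) the strip develops a long neck at a finite time $\tau_0 \in \R$, which after rescaling yields two finite-energy Floer trajectories meeting at an intersection point $r \in L^{(1)}\cap L^{(0)}$; (ii) a pseudo-holomorphic disc bubbles off at a boundary point on the $t=1$ edge, whose image lies in $L^{(1)}$; or (iii) a disc bubbles off on the $t=0$ edge, lying in $L^{(0)}$. In case (i) the $k_1$ (resp.\ $k_0$) boundary marked points on $t=1$ (resp.\ $t=0$) distribute among the two factors respecting cyclic order, while the $\ell$ interior marked points split according to their $\tau$-coordinates — yielding the direct product in (1). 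In cases (ii) and (iii) one boundary marked point of the disc becomes the node, which is attached at one of the $k_1''+1$ (resp.\ $k_0''+1$) possible positions, and the remaining boundary and interior marked points again distribute; this gives the fiber products in (2) and (3) over $L(u)$ with respect to the evaluation maps indicated. Interior sphere bubbles in $X$ do not contribute to codimension-one boundary (they contribute only in codimension two) and so are invisible here.

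Finally one must verify that these strata carry the induced Kuranishi structures, i.e.\ that the Kuranishi neighborhoods restrict to the corresponding fiber products. This is the most delicate step but is handled exactly as in Subsection 7.1.2 and Lemma 7.1.36 of \cite{fooo06} (= Lemma 29.36 \cite{fooo06pre}): one constructs obstruction bundles compatible with the gluing parameter near each boundary stratum, and identifies the gluing map with the fiber product description. The orientation compatibility with (1)-(3) follows from the orientation conventions for time-ordered concatenation and for disc bubbling established in Chapter 8 \cite{fooo06} (= Chapter 9 \cite{fooo06pre}), together with the convention $L^{(1)}\cong L(u)$ via $\psi$ used to identify $ev^{(1)}_i$ and $ev_i^{\text{int}}$ with evaluations into $L(u)$. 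The main technical obstacle is the bookkeeping of the shuffle of interior marked points combined with the cyclic shuffle of boundary marked points on the two edges, but this is formally identical to the treatment in Theorem \ref{qproperties} and Lemma \ref{bdry}, and requires no new ideas beyond those.
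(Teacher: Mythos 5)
Your argument is correct and takes essentially the same approach as the paper: the paper's own proof consists solely of a citation to subsection 7.1.4 of \cite{fooo06} (= section 29.4 of \cite{fooo06pre}), and your account of the three boundary strata (strip breaking at an intersection point, disc bubbling on the $t=1$ edge, disc bubbling on the $t=0$ edge), with sphere bubbles occurring only in codimension two and the Kuranishi-chart, gluing and orientation compatibility drawn from Chapters 7 and 8 of \cite{fooo06}, is a faithful unpacking of that reference. One minor slip in your description of cases (2) and (3): the fiber product in the statement is indexed over the $k''_1$ (resp.\ $k''_0$) boundary marked points of the strip piece at which the disc's $ev_0$ node attaches, so there are $k''_1$ (resp.\ $k''_0$) attachment slots rather than $k''_1+1$.
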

See Proposition 3.7.26 \cite {fooo06}. 
Lemma \ref{boundaryN} is proved in the same way as in Subsection 7.1.4 \cite{fooo06} (= Section 29.4 \cite{fooo06pre}).
\begin{defn}
We next take $\text{\bf p} \in Map(\ell,\underline B)$ and define
\begin{equation}\label{Mwithpbimodule}
{\mathcal M}_{k_1,k_0;\ell}(L^{(1)},L^{(0)};p,q;B;\text{\bf p}) =
{\mathcal M}_{k_1,k_0;\ell}(L^{(1)},L^{(0)};p,q;B) \,\,
{}_{ev^{\text{int}}}\times \prod_{i=1}^{\ell} D_{\text{\bf p}(i)}.
\end{equation}
It is a space with oriented Kuranishi structure with corners.
\par
We remark that Condition \ref{butukarazu} implies that if $p=q$ and
$B = B_0 = 0$, then the set ${\mathcal M}_{k_1,k_0;\ell}(L^{(1)},L^{(0)};p,p;B_0;\text{\bf p})$ with $\ell\ne 0$ is empty.
\end{defn}
\begin{lem}\label{bdrywithpn}
The boundary of
${\mathcal M}_{k_1,k_0;\ell}(L^{(1)},L^{(0)};p,q;B;\text{\bf p})$
is a union of the following three types of fiber products
as a space with Kuranishi structure.
\begin{enumerate}
\item
$${\mathcal M}_{k'_1,k'_0;\ell'}(L^{(1)},L^{(0)};p,r;B';\text{\bf p}_1)
\times {\mathcal M}_{k''_1,k''_0;\ell''}(L^{(1)},L^{(0)};r,q;B'';\text{\bf p}_2).$$
Here the notations are the same as in Lemma $\ref{boundaryN}$ $(1)$ and
\begin{equation}\label{p1p2}
(\text{\bf p}_1,\text{\bf p}_2)
= \text{\rm Split}((\mathbb L_1,\mathbb L_2),\text{\bf p})
\end{equation}
for some $(\mathbb L_1,\mathbb L_2) \in \text{\rm Shuff}(\ell)$.
\item
$${\mathcal M}_{k'_1+1;\ell'}(L(u);\beta';\text{\bf p}_1)
\,\,{}_{ev_0}\times_{ev^{(1)}_i} \,\, {\mathcal
M}_{k''_1,k_0;\ell''}(L^{(1)},L^{(0)};p,q;B'';\text{\bf p}_2).
$$
Here the notations are the same as in Lemma $\ref{boundaryN}$ $(2)$
and $(\ref{p1p2})$.
\item
$$
{\mathcal M}_{k_1,k'_0;\ell'}(L^{(1)},L^{(0)};p,q;B';\text{\bf p}_1)
\,\,{}_{ev^{(0)}_i}\times_{ev_0} \,\, {\mathcal
M}_{k''_0+1;\ell''}(L(u);\beta'';\text{\bf p}_2).
$$
Here the notations are the same as in Lemma $\ref{boundaryN}$ $(3)$
and $(\ref{p1p2})$. 
\end{enumerate}
\end{lem}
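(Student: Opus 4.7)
The plan is to derive Lemma \ref{bdrywithpn} from Lemma \ref{boundaryN} by tracking how the interior marked points (and the associated divisor constraints) distribute across the boundary strata. Since each $D_{\text{\bf p}(i)}$ is a closed toric submanifold without boundary, only the first factor in the fiber product \eqref{Mwithpbimodule} contributes to the boundary, so I first establish
\[
\partial \mathcal M_{k_1,k_0;\ell}(L^{(1)},L^{(0)};p,q;B;\text{\bf p})
= \bigl(\partial \mathcal M_{k_1,k_0;\ell}(L^{(1)},L^{(0)};p,q;B)\bigr)\,{}_{ev^+}\!\times \prod_{i=1}^{\ell} D_{\text{\bf p}(i)}
\]
as oriented spaces with Kuranishi structure, following the standard conventions of Chapter 8 \cite{fooo06}.

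Next, for each of the three boundary strata enumerated in Lemma \ref{boundaryN}, the domain breaks into two components, and the $\ell$ interior marked points are partitioned between them. This partition is by definition a shuffle $(\mathbb L_1,\mathbb L_2)\in \text{\rm Shuff}(\ell)$: the $\ell' = \#\mathbb L_1$ marked points going to the first (``left'') component form $\mathbb L_1$ and the $\ell''=\#\mathbb L_2$ going to the second form $\mathbb L_2$. Since the interior evaluation map $ev^+$ factors through the product evaluation maps of the two components, the fiber product distributes as
\[
\prod_{i\in \mathbb L_1} D_{\text{\bf p}(i)} \times \prod_{j\in \mathbb L_2} D_{\text{\bf p}(j)},
\]
which, upon relabeling by the order-preserving bijections $\frak i_j:\{1,\ldots,\ell_j\}\cong \mathbb L_j$, is identified with $\prod_{i=1}^{\ell'} D_{\text{\bf p}_1(i)} \times \prod_{i=1}^{\ell''} D_{\text{\bf p}_2(i)}$, where $(\text{\bf p}_1,\text{\bf p}_2)=\text{Split}((\mathbb L_1,\mathbb L_2),\text{\bf p})$. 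Pairing this decomposition with the three types of factorizations in Lemma \ref{boundaryN} and summing over all shuffles produces exactly the three families of fiber products (1), (2), (3) listed in Lemma \ref{bdrywithpn}.

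The main work is bookkeeping rather than substantive geometry: one must verify that the Kuranishi structures on the two sides match (as in subsection 7.1.4 \cite{fooo06} = section 29.4 \cite{fooo06pre}), that the evaluation maps at the nodal point that appears in types (2) and (3) are correctly identified with $ev_0$ of the disc bubble on one side and $ev^{(1)}_i$ or $ev^{(0)}_i$ on the strip side, and that the orientations agree. The orientation check reduces to the compatibility of fiber product orientations with boundary orientations for Kuranishi spaces with corners, established in Chapter 8 \cite{fooo06} (= Chapter 9 \cite{fooo06pre}), together with the sign conventions used to orient $\mathcal M_{k+1;\ell}^{\text{main}}(L(u),\beta;\text{\bf p})$ in Lemma \ref{kstructure}. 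The potentially subtle point—and the only genuine obstacle—is to verify that no additional boundary strata appear from the interior marked points colliding with each other or with the boundary of the strip; this is ruled out because the moduli spaces $\mathcal M_{k_1,k_0;\ell}$ are defined with interior marked points varying in $\R\times (0,1)$ and their collisions are absorbed into the sphere-bubble contributions already accounted for in the factorizations of Lemma \ref{boundaryN}.
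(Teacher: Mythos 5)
Your proof is correct and follows essentially the same route as the paper, which simply records that the statement is immediate from Lemma \ref{boundaryN}; you supply the bookkeeping the paper omits, namely that $\partial(\mathcal M \times_X \prod D_{\text{\bf p}(i)}) = (\partial\mathcal M)\times_X \prod D_{\text{\bf p}(i)}$ because the $D_a$ are closed, followed by the shuffle/$\text{Split}$ accounting of interior marked points. One small imprecision in your final paragraph: collisions of interior marked points produce sphere bubbles, which are codimension two phenomena, so they do not appear as boundary strata at all (rather than being ``absorbed into the factorizations of Lemma \ref{boundaryN},'' which lists only strip breaking and disc bubbling); this does not affect the correctness of the conclusion, and indeed the worry is unnecessary since Lemma \ref{boundaryN} already gives the full boundary of the compactified moduli space before the fiber product is taken.
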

The proof is immediate from Lemma \ref{boundaryN}.
We remark that by our definition of evaluation map $ev^{\text{\rm int}}_i$
the homology classes $\beta'$, $\beta''$ in (2), (3) above are nonzero if $\ell', \ell'' \ne 0$.
\par
We now construct a virtual fundamental chain on
the moduli space (\ref{Mwithpbimodule}).
We remark that we already defined a system of multisections on
${\mathcal M}_{k+1;\ell}(L(u);\beta;\text{\bf p})$ in Lemma \ref{37.184}.
\begin{lem}\label{multisectionN}
For any $E_0$,  
there exists a system of multisections $(\ref{Mwithpbimodule})$
of the moduli space ${\mathcal M}_{k_1,k_0;\ell}(L^{(1)},L^{(0)};p,q;B;\text{\bf p})$ 
with $B \cap \omega < E_0$, 
which are compatible to one another and to the multisections
provided in Lemma $\ref{37.184}$ under the identification of the
boundaries given in Lemma $\ref{bdrywithpn}$.
\end{lem}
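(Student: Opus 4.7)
The plan is to construct the multisections by induction on the energy $E = B \cap \omega/2\pi$, using Gromov compactness to ensure that for any fixed bound $E_0$ only finitely many combinatorial types of boundary strata appear below the cutoff. We fix the system from Lemma \ref{37.184} on all the disc moduli spaces $\mathcal{M}_{k+1;\ell}(L(u),\beta;\text{\bf p})$ once and for all, and work relative to this.

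First I would handle the base step: for the minimal-energy classes $B$, either $\mathcal{M}(L^{(1)},L^{(0)};p,q;B)$ has empty boundary of type (1) in Lemma \ref{bdrywithpn} (since any nontrivial splitting $B = B' \# B''$ would have strictly smaller energies), so the only boundary contributions come from bubbling off discs, which is governed by the already-fixed multisections of Lemma \ref{37.184} via the fiber product construction. In the inductive step, assuming multisections have been built on all $\mathcal{M}_{k_1,k_0;\ell}(L^{(1)},L^{(0)};p',q';B';\text{\bf p}')$ with $B' \cap \omega < B \cap \omega$, I would define the multisection on $\partial\mathcal{M}_{k_1,k_0;\ell}(L^{(1)},L^{(0)};p,q;B;\text{\bf p})$ stratum by stratum by pulling back/fiber-producting the multisections from the three types listed in Lemma \ref{bdrywithpn}.

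The main technical point is corner compatibility: the three types of boundary strata intersect one another along codimension-two corners, and the prescribed multisection must agree on these overlaps. This consistency follows from the associativity/compatibility built into Lemma \ref{37.184}, items (3) and (4), together with the inductive hypothesis; compare the parallel discussion in \S7.2 of \cite{fooo06} (=\S30 of \cite{fooo06pre}) and in section 11 of \cite{fooo08}. Once the boundary multisection is well-defined and transversal, one extends it to the interior using the standard extension theorem for multisections on a space with Kuranishi structure (Theorem A1.23 of \cite{fooo06}), taking a sufficiently fine good coordinate system so that the extension can be done chart by chart. Transversality of the extension can be arranged by the usual genericity argument since here we do \emph{not} need $T^n$-equivariance (the key place where the present situation differs from Lemma \ref{37.184}: the Lagrangian $L^{(1)} = \psi(L(u))$ is not $T^n$-invariant, so we just use general transversality rather than equivariant transversality).

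The hard part, as in all such inductive constructions, will be bookkeeping the corner structure and verifying that the prescribed boundary data genuinely patches to a continuous multisection of the whole $\partial\mathcal{M}$; this is essentially the same combinatorial check as in the $A_\infty$-bimodule setting of \S3.7.4 of \cite{fooo06} (=\S12.4 of \cite{fooo06pre}), only now enriched by the interior marked points carrying the label $\text{\bf p}$ and by their partitioning via the shuffle $(\mathbb{L}_1,\mathbb{L}_2)$. Once this patching is established, induction proceeds without further obstruction, and the finiteness of combinatorial types below the energy cutoff $E_0$ ensures that the procedure terminates.
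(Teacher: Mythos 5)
Your skeleton matches the paper's: induction over $\int_B\omega$, define the boundary multisection by the fiber-product prescription dictated by Lemma \ref{bdrywithpn}, check consistency on the codimension-two corners, extend to the interior by the standard Kuranishi-theoretic extension lemma, and observe that one cannot (and need not) ask for $T^n$-equivariance because $L^{(1)}=\psi(L(u))$ is not $T^n$-invariant. The corner compatibility you flag is a genuine bookkeeping step, and your references for it are appropriate.

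However, there is a gap at the one nontrivial point of the proof. You write ``Once the boundary multisection is well-defined and transversal, one extends it to the interior\dots'' and then invoke general transversality \emph{for the extension}. But the boundary multisection is not free to be perturbed for transversality --- it is \emph{forced} on you by the compatibility requirement with the lower-energy strata and with the disc-side multisections of Lemma \ref{37.184}. You never explain why this prescribed boundary multisection is transversal, and a fiber product of transversal multisections is not automatically transversal. The paper's proof hinges on exactly this: for the type (2) and (3) boundary strata of Lemma \ref{bdrywithpn}, the fiber product is taken over $L(u)$ using $ev_0$ from a disc moduli space, and $ev_0$ restricted to the perturbed zero set is a \emph{submersion} precisely because the disc-side multisections were chosen $T^n$-equivariantly in Lemma \ref{37.184}(2); a fiber product along a submersion preserves transversality. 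For the type (1) strata, the product is in fact a direct (not fiber) product, so again transversality is automatic. Without these observations the inductive step does not close, because you have no way to guarantee the prescribed boundary data is transversal. So while you correctly note the lack of $T^n$-equivariance on $\mathcal{M}_{k_1,k_0;\ell}(L^{(1)},L^{(0)};\cdots)$ itself, you miss that the $T^n$-equivariance you discarded is still quietly doing the essential work through the disc-side $ev_0$, and this is what distinguishes this argument from a generic Kuranishi extension.
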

\begin{proof}
We construct multisections on the moduli space
(\ref{Mwithpbimodule}) by induction over 
$\int_{\beta}\omega$.
\par
We remark that the boundary condition for \eqref{CReqn-Jt} is not
$T^n$ equivariant anymore: while the boundary $L^{(0)} = L(u)$ is
$T^n$ invariant, $L^{(1)} = \psi(L(u))$ is not. So there is no way
to define a $T^n$-action on our moduli space (\ref{Mwithpbimodule}).
\par
However we remark that $ev_0$ in (2) and (3) of Lemma
\ref{bdrywithpn} are submersions after perturbation. This is a
consequence of (2) of Lemma \ref{37.184}. Moreover the fiber product
in (1) of  Lemma \ref{bdrywithpn} is actually a direct product.
Therefore the perturbation near the boundary at each step of the
induction is automatically transversal by the induction hypothesis.
Therefore we can extend the perturbation by the standard theory of
Kuranishi structure and multisection. This implies Lemma
\ref{multisectionN}.
\end{proof}
We are now ready to define Floer cohomology with bulk deformation
denoted by
$$
HF((L^{(1)},\mathfrak b,\psi_*(\mathfrak x)),(L^{(0)},\mathfrak b,\mathfrak x);\Lambda_0).
$$
\par
Let us use the notation of Definition \ref{rhoyx}. We have a
representation $\rho: \pi_1(L(u)) \to \C \setminus\{0\}$. We choose
a flat $\C$-bundle $(\mathcal L,\nabla_{\rho})$ whose holonomy
representation is $\rho$. It determines flat $\C$ bundles on
$L^{(0)}$, $L^{(1)}$, which we denote by $\mathcal L^{(0)}$ and
$\mathcal L^{(1)}$, respectively. The fiber of $\mathcal L^{(j)}$ at
$p$ is denoted by $\mathcal L^{(j)}_p$.
\par
\begin{defn}
We define
$$
CF((L^{(1)},\rho),(L^{(0)},\rho);\Lambda_0)
=
\bigoplus_{p\in L^{(1)}\cap L^{(0)}} Hom(\mathcal L^{(1)}_p,\mathcal L^{(0)}_p)  \otimes_{\C} \Lambda_0.
$$
With elements $p \in L^{(1)} \cap L^{(0)}$ equipped with the degree
$0$ or $1$ according to the parity of the Maslov index, it becomes a
$\Z_2$-graded free $\Lambda_0$-module.
\end{defn}
\begin{rem}\label{rem814}
Actually we need to fix $E_0$ and construct $\mathfrak
q_{\beta;k,\ell}$ for $\beta\cap\omega < E_0$. 
We then take inductive limit.
We omit the detail and refer Proposition 7.4.17 \cite{fooo06}.
For the application 
we can use $A_{n,K}$ structure in place of $A_{\infty}$ structure so the
process to go to inductive limit is not necessary.
See also the end of the proof of Proposition \ref{calcFloer}.
\end{rem}
We are now ready to define an operator $\mathfrak r$, following 
Section 3.8 \cite{fooo06} (= Section
13.8 \cite{fooo06pre}). We first define a map
$$
\text{\rm Comp}: \pi_2(L^{(1)},L^{(0)};p,q) \times
Hom(\mathcal L^{(1)}_p,\mathcal L^{(0)}_p)
\to Hom(\mathcal L^{(1)}_q,\mathcal L^{(0)}_q).
$$
\par
Let $B = [\varphi] \in \pi_2(L^{(1)},L^{(0)};p,q)$ and 
$\sigma \in
Hom(\mathcal L^{(1)}_p,\mathcal L^{(0)}_p)$. The restriction of $\varphi$ to
$\R \times \{j\}$, $\tau \mapsto \varphi(\tau,j)$
defines a path $\partial_jB$ 
joining $p$ to $q$ in $L^{(j)}$  for each $j=0,\, 1$. 
Let
\begin{equation}\label{defpal}
\text{\rm Pal}_{\partial_jB}: \mathcal L^{(j)}_p \to \mathcal
L^{(j)}_q
\end{equation}
be the parallel transport along this path with respect to the flat
connection $\nabla_{\rho}$. Since $\nabla_{\rho}$ is flat, this is independent of
the choice of the representative $\varphi$ but depends only on $B$.
We define
\begin{equation}\label{holonomyinduce}
\text{\rm Comp}(B,\sigma) = \text{\rm Pal}_{\partial_0B} \circ
\sigma \circ\text{\rm Pal}_{\partial_1B}^{-1}.
\end{equation}
\begin{lem}\label{comphomo}
Let $B \in \pi_2(L^{(1)},L^{(0)};p,q)$, $B' \in
\pi_2(L^{(1)},L^{(0)};q,r)$ and $\beta_j \in \pi_2(X,L^{(j)})$,
$\sigma \in Hom(\mathcal L^{(1)}_p,\mathcal L^{(0)}_p)$. Then we
have
$$
\aligned \text{\rm Comp}(B\# B',\sigma)
&= \text{\rm Comp}(B',\text{\rm Comp}(B,\sigma)), \\
\text{\rm Comp}(\beta_0\# B,\sigma)
&= \rho(\partial\beta_0)\text{\rm Comp}(B,\sigma),\\
\text{\rm Comp}(B\# \beta_1,\sigma) &= \rho(\partial\beta_1)\text{\rm
Comp}(B,\sigma).
\endaligned
$$
\end{lem}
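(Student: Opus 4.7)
All three identities reduce to elementary functoriality properties of parallel transport, combined with the fact that $\mathcal L^{(0)}$ and $\mathcal L^{(1)}$ are rank-one flat $\C$-bundles whose holonomy representation is the given $\rho$. I will apply the definition \eqref{holonomyinduce} of $\text{Comp}$ and track how $\partial_0$ and $\partial_1$ of the glued class relate to the original boundary paths.

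For the first identity, the gluing $B \# B'$ corresponds at the level of boundaries to the concatenation of paths $\partial_j B$ and $\partial_j B'$ on $L^{(j)}$ for $j = 0,1$. By the standard chain rule for parallel transport along a flat connection,
$$
\text{Pal}_{\partial_j(B \# B')} \;=\; \text{Pal}_{\partial_j B'} \circ \text{Pal}_{\partial_j B}.
$$
Substituting these two equalities (for $j=0,1$) into the definition
$\text{Comp}(B \# B', \sigma) = \text{Pal}_{\partial_1(B\# B')} \circ \sigma \circ \text{Pal}_{\partial_0(B\# B')}^{-1}$, and using associativity of composition, I get exactly $\text{Comp}(B', \text{Comp}(B, \sigma))$.

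For the second identity, recall from the construction of $\#$ after \eqref{joinhomotopy} that gluing $\beta_0 \in \pi_2(X, L^{(0)})$ into $B$ is performed via an auxiliary path $\gamma$ in $L^{(0)}$ from the basepoint to a point of $\partial_0 B$. Hence $\partial_1(\beta_0 \# B) = \partial_1 B$ while $\partial_0(\beta_0 \# B)$ is homotopic rel endpoints in $L^{(0)}$ to $\partial_0 B$ with the loop $\gamma \cdot (\partial \beta_0) \cdot \gamma^{-1}$ inserted. Because $\mathcal L^{(0)}$ is a rank-one flat bundle, parallel transport around this inserted loop is multiplication by a scalar that depends only on the free homotopy class of $\partial \beta_0$; by the definition of $\rho$ in Definition~\ref{rhoyx} this scalar is $\rho(\beta_0)$, and in particular it is independent of the choice of $\gamma$ (which is why no $\gamma$ appears in the statement). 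Since scalar multiplication commutes with composition, I may pull this factor out of $\text{Pal}_{\partial_0(\beta_0 \# B)}^{-1}$ to conclude
$$
\text{Comp}(\beta_0 \# B, \sigma) \;=\; \rho(\beta_0)\,\text{Comp}(B, \sigma).
$$
The third identity is proved by the same argument with the roles of $L^{(0)}$ and $L^{(1)}$ exchanged.

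There is no substantial obstacle; the only thing to verify carefully is that the holonomy factor obtained from the inserted loop is $\rho(\beta_j)$ rather than $\rho(\beta_j)^{-1}$, which amounts to fixing orientation conventions for $\partial \beta_j$ consistently with the sign choices in \eqref{qrhodef}. The independence of $\gamma$ also uses that $\pi_1(L(u))$ acts trivially on $\pi_2(X, L(u))$, a fact already invoked in defining \eqref{joinhomotopy} and justified by $L(u)$ being a torus.
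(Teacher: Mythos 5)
The paper itself gives no proof of this lemma, stating only that it is ``easy and so omitted,'' so there is nothing in the text for me to compare your argument against; your proof is the obvious and correct elaboration. Your identity for the first item (functoriality of parallel transport under concatenation, substituted into \eqref{holonomyinduce}) is exactly right. For the second and third items your mechanism is also correct: gluing in $\beta_j \in \pi_2(X, L^{(j)})$ modifies only the $t=j$ boundary by inserting a loop whose holonomy in the rank-one flat bundle $\mathcal L^{(j)}$ is a scalar depending only on $[\partial\beta_j] \in H_1(L(u);\Z)$, with the choice of connecting path $\gamma$ irrelevant because $\C^\times$ is abelian. One small thing you raise but leave open is whether the scalar comes out as $\rho(\beta_j)$ or $\rho(\beta_j)^{-1}$; since $\text{Comp}$ involves $\text{Pal}_{\partial_0 B}^{-1}$, if the inserted loop contributes a multiplicative factor $h$ to $\text{Pal}_{\partial_0(\beta_0\#B)}$ the net factor in $\text{Comp}$ is $h^{-1}$, so the stated identity pins down an orientation convention for $\partial\beta_0$ (the inverse on the $\partial_0$ side is what makes the $\beta_0$ and $\beta_1$ cases come out with the same sign rather than opposite ones). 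It would be cleaner to fix this convention explicitly rather than defer it, but it is a routine bookkeeping point, not a gap.
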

The proof is easy and so omitted.
\begin{defn}
Let $B \in \pi_2(L^{(1)},L^{(0)};p,q)$, $\text{\bf p} \in
Map(\ell,\underline B)$ and let $h^{(j)}_i$ ($i = 1,\ldots,k_j$) be
differential forms on $L^{(j)}$. We define
\begin{equation}\label{rdRdef}
\aligned
&\mathfrak r_{\rho,k_1,k_0;\ell;B}(D(\text{\bf
p});
h^{(1)}_1,\ldots,h^{(1)}_{k_1};\sigma;h^{(0)}_1,\ldots,h^{(0)}_{k_0})\\
&= \frac{1}{\ell!}T^{\omega \cap B/2\pi}\text{\rm Comp}(B,\sigma)
\int_{{\mathcal M}_{k_1,k_0;\ell}(L^{(1)},L^{(0)};p,q;B;\text{\bf
p})}
ev^{(1) *}h^{(1)} \wedge ev^{(0) *}h^{(0)} \\
& \in Hom(\mathcal L^{(1)}_q,\mathcal L^{(0)}_q)  \otimes_{\C} \Lambda_0.
\endaligned
\end{equation}
Here
$$
h^{(j)} = h^{(j)}_1\times \cdots \times h^{(j)}_{k_j}
$$
is a differential form on $(L^{(j)})^{k_j}$.
\end{defn}
Then Gromov's compactness theorem implies that
$$
\mathfrak r_{\rho,k_1,k_0;\ell} =\sum_{B}\mathfrak r_{\rho,k_1,k_0;\ell;B}
$$
converges in non-Archimedean topology 
and so defines
$$
\aligned
\mathfrak r_{\rho,k_1,k_0;\ell}
: E_{\ell}\mathcal A(\Lambda_+)[2] &\otimes B_{k_1}((\Omega(L^{(1)}) \,\widehat{\otimes}\, \Lambda_0)[1]) \\
&\otimes
CF((L^{(1)},\rho),(L^{(0)},\rho);\Lambda_0)
\otimes B_{k_0}((\Omega(L^{(0)}) \,\widehat{\otimes}\, \Lambda_0)[1]) \\
& \longrightarrow CF((L^{(1)},\rho),(L^{(0)},\rho);\Lambda_0).
\endaligned$$
The following is a slight modification of 
Theorem 3.8.71 \cite{fooo06} (= Theorem 13.71 \cite{fooo06pre}).
\begin{prop}\label{rmainformulaprop}
Let $\text{\bf y} \in \mathcal A(\Lambda_+)[2]$, $\text{\bf x} \in
B_{k_1}((\Omega(L^{(1)}) \,\widehat{\otimes}\, \Lambda_0)[1])$,  and let $\text{\bf z} \in
B_{k_0}((\Omega(L^{(0)})\,\widehat{\otimes}\, \Lambda_0)[1])$, $v \in
CF((L^{(1)},\rho),(L^{(0)},\rho);\Lambda_0) $. Then we have
\begin{equation}\label{rmainformula}
\aligned
0 =
&
\sum_{c_1,c_2} (-1)^{\deg \text{\bf
y}^{(2;2)}_{c_1}\deg'\text{\bf x}^{(3;1)}_{c_2}  + \deg'\text{\bf
x}^{(3;1)}_{c_2} + \deg \text{\bf
y}^{(2;1)}_{c_1}}\\
&\hskip1cm \mathfrak r_{\rho}( \text{\bf y}^{(2;1)}_{c_1} ;
(
\text{\bf x}^{(3;1)}_{c_2}
\otimes
\mathfrak q_{\rho}(\text{\bf y}^{(2;2)}_{c_1}; \text{\bf x}^{(3;2)}_{c_2} )
\otimes
\text{\bf x}^{(3;3)}_{c_2} )
\otimes v \otimes \text{\bf z})
\\
&+
\sum_{c_1,c_2,c_3} (-1)^{\deg \text{\bf
y}^{(2;2)}_{c_1}\deg'\text{\bf x}^{(2;1)}_{c_2}  + \deg'\text{\bf
x}^{(2;1)}_{c_2} + \deg \text{\bf y}^{(2;1)}_{c_1}}\\
&\hskip1cm \mathfrak r_{\rho}( \text{\bf y}^{(2;1)}_{c_1} ;
\text{\bf x}^{(2;1)}_{c_2}
\otimes
\mathfrak r_{\rho}(\text{\bf y}^{(2;2)}_{c_1} ; \text{\bf x}^{(2;2)}_{c_2}
\otimes v \otimes \text{\bf z}^{(2;1)}_{c_3}))
\otimes \text{\bf z}^{(2;2)}_{c_3})\\
&+
\sum_{c_1,c_3} (-1)^{(\deg \text{\bf
y}^{(2;2)}_{c_1}+1)(\deg'\text{\bf x}+\deg' v +
\deg'\text{\bf z}^{(3;1)}_{c_3}) + \deg \text{\bf y}^{(2;1)}_{c_1}}
\\
&\hskip1cm \mathfrak r_{\rho}( \text{\bf y}^{(2;1)}_{c_1} ;
(
\text{\bf x}
\otimes v \otimes(\text{\bf z}^{(3;1)}_{c_3}\otimes
\mathfrak q_{\rho}(\text{\bf y}^{(2;2)}_{c_1}; \text{\bf z}^{(3;2)}_{c_3} )
\otimes
\text{\bf z}^{(3;3)}_{c_3})).
\endaligned\end{equation}
\end{prop}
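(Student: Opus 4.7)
The proof will follow the standard pattern for deriving $A_\infty$-bimodule relations from the codimension-one boundary description of the moduli space of Floer strips, adapted to incorporate bulk deformations and holonomy twisting by $\rho$. The starting point is to apply Stokes' theorem for integration along the fiber to the top-dimensional moduli spaces $\mathcal M_{k_1,k_0;\ell}(L^{(1)},L^{(0)};p,q;B;\text{\bf p})$ equipped with the compatible system of multisections provided by Lemma \ref{multisectionN}. Since the zero-set of the multisection is a manifold with corners (modulo the finite-group ambiguity of Kuranishi charts) and all inputs are differential forms, the Stokes theorem from section \ref{sec:integration} implies that the total boundary integral equals the integral of $d$ applied to the product $ev^{(1)*}h^{(1)} \wedge ev^{(0)*}h^{(0)}$, from which one extracts the contribution of $\frak q_{0;0,1}(h) = \pm dh$ for the non-closed forms.

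Next, I would identify the three terms in \eqref{rmainformula} with the three types of codimension-one boundary components enumerated in Lemma \ref{bdrywithpn}. Concretely, the type (1) boundary, corresponding to strip-breaking at an intermediate intersection point $r \in L^{(1)}\cap L^{(0)}$, produces the second term (the $\frak r \circ \frak r$ sum): the split $B = B' \# B''$ combined with the shuffle splitting of bulk insertions $\mathrm{Split}((\mathbb L_1,\mathbb L_2),\text{\bf p})$ and the splitting of the inputs $\text{\bf x}$ and $\text{\bf z}$ yields precisely the asserted double-coproduct expression, while the composition law $\mathrm{Comp}(B'\#B'',\sigma)=\mathrm{Comp}(B'',\mathrm{Comp}(B',\sigma))$ from Lemma \ref{comphomo} guarantees that the holonomy factors multiply correctly. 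The type (2) disc bubbling, with $\beta' \in \pi_2(X,L^{(1)})$ splitting off at one of the $L^{(1)}$-boundary marked points, produces the first term $\frak r_\rho(\cdots;\frak q_\rho(\cdots;\text{\bf x}^{(3;2)})\cdots)$, where the factor $\rho(\beta')$ appearing in $\mathrm{Comp}(\beta'\# B'',\sigma)$ is precisely what enters the twisted operator $\frak q_\rho$ via \eqref{qrhodef}; symmetrically, type (3) produces the third term.

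The combinatorial matching will be executed by expressing $\Delta^{2}(\text{\bf x})$ and $\Delta^{2}(\text{\bf z})$ as iterated coproducts and using the compatibility of $\Delta$ with the shuffle coproduct on bulk insertions, exactly parallel to the proof of Theorem 3.8.32 \cite{fooo06} (= Theorem 13.32 \cite{fooo06pre}) for the operator $\frak q$ itself and of the strip version in Theorem 3.8.71 \cite{fooo06} (= Theorem 13.71 \cite{fooo06pre}). The inductive construction in Remark \ref{rem814} reduces the problem to a fixed energy level $E_0$, where all moduli spaces involved are compact and only finitely many $B, \beta', \beta''$ contribute, making the Stokes calculation well-defined term by term.

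The main obstacle will be the verification of the explicit signs appearing on the right-hand side of \eqref{rmainformula}. These signs combine the Koszul signs arising from the graded coproduct on $B(H(L;R)[1])$ and $E(H[2])$, the orientation sign coming from identifying each boundary stratum of Lemma \ref{bdrywithpn} with a fiber product of moduli spaces (carried out via the conventions of Chapter 8 \cite{fooo06} = Chapter 9 \cite{fooo06pre}), and the sign from integration along a fiber of odd-codimensional boundary. The careful bookkeeping is essentially identical to that performed for the untwisted, bulk-free version in section 3.7.4 \cite{fooo06}, so I would derive the signs by reducing to that case: the bulk parameters $\text{\bf y}$ carry even total degree after the shift $[2]$ and the holonomy factors $\rho(\beta)$ are scalars of degree zero, so neither source introduces additional sign contributions beyond those already present in the established formula, leaving only the standard $A_\infty$-bimodule signs to verify.
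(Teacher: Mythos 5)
Your proposal is correct and takes essentially the same approach as the paper: identify the three terms of \eqref{rmainformula} with the three boundary strata of Lemma \ref{bdrywithpn} (your correspondence type (1)$\leftrightarrow$second term, type (2)$\leftrightarrow$first term, type (3)$\leftrightarrow$third term matches the paper's), use Lemma \ref{comphomo} to check the holonomy weights compose correctly across the breaking, and invoke Stokes' formula for integration along the fiber. The only minor divergence is on signs: you argue they carry over from the bulk-free untwisted case since $\text{\bf y}$ is even after the shift and $\rho(\beta)$ is degree-zero, whereas the paper simply observes that in the relevant application all ambient classes are even-degree and all Lagrangian cohomology classes odd-degree so every $(-1)^*$ evaluates to $+1$ — both observations are fine and lead to the same conclusion.
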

\begin{proof}
The 1st, 2nd and 3rd terms correspond to (2), (1) and (3) of Lemma
\ref{bdrywithpn} respectively. The associated weights of symplectic area
behave correctly under the composition rules in Lemma
\ref{comphomo}. The proposition follows from Stokes' formula. (We
do not discuss sign here, since the sign will be trivial for the
case of our interest where the degrees of ambient cohomology classes
are even and the degrees of the cohomology classes of
Lagrangian submanifold are odd.)
\end{proof}
\begin{lem}\label{unitality}
Let $\text{\bf y} \in \mathcal {A}(\Lambda_+)[2]$, 
$x_i \in \Omega(L^{(1)})
\,\widehat{\otimes}\, \Lambda_0[1]$,
$z_i \in \Omega(L^{(0)})
\,\widehat{\otimes}\, \Lambda_0[1]$ 
and $v \in
CF((L^{(1)},\rho),(L^{(0)},\rho);\Lambda_0)$. 
\par
If $p=q$, $B=B_0=0 \in 
\pi_2(L^{(1)},L^{(0)};p,p)$ and 
$\ell =0$, then we have 
\begin{equation}\label{unitalityr2}
\mathfrak r_{\rho,1,0;\ell=0;B=0}(\text{\bf y} ; 1 \otimes v)
=(-1)^{\deg v}
\mathfrak r_{\rho,0,1;\ell=0;B=0}(\text{\bf y} ; v \otimes 1) = v. 
\end{equation}
Here $1$ is the degree $0$ form $1$ which 
represents the Poincar\'e dual to the fundamental cycle of $L^{(0)}$, (or $L^{(1)}$).   
Otherwise, we have
\begin{equation}\label{unitalityr1}
\aligned
\mathfrak r_{\rho,k_1,k_0;\ell}(\text{\bf y} ; 
x_1 \otimes \dots \otimes 1 \otimes \dots \otimes x_{k_1-1} \otimes v \otimes z_1 \otimes \dots \otimes z_{k_0}) 
& = 0, \\
\mathfrak r_{\rho,k_1,k_0;\ell}(\text{\bf y} ; 
x_1 \otimes \dots \otimes x_{k_1} \otimes v \otimes z_1 \otimes \dots \otimes 1 \otimes \dots \otimes z_{k_0-1})
& = 0.
\endaligned
\end{equation}
\end{lem}
\begin{proof}
This is an immediate consequence of the definition. 
See Theorems 3.7.21, 3.8.71 \cite{fooo06} ( = Theorems 12.21, 13.71 \cite{fooo06pre}) and Section 12 \cite{fooo08}. 
\end{proof}
\begin{rem}
In this paper we do not consider bulk deformation by 
the Poincar\'e dual $PD([X]) \in H^0(X;\Z)$ to the fundamental cycle of $X$. 
(See the beginning of Section \ref{sec:Potbul}.) 
If we incorporate it into the story, we further obtain 
the following 
equality besides (\ref{unitalityr2}):
\be\label{unitalityr3}
\mathfrak r_{\rho,1,0;\ell=1;B=0}(PD([X]) ; 1 \otimes v)
=(-1)^{\deg v}
\mathfrak r_{\rho,0,1;\ell=1;B=0}(PD([X]) ; v \otimes 1) = v, 
\ee
for the case $p=q$, 
$B=B_0=0 \in \pi_2(L^{(1)},L^{(0)};p,p)$ and 
$\ell=1$, ${\bf y}=PD([X])$. 
\end{rem}

We define $T^n$ action on $L^{(1)}$ by 
$$
g\cdot x = \psi (g \psi^{-1}(x)),
$$
where $g \psi^{-1}(x)$ is defined by $T^n$ action on 
$L^{(0)}$ induced by the  
$T^n$ action on $X$. (Note  $L^{(1)} \subset X$ is not necessarily $T^n$ invariant 
under the $T^n$ action on $X$, since $\psi$ may not be $T^n$ equivariant.)
\par
We identify $H(L^{(1)},\C)$ with the set of $T^n$ invariant forms, with respect to the 
above action. Now by restricting (\ref{rdRdef}) we obtain:
$$
\aligned
\mathfrak r^{{\text{\rm can}}}_{\rho,k_1,k_0;\ell}
: E_{\ell}\mathcal A(\Lambda_+)[2] &\otimes B_{k_1}(H(L^{(1)};\Lambda_0)[1]) \\
&\otimes
CF((L^{(1)},\rho),(L^{(0)},\rho);\Lambda_0)
\otimes B_{k_0}(H(L^{(0)};\Lambda_0)[1]) \\
& \longrightarrow CF((L^{(1)},\rho),(L^{(0)},\rho);\Lambda_0).
\endaligned$$
By definition, (\ref{rmainformula}) and Lemma 
\ref{unitality} hold when
$\mathfrak r$ and $\mathfrak q$ are replaced by 
$\mathfrak r^{{\text{\rm can}}}$ and $\mathfrak q^{{\text{\rm can}}}$.
\begin{defn}
Let $\mathfrak b \in \mathcal A(\Lambda_+)$, $\mathfrak x \in H^1(L(u),\Lambda_0)$.
We use the notations of Definition \ref{rhoyx} and define
$$
\delta^{\mathfrak b,\mathfrak x}: CF((L^{(1)},\rho),(L^{(0)},\rho);\Lambda_0)
\to CF((L^{(1)},\rho),(L^{(0)},\rho);\Lambda_0)
$$
by
$$
\delta^{\mathfrak b,\mathfrak x}(v)
= \mathfrak r^{{\text{\rm can}}}_{\rho}(e^{\mathfrak b}; e^{\psi_* (\mathfrak x_+)} \otimes v
\otimes e^{\mathfrak x_+}).
$$
\end{defn}
By taking a harmonic representative of $\mathfrak x_+$ we also have
$$
\delta^{\mathfrak b,\mathfrak x}(v)
= \mathfrak r_{\rho}(e^{\mathfrak b}; e^{\psi_* (\mathfrak x_+)} \otimes v
\otimes e^{\mathfrak x_+}).
$$
\begin{lem}\label{boundaryfloer2}
$$
\delta^{\mathfrak b,\mathfrak x} \circ \delta^{\mathfrak b,\mathfrak x} = 0.
$$
\end{lem}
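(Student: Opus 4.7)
The plan is to specialize the master equation \eqref{rmainformula} of Proposition~\ref{rmainformulaprop} to $\text{\bf y}=e^{\frak b}$, $\text{\bf x}=\text{\bf z}=e^{\frak x_+}$, and to show that the two boundary terms vanish by the unitality Lemma~\ref{unitality}, leaving only the middle term which equals $\pm\,\delta^{\frak b,\frak x}\!\circ\delta^{\frak b,\frak x}(v)$. First I would use that both $e^{\frak b}$ and $e^{\frak x_+}$ are group-like with respect to the coproducts $\Delta$ on $E(\mathcal A[2])$ and $B(\Omega(L)[1])$, i.e.
$$
\Delta(e^{\frak b})=e^{\frak b}\otimes e^{\frak b},\qquad \Delta(e^{\frak x_+})=e^{\frak x_+}\otimes e^{\frak x_+},
$$
and iterate these to evaluate $\Delta^2$. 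Inserting these into the three sums in \eqref{rmainformula} converts the sum over $c_1,c_2,c_3$ into a clean tensor of $e^{\frak b}$'s and $e^{\frak x_+}$'s.

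Next I would identify the middle term. After the substitution, it reads
$$
\frak r_{\rho}\bigl(e^{\frak b};\,e^{\frak x_+}\otimes \frak r_{\rho}(e^{\frak b};e^{\frak x_+}\otimes v\otimes e^{\frak x_+})\otimes e^{\frak x_+}\bigr),
$$
which, passing to the canonical model by harmonicity of the representatives and recalling Definition of $\delta^{\frak b,\frak x}$, is exactly $\delta^{\frak b,\frak x}\circ\delta^{\frak b,\frak x}(v)$ (up to the overall sign that is determined by the sign rule in Proposition~\ref{rmainformulaprop}).

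For the first and third terms I would use the key computation: by Proposition~\ref{unobstruct} together with the very definition of the potential function $\frak{PO}^u_{\rho}(\frak b,\frak x_+)=\sum_{\ell,k}\frak q^{can,\rho}_{\ell,k}(\frak b^{\otimes\ell};\frak x_+^{\otimes k})$, one has
$$
\sum_{\ell,k}\frak q^{can,\rho}_{\ell,k}(e^{\frak b};e^{\frak x_+})=\frak{PO}^u_{\rho}(\frak b,\frak x_+)\cdot\mathbf{1},
$$
where $\mathbf{1}$ is the de Rham unit $1\in\Omega^0(L(u))$ (which corresponds to $PD[L(u)]$ under our conventions). Consequently the first term in \eqref{rmainformula} becomes
$$
\frak{PO}^u_{\rho}(\frak b,\frak x_+)\cdot\frak r_{\rho}\bigl(e^{\frak b};(e^{\frak x_+}\otimes\mathbf{1}\otimes e^{\frak x_+})\otimes v\otimes e^{\frak x_+}\bigr),
$$
and the third term has the analogous form with $\mathbf{1}$ inserted in the $\text{\bf z}$ slot; in both, the scalar factor $\frak{PO}^u_{\rho}(\frak b,\frak x_+)$ is the \emph{same} (it depends only on $(\frak b,\frak x)$, which is identical on the two Lagrangians by construction), so no cancellation between the two terms is needed. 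By Lemma~\ref{unitality}, each of these factors of $\frak r$ with $\mathbf{1}$ inserted on the $L^{(1)}$ or $L^{(0)}$ boundary vanishes. The master equation therefore reduces to $0=\pm\,\delta^{\frak b,\frak x}\circ\delta^{\frak b,\frak x}(v)$, proving the lemma.

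The main technical obstacle I expect is bookkeeping: making sure that (i) the identification of the iterated $\frak r$ with $\delta^{\frak b,\frak x}\circ\delta^{\frak b,\frak x}$ matches after passing to the canonical (harmonic) model (handled as in the proof of Lemma~\ref{caneqcannasi} and equation~\eqref{caniso}), and (ii) that the disc bubbles giving $\frak q$ on the $L^{(1)}$-side really produce the same potential function as on the $L^{(0)}$-side. The latter is straightforward because the evaluation map conventions in \eqref{evaluationFH} use $\psi^{-1}$, which yields a canonical isomorphism between the relevant disc moduli spaces on $L^{(1)}$ and on $L(u)=L^{(0)}$ together with their $T^n$-equivariant perturbations from Lemmas~\ref{37.184} and~\ref{multisectionN}; under this isomorphism the symplectic areas are preserved (since $\psi$ is a symplectomorphism), so the Novikov weights $T^{\omega\cap\beta/2\pi}$ match and both computations produce the same $\frak{PO}^u_{\rho}(\frak b,\frak x_+)$.
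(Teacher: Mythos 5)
Your proof is correct and follows essentially the same route as the paper: substitute the group-like elements $e^{\frak b}$, $e^{\frak x_+}$ into the $\frak r$--master equation (Proposition~\ref{rmainformulaprop}), identify the iterated-$\frak r$ term with $\delta^{\frak b,\frak x}\circ\delta^{\frak b,\frak x}$, and kill the other two terms by Lemma~\ref{unitality} once $\frak q^{can,\rho}(e^{\frak b};e^{\frak x_+})$ is recognized as a degree-zero ($T^n$-invariant, hence constant) multiple of $\mathbf 1$. The paper phrases that last step simply as ``a (harmonic) form of degree $0$''; your identification of the constant as $\frak{PO}^u_\rho(\frak b,\frak x_+)$ and the remarks on matching potentials across $L^{(0)},L^{(1)}$ are harmless extras but not needed, since each offending term vanishes on its own by unitality rather than by cancellation.
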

\begin{proof}
We remark that
$\Delta e^{\mathfrak b} = e^{\mathfrak b} \otimes e^{\mathfrak b}$
and
$\Delta e^{\mathfrak x_+} = e^{\mathfrak x_+} \otimes e^{\mathfrak x_+}$.
Therefore Proposition \ref{rmainformulaprop} implies
$$
\aligned
0 = & \mathfrak r_{\rho}^{{\text{\rm can}}}(e^{\mathfrak b} ; 
e^{\psi_* (\mathfrak x_+)}
\otimes \mathfrak r_{\rho}^{{\text{\rm can}}}(e^{\mathfrak b} ; 
e^{\psi_* (\mathfrak x_+)}
\otimes v
\otimes e^{\mathfrak x_+})\otimes e^{\mathfrak x_+}) \\
& +
\mathfrak r_{\rho}^{{\text{\rm can}}}(e^{\mathfrak b} ; 
e^{\psi_* (\mathfrak x_+)}\otimes
\mathfrak q_{\rho}^{{\text{\rm can}}}(e^{\mathfrak b} ; e^{\psi_* (\mathfrak x_+)})
\otimes e^{\psi_* (\mathfrak x_+)}\otimes v
\otimes e^{\mathfrak x_+}) \\
&+ (-1)^{\deg  v +1} \mathfrak r_{\rho}^{{\text{\rm can}}}(e^{\mathfrak b}; 
e^{\psi_* (\mathfrak x_+)}\otimes v
\otimes e^{\mathfrak x_+}\otimes
\mathfrak q_{\rho}^{{\text{\rm can}}}(e^{\mathfrak b} ; e^{\mathfrak x_+})
\otimes e^{\mathfrak x_+}).
\endaligned$$
Since $\mathfrak q_{\rho}^{{\text{\rm can}}}(e^{\mathfrak b} ; e^{\mathfrak x_+}) $ is
a (harmonic) form of degree $0$, Lemma \ref{unitality},  especially (\ref{unitalityr1}), implies that the second
and the third terms almost vanish. 
All other non-vanishing terms  
in the second and the third lines come from 
(\ref{unitalityr2}).
Then a calculation similar to the proofs of 
Lemma 12.7 \cite{fooo08} and 
Proposition 3.7.17 \cite{fooo06} ( = Proposition 12.17 
\cite{fooo06pre}) shows that sum of the second and third lines 
are equal to   
\be\label{differencePO}
\left(-\mathfrak{PO}_{\rho}(\mathfrak b, \psi_* (\mathfrak x_+)) + 
\mathfrak{PO}_{\rho}(\mathfrak b, \mathfrak x_+)
\right) v.
\ee
By invariance of the potential function  
(see Theorem B (B.3) \cite{fooo06} ( = Theorem B (B.3) \cite{fooo06pre}) we obtain 
$\mathfrak{PO}_{\rho}(\mathfrak b, \psi_* (\mathfrak x_+)) =
\mathfrak{PO}_{\rho}(\mathfrak b, \mathfrak x_+)$.
This proves the lemma.
\end{proof}
\begin{rem} Such a 
cancellation mechanism related to  
(\ref{differencePO}) was observed in Example 7.4 
\cite{fooo00} for the case $\mathfrak b=0 (\ell =0), \rho=1$. 
\end{rem}
\begin{defn}
$$
HF((L^{(1)},\mathfrak b,\psi_*(\mathfrak x)),(L^{(0)},\mathfrak b,\mathfrak x);\Lambda_0)
= \frac{\text{\rm Ker}\,\,\delta^{\mathfrak b,\mathfrak x}}
{\text{\rm Im}\,\,\delta^{\mathfrak b,\mathfrak x}}.
$$
\end{defn}

We recall that we are considering the Hamiltonian isotopic pair
$$
L^{(0)} = L(u), \quad L^{(1)} = \psi(L(u)).
$$
For this case, we prove

\begin{prop}\label{calcFloer}
We have
$$
HF((L^{(1)},\mathfrak b,\psi_*(\mathfrak x)),(L^{(0)},\mathfrak b,\mathfrak x);\Lambda)
\cong HF((L(u),\mathfrak b,\mathfrak x),(L(u),\mathfrak b,\mathfrak x);\Lambda).
$$
\end{prop}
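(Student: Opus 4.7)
The plan is to prove the proposition by the classical Hamiltonian-invariance strategy of Floer, adapted to our setting involving bulk deformations and the operator $\mathfrak{r}_{\rho}$. The goal is to identify the complex $(CF((L^{(1)},\rho),(L^{(0)},\rho);\Lambda_0),\delta^{\mathfrak b,\mathfrak x})$ with the canonical model complex $(H(L(u);\Lambda_0),\mathfrak m_1^{\mathfrak b,can,\mathfrak x})$ whose cohomology is, by definition, $HF((L(u),\mathfrak b,\mathfrak x),(L(u),\mathfrak b,\mathfrak x);\Lambda_0)$. I will do this by deforming $L^{(1)}=\psi(L(u))$ through a family of Hamiltonian images and comparing Floer complexes along the deformation.

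First I would introduce a two-parameter family $\{\psi_t^s\}_{(t,s)\in[0,1]^2}$ of Hamiltonian diffeomorphisms with $\psi_t^1=\psi_t$ and with $\psi_t^0$ generated by a $C^2$-small Morse Hamiltonian $h$ on $L(u)$, pulled back to a Weinstein tubular neighborhood of $L(u)$. For $s$ close to $0$, $L_s^{(1)}:=\psi_1^s(L(u))$ is the graph of $\epsilon dh$ in $T^*L(u)$, intersecting $L(u)$ transversally at the critical points of $h$ and still satisfying the analogue of Condition \ref{butukarazu}. I would then build $s$-parametric moduli spaces $\mathcal M^s_{k_1,k_0;\ell}(L_s^{(1)},L^{(0)};p,q;B;\mathbf p)$ and, by inserting an extra $\mathbf R$-coordinate in $s$, the corresponding cobordism moduli spaces $\mathcal M^{[0,1]}_{k_1,k_0;\ell}$. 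Their boundaries include, besides the splittings of Lemma \ref{bdrywithpn}, the two endpoints $s=0$ and $s=1$. Equipping these spaces with $T^n$-compatible Kuranishi structures and multisections as in Lemma \ref{multisectionN}, and taking evaluations and fiber products with the divisors $D_{\mathbf p(i)}$, I obtain operators $\mathfrak r_\rho^s$ satisfying the $A_\infty$-bimodule identity of Proposition \ref{rmainformulaprop} and a homotopy $\mathfrak h_\rho$ fitting into the standard chain-homotopy formula relating $\mathfrak r_\rho^0$ and $\mathfrak r_\rho^1$. Passing to $T^n$-invariant (harmonic) test forms and applying the bulk and bounding-cochain substitution $e^{\mathfrak b}\otimes e^{\mathfrak x_+}$ as in the definition of $\delta^{\mathfrak b,\mathfrak x}$, this produces a chain map between the $s=1$ and $s=0$ Floer complexes and a homotopy showing it induces an isomorphism on cohomology, exactly as in Theorems G and J of \cite{fooo06}.

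Next, for $s=0$, i.e.\ when $L^{(1)}=L_0^{(1)}$ is the graph of $\epsilon dh$, I would identify the Floer complex with the canonical model complex of $L(u)$ by a Morse–Bott / pearl argument. For $\epsilon$ sufficiently small, Gromov compactness plus the energy estimate shows that all nonconstant Floer strips of bounded energy concentrate into configurations of gradient trajectories of $h$ with pseudo-holomorphic disc insertions in class $\beta$ weighted by the holonomy factors from $\rho$, matching precisely the right-hand side of \eqref{qrhodef} with bulk insertions from $\mathfrak b$. Thus the $s=0$ complex is chain-isomorphic to $(CM^*(h)\widehat\otimes\Lambda_0,\mathfrak m_1^{\mathfrak b,\mathfrak x})$, whose cohomology is $H^*(L(u);\Lambda_0)$-version of $HF((L(u),\mathfrak b,\mathfrak x),(L(u),\mathfrak b,\mathfrak x);\Lambda_0)$. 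Combining with \eqref{caniso} and Lemma \ref{caneqcannasi}, this gives the stated isomorphism.

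The main obstacle is not the overall chain-homotopy philosophy, which is classical, but the technical bookkeeping: constructing compatible oriented Kuranishi structures and transversal multisections on the $s$-parametric spaces $\mathcal M^s$ and $\mathcal M^{[0,1]}$, matching them with those of Lemmata \ref{37.184} and \ref{multisectionN} on the codimension-one strata of type (1)–(3) of Lemma \ref{bdrywithpn}, while simultaneously respecting the $T^n$-invariance used to achieve submersivity of $ev_0$ at disc-bubble strata. Because $L_s^{(1)}$ is generally not $T^n$-invariant, equivariance holds only for the $L^{(0)}$-boundary, which is precisely the asymmetry exploited in the proof of Lemma \ref{multisectionN}; the same mechanism makes the $s$-parametric perturbations automatically transversal on their boundary, so the induction runs. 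Once these Kuranishi-theoretic ingredients are in place, the proposition follows from the $A_\infty$-bimodule homotopy identity and the Morse-Bott identification above.
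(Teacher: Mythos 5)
Your proposal takes a genuinely different route from the paper, and the second step of your two-step plan has a real gap.

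The paper does \emph{not} deform $\psi$ to a small Hamiltonian and then pass to a pearl/Morse--Bott model. Instead it constructs chain maps $\frak f$ and $\frak g$ directly between $\Omega(L(u))\widehat\otimes\Lambda$ (equivalently, its canonical model of $T^n$-invariant forms, via Lemma \ref{caneqcannasi} and \eqref{caniso}) and $CF((L^{(1)},\rho),(L^{(0)},\rho);\Lambda)$. These are continuation-type maps built from moduli of strips whose upper boundary interpolates $\tau\mapsto L^{(\chi(\tau))}$ and whose $\tau\to\mp\infty$ end is a \emph{free point on $L(u)$}, not an intersection point; the differential form degrees are then handled by pullback along $ev_{-\infty}$ for $\frak f$ and by integration along the fiber of $ev_{+\infty}$ for $\frak g$, using continuous families of multisections from section \ref{sec:integration} to guarantee $ev_{\pm\infty}$ are submersions. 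The chain-homotopy between $\frak g\circ\frak f$ and $\text{id}$ comes from the 3-parameter family $J_{S,\tau,t}$, whose $S=0$ end reduces to the closed-string (disc-with-two-free-boundary-points) moduli with an extra $\R$-symmetry and whose $S=\infty$ end gives $\frak g\circ\frak f$ (Lemma \ref{2bdrywithpn6}). Nowhere is an adiabatic or pearl limit needed.

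Your Step (a) — homotoping $L^{(1)}$ to the graph of $\epsilon\,dh$ through a Hamiltonian family and proving invariance of Floer cohomology under this deformation — is acceptable in outline and uses essentially the same continuation-map machinery as the paper (indeed you invoke Theorems G and J of \cite{fooo06}); note though that a naive ``$s$-parametric cobordism'' of moduli spaces does not directly yield a chain map, because the underlying modules $\bigoplus_{p\in L_s^{(1)}\cap L^{(0)}}\text{Hom}(\mathcal L^{(0)}_p,\mathcal L^{(1)}_p)$ change with $s$; you need genuine continuation strips with $\tau$-dependent boundary conditions, as the paper uses.

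The real gap is Step (b). You assert that for $\epsilon$ small all Floer strips ``concentrate into configurations of gradient trajectories of $h$ with pseudo-holomorphic disc insertions in class $\beta$'' and that this yields a chain isomorphism with the canonical model of $L(u)$. This adiabatic / Morse--Bott degeneration is a substantial theorem, requiring gluing analysis of strips onto flow-line-plus-disc configurations, a compatible Kuranishi structure and multisections on the limit pearl moduli, and a treatment of the bulk insertions $D_{\text{\bf p}(i)}$ in the degeneration; none of this is in the paper or in the FOOO framework the paper relies on, and you do not supply it. Moreover, even granting the pearl identification, the resulting complex is a \emph{Morse} complex of $h$, whereas the paper's canonical model is the subcomplex of $T^n$-invariant harmonic forms; an additional comparison of canonical models would be needed. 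The paper avoids this entire step by building $\frak f$ and $\frak g$ with a free boundary point on $L(u)$, so the target is the de Rham complex directly, at the cost of needing the continuous-family-of-multisections technology to define the wrong-way map $\frak g$. If you wish to pursue your pearl route, you would need to establish the adiabatic convergence and gluing theorems in the Kuranishi setting with bulk insertions, which is a significant undertaking not covered by what you have cited.
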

\begin{rem}
We use $\Lambda$ coefficients instead of $\Lambda_0$ coefficients
in Proposition \ref{calcFloer}.
\end{rem}
\begin{rem}\label{824}
In the proof of Proposition \ref{calcFloer}, we need to choose a system of 
multisections of various moduli spaces. In doing so, we need to fix 
the energy level $E_0$ and restrict the construction 
to the moduli spaces with energy smaller than $E_0$.
\par
In the situation of the proof of Proposition \ref{calcFloer} this point is slightly more nontrivial than the similar 
problem mentioned in Remarks \ref{rem610}, \ref{rem814}, since we need to work 
with Novikov field $\Lambda$ instead of $\Lambda_0$.
\par
To simplify the description we ignore this problem for a while and will explain it 
at the end of the proof of Proposition \ref{calcFloer}.
\end{rem}
\begin{proof} 
We can prove
Proposition \ref{calcFloer} by the same way as in 
Sections 3.8, 5.3, 7.4 of \cite{fooo06} (= Sections 13, 22, 32 of \cite{fooo06pre}).
We will give the detail of the proof using de Rham theory, for completeness. 
The rest of this section is almost occupied with 
the proof of Proposition \ref{calcFloer}. 
\par\smallskip
Firstly we will define a chain map 
$\mathfrak f:  \Omega(L(u)) \widehat{\otimes} \Lambda
\to CF((L^{(1)},\rho),(L^{(0)},\rho);\Lambda)
$.
\par
Let $\psi_t$ be a Hamiltonian isotopy such that
$\psi_0$ is the identity and $\psi_1$ is $\psi$.
We put $L^{(t)} = \psi_t(L(u))$.
Let $\chi: \R \to [0,1]$ be a smooth function such that
$$
\chi(\tau) =
\begin{cases}
0 & \text{$\tau \le 0$}, \\
1 &\text{$\tau \ge 1$}.
\end{cases}
$$
We choose a two-parameter family 
$\{J_{\tau,t}\}_{\tau,t}$ of compatible almost
complex structures defined by
$$
J_{\tau,t} = \psi_{t\chi(\tau) *} J.
$$
Then it satisfies the following:
\begin{enumerate}
\item $J_{\tau,t} = J_t$ for $\tau \ge 1$.
\item $J_{\tau,t} = J$ for $\tau \le 0$.
\item $J_{\tau,1} = \psi_{\chi(\tau) *}J$.
\item $J_{\tau,0} = J$.
\end{enumerate}

Let $p \in L^{(0)}\cap L^{(1)}$. We consider maps
$
\varphi: \R \times [0,1] \to X
$
such that
\begin{enumerate}
\item
$\lim_{\tau \to +\infty} \varphi(\tau,t) = p$.
\item
$\lim_{\tau \to -\infty} \varphi(\tau,t)$ converges
to a point in $L^{(0)}$ independent of $t$.
\item
$\varphi(\tau,0) \in L^{(0)}$, $\varphi(\tau,1) \in L^{(\chi(\tau))}$.
\end{enumerate}
We denote by $\pi_2(L^{(1)},L^{(0)};*,p)$ the set of homotopy
classes of such maps.
There are obvious maps
\begin{equation}\label{sharp2}
\aligned
& \pi_2(L^{(1)},L^{(0)};*,p) \times \pi_2(L^{(1)},L^{(0)};p,q)
\to \pi_2(L^{(1)},L^{(0)};*,q), \\
& \pi_2(X,L^{(1)}) \times \pi_2(L^{(1)},L^{(0)};*,p)
\to \pi_2(L^{(1)},L^{(0)};*,p), \\
&  \pi_2(L^{(1)},L^{(0)};*,p)  \times \pi_2(X,L^{(0)})
\to \pi_2(L^{(1)},L^{(0)};*,p).
\endaligned
\end{equation}
(We here use the fact that the action of $\pi_1(L^{(i)})$ on $\pi_2(X,L^{(i)})$
is trivial.) We denote (\ref{sharp2}) by $\#$.

\begin{defn}
We consider the moduli space of maps
satisfying (1) - (3) above and of homotopy class
$C_+ \in \pi_2(L^{(0)},L^{(1)};*,p)$ and satisfying the following
equation:
\begin{equation}\label{tautCR}
\frac{\partial\varphi}{\partial \tau}
+ J_{\tau,t} \left(\frac{\partial\varphi}{\partial t}\right) = 0.
\end{equation}
We denote it by
$$
{\mathcal M}^{\text{reg}}(L^{(1)},L^{(0)};*,p;C_+).
$$
We also consider the moduli spaces of maps with interior and boundary marked points and
their compactifications. We then get the moduli space
$$
{\mathcal M}_{k_1,k_0;\ell}(L^{(1)},L^{(0)};*,p;C_+).
$$
\end{defn}
We remark that we do not divide by $\R$ action since (\ref{tautCR})
is not invariant under the translation.
We define evaluation maps
$$
ev = (ev^{\text{int}},ev^{(1)},ev^{(0)}): {\mathcal
M}^{\text{reg}}_{k_1,k_0;\ell}(L^{(1)},L^{(0)};*,p;C_+) \to X^{\ell} \times
(L^{(0)})^{k_0+k_1},
$$
in a similar way as (\ref{evaluationFH}):
\begin{equation}\label{evaluationFH2}
\aligned
ev_i^{(0)}(\varphi,\{(\tau^{(1)}_i,1)\},\{(\tau^{(0)}_i,0)\},
\{(\tau_i,t_i)\})
&= \varphi((\tau^{(0)}_i,0)), \\
ev_i^{(1)}(\varphi,\{(\tau^{(1)}_i,1)\},\{(\tau^{(0)}_i,0)\},
\{(\tau_i,t_i)\})
&= \psi^{-1}_{\chi(\tau_i)}(\varphi((\tau^{(1)}_i,1))), \\
ev_i^{\text{int}}(\varphi,\{(\tau^{(1)}_i,1)\},\{(\tau^{(0)}_i,0)\},
\{(\tau_i,t_i)\})
&= \psi_{t_i\chi(\tau_i)}^{-1}(\varphi((\tau_i,t_i))),
\endaligned
\end{equation}  
and extend to its compactification 
$$
ev = (ev^{\text{int}},ev^{(1)},ev^{(0)}): {\mathcal
M}_{k_1,k_0;\ell}(L^{(1)},L^{(0)};*,p;C_+) \to X^{\ell} \times
(L^{(0)})^{k_0+k_1}.
$$
Moreover
there is another evaluation map
$$
ev_{-\infty}: {\mathcal M}_{k_1,k_0;\ell}(L^{(1)},L^{(0)};*,p;C_+)
\to L(u)
$$
defined by
$$
ev_{-\infty}(\varphi) = \lim_{\tau\to -\infty}\varphi(\tau,t).
$$
Using fiber product with the cycle $D(\text{\bf p})$
we define ${\mathcal M}_{k_1,k_0;\ell}(L^{(1)},L^{(0)};*,p;C_+;\text{\bf p})$ in the same way as above.
\begin{lem}\label{2bdrywithpn}
The space ${\mathcal M}_{k_1,k_0;\ell}(L^{(1)},L^{(0)};*,p;C_+;\text{\bf p})$
has an oriented Kuranishi structure with boundary.
Its boundary is a union of the following four types of fiber products
as the space with Kuranishi structure.
\begin{enumerate}
\item
$${\mathcal M}_{k'_1,k'_0;\ell'}(L^{(1)},L^{(0)};*,q;C_+';\text{\bf p}_1)
\times
{\mathcal M}_{k''_1,k''_0;\ell''}(L^{(1)},L^{(0)};q,p;B'';\text{\bf p}_2).$$
Here the notations are the same as in Lemma $\ref{boundaryN}$ $(1)$
and $(\ref{p1p2})$.
\item
$${\mathcal M}_{k'_1+1;\ell'}(L(u);\beta';\text{\bf p}_1)
\,\,{}_{ev_0}\times_{ev^{(1)}_i} \,\,
{\mathcal M}_{k''_1,k_0;\ell''}(L^{(1)},L^{(0)};*,p;C_+'';\text{\bf p}_2).$$
Here the notations are the same as in Lemma $\ref{boundaryN}$ $(2)$
and $(\ref{p1p2})$.
\item
$$
{\mathcal M}_{k_1,k'_0;\ell'}(L^{(1)},L^{(0)};*,p;C_+';\text{\bf p}_1)
\,\,{}_{ev^{(0)}_i}\times_{ev_0} \,\,
{\mathcal M}_{k''_0+1;\ell''}(L(u);\beta'';\text{\bf p}_2) .$$
Here the notations are the same as in Lemma $\ref{boundaryN}$ $(3)$
and $(\ref{p1p2})$.
\item
$$
{\mathcal M}_{k'_1+k'_0+1;\ell'}(L(u);\beta';\text{\bf p}_1)
{}_{ev_0}\times_{ev_{-\infty}} {\mathcal M}_{k''_1,k''_0;\ell'}(L^{(1)},L^{(0)};*,p;C_+'';\text{\bf p}_2),
$$
where $k'_j + k''_j = k_j$, $\ell' + \ell'' = \ell$,
$\beta' \# C_+'' = C_+$ and $(\ref{p1p2})$.
\end{enumerate}
\end{lem}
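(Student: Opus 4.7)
The plan is to treat Lemma \ref{2bdrywithpn} as a parametrized analog of Lemma \ref{bdrywithpn}, building on the techniques of section 7.1 of \cite{fooo06}. The Kuranishi structure on ${\mathcal M}_{k_1,k_0;\ell}(L^{(1)},L^{(0)};*,p;C_+)$ is constructed exactly as in section 7.1 of \cite{fooo06}, adapted to the two-parameter family $J_{\tau,t}$; since the equation \eqref{tautCR} is \emph{not} $\tau$-translation invariant, we do not quotient by $\R$, and the virtual dimension is one larger than that of the unparametrized strip moduli spaces in Lemma \ref{boundaryN}. Fiber product with $\prod_i D_{\text{\bf p}(i)}$ to incorporate the bulk constraints, together with orientability, is handled exactly as in Lemma \ref{kstructure} and Chapter 8 of \cite{fooo06}.

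The core of the proof is the identification of the boundary via Gromov compactness. A sequence $\{\varphi_n\}$ approaching the boundary of the compactified moduli space undergoes one of the following degenerations. First, translation-invariant strip breaking can occur only in regions where \eqref{tautCR} is translation invariant; the nontrivial case is $\tau \gg 0$ where $\chi \equiv 1$ and $J_{\tau,t} = J_t$, peeling off a standard Floer strip at some intermediate $q \in L^{(1)} \cap L^{(0)}$ and producing type (1). Second, boundary bubbling along $t=1$ at a point $\tau_0$ produces a $J_{\tau_0,1}$-holomorphic disc which, after applying $\psi_{\chi(\tau_0)}^{-1}$ as in \eqref{evaluationFH2}, is identified with an element of $\mathcal M_{k'_1+1;\ell'}(L(u);\beta';\text{\bf p}_1)$ attached via $ev_0 = ev^{(1)}_i$, giving type (2); analogous bubbling along $t=0$ yields type (3). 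Third, and new compared to Lemma \ref{bdrywithpn}, is bubble formation as $\tau_n \to -\infty$: since $\chi(\tau_n) \to 0$, the almost complex structures $J_{\tau_n,t}$ converge to $J$ and both boundary Lagrangians $L^{(0)}$ and $L^{(\chi(\tau_n))}$ collapse to $L(u)$, so the rescaled limit is a genuine $J$-holomorphic disc bounding $L(u)$; its distinguished boundary point matches $ev_{-\infty}$ of the remaining parametrized strip, giving the fiber product in (4).

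In each case the interior marked points of the limit distribute onto the two components via a shuffle $(\mathbb L_1,\mathbb L_2) \in \text{\rm Shuff}(\ell)$, inducing the splitting $\text{Split}((\mathbb L_1,\mathbb L_2),\text{\bf p}) = (\text{\bf p}_1,\text{\bf p}_2)$ exactly as in Lemma \ref{bdrywithpn}. Compatibility of Kuranishi structures across each fiber product is inherited from the construction in sections 7.1.4 and 7.4 of \cite{fooo06}, so that extending the multisections of Lemma \ref{multisectionN} to this parametrized moduli space presents no new difficulty once the boundary description is established.

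The main technical obstacle I expect is the analysis of the $-\infty$ bubbling in case (4). One must show that a sequence whose energy concentrates near $\tau_n \to -\infty$ admits, after rescaling, a subsequence converging to a $J$-holomorphic disc with boundary on $L(u)$; that the attaching point matches $ev_{-\infty}$ of the residual parametrized strip; and that standard gluing recovers a neighborhood of this stratum isomorphic to the fiber product in (4). The key inputs are the monotone convergence $\chi(\tau_n) \to 0$, the fact that $\psi_{\chi(\tau_n)} \to \mathrm{id}$ uniformly on compact sets (so that both boundary conditions genuinely converge to the same $L(u)$), and removal of singularities; together these reduce the neck analysis near $-\infty$ to the familiar case of disc bubbling in $\mathcal M_{k+1;\ell}(L(u);\beta;\text{\bf p})$.
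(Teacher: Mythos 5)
Your proposal is correct and follows the same route as the paper, whose ``proof'' of Lemma~\ref{2bdrywithpn} is a one-line citation to subsection 7.1.4 of \cite{fooo06}; you have reconstructed the content of that cited argument. The classification of boundary strata via Gromov compactness is right, and you correctly single out case~(4), which is the new phenomenon not present in Lemma~\ref{bdrywithpn}, and correctly identify that the attaching point of the bubble must match $ev_{-\infty}$.

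One small remark on the technical discussion of case~(4): the degeneration at $\tau\to-\infty$ is better described as $\tau$-\emph{translation} breaking at a Bott-degenerate end than as a rescaling bubble. For $\tau$ sufficiently negative one has $\chi(\tau)\equiv 0$, so the equation \eqref{tautCR} is \emph{exactly} (not merely asymptotically) the translation-invariant $J$-holomorphic equation with both boundaries \emph{exactly} on $L(u)$, and $\psi_{\chi(\tau)}$ is exactly the identity. When energy escapes toward $\tau\to-\infty$, one translates by $\tau$ (no parabolic rescaling is involved) to extract a limit which is a $J$-holomorphic strip asymptotic to points of $L(u)$ at both ends; removal of singularities then produces the disc in ${\mathcal M}_{k'_1+k'_0+1;\ell'}(L(u);\beta';\text{\bf p}_1)$, with its $0$-th marked point being the removable singularity facing the residual component. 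So the analysis at $-\infty$ is exactly the analysis of disc bubbling at the distinguished boundary point of the compactified domain, which is already handled by the Kuranishi-structure constructions of section~7.1 of \cite{fooo06}; there is no genuinely new gluing analysis to perform. This is a matter of terminology rather than a gap, and does not affect the correctness of the boundary description.
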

The proof is the same as one in Subsection 7.1.4 
\cite{fooo06} (= Section 29.4 \cite{fooo06pre}).
\par
\begin{lem}\label{fmulti}
There exists a system of multisections on
$${\mathcal M}_{k_1,k_0;\ell}(L^{(1)},L^{(0)};*,p;C_+;\text{\bf p})$$
so that it is compatible with one constructed before
at the boundaries described in Lemma $\ref{2bdrywithpn}$.
\end{lem}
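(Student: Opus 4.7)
The plan is to construct the multisections inductively on the energy $\omega \cap C_+/2\pi$, paralleling the proof of Lemma \ref{multisectionN}. At the base of the induction, for small enough energy the moduli space has no boundary strata involving non-trivial disc or strip bubbles, so a transversal multisection exists by the standard theory of Kuranishi structures. For the inductive step, the boundary strata listed in Lemma \ref{2bdrywithpn} all carry multisections that are already fixed: strata of type (2), (3) and (4) involve the disc moduli spaces $\mathcal M_{k+1;\ell}(L(u);\beta;\text{\bf p})$ whose multisections were chosen in Lemma \ref{37.184}, while strata of type (1) involve the lower-energy strip moduli spaces whose multisections are provided either by Lemma \ref{multisectionN} or by the induction hypothesis on $\omega \cap C_+$.

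The key transversality input is that in strata (2), (3) and (4) the fiber product is taken along $ev_0$ from a disc moduli space, which is a submersion by the $T^n$-equivariance established in Lemma \ref{37.184} (2). Consequently the fiber product multisections are automatically transversal without any further perturbation. Stratum (1) is a direct product and transversality there follows at once from the induction hypothesis. Hence the data prescribed on $\partial \mathcal M_{k_1,k_0;\ell}(L^{(1)},L^{(0)};*,p;C_+;\text{\bf p})$ assembles into a single transversal multisection on the boundary, which we then extend to the interior by the usual extension theorem in the theory of Kuranishi structures (using a good coordinate system, as in Corollary 15.15 \cite{fooo08}).

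The main delicate point, as in the proof of Lemma \ref{multisectionN}, is the consistency at corners where several of the four strata of Lemma \ref{2bdrywithpn} meet. This is handled by arranging the induction simultaneously on the pair $(\omega\cap C_+, k_1+k_0+\ell)$ and by constructing the multisections stratum by stratum, always using that the relevant evaluation maps from disc moduli spaces are submersive so that fiber products of already-chosen multisections remain transversal. Just as in Lemma \ref{multisectionN}, we do not need to impose $T^n$-equivariance on the perturbations of the strip moduli spaces themselves, since the $T^n$-equivariance is only needed on the disc factors to ensure the submersivity of $ev_0$. This completes the construction, producing the desired compatible system of multisections.
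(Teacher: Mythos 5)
Your proof is correct and is an elaborated version of the paper's one-sentence argument: both rely on the fact that $ev_0$ from the perturbed disc moduli spaces of Lemma \ref{37.184} is a submersion, so the fiber-product multisections on the boundary strata (2)–(4) of Lemma \ref{2bdrywithpn} are automatically transversal, the direct-product stratum (1) is handled by the induction hypothesis, and one then extends inward by the standard Kuranishi/good-coordinate-system machinery. The only minor wording issue is that $T^n$-equivariance is not merely "not needed" for the strip moduli spaces but does not even make sense there (as the paper notes in the proof of Lemma \ref{multisectionN}, there is no $T^n$-action on these spaces since $L^{(1)}=\psi(L(u))$ is not $T^n$-invariant); this does not affect the argument.
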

\begin{proof}
We can still use the fact that $ev_0$ is a submersion on the
perturbed moduli space to perform the inductive construction of
multisection in the same way as the proof of Lemma \ref{multisectionN}.
\end{proof}
\par
For $C_+ \in \pi_2(L^{(1)},L^{(0)};*,p)$, we define $\rho(C_+)
\in Hom(\mathcal L^{(1)}_p,\mathcal L^{(0)}_p)$ by
\begin{equation}\label{parallel}
\rho(C_+) =  \text{\rm Pal}_{\partial_0C_+}
\circ\text{\rm Pal}_{\partial_1C_+}^{-1}.
\end{equation}
Here we use the notation of (\ref{defpal}).
\begin{lem}\label{comphomo2}
Let $C_+ \in \pi_2(L^{(1)},L^{(0)};*,p)$,
$B' \in \pi_2(L^{(1)},L^{(0)};p,q)$ and
$\beta_j \in \pi_2(X,L^{(j)})$.
Then we have
$$
\aligned \text{\rm Comp}(B',\rho(C_+)) & = \rho(C_+\#B'), \\
\rho(\beta_0\# C_+) = \rho(\partial\beta_0)\rho(C_+), \, &{}\quad
\rho(C_+\# \beta_1) = \rho(\partial\beta_1)\rho(C_+).
\endaligned
$$
\end{lem}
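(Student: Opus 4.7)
The plan is to verify each of the three identities by direct substitution into the definitions (\ref{parallel}) of $\rho(C_+)$ and (\ref{holonomyinduce}) of $\text{\rm Comp}$, using the multiplicativity of parallel transport along concatenated paths, which follows from flatness of $\nabla$.

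First I would treat the identity $\text{\rm Comp}(B',\rho(C_+)) = \rho(C_+\#B')$. Under the concatenation $C_+\#B'$, each boundary path decomposes as $\partial_j C_+$ followed by $\partial_j B'$, so by multiplicativity of parallel transport
\begin{equation*}
\text{\rm Pal}_{\partial_j(C_+\#B')} = \text{\rm Pal}_{\partial_j B'}\circ \text{\rm Pal}_{\partial_j C_+}, \qquad j=0,1.
\end{equation*}
Plugging into (\ref{parallel}) gives
\begin{equation*}
\rho(C_+\#B') = \text{\rm Pal}_{\partial_1 B'}\circ \text{\rm Pal}_{\partial_1 C_+} \circ \text{\rm Pal}_{\partial_0 C_+}^{-1} \circ \text{\rm Pal}_{\partial_0 B'}^{-1} = \text{\rm Pal}_{\partial_1 B'}\circ \rho(C_+)\circ \text{\rm Pal}_{\partial_0 B'}^{-1},
\end{equation*}
which is exactly $\text{\rm Comp}(B',\rho(C_+))$ by (\ref{holonomyinduce}).

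For the remaining two identities, the disc $\beta_0\in \pi_2(X,L^{(0)})$ (resp.\ $\beta_1\in \pi_2(X,L^{(1)})$) glued at the ``$*$'' corner of $C_+$ inserts the boundary loop $\partial\beta_0$ (resp.\ $\partial\beta_1$) into $\partial_0$ (resp.\ $\partial_1$), while leaving the other side of the strip unchanged. Because $\rho$ is a one-dimensional representation, the parallel transport around this inserted loop acts on the rank-one fiber of $\mathcal{L}^{(j)}$ by multiplication by the scalar $\rho(\beta_j)\in\C\setminus\{0\}$, and such scalars commute with every other $\text{\rm Pal}$ operator appearing in (\ref{parallel}). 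A direct computation with (\ref{parallel}) then yields $\rho(\beta_0\#C_+)=\rho(\beta_0)\rho(C_+)$ and $\rho(C_+\#\beta_1)=\rho(\beta_1)\rho(C_+)$.

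No substantive obstacle is expected; the proof is pure bookkeeping of parallel transport along concatenated paths. The only subtle point is to ensure that the orientation of the inserted loop $\partial \beta_j$ within the boundary of the glued surface is taken with the same sign convention fixed in the gluing maps $\#$ defined after (\ref{sharp2}); this is the same convention used in Lemma \ref{comphomo}, and guarantees that the scalar factor appears as $\rho(\beta_j)$ rather than $\rho(\beta_j)^{-1}$, consistent with the form of the preceding lemma.
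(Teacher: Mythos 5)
Your proof is correct and matches the approach the paper implicitly has in mind: the paper leaves this lemma (like the parallel Lemma \ref{comphomo}) without proof precisely because it reduces to the bookkeeping of parallel transports along concatenated paths, which is what you carry out. Your first identity follows cleanly from multiplicativity of $\text{\rm Pal}$, and the scalar-holonomy observation for the rank-one flat bundle $\mathcal L$ handles the disc-attachment identities. One small imprecision: in the paper's definition of $\#$ the disc $\beta_j$ is not glued at the ``$*$'' corner but rather at a point such as $\varphi(0,j)$ on the $t=j$ boundary via an auxiliary path $\gamma$ in $L^{(j)}$; however, because $\nabla$ is flat and the inserted loop's holonomy is the scalar $\rho(\beta_j)$, the location of insertion along $\partial_j C_+$ is immaterial, so your argument and its conclusion are unaffected.
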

The proof is easy and is left to the reader.
\par
Now let $C_+ \in \pi_2(L^{(1)},L^{(0)};*,p)$, $\text{\bf p} \in
Map(\ell,\underline B)$ and let $h^{(j)}_i$ ($i = 1,\ldots,k_j$) be
differential forms on $L^{(j)}$ and $h$ also a differential form
on $L(u)$. We define
\begin{equation}\label{deffraf}
\aligned
&\mathfrak f_{k_1,k_0;\ell;C_+}(SD(\text{\bf p});h^{(1)}_1,\ldots,h^{(1)}_{k_1};
h;h^{(0)}_1,\ldots,h^{(0)}_{k_0})\\
&= \frac{1}{\ell!}\rho(C_+)
\int_{{\mathcal M}_{k_1,k_0;\ell}(L^{(1)},L^{(0)};*,p;C_+;\text{\bf p})}
ev^{(1) *}h^{(1)} \wedge ev_{-\infty}^* h \wedge ev^{(0) *}h^{(0)} \\
& \in Hom(\mathcal L^{(1)}_p,\mathcal L^{(0)}_p)  \otimes \Lambda.
\endaligned
\end{equation}
Here
$$
h^{(j)} = h^{(j)}_1\times \cdots \times h^{(j)}_{k_j}
$$
is a differential form on $(L^{(j)})^{k_j}$.
It induces
$$\aligned
\mathfrak f_{C_+}: B((\Omega(L^{(1)}) \,\widehat\otimes\, \Lambda_0)[1]) &\otimes (\Omega(L(u)) \,\widehat\otimes\, \Lambda)[1]
\otimes B((\Omega(L^{(0)}) \,\widehat\otimes\, \Lambda_0)[1]) \\
&\to \bigoplus_{p\in L^{(1)}\cap L^{(0)}}Hom(\mathcal L^{(1)}_p,\mathcal L^{(0)}_p) \otimes \Lambda.
\endaligned$$
\par
Now we define
$$
\mathfrak f:  \Omega(L(u)) \widehat{\otimes} \Lambda
\to CF((L^{(1)},\rho),(L^{(0)},\rho);\Lambda)
$$
by
\begin{equation}
\mathfrak f(h) =
\sum_{C_+} T^{\omega \cap C_+/2\pi}\mathfrak f_{C_+}(e^{\mathfrak b} ; e^{\psi_{*}(\mathfrak x_+)} \otimes h \otimes e^{\mathfrak x_+}).
\end{equation}
We remark that $\omega \cap C_+/2\pi$ may not be
positive in this case since (\ref{tautCR}) is $\tau$-dependent.
\par
The fact that the right hand side converges in
non-Archimedean topology follows from 
Gromov's compactness theorem.
\begin{lem}\label{fischain}
$\mathfrak f$ is a chain map.
\end{lem}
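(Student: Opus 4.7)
The plan is to apply Stokes' formula (in the form of Proposition/Lemma available in section \ref{sec:integration}) to the integrals defining $\frak f_{C_+}$, and to read off the four boundary contributions from Lemma \ref{2bdrywithpn}. Concretely, for each fixed $C_+$, $\text{\bf p}$, $k_1$, $k_0$, $\ell$, the moduli space ${\mathcal M}_{k_1,k_0;\ell}(L^{(1)},L^{(0)};*,p;C_+;\text{\bf p})$ carries an oriented Kuranishi structure with boundary on which the evaluation map at $+\infty$ (namely towards the intersection point $p$) and the maps $ev^{(1)}$, $ev^{(0)}$, $ev_{-\infty}$, $ev^{\text{int}}$ are all defined. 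Integrating $ev^{(1)*} h^{(1)} \wedge ev_{-\infty}^* h \wedge ev^{(0)*} h^{(0)}$ and pulling the exterior derivative through $\int$, the nontrivial contribution comes entirely from the boundary, and the four pieces of the boundary listed in Lemma \ref{2bdrywithpn} each give a well-identifiable term after passing to the sum over $C_+$ and inserting $e^{\frak b}$, $e^{\frak x_+}$ on both sides and $h = $ a harmonic representative.

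The identification of the four types proceeds as follows. Boundary type (1) corresponds to a strip breaking off at $+\infty$; after the parallel-transport identity of Lemma \ref{comphomo2} $\bigl(\rho(C_+' \# B'') = \text{\rm Comp}(B'',\rho(C_+'))\bigr)$ and the coproduct formula $\Delta e^{\frak b} = e^{\frak b} \otimes e^{\frak b}$, $\Delta e^{\frak x_+} = e^{\frak x_+} \otimes e^{\frak x_+}$, this sum collects exactly into $\delta^{\frak b,\frak x}(\frak f(h))$. Boundary type (4) corresponds to a disc bubble on $L(u)$ at $-\infty$, which glues along $ev_{-\infty}$; the resulting sum reproduces $\frak f(\frak m_1^{\frak b,\frak x}(h))$ after recognizing the disc factor as $\frak q^{can}_\rho$ evaluated on $e^{\frak b}; e^{\frak x_+} \otimes h \otimes e^{\frak x_+}$ and absorbing $T^{\omega \cap \beta'/2\pi}$ into the weight for $C_+ = \beta' \# C_+''$. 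Boundary types (2) and (3) are disc bubbles on $L^{(1)}$ and $L^{(0)}$ respectively; by the unitality Lemma \ref{unitality} (or rather its analogue for $\frak f$, which holds by the same parallel-transport computation) only the $PD[L^{(j)}]$-component of $\frak q^{can}_\rho(e^{\frak b}; e^{\frak x_+})$ survives, yielding the scalar factors $\frak{PO}^u(\frak b,\psi_*\frak x)$ and $\frak{PO}^u(\frak b,\frak x)$ multiplied by $\frak f(h)$ (with opposite signs coming from the position of the sub-disc in the boundary orientation).

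Under the standing assumption $\frak{PO}^u(\frak b,\frak x) = \frak{PO}^u(\frak b,\psi_*\frak x)$ (which is the meaning of comparing Floer cohomologies at matched potential values, exactly as in Lemma \ref{delta2}), the contributions of types (2) and (3) cancel, leaving
\[
0 \;=\; \delta^{\frak b,\frak x}(\frak f(h)) \;-\; (-1)^{\deg h}\,\frak f(\frak m_1^{\frak b,\frak x}(h)),
\]
which is the chain map property. The sign in front of $\frak f \circ \frak m_1^{\frak b,\frak x}$ is the shifted-degree sign dictated by the $A_\infty$-bimodule conventions of section 3.7 \cite{fooo06} and will be tracked by the same prescription as in Proposition \ref{rmainformulaprop}.

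The main technical obstacle is twofold. First, one must verify the cancellation of types (2) and (3) on the nose; this requires checking that the parallel-transport factor $\rho(\beta_j \# C_+'') = \rho(\beta_j) \rho(C_+'')$ from Lemma \ref{comphomo2} correctly reproduces the $\rho$-twist built into $\frak q^{can}_\rho$ in Definition \ref{mkbwithtwist}, so that the scalar that pops out is literally $\frak{PO}^u(\frak b,\psi_*\frak x)$ or $\frak{PO}^u(\frak b,\frak x)$ and not a variant of it. Second, as flagged in Remark \ref{824}, because we are working over $\Lambda$ and not $\Lambda_0$ the series $\sum_{C_+} T^{\omega \cap C_+/2\pi}$ need not converge unconditionally: one must fix an energy cut-off $E_0$, construct everything at the level of $A_{n,K}$-structures (as in Proposition 7.4.17 \cite{fooo06}), establish the chain map property modulo $T^{E_0}$, and then pass to the inductive limit; this is a routine but notationally heavy application of the machinery of sections 7.2, 7.4 \cite{fooo06}.
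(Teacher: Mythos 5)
The overall structure of your argument (apply Stokes' formula, Lemma \ref{stokes}, and read off the four boundary terms of Lemma \ref{2bdrywithpn}) matches the paper, and your identification of boundary type (1) with $\delta^{\frak b,\frak x}\circ\frak f$ and of type (4) with $\frak f\circ\frak m_1^{\frak b,\frak x}$ is correct. Your treatment of boundary types (2) and (3) is, however, wrong, and it contradicts a lemma you cite.

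In the paper's de~Rham model, Lemma \ref{unitality} is a \emph{strict vanishing} statement: if any input slot of $\frak r_\rho$ on either side of $v$ contains the degree-$0$ form $1$, the operation is identically zero. Since $\frak q^{can}_\rho(e^{\frak b};e^{\frak x_+})$ is a degree-$0$ harmonic form, it equals $\frak{PO}^u(\frak b,\frak x)\cdot 1$, so the contributions of types (2) and (3) are $\frak{PO}^u(\frak b,\frak x)\cdot\frak r_\rho(\cdots\otimes 1\otimes\cdots)=\frak{PO}^u(\frak b,\frak x)\cdot 0 = 0$. The same vanishing holds verbatim for $\frak f$ (the moduli space ${\mathcal M}_{k_1,k_0;\ell}(L^{(1)},L^{(0)};*,p;C_+;\text{\bf p})$ fibers over the one with one fewer boundary marked point, the multisection is compatible, and pulling back a degree-$0$ form along a fiber of positive dimension integrates to zero). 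This is exactly why the proof of Lemma \ref{boundaryfloer2} does \emph{not} invoke equality of potential values, and why the paper works in de~Rham rather than singular chains---it gets an exact unit, not just a homotopy unit, so the weak-bounding-cochain condition truly annihilates these terms rather than merely matching them up.

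You instead assert that the unit-component of $\frak q^{can}_\rho(e^{\frak b};e^{\frak x_+})$ ``survives'' and produces terms $\pm\frak{PO}^u(\cdots)\cdot\frak f(h)$ that cancel under a ``standing assumption'' $\frak{PO}^u(\frak b,\frak x)=\frak{PO}^u(\frak b,\psi_*\frak x)$. That picture is the homotopy-unit picture and does not describe what $\frak r_\rho$ (or $\frak f$) does to the form $1$ in this setting; it is internally inconsistent to cite Lemma \ref{unitality} (``the operation is zero'') and then conclude a non-zero contribution. Moreover, the equality you impose is not an assumption here at all---it is automatic since $\psi$ is a Hamiltonian diffeomorphism and all disc moduli on $L^{(1)}=\psi(L^{(0)})$ are carried over by $\psi$---so it cannot be ``the meaning of comparing Floer cohomologies at matched potential values,'' and invoking Lemma \ref{delta2} (which concerns possibly distinct bounding cochains $b_1\ne b_0$) is a red herring. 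Replace the cancellation argument with the paper's actual mechanism: types (2) and (3) vanish individually by the strict unitality built into the de~Rham construction, and then the chain-map identity $\delta^{\frak b,\frak x}\circ\frak f=\pm\frak f\circ\frak m_1^{\frak b,\frak x}$ falls out of the remaining two boundary terms.
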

\begin{proof}
With Lemmata \ref{2bdrywithpn},  \ref{fmulti}, \ref{comphomo2},
the proof is similar to the proof of Proposition  \ref{rmainformula} and Lemmata
\ref{unitality}, \ref{boundaryfloer2}.
\end{proof}
\par\smallskip
Next we define a chain map of the opposite direction of $\mathfrak f$.
Let $p \in L^{(0)}\cap L^{(1)}$.
We consider maps
$
\varphi: \R \times [0,1] \to X
$
such that
\begin{enumerate}
\item
$\lim_{\tau \to -\infty} \varphi(\tau,t) = p$.
\item
$\lim_{\tau \to +\infty} \varphi(\tau,t)$ converges
to a point in $L^{(0)}$ and is independent of $t$.
\item
$\varphi(\tau,0) \in L^{(0)}$, $\varphi(\tau,1) \in L^{(\chi(-\tau))}$.
\end{enumerate}
We denote by $\pi_2(L^{(1)},L^{(0)};p,*)$ the set of homotopy
classes of such maps.
There are obvious maps
\begin{equation}
\aligned
& \pi_2(L^{(1)},L^{(0)};p,q) \times \pi_2(L^{(1)},L^{(0)};q,*)
\to \pi_2(L^{(1)},L^{(0)};p,*), \\
& \pi_2(X,L^{(1)}) \times \pi_2(L^{(1)},L^{(0)};p,*)
\to \pi_2(L^{(1)},L^{(0)};p,*), \\
&  \pi_2(L^{(1)},L^{(0)};p,*)  \times \pi_2(X,L^{(0)})
\to \pi_2(L^{(1)},L^{(0)};p,*).
\endaligned
\end{equation}
We denote them by $\#$.
\begin{defn}
We consider the moduli space of maps
satisfying (1) - (3) above and of homotopy class
$C_- \in \pi_2(L^{(1)},L^{(0)};p,*)$ and satisfying the following
equation:
\begin{equation}\label{tautCR2}
\frac{\partial\varphi}{\partial \tau}
+ J_{-\tau,t} \left(\frac{\partial\varphi}{\partial t}\right) = 0.
\end{equation}
We denote it by
$$
{\mathcal M}^{\text{reg}}(L^{(1)},L^{(0)};p,*;C_-).
$$
We include interior and boundary marked points and
compactify it. We then get the moduli space
$
{\mathcal M}_{k_1,k_0;\ell}(L^{(1)},L^{(0)};p,*;C_-).
$
\end{defn}
We can define the evaluation maps
$$
ev = (ev^+,ev^{(1)},ev^{(0)}): {\mathcal
M}_{k_1,k_0;\ell}(L^{(1)},L^{(0)};p,*;C_-) \to X^{\ell} \times
(L^{(0)})^{k_0+k_1},
$$
and
$$
ev_{+\infty}: {\mathcal M}_{k_1,k_0;\ell}(L^{(1)},L^{(0)};p,*;C_-)
\to L(u).
$$
Here $L^{(0)}=L(u)$ and
$$
ev_{+\infty}(\varphi) = \lim_{\tau\to +\infty}\varphi(\tau,t).
$$
Using $ev^+$, we take fiber product with $D(\text{\bf p})$ and obtain
$$ {\mathcal M}_{k_1,k_0;\ell}(L^{(1)},L^{(0)};p,*;C_-;\text{\bf
p}). $$
\begin{lem}\label{2bdrywithpn2}
The space ${\mathcal M}_{k_1,k_0;\ell}(L^{(1)},L^{(0)};p,*;C_-;\text{\bf p})$
has an oriented Kuranishi structure with boundary.
Its boundary is a union of the following four types of fiber product
as a space with Kuranishi structure.
\begin{enumerate}
\item
$${\mathcal M}_{k'_1,k'_0;\ell'}(L^{(1)},L^{(0)};p,q;B';\text{\bf p}_1)
\times
{\mathcal M}_{k''_1,k''_0;\ell''}(L^{(1)},L^{(0)};q,*;C_-'';\text{\bf p}_2).$$
Here the notations are the same as in Lemma $\ref{boundaryN}$ $(1)$
and $(\ref{p1p2})$.
\item
$${\mathcal M}_{k'_1+1;\ell'}(L(u);\beta';\text{\bf p}_1)
\,\,{}_{ev_0}\times_{ev^{(1)}_i} \,\,
{\mathcal M}_{k''_1,k_0;\ell''}(L^{(1)},L^{(0)};p,*;C_-'';\text{\bf p}_2).$$
Here the notations are the same as in Lemma $\ref{boundaryN}$ $(2)$
and $(\ref{p1p2})$.
\item
$$
{\mathcal M}_{k_1,k'_0;\ell'}(L^{(1)},L^{(0)};p,*;C_-';\text{\bf p}_1)
\,\,{}_{ev^{(0)}_i}\times_{ev_0} \,\,
{\mathcal M}_{k''_0+1;\ell''}(L(u);\beta'';\text{\bf p}_2) .$$
Here the notations are the same as in Lemma $\ref{boundaryN}$ $(3)$
and $(\ref{p1p2})$.
\item
$$
{\mathcal M}_{k'_1,k'_0;\ell'}(L^{(1)},L^{(0)};p,*;C_-';\text{\bf p}_1)
{}_{ev_{-\infty}}\times_{ev_0} {\mathcal M}_{k''_1+k''_0+1;\ell''}
(L(u);\beta'';\text{\bf p}_2),
$$
where $k'_j + k''_j = k_j$, $\ell' + \ell'' = \ell$,
$\beta' \# C_-'' = C_-$ and $(\ref{p1p2})$.
\end{enumerate}
\end{lem}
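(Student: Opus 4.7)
The plan is to mirror the proof of Lemma \ref{2bdrywithpn}, exploiting the fact that the moduli space ${\mathcal M}_{k_1,k_0;\ell}(L^{(1)},L^{(0)};p,*;C_-;\text{\bf p})$ is obtained from ${\mathcal M}_{k_1,k_0;\ell}(L^{(1)},L^{(0)};*,p;C_+;\text{\bf p})$ by the time reversal $\tau \mapsto -\tau$, which converts the equation \eqref{tautCR} into \eqref{tautCR2} and interchanges the roles of the two ends. Concretely, I would first establish the Kuranishi structure with boundary by applying the standard gluing and transversality machinery for $\tau$-dependent Floer trajectories with interior constraints developed in subsection 7.1.4 \cite{fooo06}; orientability follows from Chapter 8 \cite{fooo06} since all fiber products below are taken with the submersions $ev_0$ or $ev_{\pm\infty}$.

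Next I would enumerate the strata of the codimension-one boundary according to where bubbling or breaking can occur. Because $J_{-\tau,t}$ stabilizes to the time-dependent family $J_t$ as $\tau \to -\infty$ and to the constant $J$ as $\tau \to +\infty$, the two ends behave asymmetrically: at $\tau = -\infty$ a Floer strip connecting $p$ to some intermediate intersection point $q$ can split off, yielding stratum (1); at $\tau = +\infty$, since the limiting equation is $\delbar_J$ and the asymptotic is a point in $L^{(0)} = L(u)$, the bubbling is a standard pseudo-holomorphic disc with boundary on $L(u)$ attached at the asymptotic point via $ev_{+\infty} = ev_0$, yielding stratum (4). Disc bubbling on the upper edge $\{t=1\}$ takes place at a point $(\tau_*, 1)$, and since at that point the boundary lies on $L^{(\chi(-\tau_*))} \cong L(u)$ (transported by $\psi_{\chi(-\tau_*)}$), after using $\psi$ to identify all $L^{(t)}$ with $L(u)$ the bubble is a disc in $(X,L(u))$ attached via $ev_0$ to $ev^{(1)}_i$, yielding stratum (2); similarly, disc bubbling on $\{t=0\}$ yields stratum (3). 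The interior marked points split between the main component and the bubble by a shuffle, giving the $\text{Split}$-decomposition $(\mathbf{p}_1,\mathbf{p}_2)$ in \eqref{p1p2}, and the homology classes split as $B' \# C_-'' = C_-$ in stratum (1), $\beta' \# C_-'' = C_-$ in (2), $C_-' \# \beta'' = C_-$ in (3), and $C_-' \# \beta'' = C_-$ with $\beta'' \in \pi_2(X;L^{(0)})$ in (4); these are the obvious modifications of the analogous relations in Lemma \ref{2bdrywithpn}.

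Finally I would verify that no further strata arise: sphere bubbles in the interior contribute codimension two and so lie in the interior of the boundary components already listed, while no breaking can occur in the $\tau$-variable away from the ends because $J_{-\tau,t}$ is genuinely $\tau$-dependent on a compact region (so the translation action is absent there). The identification of the boundary with the listed fiber products as a space with Kuranishi structure, including the compatibility with the evaluation maps, follows from the same gluing theorem used in the proof of Lemma \ref{2bdrywithpn}.

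The main technical obstacle is the careful bookkeeping at stratum (4): one must check that the relevant gluing parameter lives in $[0,\infty)$ (not $\R$) because the $+\infty$ end has no $\R$-symmetry (unlike the strip-breaking end), and that the resulting fiber product over $L(u)$ via $ev_{+\infty}$ vs.\ $ev_0$ is exactly as stated. Once this is checked, the orientation, the $\text{\bf p}$-decomposition, and the inductive compatibility with the multisections constructed in Lemmata \ref{multisectionN} and \ref{fmulti} are formal consequences of the corresponding statements for the simpler moduli spaces.
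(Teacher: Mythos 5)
Your proposal is correct and takes essentially the same route as the paper, which simply observes that the argument is identical to the one for Lemma \ref{2bdrywithpn} and refers to subsection 7.1.4 of \cite{fooo06} for the details (Gromov compactness plus gluing, with the two asymmetric ends governed by $J_{-\tau,t}\to J_t$ at $\tau\to-\infty$ and $J_{-\tau,t}\to J$ at $\tau\to+\infty$). Incidentally, your reading of stratum (4) — the fiber product over $L(u)$ via $ev_{+\infty}$ rather than $ev_{-\infty}$, with splitting relation $C_-'\#\beta''=C_-$ rather than $\beta'\#C_-''=C_-$ — corrects what appear to be transcription errors in the printed statement, as is clear by comparing with stratum (4) of Lemma \ref{2bdrywithpn}.
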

The proof is the same as one in Subsection 7.1.4 \cite{fooo06} 
( = Section 29.4 \cite{fooo06pre}).
\par
We define the map
$$
\text{\rm Comp}: \pi_2(L^{(1)},L^{(0)};p,*) \times
Hom(\mathcal L^{(1)}_p,\mathcal L^{(0)}_p)
\to \C
$$
as follows. Let $\sigma \in Hom(\mathcal L^{(1)}_p,\mathcal L^{(0)}_p)$
and $C_- \in \pi_2(L^{(1)},L^{(0)};p,*)$. Then
\begin{equation}\label{holonomyinduce2}
\text{\rm Comp}(C_-,\sigma) v
= \text{\rm Pal}_{\partial_0C_-}
\circ \sigma \circ\text{\rm Pal}_{\partial_1C_-}^{-1}(v),
\end{equation}
where $v \in \mathcal L_{\lim_{\tau\to+\infty}\varphi(\tau,t)}$ and
we use the notation of (\ref{defpal}).
\par
Let $\sigma \in Hom(\mathcal L^{(1)}_p,\mathcal L^{(0)}_p)$, $C_- \in \pi_2(L^{(1)},L^{(0)};q,*)$,
$B' \in \pi_2(L^{(1)},L^{(0)};p,q)$ and
$\beta_j \in \pi_2(X,L^{(j)})$.
Then we have
\begin{equation}\label{comp3}
\aligned
\text{\rm Comp}(B',\text{\rm Comp}(C_-,\sigma))
&= \text{\rm Comp}(B'\# C_-,\sigma), \\
\text{\rm Comp}(\beta_0\# C_-,\sigma)
&= \rho(\partial\beta_0)\text{\rm Comp}(C_-,\sigma), \\
\text{\rm Comp}(C_- \# \beta_1,\sigma)
&= \rho(\partial\beta_1)\text{\rm Comp}(C_-,\sigma).
\endaligned\end{equation}
Now let $C_- \in \pi_2(L^{(1)},L^{(0)};p,*)$, $\text{\bf p} \in
Map(\ell,\underline B)$ and $h^{(j)}_i$ ($i = 1,\ldots,k_j$) be
differential forms on $L^{(j)}$ and $\sigma \in Hom(\mathcal
L^{(1)}_p,\mathcal L^{(0)}_p)$. We will define an element
\begin{equation}\label{defg}
\mathfrak g_{\ell;k_1,k_0;C_-}(SD(\text{\bf p});h^{(1)}_1,\ldots,h^{(1)}_{k_1};\sigma;
h^{(0)}_1,\ldots,h^{(0)}_{k_0})
\in \Omega(L(u)) \otimes \Lambda.
\end{equation}
We will define it as
\begin{equation}\label{deffraf4}
\aligned
&\mathfrak g_{\ell;k_1,k_0;C_-}(SD(\text{\bf p});h^{(1)}_1,\ldots,h^{(1)}_{k_1};\sigma;
h^{(0)}_1,\ldots,h^{(0)}_{k_0})\\
&= \frac{1}{\ell!}\text{\rm Comp}(C_-,\sigma)
((ev_{+\infty})_!)
(ev^{(1) *}h^{(1)} \wedge ev^{(0) *}h^{(0)}).
\endaligned
\end{equation}
Here
$(ev_{+\infty})_!$  is the integration along the fiber of the map
\begin{equation}\label{evaluationinfty}
ev_{+\infty}: {\mathcal M}_{k_1,k_0;\ell}(L^{(1)},L^{(0)};p,*;C_-;\text{\bf p})^{\mathfrak s}
\to L(u)
\end{equation}
of the appropriately perturbed moduli space.
More precise definition is in order.
\par
We can inductively define a multisection on
${\mathcal M}_{k_1,k_0;\ell}(L^{(1)},L^{(0)};p,*;C_-;\text{\bf p})$
so that this is transversal to $0$ and is compatible with other
multisections we have constructed in the earlier stage of induction.
We can prove it in the same way as Lemma \ref{fmulti}.
\par
However it is impossible to make the evaluation map
(\ref{evaluationinfty}) a submersion in general by the obvious
dimensional reason if we just use multisections over the moduli
space ${\mathcal M}_{k_1,k_0;\ell}(L^{(1)},L^{(0)};p,*;C_-;\text{\bf
p})$: We need to enlarge the base by considering a \emph{continuous
family} of multisections. This method was introduced in Section 7.5 \cite{fooo06} 
(= Section 33
\cite{fooo06pre}) for example and the form we need here is detailed in
Section 12 \cite{fukaya;operad}. We recall the detail of this
construction in Appendix of the present paper for readers'
convenience. More precisely we take $M_s = L(u)^{k_0+k_1}$,
$\mathcal M = {\mathcal
M}_{k_1,k_0;\ell}(L^{(1)},L^{(0)};p,*;C_-;\text{\bf p})$, $M_t =
L(u)$, $ev_s =(ev^{(1)},ev^{(0)})$, $ev_t = ev_{+\infty}$ and apply
Definition \ref{cordefinitionss}. Then the next lemma follows from
Lemma \ref{cornerextend} in Appendix.
\begin{lem}\label{gmulticonti}
There exists a continuous family $\{\mathfrak s_{\alpha}\}$ of
multisections on our moduli space ${\mathcal
M}_{k_1,k_0;\ell}(L^{(1)},L^{(0)};p,*;\beta;\text{\bf p})$ so that
it is compatible in the sense of Definition $\ref{compdefini}$ and
is also compatible with the multisections constructed before in the
inductive process at the boundaries described in Lemma
$\ref{2bdrywithpn2}$. Moreover $(\ref{evaluationinfty})$ is a
submersion.
\end{lem}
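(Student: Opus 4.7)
The plan is to prove Lemma \ref{gmulticonti} by induction on the symplectic area $\omega \cap C_-/2\pi$, with the key geometric input being the appendix Lemma \ref{cornerextend} applied in the setup already spelled out right before the lemma. Throughout, the continuous family of multisections is parameterized by an auxiliary smooth manifold as in Definition \ref{cordefinitionss}, so that after fiber-integration one obtains genuine smooth differential forms on $L(u)$.

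For the base case, one begins with the smallest value of $\omega \cap C_-$ for which $\mathcal M_{k_1,k_0;\ell}(L^{(1)},L^{(0)};p,*;C_-;\text{\bf p})$ is nonempty. Since moduli spaces of strictly smaller energy are empty, all four types of boundary pieces described in Lemma \ref{2bdrywithpn2} are either empty or involve only the moduli spaces $\mathcal M_{k+1;\ell}(L(u);\beta;\text{\bf p})$ of pseudo-holomorphic discs (types (2), (3), (4)), which have already been equipped with $T^n$-equivariant multisections in Lemma \ref{37.184}. Because those multisections are chosen so that $ev_0$ is a submersion, the induced fiber-product family on the boundary is automatically transversal and its boundary evaluation $ev_{+\infty}$ restricted to the boundary is already a submersion. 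One then invokes Lemma \ref{cornerextend} with $M_s = L(u)^{k_1+k_0}$, $\mathcal M = \mathcal M_{k_1,k_0;\ell}(L^{(1)},L^{(0)};p,*;C_-;\text{\bf p})$, $M_t = L(u)$, $ev_s = (ev^{(1)},ev^{(0)})$, $ev_t = ev_{+\infty}$ to extend this boundary data to a continuous family of multisections on the whole moduli space that is transversal to zero and with respect to which $ev_{+\infty}$ is a submersion.

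For the inductive step, suppose that compatible continuous families have been constructed on every $\mathcal M_{k_1',k_0';\ell'}(L^{(1)},L^{(0)};p,*;C_-';\text{\bf p}')$ with $\omega\cap C_-'/2\pi < \omega \cap C_-/2\pi$, and that the ordinary multisections on the disc moduli spaces of Lemma \ref{37.184}, on the Floer-strip moduli spaces of Lemma \ref{multisectionN}, and on the $C_+$-moduli spaces of Lemma \ref{fmulti} have all been chosen compatibly. Each of the four boundary types listed in Lemma \ref{2bdrywithpn2} is then decorated with a prescribed family: type (1) is a direct product with a $\frak s_{\beta,k''_1,k''_0;\ell''}$ of Lemma \ref{multisectionN}; types (2) and (3) are fiber products with already-perturbed disc moduli spaces over their submersive $ev_0$ maps; type (4) is a fiber product with a disc moduli space over the already-arranged submersive $ev_{-\infty}$ on the inductively constructed continuous family. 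Together they define a compatible continuous family of multisections on $\partial \mathcal M_{k_1,k_0;\ell}(L^{(1)},L^{(0)};p,*;C_-;\text{\bf p})$ with respect to which $ev_{+\infty}|_\partial$ is a submersion. A second appeal to Lemma \ref{cornerextend} then produces the desired extension to the entire moduli space.

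The main obstacle is not at any single inductive step, where the extension is essentially automatic, but in verifying that the geometric input required by Lemma \ref{cornerextend} is actually satisfied at the boundary, namely that the prescribed boundary family makes $ev_{+\infty}|_\partial$ a submersion and that the transversality and submersivity are preserved under fiber product. This reduces, in each boundary stratum, to the $T^n$-equivariance of the disc multisections (Lemma \ref{37.184}(2)), which forces $ev_0$ on the disc factors to be a submersion and hence the fiber product to be well-defined and submersive onto $L(u)$ via the remaining evaluation. Once this is checked stratum by stratum, the combinatorial induction over $\omega\cap C_-/2\pi$ (organized using a good coordinate system on each moduli space as in Corollary 15.15 of \cite{fooo08}) closes, yielding Lemma \ref{gmulticonti}.
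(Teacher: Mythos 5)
Your proposal is correct and follows essentially the same route as the paper: the paper's proof consists of applying Lemma \ref{cornerextend} with the choices $M_s = L(u)^{k_0+k_1}$, $M_t = L(u)$, $ev_s = (ev^{(1)},ev^{(0)})$, $ev_t = ev_{+\infty}$ stated just before the lemma. Your argument simply makes explicit the induction over energy and the verification (via $T^n$-equivariance of the disc multisections of Lemma \ref{37.184}, forcing $ev_0$ to be a submersion on each disc factor) that the prescribed boundary data make $ev_{+\infty}$ submersive on the corner strata, all of which the paper leaves implicit in the citation of Lemma \ref{cornerextend}.
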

By Definition \ref{cordefinitionss}, the integration along the fiber
(\ref{deffraf4}) (or smooth correspondence map) is defined. Now we
have finished the description of the element (\ref{defg}). This
assignment induces a homomorphism
$$\aligned
\mathfrak g_{\beta}: B((\Omega(L^{(1)}) \,\widehat\otimes\, \Lambda_0)[1]) &\otimes
\left(\bigoplus_{p\in L^{(1)}\cap L^{(0)}}Hom(\mathcal L^{(1)}_p,\mathcal L^{(0)}_p)  
 \otimes_{\C} \Lambda
\right) \\
&\otimes B((\Omega(L^{(0)})\,\widehat\otimes\, \Lambda_0)[1]) \to
(\Omega(L(u))\,\widehat\otimes\, \Lambda)[1].
\endaligned$$
Now we define
$$
\mathfrak g:  CF((L^{(1)},\rho),(L^{(0)},\rho);\Lambda)
\to \Omega(L(u))\widehat{\otimes} \Lambda
$$
by
\begin{equation}
\mathfrak g(\sigma) =
\sum_{\beta} T^{\omega \cap C_-/2\pi}\mathfrak g_{\beta}(e^{\mathfrak b}
; e^{\psi_{*}(\mathfrak x_+)} \otimes \sigma \otimes e^{\mathfrak x_+}).
\end{equation}
\par
With these preparation,
we can prove the following lemma in the same way as Lemma
\ref{fischain} using Lemmata \ref{stokes} and \ref{compformula}. So its proof is
omitted.

\begin{lem}
$\mathfrak g$ is a chain map.
\end{lem}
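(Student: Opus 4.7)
The plan is to proceed in direct parallel with the proofs of Proposition \ref{rmainformulaprop} and Lemmas \ref{unitality}, \ref{boundaryfloer2}, \ref{fischain}, applying Stokes' formula to the integration along the fiber of $ev_{+\infty}$ over the moduli space ${\mathcal M}_{k_1,k_0;\ell}(L^{(1)},L^{(0)};p,*;C_-;\text{\bf p})$ equipped with the continuous family of multisections $\{\frak s_\alpha\}$ constructed in Lemma \ref{gmulticonti}. The desired identity to verify is
$$
\frak m_1^{\frak b,\frak x} \circ \frak g \;=\; (-1)^{?}\, \frak g \circ \delta^{\frak b,\frak x},
$$
where $\frak m_1^{\frak b,\frak x}$ denotes the differential on $\Omega(L(u))\,\widehat\otimes\,\Lambda$ coming from Definition \ref{mkbwithtwist}.

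First I would enumerate the four types of codimension-one boundary strata from Lemma \ref{2bdrywithpn2} and track their contributions after integration along $ev_{+\infty}$. Type $(1)$ strata, strip-breaking at an intersection point $q \in L^{(1)}\cap L^{(0)}$, combine with the composition rule \eqref{comp3} for the holonomy $\text{Comp}(C_-,\sigma)$ and with the splitting $\text{Split}((\mathbb L_1,\mathbb L_2),\text{\bf p})$ of ambient insertions to produce the composition $\frak g\circ \delta^{\frak b,\frak x}$, exactly as the analogous stratum produced $\delta^{\frak b,\frak x}\circ \frak f$ in Lemma \ref{fischain}. Type $(4)$ strata, where a disc bubbles off the $+\infty$ end along $L(u)$, factor through the fiber product ${\mathcal M}_{k'_1+k'_0+1;\ell'}(L(u);\beta';\text{\bf p}_1)\,{}_{ev_0}\!\times_{ev_{+\infty}}(\cdots)$, and by the compatibility built into the multisections (Lemma \ref{gmulticonti} together with Lemma \ref{37.184}(4)) the contribution factorizes as $\frak m_1^{\frak b,\frak x}\circ \frak g$ on harmonic representatives.

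Types $(2)$ and $(3)$, which are disc bubbling at intermediate boundary marked points on $L^{(1)}$ or $L^{(0)}$, will be shown to vanish by the same mechanism used in Lemma \ref{boundaryfloer2}: each such contribution inserts the degree-zero element $\frak q^{can}_\rho(e^{\frak b};e^{\frak x_+})$, which is proportional to the Poincar\'e dual of the fundamental class and equals $\frak{PO}^u(\frak b,\frak x)\cdot \mathbf{1}$ on the respective Lagrangian; the unit property of $\frak r$ (the exact analog of Lemma \ref{unitality} for the moduli spaces ${\mathcal M}_{k_1,k_0;\ell}(L^{(1)},L^{(0)};p,*;C_-;\text{\bf p})$, which holds because $ev_{+\infty}$ factors through the forgetful map in that marked point) then forces these terms to vanish. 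This uses the hypothesis $\frak{PO}(\frak b,\frak x) = \frak{PO}(\frak b,\psi_*\frak x)$, which is automatic since the two potentials are pulled back from the same function on $\CA(\Lambda_+)\times H^1(L(u);\Lambda_0)$ via the identification $\psi_*$.

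The main technical obstacle is that $ev_{+\infty}$ is not a priori a submersion on ${\mathcal M}_{k_1,k_0;\ell}(L^{(1)},L^{(0)};p,*;C_-;\text{\bf p})^{\frak s}$, so ordinary integration along the fiber combined with Stokes' theorem cannot be invoked directly. This is exactly why the continuous family $\{\frak s_\alpha\}$ of Lemma \ref{gmulticonti} is required, and why the Stokes and composition formulae for smooth correspondences via continuous families of multisections (Lemmas \ref{stokes} and \ref{compformula} of the Appendix) must replace the naive Stokes argument used for $\frak f$. Once these are in place, matching the four boundary contributions with the two sides of the chain map equation becomes a formal bookkeeping parallel to the singular chain version carried out in section 3.8 \cite{fooo06} (= section 13.8 \cite{fooo06pre}). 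Signs are as in those references and we omit them, since for the applications in this paper the ambient classes have even degree and the Lagrangian insertions have odd degree, making sign ambiguities harmless.
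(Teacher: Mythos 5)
Your proposal is correct and follows essentially the plan the paper intends (which it only sketches by reference to Lemma \ref{fischain} together with the Appendix Lemmas \ref{stokes} and \ref{compformula}): Stokes' theorem on the continuous-family perturbed moduli, matching boundary types (1) and (4) of Lemma \ref{2bdrywithpn2} to the two sides of the chain-map identity, and killing types (2) and (3) via unitality as in Lemma \ref{boundaryfloer2}. You also correctly identify the one genuinely new ingredient relative to the proof for $\frak f$ — namely that $ev_{+\infty}$ lands in $\Omega(L(u))$ rather than at finitely many intersection points, so it is not automatically a submersion and the continuous family of multisections from Lemma \ref{gmulticonti} and the Appendix machinery are indispensable.
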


\begin{prop}\label{homotopygyaku}
$\mathfrak f \circ \mathfrak g$ and $\mathfrak g\circ \mathfrak f$ are
chain homotopic to the identity.
\end{prop}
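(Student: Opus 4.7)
The plan is to construct the chain homotopies between $\frak g \circ \frak f$ (resp.\ $\frak f \circ \frak g$) and the identity as operators produced by integration over $1$-parameter families of moduli spaces whose endpoints realize the two maps to be compared, and whose codimension-one boundary strata split off the differentials $\frak m_1^{\frak b, \frak x}$ and $\delta^{\frak b, \frak x}$ respectively.

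To treat $\frak g \circ \frak f$, I would introduce, for $s \in [0,\infty)$, a family of cut-off functions $\chi_s : \R \to [0,1]$ with the following schematic behavior: $\chi_0 \equiv 0$; for $s$ large, $\chi_s(\tau) = \chi(\tau + s)$ for $\tau \le 0$ and $\chi_s(\tau) = \chi(-\tau + s)$ for $\tau \ge 0$ (so that $\chi_s$ rises from $0$ to $1$, stays near $1$ on an interval of length $\sim 2s$, and falls back to $0$), interpolating monotonically through some generic family in between. Set $J^{(s)}_{\tau,t} = \psi^\ast_{t\chi_s(\tau)} J$, so that $J^{(0)}_{\tau,t}=J$ is the standard structure, while for $s \to \infty$ the Cauchy--Riemann equation $\bar\partial_{J^{(s)}_{\tau,t}} \varphi = 0$ degenerates (after neck-stretching near $\tau = 0$) into a pair of strips governed by $J_{\tau,t}$ and $J_{-\tau,t}$, i.e.\ into the pair of moduli spaces used to define $\frak f$ and $\frak g$. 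For $B \in \pi_2(L^{(1)},L^{(0)};*,*)$ (the parameter set of homotopy classes with both ends asymptotic to $L^{(0)}$), define
\[
\mathcal P_{k_1,k_0;\ell}(L(u);B;\text{\bf p}) = \bigcup_{s \in [0,\infty)} \{s\} \times \mathcal N^{(s)}_{k_1,k_0;\ell}(L^{(1)},L^{(0)};*,*;B;\text{\bf p}),
\]
where $\mathcal N^{(s)}$ is the moduli space of $J^{(s)}_{\tau,t}$-holomorphic strips with boundary on $L^{(0)}\cup L^{(\chi_s(\tau))}$, $k_1$ marked boundary points on the upper edge, $k_0$ on the lower edge, $\ell$ interior marked points cut out by $D(\text{\bf p})$, with two evaluation maps $ev_{-\infty}, ev_{+\infty} : \mathcal P \to L(u)$.

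The next step is to equip $\mathcal P$ with an oriented Kuranishi structure with corners, compatible with all previously chosen perturbations, and a continuous family of multisections (as in the construction of $\frak g$, cf.\ Lemma \ref{gmulticonti}) such that $ev_{+\infty}$ is a submersion. The codimension-one boundary of $\mathcal P_{k_1,k_0;\ell}(L(u);B;\text{\bf p})$ then consists of four kinds of strata: (i) $s=0$, which by construction of $\chi_0 \equiv 0$ reduces to the moduli space of genuine $J$-holomorphic discs with boundary on $L(u)$ with two additional marked points $z_{-\infty}, z_{+\infty}$, and whose contribution via smooth correspondence is exactly the identity operator on $\Omega(L(u))\widehat\otimes \Lambda$ twisted by $(\frak b, \frak x)$ (the $\beta = 0$ disc yields the diagonal, higher $\beta$'s cancel against the $\frak m_1^{\frak b,\frak x}$ terms on the other boundary); (ii) $s \to \infty$, the neck-stretching stratum, which is the fiber product over $L^{(1)}\cap L^{(0)}$ of the $\frak f$-moduli space and the $\frak g$-moduli space, producing $\frak g \circ \frak f$; (iii) disc or sphere bubbling at the boundary, producing the $\frak q^{can}$-insertions that reproduce $\frak m_1^{\frak b,\frak x}$ acting on either argument; (iv) strip-breaking in the interior, which contributes nothing new once (ii) is separated out. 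Define $H : \Omega(L(u))\widehat\otimes \Lambda \to \Omega(L(u))\widehat\otimes \Lambda$ by the correspondence
\[
H(h) = \sum_{B,\text{\bf p},\ell,k_0,k_1} \frac{1}{\ell!} T^{\omega\cap B/2\pi}\,\rho(B)\, (ev_{+\infty})_! \bigl(ev_{-\infty}^\ast h \wedge ev^{(1)\ast}(e^{\frak x_+})^{k_1} \wedge ev^{(0)\ast}(e^{\frak x_+})^{k_0} \bigr)
\]
with $e^{\frak b}$-insertions at the interior punctures. Applying Stokes (in the form of Lemma \ref{stokes} and the composition formula Lemma \ref{compformula}) to $\mathcal P$ then yields
\[
\frak m_1^{\frak b,\frak x} \circ H + H \circ \frak m_1^{\frak b,\frak x} = \mathrm{id} - \frak g \circ \frak f,
\]
modulo unital cancellations as in Lemma \ref{unitality}. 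The same argument, with the roles of $\frak f$ and $\frak g$ reversed (using a family of moduli spaces with both ends at $p, q \in L^{(1)}\cap L^{(0)}$ and interpolating almost complex structures that collapse the middle to a constant-$J_t$ strip), provides a chain homotopy from $\frak f \circ \frak g$ to the identity on $CF((L^{(1)},\rho),(L^{(0)},\rho);\Lambda)$.

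The principal technical obstacle is twofold: first, arranging the parametrized Kuranishi structure and continuous family of multisections on $\mathcal P$ so that all boundary strata have perturbations agreeing with those already chosen for $\mathcal M_{\cdots}(L^{(1)},L^{(0)};\cdots)$, $\mathcal M_{\cdots}(L^{(1)},L^{(0)};*,p;C_+;\cdots)$, $\mathcal M_{\cdots}(L^{(1)},L^{(0)};p,*;C_-;\cdots)$, and $\mathcal M_{\cdots}(L(u);\beta;\cdots)$ simultaneously while preserving submersivity of $ev_{+\infty}$; this is handled inductively on energy by the same method as Lemmata \ref{multisectionN} and \ref{gmulticonti}. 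Second, since we work over $\Lambda$ rather than $\Lambda_0$ (cf.\ Remark \ref{824}), a priori neither $\frak g \circ \frak f$ nor $H$ converges; following the scheme of section 5.3 \cite{fooo06}, I would fix an energy cutoff $E_0$ and construct $\frak f, \frak g, H$ modulo $T^{E_0/2\pi}$ as operators on truncated $A_{n,K}$-type structures, then pass to the limit $E_0 \to \infty$, using the fact that Hamiltonian isotopy invariance statements of this kind are already compatible with the $A_{n,K}$-formalism of  Chapter 7 \cite{fooo06} $(=$ Chapter 30 \cite{fooo06pre}$)$. Combining the two homotopies finishes the proof.
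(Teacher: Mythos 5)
Your proposal follows essentially the same route as the paper: the one-parameter family of moduli spaces over $S\in[0,\infty]$ interpolating between the product decoupling (producing $\frak g\circ\frak f$) and the $\chi_0\equiv 0$ stratum (producing the identity), with a continuous family of multisections making $ev_{+\infty}$ submersive and Stokes' theorem giving the chain-homotopy identity; the energy-cutoff/$A_{n,K}$ trick and the induction for compatible perturbations are also exactly the paper's. Your $\chi_s$ (rising $0\to1\to0$) is the geometrically correct one, and your formula for $H$ matches the paper's $\frak h$.

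The one point that is off is your explanation of why the $s=0$ stratum contributes the bare identity. You assert that the $\beta\ne 0$ discs at $s=0$ ``cancel against the $\frak m_1^{\frak b,\frak x}$ terms on the other boundary.'' That is not the mechanism, and if it were, the chain homotopy equation $\frak m_1^{\frak b,\frak x}\circ H + H\circ \frak m_1^{\frak b,\frak x} = \frak g\circ\frak f - \mathrm{id}$ would not come out. The $\frak m_1^{\frak b,\frak x}-d$ insertions arise from disc bubbling off the $\pm\infty$ ends at \emph{arbitrary} $s$ (the analogues of Lemma \ref{2bdrywithpn6} (3) and (4)), and these are geometrically disjoint from the $s=0$ face. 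The reason the $\beta\ne 0$ contributions at $s=0$ drop out on their own is the extra $\R$-translation symmetry: when $\chi_0\equiv 0$ the equation becomes $\tau$-translation invariant, so the $s=0$ face is an $\R$-bundle over $\mathcal M_{k_1+k_0+2;\ell}(L(u);\beta;\text{\bf p})$, and only the constant ($\beta=0$) disc survives the resulting dimension shift, giving precisely the diagonal. You should replace the cancellation claim by this $\R$-equivariance argument; with that correction the proof is the paper's.
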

\begin{proof}
We will prove that $\mathfrak g \circ \mathfrak f$ is chain homotopic to the
identity. Let $S_0$ be a sufficiently large positive number.
(Say $S_0 = 10$.) For $S > S_0$ we put
$$
\chi_{S}(\tau) =
\begin{cases}
\chi(\tau +S) & \tau\le 0, \\
\chi(-\tau+S) & \tau \ge 0.
\end{cases}
$$
Here we recall that $\chi : \R \to [0,1]$ is a smooth function satisfying 
$\chi (\tau)=0$ for  
$\tau \le 0$  
and $\chi(\tau)=1$ for $\tau \ge 1$.
We will extend it to $0\le S \le S_0$ so that $\chi_0(\tau) = 0$.
\par
We consider maps
$
\varphi: \R \times [0,1] \to X
$
such that the following holds:
\begin{enumerate}
\item
$\lim_{\tau \to -\infty} \varphi(\tau,t)$ converges
to a point in $L(u)$ and is independent of $t$.
\item
$\lim_{\tau \to +\infty} \varphi(\tau,t)$ converges
to a point in $L(u)$ and is independent of $t$.
\item
$\varphi(\tau,0) \in L^{(0)}$, $\varphi(\tau,1) \in L^{(\chi_S(\tau))}$.
\end{enumerate}
We denote by $\pi_2(L^{(1)},L^{(0)};*,*;S)$ the set of homotopy
classes of such maps. There exists a natural isomorphism
$\pi_2(L^{(1)},L^{(0)};*,*;S) \cong  \pi_2(X,L(u))$,
$$
[\varphi] \mapsto [\varphi'], \qquad \text{where} \,\,
\varphi'(\tau,t) = \psi^{-1}_{\chi_S(\tau)}(\varphi(\tau,t)).
$$
Here we recall $L^{(0)} = L(u)$, $L^{(1)} = \psi_1(L(u))$. Therefore
we will denote an element of $\pi_2(L^{(1)},L^{(0)};*,*;S)$ again by
$\beta$ as for the case of $\pi_2(X,L(u))$.

We have the obvious gluing maps
\begin{equation}
\aligned
& \pi_2(L^{(1)},L^{(0)};*,p) \times \pi_2(L^{(1)},L^{(0)};p,*;S)
\to \pi_2(L^{(1)},L^{(0)};*,*;S), \\
& \pi_2(X,L^{(1)}) \times \pi_2(L^{(1)},L^{(0)};*,*;S)
\to \pi_2(L^{(1)},L^{(0)};*,*;S), \\
&  \pi_2(L^{(1)},L^{(0)};*,*;S)  \times \pi_2(X,L^{(0)}) \to
\pi_2(L^{(1)},L^{(0)};*,*;S)
\endaligned
\end{equation}
which we denote all by $\#$.
\par
We consider a three-parameter family of compatible almost complex
structures $J_{S,\tau,t}$ given by
$$
J_{S,\tau,t}
=
\psi_{t\chi_S(\tau) *} J.
$$
Then it satisfies:
\begin{equation}
J_{S,\tau,t}
=
\begin{cases}
J  &\text{$\tau$ is sufficiently small and $S \ge S_0$},
\\
J  &\text{$\tau$ is sufficiently large and $S \ge S_0$},
\\
J &\text{$t=0$}, \\
\psi_{\chi_S(\tau)*} J&\text{$t=1$},\\
J &\text{$S=0$}.
\end{cases}
\end{equation}
\begin{defn}
Consider the moduli space of maps satisfying (1) - (3) above and of
homotopy class $\beta \in \pi_2(L^{(0)},L^{(1)};*,*)$ and satisfying
the following equation
\begin{equation}\label{tautCR5}
\frac{\partial\varphi}{\partial \tau}
+ J_{S,\tau,t} \left(\frac{\partial\varphi}{\partial t}\right) = 0.
\end{equation}
For each $0 \le S < \infty$, we denote the moduli space by
$$
{\mathcal M}^{\text{reg}}_S(L^{(1)},L^{(0)};*,*;\beta).
$$
We also put
$$\aligned
&{\mathcal M}^{\text{reg}}_{+\infty}(L^{(1)},L^{(0)};*,*;\beta) \\
&= \bigcup_{p\in L^{(1)}\cap L^{(0)}}\bigcup_{C_+'\# C_-''=\beta}
({\mathcal M}^{\text{reg}}(L^{(1)},L^{(0)};*,p;C_+') \times
{\mathcal M}^{\text{reg}}(L^{(1)},L^{(0)};p,*;C_-''))
\endaligned$$
and define
$$
{\mathcal M}^{\text{reg}}(L^{(1)},L^{(0)};*,*;\beta;para)
= \bigcup_{S \in [0,+\infty]}(\{S\}\times
{\mathcal M}^{\text{reg}}_S(L^{(1)},L^{(0)};*,*;\beta)).
$$
We can also include interior and boundary marked points and
compactify the corresponding moduli space which then gives rise to
the moduli space
$$ {\mathcal
M}_{k_1,k_0;\ell}(L^{(1)},L^{(0)};*,*;\beta;para).
$$
\end{defn}
We can define the evaluation maps
$$
ev = (ev^{\text{\rm int}},ev^{(1)},ev^{(0)}):
{\mathcal M}_{k_1,k_0;\ell}(L^{(1)},L^{(0)};*,*;\beta) \to X^{\ell} \times
(L^{(1)})^{k_1} \times (L^{(0)})^{k_0},
$$
and
$$
ev_{\pm\infty}: {\mathcal M}_{k_1,k_0;\ell}(L^{(1)},L^{(0)};*,*;\beta)
\to L(u).
$$
Here
$$
ev_{\pm\infty}(\varphi) = \lim_{\tau\to \pm\infty}\varphi(\tau,t).
$$
Using $ev^{\text{\rm int}}$, we take fiber product with $D(\text{\bf p})$ and obtain
$$ {\mathcal
M}_{k_1,k_0;\ell}(L^{(1)},L^{(0)};*,*;\beta;para;\text{\bf p}). $$
\begin{lem}\label{2bdrywithpn6}
The space ${\mathcal
M}_{k_1,k_0;\ell}(L^{(1)},L^{(0)};*,*;\beta;para;\text{\bf p})$ has
an oriented Kuranishi structure with corners. Its boundary is a
union of the following six types of fiber products as a space with
Kuranishi structure:
\begin{enumerate}
\item
$${\mathcal M}_{k'_1+1;\ell'}(L(u);\beta';\text{\bf p}_1)
\,\,{}_{ev_0}\times_{ev^{(1)}_i} \,\,
{\mathcal M}_{k''_1,k_0;\ell''}(L^{(1)},L^{(0)};*,*;para;\beta'';\text{\bf p}_2).$$
Here the notations are the same as in Lemma $\ref{boundaryN}$ $(2)$
and $(\ref{p1p2})$.
\item
$$
{\mathcal M}_{k_1,k'_0;\ell'}(L^{(1)},L^{(0)};*,*;para;\beta';\text{\bf p}_1)
\,\,{}_{ev^{(0)}_i}\times_{ev_0} \,\,
{\mathcal M}_{k''_0+1;\ell''}(L(u);\beta'';\text{\bf p}_2) .$$
Here the notations are the same as in Lemma $\ref{boundaryN}$ $(3)$
and $(\ref{p1p2})$.
\item
$$
{\mathcal M}_{k'_1,k'_0;\ell'}(L^{(1)},L^{(0)};*,*;\beta';para;\text{\bf p}_1)
{}_{ev_{+\infty}}\times_{ev_0} {\mathcal M}_{k''_1+k''_0+2;\ell''}(L(u);\beta'';\text{\bf p}_2),
$$
where $k'_j + k''_j = k_j$, $\ell' + \ell'' = \ell$,
$\beta' \# \beta'' = \beta$ and $(\ref{p1p2})$.
\item
$$
{\mathcal M}_{k'_1+k'_0+2;\ell'}(L(u);\beta';\text{\bf p}_1)
{}_{ev_0}\times_{ev_{-\infty}}
{\mathcal M}_{k''_1,k''_0;\ell''}(L^{(1)},L^{(0)};*,*;\beta''
;para;\text{\bf p}_2),
$$
where $k'_j + k''_j = k_j$, $\ell' + \ell'' = \ell$,
$\beta' \# \beta'' = \beta$ and $(\ref{p1p2})$.
\item
$$
{\mathcal M}_{k'_1,k'_0;\ell'}(L^{(1)},L^{(0)};*,p;\text{\bf p}_1;C_+')
\times {\mathcal M}_{k''_1,k''_0;\ell''}(L^{(1)},L^{(0)};p,*;\text{\bf p}_2;C_-'')
$$
where $k'_j + k''_j = k_j$, $\ell' + \ell'' = \ell$,
$C_+' \# C_-'' = \beta$ and $(\ref{p1p2})$.
\item A space
$\widetilde{\mathcal M}_{k_1+k_0+2;\ell}(L(u);\beta;\text{\bf p}).
$
There exists an $\R$ action on it such that the quotient space is
${\mathcal M}_{k_1+k_0+2;\ell}(L(u);\beta;\text{\bf p})
$.
\end{enumerate}
\end{lem}
\begin{proof}
The proof is similar to the proofs of Lemma \ref{2bdrywithpn2} etc.
\par
We remark that the case $S=\infty$ corresponds to
(5).
\par
The case when $S=0$ corresponds to $(6)$. 
In fact, $\chi_0(\tau,t) =
0$. So the boundary condition reduces $\varphi(\partial(\R \times
[0,1])) \subset L(u)$ and the equation (\ref{tautCR5}) is $J$
holomorphicity. The $\tau$-translations define an $\R$-action on the
moduli space at the part $S=0$. The quotient space is the moduli
space of holomorphic discs with boundary and interior marked points.
\par
To construct a Kuranishi chart in a neighborhood of $S=\infty$, we
need to choose a smooth structure of $[0,\infty]$ at $\infty$. We
can do this so that the coordinate change of the Kuranishi structure
is smooth using the standard exponential decay estimate: Namely,
for a sufficiently large $S$, every element of ${\mathcal
M}^{\text{reg}}_S(L^{(1)},L^{(0)};*,*;\beta)$, together with its
$S$-derivatives, is close to an element of ${\mathcal
M}^{\text{reg}}_{\infty}(L^{(1)},L^{(0)};*,*;\beta)$ in the order of
$Ce^{-cS}$. We can prove this estimate in a way similar to the proof
of Lemma A1.58 \cite{fooo06} (= Lemma A1.58 \cite{fooo06pre}).
\end{proof}
\begin{lem}\label{gmulticonti2}
There exists a continuous family $\mathfrak s$ of multisections on our
moduli space
${\mathcal M}_{k_1,k_0;\ell}(L^{(1)},L^{(0)};*,*;\beta;para;\text{\bf p})$
such that it is compatible in the sense of Definition $\ref{compdefini}$
and also compatible with the one constructed before in the
induction process
at the boundaries described in Lemma $\ref{2bdrywithpn6}$.
Moreover $ev_{\pm \infty}$ are submersions on the
moduli space perturbed by this family.
\end{lem}
\begin{proof}
The proof is the same as the proof of Lemma \ref{gmulticonti}.
\end{proof}
We use $\pi_2(L^{(1)},L^{(0)};*,*;S) \cong  \pi_2(X,L(u))$ to define
$$
\rho: \pi_2(L^{(1)},L^{(0)};*,*;S)
\to \C \setminus\{0\}
$$
as the composition
$$
\pi_2(L^{(1)},L^{(0)};*,*;S)
\to \pi_2(X,L(u))
\to \pi_1(L(u))
\overset{\rho}{\longrightarrow} \C \setminus\{0\}.
$$
There is an obvious compatibility relation of this
$\rho$ and other $\rho$'s and $\text{\rm Comp}$'s we defined
before through $\#$.
\par
Now let $\beta\in \pi_2(L^{(1)},L^{(0)};*,*;S)$, $\text{\bf p} \in
Map(\ell,\underline B)$. Let $h^{(j)}_i$ ($i = 1,\ldots,k_j$) and $h$
be differential forms on $L^{(j)}$ and on $L(u)$, respectively. We will define an element
\begin{equation}\label{defh}
\mathfrak h_{\beta;\ell;k_1,k_0}(SD(\text{\bf p});h^{(1)}_1,\ldots,h^{(1)}_{k_1};h;
h^{(0)}_1,\ldots,h^{(0)}_{k_0})
\in \Omega(L(u)) \widehat{\otimes} \Lambda
\end{equation}
by
\begin{equation}\label{deffrah}
\aligned
&\mathfrak h_{\beta;\ell;k_1,k_0}(SD(\text{\bf p});h^{(1)}_1,\ldots,h^{(1)}_{k_1};h;
h^{(0)}_1,\ldots,h^{(0)}_{k_0})\\
&= \frac{1}{\ell!}\rho(\partial\beta)
((ev_{+\infty})_!)
(ev^{(1) *}h^{(1)} \wedge ev_{-\infty}^*h
\wedge ev^{(0) *}h^{(0)}).
\endaligned
\end{equation}
Here
$(ev_{+\infty})_!$  is the integration along fiber of the map
\begin{equation}\label{evaluationinfty2}
ev_{+\infty}: {\mathcal M}_{k_1,k_0;\ell}(L^{(1)},L^{(0)};*,*;\beta;para;\text{\bf p})^{\mathfrak s}
\to L(u)
\end{equation}
of our moduli space which is perturbed by the continuous family $\mathfrak s$ of
perturbations given in Lemma \ref{gmulticonti2}.
More precisely we apply Definition \ref{cordefinitionss} to
$M_s = L(u)^{k_0+1+k_1}$, $M_t = L(u)$, $\mathcal M
= {\mathcal M}_{k_1,k_0;\ell}(L^{(1)},L^{(0)};*,*;\beta;para;\text{\bf p})$, and $ev_s = (ev^{(1)},ev_{-\infty},ev^{(0)})$,
$ev_t = ev_{+\infty}$. We then obtain  (\ref{evaluationinfty2}).
\par
The family of the maps $\mathfrak h_{\beta;\ell;k_1,k_0}$ induce a
homomorphism
$$
\aligned \mathfrak h_{\beta} : B((\Omega(L^{(1)})
\,\widehat{\otimes}\, \Lambda_0)[1]) &\otimes (\Omega(L(u))\,
\widehat{\otimes}\, \Lambda))[1] \\
&  \otimes
B((\Omega(L^{(0)})\,\widehat{\otimes}\,
\Lambda_0))[1]) \longrightarrow \Omega(L(u))[1] \widehat{\otimes}
\Lambda.
\endaligned
$$
Now we define
$$
\mathfrak h:  \Omega(L(u))\,\widehat{\otimes}\, \Lambda
\to \Omega(L(u))\,\widehat{\otimes}\,\Lambda
$$
by
\begin{equation}\label{hdeffinal}
\mathfrak h(h) = \sum_{\beta} T^{\omega \cap \beta/2\pi} \mathfrak
h_{\beta}(e^{\mathfrak b} ; e^{\psi_*(\mathfrak x_+)} \otimes h \otimes
e^{\mathfrak x_+}).
\end{equation}
\begin{lem}\label{chainhomotopy}
$\mathfrak h$ is a chain homotopy from the identity to $\mathfrak g\circ \mathfrak f$.
\end{lem}
\begin{proof}
Using Lemma \ref{stokes} we can prove that
$d\circ \mathfrak h + \mathfrak h \circ d$ is a sum of terms which are obtained from each of (1) - (6) of
Lemma \ref{2bdrywithpn6}
in the same way as (\ref{deffrah}),  (\ref{hdeffinal}), as follows.
\par
Using the fact $\mathfrak q_{\rho}(e^{\mathfrak b} ; e^{\mathfrak x_+})$ is
a harmonic form of degree $0$ in the same way as the proof of Lemma \ref{boundaryfloer2},
we can show that  the contributions of (1) and (2) vanish.
The contributions of (3) and (4) are
$$
(\mathfrak m_1^{\mathfrak b,\mathfrak x} - d) \circ \mathfrak h
$$
and
$$
\mathfrak h \circ (\mathfrak m_1^{\mathfrak b,\mathfrak x} - d)
$$
respectively. The contribution of (5) is $\mathfrak g\circ \mathfrak f$.
The contribution of (6) vanishes in the case when $\beta \ne 0$, because
of extra $\R$ symmetry. The case $\beta = 0$ gives rise to the identity.
\par
In sum, we use Stokes' formula to conclude
$$
\mathfrak h \circ \mathfrak m_1^{\mathfrak b,\mathfrak x} + \mathfrak m_1^{\mathfrak b,\mathfrak
x} \circ \mathfrak h = \mathfrak g\circ \mathfrak f - id.
$$
We use the composition formula in Appendix (Lemma \ref{compformula})
to prove the above formulae. The proof of Lemma \ref{chainhomotopy}
is now complete.
\end{proof}
We have thus proved that $\mathfrak g\circ \mathfrak f$ is chain homotopic to the identity.
We can prove $\mathfrak f\circ \mathfrak g$ is chain homotopic to identity in
the same way. 
The proof of Proposition \ref{homotopygyaku} is now complete.
\end{proof}
We now explain the point mentioned in Remark \ref{824}.
\par
We take a sequence $E_0 < E_1 < \cdots$  
with $E_i \to \infty$
and then use a system of (continuous family of) multisections 
of the moduli spaces with energy $< E_i$ to obtain 
$$
\mathfrak f_{E_i} : \Omega(L(u))\, \widehat{\otimes}\, \frac{\Lambda_0}{T^{E_i}\Lambda_0}
\to CF\left((L^{(1)},\rho),(L^{(0)},\rho); \frac{T^{-c}\Lambda_0}{T^{E_i-c}\Lambda_0}\right).
$$
Here $\frac{T^{-c}\Lambda_0}{T^{E_i-c}\Lambda_0}$ is a $\Lambda_0$ module. 
(It is not a ring but an abelian group.) The number $c$ is the energy loss  
(see Definition 5.2.1 \cite{fooo06} = Definition 21.1 \cite{fooo06pre}),  
which can be estimated by Hofer distance of $\psi$ from the identity map. (See 
Subsection 5.3.5 \cite{fooo06} = Section 22.5 \cite{fooo06pre}.) 
In particular, it is independent of $E_i$.
\par
We can prove that for $j> i$, the homomorphism $\mathfrak f_{E_j}$ induces 
$$
\Omega(L(u))\, \widehat{\otimes}\, \frac{\Lambda_0}{T^{E_i}\Lambda_0}
\to CF\left((L^{(1)},\rho),(L^{(0)},\rho); \frac{T^{-c}\Lambda_0}{T^{E_i-c}\Lambda_0}\right),
$$
which is chain homotopic to $\mathfrak f_{E_i}$.
Therefore we can take its projective limit and obtain 
$$
\mathfrak f_{\#} :
HF((L(u),\mathfrak b,\mathfrak x),(L(u),\mathfrak b,\mathfrak x);\Lambda_0)
\to
HF((L^{(1)},\mathfrak b,\psi_*(\mathfrak x)),(L^{(0)},\mathfrak b,\mathfrak x);T^{-c}\Lambda_0).
$$
\par
We next define 
$$
\mathfrak g_{E_i} : 
CF\left((L^{(1)},\rho),(L^{(0)},\rho); \frac{\Lambda_0}{T^{E_i}\Lambda_0}\right)
\to \Omega(L(u))\, \widehat{\otimes}\, \frac{T^{-c}\Lambda_0}{T^{E_i-c}\Lambda_0}
$$
in the same way as $\mathfrak f_{E_i}$ and take its projective limit to obtain
$$
\mathfrak g_{\#} :
HF((L^{(1)},\mathfrak b,\psi_*(\mathfrak x)),(L^{(0)},\mathfrak b,\mathfrak x);\Lambda_0)
\to
HF((L(u),\mathfrak b,\mathfrak x),(L(u),\mathfrak b,\mathfrak x);T^{-c}\Lambda_0).
$$
\par
We remark that $\mathfrak g_{E_i}$ induces a homomorphism 
$$
CF\left((L^{(1)},\rho),(L^{(0)},\rho); \frac{T^{-c}\Lambda_0}{T^{E_i-c}\Lambda_0}\right)
\to \Omega(L(u))\, \widehat{\otimes}\, \frac{T^{-2c}\Lambda_0}{T^{E_i-2c}\Lambda_0}.
$$
Therefore we have
$$
\mathfrak g_{E_i} \circ \mathfrak f_{E_i} : 
\Omega(L(u))\, \widehat{\otimes}\, \frac{\Lambda_0}{T^{E_i}\Lambda_0} 
\to \Omega(L(u))\, \widehat{\otimes}\, \frac{T^{-2c}\Lambda_0}{T^{E_i-2c}\Lambda_0}.
$$
Now we can construct $\mathfrak h_{E_i}$ which is a 
chain homotopy from $\mathfrak g_{E_i} \circ \mathfrak f_{E_i}$ to the map which is induced by 
the identity map.
Therefore $(\mathfrak f_{\#} \circ \mathfrak g_{\#}) \otimes_{\Lambda_0} \Lambda$ is the
identity map. In the same way  $(\mathfrak g_{\#} \circ \mathfrak f_{\#}) \otimes_{\Lambda_0} \Lambda$
is the identity map.
The proof of Proposition \ref{calcFloer} is complete. 
\par
Hence, by using the isomorphism (\ref{caniso}), we have
also completed the proof of Proposition \ref{prof:bal2} also.
\end{proof}
\begin{rem}
We gave the proof of the above proposition using de Rham cohomology.
In \cite{fooo06} we gave a proof based on singular cohomology.
Strictly speaking we only discussed in the case when $b \in
H^1(L(u);\Lambda_+)$ in \cite{fooo06}. But using Cho's idea of
shifting the constant term by non-unitary flat connection, the proof
of \cite{fooo06} can be easily generalized to the present situation
of $H^1(L(u);\Lambda_0)$. In fact Theorem \ref{HFdisplace} was
proved in Section 3.8 or 5.3 \cite{fooo06} (= Section 13 or 22 \cite{fooo06pre}) by proving a statement
similar to Proposition \ref{calcFloer} from which we can derive
Proposition \ref{prof:bal2}. (See also Remark \ref{37.190}.)
\par
The approach using de Rham cohomology is shorter but we cannot treat
the results with $\Q$-coefficients, at least at the time of writing
this article. Therefore we need to use singular homology version
for that purpose. It might be possible to develop the $\Q$ de Rham
theory for the purpose. We remark that by Lemma \ref{cbetawell}, $
\mathfrak{PO}^u(\mathfrak b;y_1,\ldots,y_n) $ is defined over the $\Q$
coefficients. To study quantum cohomology $QH(X;\Lambda_{0}(\Q))$
this de Rham version will be enough.
\end{rem}

\section{Domain of definition of potential function with bulk}
\label{sec:domain}
The purpose of this section is to prove Theorem
\ref{extendth}. Theorem \ref{extendth} is not used in the
other part of this paper except in Section \ref{sec:byLambda0}
but is used in \cite{fooo10}.
\begin{proof}[Proof of Theorem \ref{extendth}]
We recall that $D_1, \ldots, D_m$ are of complex codimension $1$ in $X$
and $D_{m+1},\ldots, D_{B}$ are of higher complex codimension. Let
$\text{\bf p} \in Map(\ell,\underline B)$. We put
$$
\vert \text{\bf p}\vert_{\text{\rm high}}
= \#\{ j \mid \text{\bf p}(j) > m\}.
$$
\begin{lem}\label{highestimate}
For any $E$ we have
$$
\sup \{\vert \text{\bf p}\vert_{\text{\rm high}} \mid
c(\beta;\text{\bf p}) \ne 0, \,\,
\beta \cap\omega < E\} < C(E),
$$
where $C(E)$ depends only on $E$ and $X$.
Here the supremum in the left hand side is taken over $\ell$ and 
$\text{\bf p} \in Map(\ell,\underline B)$.
\end{lem}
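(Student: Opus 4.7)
The plan is to combine the virtual-dimension constraint forced by the nonvanishing of $c(\beta,\text{\bf p})$ with Gromov compactness on $X$. Since $c(\beta,\text{\bf p})$ is obtained (Definition \ref{cdef}) by pushing forward the virtual fundamental class under $ev_0 : \mathcal M^{\text{main}}_{1;\ell}(L(u),\beta;\text{\bf p})^{\frak s_{\beta,1,\ell,\text{\bf p}}} \to L(u)$, its nonvanishing forces the virtual dimension to equal $n = \dim L(u)$. By \eqref{dimensionformula} this is exactly the identity \eqref{dimensionformula2}:
$$
\mu(\beta) = \sum_{j=1}^{\ell}\bigl(2n - \dim D_{\text{\bf p}(j)} - 2\bigr) + 2.
$$
Every summand on the right is nonnegative: for $\text{\bf p}(j) \le m$, the divisor $D_{\text{\bf p}(j)}$ has real codimension $2$ in $X$, so the summand is $0$; for $\text{\bf p}(j) > m$ it has real codimension $\ge 4$, so the summand is $\ge 2$. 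Hence
$$
\mu(\beta) \;\ge\; 2\,\vert\text{\bf p}\vert_{\text{\rm high}} + 2.
$$

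Next I would bound $\mu(\beta)$ in terms of $E$ alone. For this I would apply Proposition \ref{copy37.177}(5): if $\mathcal M^{\text{main}}_{1;\ell}(L(u),\beta)$ is nonempty (which is necessary for the perturbed moduli space to be so, by the inductive construction of the multisections and Gromov compactness), then
$$
\beta = \sum_{i=1}^{m} k_i \beta_i + \sum_{j} \alpha_j,
$$
with $k_i \in \Z_{\ge 0}$ and each $\alpha_j$ represented by a holomorphic sphere in $X$. The energy constraint $\beta\cap\omega < E$ gives $\sum_i k_i(\beta_i\cap\omega) + \sum_j (\alpha_j\cap\omega) < E$. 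Since each $\beta_i\cap\omega > 0$ and each $\alpha_j\cap\omega > 0$, the integers $k_i$ are bounded by $E/\min_i(\beta_i\cap\omega)$, while Gromov compactness (equivalently, the fact that the Mori cone of the toric manifold $X$ is finitely generated and $\omega$ is strictly positive on its nonzero effective classes) bounds the finitely many effective spherical classes with $\omega$-area $<E$, and therefore bounds $\sum_j c_1(X)\cap\alpha_j$ by some constant depending only on $X$ and $E$. Combining these with
$$
\mu(\beta) = 2\sum_i k_i + 2\sum_j c_1(X)\cap\alpha_j
$$
yields $\mu(\beta) \le C'(E)$, and therefore $\vert\text{\bf p}\vert_{\text{\rm high}} \le (C'(E)-2)/2 =: C(E)$, as desired.

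The argument is structurally short, and no serious obstacle is anticipated. The one point deserving care is that in going from ``$c(\beta,\text{\bf p}) \ne 0$'' to ``the unperturbed moduli space is nonempty in class $\beta$'' one uses that the multisections $\frak s_{\beta,1,\ell,\text{\bf p}}$ are small perturbations of $\bar\partial$, so by Gromov compactness any sequence of solutions of the perturbed equation with bounded energy converges to a stable map in the unperturbed sense; once this is granted, Proposition \ref{copy37.177}(5) applies and the rest is bookkeeping. No new analysis is required beyond what is already in place.
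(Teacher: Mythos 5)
Your argument is correct and follows essentially the same route as the paper's own terse proof: the dimension constraint forces $\mu(\beta) \ge 2\vert\text{\bf p}\vert_{\text{\rm high}}$ (in fact your sharper $\ge 2\vert\text{\bf p}\vert_{\text{\rm high}}+2$), and then Proposition \ref{copy37.177}(5) together with Gromov compactness bounds $\mu(\beta)$ by a constant depending only on $E$ and $X$. You have merely expanded the bookkeeping that the paper compresses into two sentences.
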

\begin{proof}
If $\vert \text{\bf p}\vert_{\text{\rm high}} = \mathcal N$ and
$c(\beta;\text{\bf p}) \ne 0$, then $2\mathcal N \leq \mu(\beta) $
by the dimension counting. The lemma then follows from Proposition
\ref{copy37.177} (5) and Gromov's compactness.
\end{proof}
We denote by $Map(\ell_+,\underline B \setminus \underline m)$ 
the set of the maps $\{1,\ldots,\ell_+\} \to \underline B \setminus
\underline m = \{m+1,\ldots,B\}$. We put
$$
M_+ = \bigcup_{\ell_+}Map(\ell_+,\underline B \setminus \underline m).
$$
For $\text{\bf p}_+ \in Map(\ell_+,\underline B \setminus \underline
m)$ and $\ell_1,\ldots,\ell_m$ we define $\text{\bf p} =
(\ell_1,\ldots,\ell_m;\text{\bf p}_+)$ by
\begin{equation}
\text{\bf p}(i)
=
\begin{cases}
j  &\text{if $\ell_1+\cdots+\ell_{j-1} < i \le \ell_1+\cdots+\ell_{j}$,}\\
\text{\bf p}_+(i -\sum_{j=1}^m \ell_j)
&\text{if $i > \sum_{j=1}^m \ell_j$}.
\end{cases}
\end{equation}
\begin{lem}\label{deg2divixopr}
If $\text{\bf p} = (\ell_1,\ldots,\ell_m;\text{\bf p}_+)$, then
$$
c(\beta;\text{\bf p})
= c(\beta;\text{\bf p}_+)\prod_{i=1}^m (\beta\cap D_i)^{\ell_i}.
$$
\end{lem}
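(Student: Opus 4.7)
The plan is to establish a divisor axiom for the open Gromov--Witten invariants $c(\beta;\text{\bf p})$: each entry of $\text{\bf p}$ mapping to a complex-codimension-one toric divisor $D_i$ ($i \le m$) can be stripped off at the cost of a multiplicative factor $\beta \cap D_i$. Iterating this $\ell_1+\cdots+\ell_m$ times will yield the formula. By Remark \ref{Remark69}(2), $c(\beta;\text{\bf p})$ depends only on the multiset of values of $\text{\bf p}$, so we may freely reorder the insertions and it suffices to prove the following one-step statement: for any $i\in\{1,\dots,m\}$ and any $\text{\bf p}\in Map(\ell,\underline B)$, letting $\text{\bf p}^{+}\in Map(\ell+1,\underline B)$ denote $\text{\bf p}$ with one additional entry equal to $i$ appended,
\[
c(\beta;\text{\bf p}^{+}) \;=\; (\beta\cap D_i)\cdot c(\beta;\text{\bf p}).
\]

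To prove this, I would consider the forgetful map at interior marked points,
\[
\pi_{\mathrm{int}}:\mathcal M^{\mathrm{main}}_{1;\ell+1}(L(u),\beta)\longrightarrow \mathcal M^{\mathrm{main}}_{1;\ell}(L(u),\beta),
\]
which forgets the newly appended interior marked point. Since $\beta\ne 0$ and all disc components are non-constant in the toric setting, no contraction of unstable components is needed, so $\pi_{\mathrm{int}}$ is a genuine forgetful map compatible with Kuranishi structures. Arguing exactly as in Lemma \ref{37.184}(3) but for interior marked points, I would arrange the multisection $\mathfrak s_{\beta,1,\ell+1,\text{\bf p}^{+}}$ to be the pull-back of $\mathfrak s_{\beta,1,\ell,\text{\bf p}}$ along $\pi_{\mathrm{int}}$. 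Passing to fiber products, the resulting commutative diagram
\[
\begin{array}{ccc}
\mathcal M^{\mathrm{main}}_{1;\ell+1}(L(u),\beta;\text{\bf p}^{+}) & \longrightarrow & \mathcal M^{\mathrm{main}}_{1;\ell}(L(u),\beta;\text{\bf p}) \\
\downarrow \scriptstyle{ev_0} & & \downarrow \scriptstyle{ev_0} \\
L(u) & = & L(u)
\end{array}
\]
has the fiber of the top horizontal map over $(\Sigma,\varphi,z_0,\{z^{+}_j\}_{j\le\ell})$ naturally identified with $\varphi^{-1}(D_i)\subset\Sigma$. Since $D_i$ has complex codimension one and the homological intersection $\varphi_{*}[\Sigma]\cdot[D_i]$ equals $\beta\cap D_i$, this fiber consists of $\beta\cap D_i$ points (counted with orientation) on the top stratum. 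Pushing forward by $ev_0$ therefore multiplies by $\beta\cap D_i$, yielding the divisor axiom.

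The main obstacle is the presence of configurations in which some irreducible component of $\Sigma$ maps entirely into $D_i$: on such loci the set $\varphi^{-1}(D_i)$ has positive dimension and the naive point count breaks down. These loci, however, lie in the boundary/lower strata of the moduli space. Since by Lemma \ref{cbetawell} the number $c(\beta;\text{\bf p})$ is independent of the choice of perturbation within the class of multisections satisfying Lemma \ref{37.184}, one may perform the computation on the top stratum where the fiber count is exactly $\beta\cap D_i$. Equivalently, the identity holds at the homology level, since only the total intersection number of $\beta$ with $[D_i]$ enters --- this is precisely the mechanism used to obtain the formula \eqref{cpj} in the proof of Proposition \ref{POcalcFano}. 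Iterating the one-step identity, first appending $\ell_1$ copies of $D_1$ to $\text{\bf p}_{+}$, then $\ell_2$ copies of $D_2$, and so on, produces the prefactor $\prod_{i=1}^{m}(\beta\cap D_i)^{\ell_i}$ and finishes the proof.
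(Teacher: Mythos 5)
Your proof is correct and its core mechanism is the same as the paper's: because $c(\beta;\text{\bf p})$ is perturbation-independent (Lemma \ref{cbetawell}), one reduces the count to a homological computation at a regular fiber of $ev_0$ on the $\text{\bf p}_+$-constrained space, and for each disc $\varphi_j$ in that fiber the extra codimension-one constraints contribute $\prod_i(\beta\cap D_i)^{\ell_i}$ because an added interior marked point required to land on $D_i$ can do so in $\varphi_j\cdot D_i=\beta\cap D_i$ ways. The paper states this directly as a ``calculation in the homology level''; you package the same step via the interior-marked-point forgetful map $\pi_{\mathrm{int}}$ and a claimed pull-back compatibility of the multisections. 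That extra scaffolding is not actually available off the shelf: Lemma \ref{37.184}(3) is stated only for boundary marked points, and forgetting an interior marked point can require stabilization (a constant sphere bubble carrying only that marked point and two nodes becomes unstable), so $\pi_{\mathrm{int}}$ is a stabilization-then-forget map rather than a plain projection, and arranging $\mathfrak s_{\beta,1,\ell+1,\text{\bf p}^+}$ as a literal pull-back while simultaneously keeping the fiber product with $D_i$ transversal would require a separate argument. Fortunately you do not actually need it: once you invoke Lemma \ref{cbetawell} to justify computing on the top stratum homologically, the forgetful-map framing is only expository, and the proof reduces to the paper's own, shorter argument. So the proposal is correct; the genuinely load-bearing step is perturbation-independence plus the homological intersection count, exactly as in the paper.
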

\begin{proof}
By the dimensional reason
$$
\dim \mathcal M_{1,\vert\text{\bf p}_+\vert}(L(u);\beta;\text{\bf p}_+)
= n
$$
and $c(\beta;\text{\bf p}_+)$ is the degree of the map
\begin{equation}\label{ev00}
ev_0: \mathcal M_{1,\vert\text{\bf p}_+\vert}(L(u);\beta;\text{\bf p}_+)
\to L(u).
\end{equation}
Note that $\mathcal M_{1,\vert\text{\bf p}_+\vert}(L(u);\beta;\text{\bf
p}_+)$ after perturbation is a space with triangulation and the
weight in $\Q$, which is defined by the multiplicity and the order
of the isotropy group. So it has a fundamental cycle over $\Q$.
\par
We fix a regular value $p_0 \in L(u)$ of (\ref{ev00}). Let
$$
ev_0^{-1}(p_0) = \{\varphi_j \mid j=1,\ldots,K\}
$$
be its preimage. Each of its elements contributes to $c(\beta;\text{\bf p}_+)$ by $\epsilon_j \in \Q$ so that $\sum \epsilon_j =
c(\beta;\text{\bf p}_+)$.
\par
We remark that our counting problem to calculate $c(\beta;\text{\bf p})$ is well-defined in the sense of Lemma  \ref{cbetawell}.
Therefore we can perform the calculation in the homology level using 
Lemma \ref{37.184} (3) to
find that each of $\varphi_j$ contributes $\epsilon_j\prod_{i=1}^m
(\beta\cap D_i)^{\ell_i}$ to 
$c(\beta;\text{\bf p})$. The lemma
follows.
\end{proof}
Now we are ready to complete the proof of Theorem \ref{extendth}. By
Lemma \ref{deg2divixopr} and 
Formula \eqref{calcPO}, we find
\begin{equation}\label{94}
\aligned
&\mathfrak{PO}^u(w_1,\ldots,w_B;\mathfrak x) \\
&{}\quad= \sum_\beta \sum_{\text{\bf p}_+ \in M_+}
\sum_{\ell_1,\cdots,\ell_m} \left(
\frac{(\ell_1+\cdots+\ell_m+\vert\text{\bf p}_+\vert)!}
{\ell_1!\cdots\ell_m!\vert\text{\bf p}_+\vert!}\right)
\frac{w_{\text{\bf p}_+(1)}\cdots w_{\text{\bf p}_+(
\vert\text{\bf p}_+\vert)}}{(\ell_1+\cdots+\ell_m+\vert\text{\bf p}_+\vert)!}\\
&{}\qquad\qquad T^{\beta\cap \omega/2\pi}
c(\beta;\text{\bf p}_+)
\left(\prod_{i=1}^m (\beta\cap D_i)^{\ell_i}\right) w_1^{\ell_1}\cdots
w_m^{\ell_m}\exp(\partial \beta\cap
\mathfrak x) \\
&{}\quad = \sum_\beta \sum_{\text{\bf p}_+ \in M_+}
\frac{c(\beta;\text{\bf p}_+)}{\vert\text{\bf p}_+\vert !}
w_{\text{\bf p}_+(1)}\cdots w_{\text{\bf p}_+(
\vert\text{\bf p}_+\vert)}T^{\beta\cap \omega/2\pi}\\
&{}\quad\qquad\qquad\qquad
\mathfrak w_1^{\beta\cap D_1}\cdots
\mathfrak w_m^{\beta\cap D_m} (y_1(u))^{\partial \beta\cap \text{\bf e}_1}
\cdots (y_n(u))^{\partial \beta\cap \text{\bf e}_n}.
\endaligned
\end{equation}
We recall that we can write 
$\beta=\sum_{j=1}^m a_j \beta_j +\sum \alpha_i$ for some 
$a_j \in \Z_{\ge 0}$ as in Proposition \ref{copy37.177} (5). 
Then by noticing $\beta_j\cap \omega/2\pi=\ell_j(u)$ and using Lemma \ref{characterizationlamda0}, we find that
\begin{equation}\label{zproduct}
T^{\beta\cap\omega/2\pi}
(y_1(u))^{\partial \beta\cap \text{\bf e}_1}
\cdots (y_n(u))^{\partial \beta\cap \text{\bf e}_n} = T^{\sum\alpha_i\cap\omega/2\pi}\prod_{j=1}^m
(z_j)^{a_j}.
\end{equation}
By Lemma \ref{lem:zjupsiu} we have
$\mathfrak v_T^{u}(z_j^{a_j})=a_j\ell_j(u) \ge 0$ for any 
$u \in \text{Int }P$, and 
by Theorem \ref{weakpotential} we have 
$\mathfrak v_T^u(T^{\sum_{i} \alpha_i \cap \omega/2\pi}) 
= +\infty$ if 
there are infinitely many non-zero terms in 
$\sum_{i} \alpha_i \cap \omega/2\pi$. 
Therefore 
by Lemma \ref{highestimate} the series 
(\ref{94}) converges on $\mathfrak
v_T^{u}$-adic topology for any $u \in \operatorname{Int}\,P$.
\begin{rem}\label{transsukosi}
In the second equality in (\ref{94}), beside the identity
$$
\exp(\partial \beta \cap \mathfrak x) = (y_1(u))^{\partial \beta\cap \text{\bf e}_1}
\cdots (y_n(u))^{\partial \beta\cap \text{\bf e}_n},
$$
we also use the definition of $\mathfrak w_i$
\begin{equation}\label{defoffrakw}
\mathfrak w_i := \exp w_i= \sum_{k=0}^{\infty} \frac{w_i^k}{k!}.
\end{equation}
(See the line right above \eqref{stconvserring}.)
Generally, any $x \in \Lambda_0$ can be 
uniquely written as
$x=\overline{x} + x_+$ where 
$\overline{x} \in \C$ and $x_+ \in \Lambda_+$. Then 
we have 
$$
e^x = e^{\overline{x}} e^{x_+} = 
\left(\sum_{k=0}^{\infty} \frac{\overline{x}^k}{k!} \right)
\left( \sum_{k=0}^{\infty} \frac{x_+^k}{k!}
\right),
$$
where the first factor $\sum_{k=0}^{\infty} \overline{x}^k /k!$ converges with respect to the usual Archimedean topology of $\C$ and 
the second factor $\sum_{k=0}^{\infty} x_+^k /k!$
converges with respect to the adic  
non-Archimedean topology. 
In this sense,  
if we replace the formal variable $w_i$ by a 
number
$c_i \in \C$ and 
$\mathfrak w_i$ by $\mathfrak c_i = e^{c_i} \in \C$, 
then
the second equality (\ref{94}) still holds. 
The convergence
in the left hand side of
$$
\sum_{k=0}^{\infty} \frac{c_i^k}{k!} = \mathfrak c_i
$$
is with respect to the usual Archimedean 
topology of $\C$.
\end{rem}
Now we examine the dependence of this sum on $u$'s. Firstly, through
the isomorphism $\psi_u: H^*(T^n;\Z) \to H^*(L(u);\Z)$, we may
regard $\beta$ or $\beta_j$ are independent of $u$ and so are the
coefficients $a_j$'s. Secondly by the structure theorem, Proposition
\ref{copy37.177}, the moduli spaces associated to a given $\beta$
are all isomorphic and so can be canonically identified when $u \in
\text{Int}P$ varies. Thirdly
Lemma \ref{lem:zjupsiu} shows that $z_j$ in the formula (\ref{zproduct}) are
independent of $u \in \text{Int} P$. Therefore the composition
$\mathfrak{PO}^u \circ \psi_u$ is independent of $u$'s. This proves
Theorem \ref{extendth} (1). We denote the common function by $\mathfrak{PO}$.
\par
Now if we regard $\mathfrak{PO}$ as a
a function defined on $\mathcal A(\Lambda_+) \times H^1(T^n;\Lambda_0)$,
we can easily derive from the expression \eqref{94} and 
the formula \eqref{zproduct} that $\mathfrak{PO}$ lies in
$\Lambda^P_0\langle\!\langle\mathfrak w,\mathfrak w^{-1},w,y,y^{-1}\rangle\!\rangle$
by using Lemma \ref{characterizationlamda0}.

The proofs of Theorem \ref{extendth} are now complete.
\end{proof}
We recall that $X$ is nef if and only if every holomorphic
sphere $w: S^2 \to X$ satisfies $w_*[S^2] \cap c_1(X) \ge 0$.
In the nef case we can prove the following statement which is
somewhat similar to Proposition \ref{POcalcFano}.
\begin{prop}
If $X$ is nef and $\mathfrak b$ is as in $(\ref{formfrakb})$, then we have
\begin{equation}\label{POFanoformulanef}
\mathfrak{PO}^u(\mathfrak b;y)= \sum_{l=1}^{K}\sum_{j=1}^{a(l)}
T^{S_l}(\exp(\mathfrak b_{l,j}) + c_{l,j}(\mathfrak b)) y^{\vec v_{i(l,j)}} +
\sum_{i=\mathcal K+1}^m T^{\ell_i(u)}(1 + c_i(\mathfrak b)) y^{\vec v_i},
\end{equation}
where $c_i(\mathfrak b), c_{l,j}(\mathfrak b) \in \Lambda_+$.
\end{prop}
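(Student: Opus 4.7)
The plan is to imitate the proof of Proposition \ref{POcalcFano}, using the nef hypothesis in place of the Fano one, and then to isolate the additional contributions that arise from holomorphic sphere bubbles with vanishing first Chern number. Starting from the master formula \eqref{calcPO}, the fact that every $D_{i(l,r)}$ in $\frak b$ has real degree $2$ forces, by dimension counting, only classes $\beta$ with $\mu(\beta)=2$ to contribute. By Proposition \ref{copy37.177}(5), any such $\beta$ for which $\mathcal M^{\text{\rm main}}_{1;\ell}(L(u),\beta)$ is nonempty decomposes as
$$
\beta = \sum_{i=1}^m k_i \beta_i + \sum_j \alpha_j,
\qquad k_i \in \Z_{\ge 0},\ \sum_i k_i \ge 1,
$$
with each $\alpha_j$ realized by a holomorphic sphere. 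The nef hypothesis gives $c_1(X)\cap \alpha_j \ge 0$, so the Maslov index formula
$$
2 = \mu(\beta) = 2\sum_i k_i + 2\sum_j c_1(X)\cap\alpha_j
$$
forces $\sum_i k_i = 1$ and $c_1(X)\cap\alpha_j = 0$ for every $j$. Thus $\beta = \beta_{i_0} + \sum_j \alpha_j$ for a unique index $i_0 \in \{1,\dots,m\}$.

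Next I would group the contributions to \eqref{calcPO} by this index $i_0$. For such $\beta$ one has $\partial\beta = \vec v_{i_0}$ (hence $\exp(b\cap\partial\beta) = y^{\vec v_{i_0}}$) and $\beta\cap\omega/2\pi = \ell_{i_0}(u) + \rho_\beta$, where $\rho_\beta := \sum_j \alpha_j\cap\omega/2\pi \ge 0$. The contributions with $\rho_\beta = 0$ correspond to $\beta = \beta_{i_0}$ alone; for these the open Gromov--Witten numbers $c(\beta_{i_0};\text{\bf p})$ were already computed in \eqref{cpj}, and summing over $\text{\bf p}$ exactly as in the proof of Proposition \ref{POcalcFano} gives the closed-form contribution
$$
\exp(\frak b_{i_0})\, T^{\ell_{i_0}(u)}\, y^{\vec v_{i_0}},
$$
with the convention that $\frak b_{i_0}=0$ when $i_0 \ge \mathcal K+1$. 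Writing $i_0 = i(l,j)$ when $i_0 \le \mathcal K$ converts $T^{\ell_{i_0}(u)}$ to $T^{S_l}$ and yields the first two main terms of \eqref{POFanoformulanef}.

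The remaining contributions are those with $\rho_\beta > 0$, which arise from the presence of at least one (nonconstant) holomorphic sphere component. For each fixed $i_0$, define
$$
T^{\ell_{i_0}(u)} \tilde c_{i_0}(\frak b)\, y^{\vec v_{i_0}} :=
\sum_{\substack{\beta=\beta_{i_0}+\sum_j \alpha_j\\ \sum_j\alpha_j\neq 0}}
\sum_{\text{\bf p}} \frac{\frak b^{\text{\bf p}}}{|\text{\bf p}|!}\,T^{\beta\cap\omega/2\pi}\, c(\beta;\text{\bf p})\, y^{\vec v_{i_0}}.
$$
Each summand carries the factor $T^{\rho_\beta}$ with $\rho_\beta>0$ (sphere areas are strictly positive), so every individual term lies in $\Lambda_+$. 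The non-trivial point is to show that the whole sum converges to an element of $\Lambda_+$ rather than merely $\Lambda_0$; this is exactly what \eqref{rhogoinfty} in Theorem \ref{weakpotential} provides, since the $T$-exponents of these terms tend to infinity. Setting $c_{l,j}(\frak b) := \tilde c_{i(l,j)}(\frak b)$ for $l \le K$, $j \le a(l)$ and $c_i(\frak b) := \tilde c_i(\frak b)$ for $i \ge \mathcal K+1$ yields the stated formula. The main technical obstacle is precisely this convergence-in-$\Lambda_+$ statement for the correction, which must be read off carefully from Theorem \ref{weakpotential}; everything else is bookkeeping on the Maslov-index computation and the Fano-case calculation of $c(\beta_{i_0};\text{\bf p})$.
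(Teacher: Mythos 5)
Your proof is correct and follows the same route as the paper: the degree count restricts to $\mu(\beta)=2$, the nef hypothesis combined with the Maslov index formula forces $\beta=\beta_{i_0}+\sum_j\alpha_j$ with $\sum_i k_i=1$ and $c_1(X)\cap\alpha_j=0$, the $\sum_j\alpha_j=0$ part reproduces the Fano computation of Proposition \ref{POcalcFano}, and each sphere-bubble contribution carries a strictly positive $T$-exponent. You also make explicit the convergence-in-$\Lambda_+$ step via \eqref{rhogoinfty}, a detail the paper leaves implicit under ``the rest of the proof is the same as the proof of Proposition \ref{POcalcFano}.''
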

\begin{proof}
Let $\beta \in H_2(X,L(u);\Z)$ with $\mu(\beta) = 2$.
We assume $\mathcal M_{1;\ell}^{\text{\rm main}}(L(u),\beta)$ is nonempty.
Let
$$
\beta = \sum_{i=1}^m k_i\beta_i + \sum_j \alpha_j
$$
be as in Proposition \ref{copy37.177} (5). Since $\alpha_j \cap
c_1(X) \ge 0$ by assumption, it follows from the condition
$\mu(\beta) = 2$ that there exists unique $i$ such that $k_i =1$ and
other $k_i$'s are zero. Moreover  $\alpha_j \cap c_1(X) = 0$. Hence if
$\beta$ is not $\beta_i$, then we have
$$
\beta = \beta_i + \sum_j \alpha_j.
$$
This $\beta$ contributes
$$
c T^{\sum_j \alpha_j\cap[\omega]/2\pi} T^{\ell_i(u)} y^{\vec v_i}
$$
to $\mathfrak{PO}^u(\mathfrak b;y)$.
The rest of the proof is the same as the proof of Proposition \ref{POcalcFano}.
\end{proof}
\par
\section{Euler vector field}
\label{sec:euler} 
The formula (\ref{94}) derived in the previous
section yields an interesting consequence which is related to the
Euler vector field on a Frobenius manifold and to our potential
function. In \cite{fooo10}, we further
discuss the Frobenius manifold structure on the quantum cohomology
and on the Jacobian ring of our potential function and their
relationship. (See Remark \ref{anounce}.)
\par
For $i=1,\cdots,B$, let $d_i$ be the degree of $D_i \in \mathcal A$.
(That is twice of the real codimension of the corresponding faces of
$P$.) In case $d_i =2$ (that is $i\le m$) 
we observe that
$$
\mu_{L(u)}(\beta) = 
\sum_{i=1}^m 2 (\beta \cap D_i)
$$
for $\beta \in H_2(X,L(u);\Z)$.
Here $\mu_{L(u)}$ is the
Maslov index.
\begin{defn}
We define the {\it Euler vector field} $\mathfrak E$ on $\mathcal A$
by
$$
\mathfrak E = \sum_{i=m+1}^B \left( 1 - \frac{d_i}{2}\right)w_i \frac{\partial}
{\partial w_i} + \sum_{i=1}^m  \frac{\partial}
{\partial \mathfrak w_i}.
$$
\end{defn}
\begin{thm}\label{eulerthem} The directional derivative
$\mathfrak{PO}^u$ along the vector field $\mathfrak E$ satisfies
$$
\mathfrak E(\mathfrak{PO}^u) = \mathfrak{PO}^u.
$$
\end{thm}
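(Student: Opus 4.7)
The plan is to compute $\mathfrak E(\frak{PO}^u)$ termwise on the explicit monomial expansion \eqref{94}. Every nonzero contribution there is of the form
\[
\frac{c(\text{\bf p}_+;\beta)}{|\text{\bf p}_+|!}\, w_{\text{\bf p}_+(1)}\cdots w_{\text{\bf p}_+(|\text{\bf p}_+|)}\, T^{\beta\cap\omega/2\pi}\, \frak w_1^{\beta\cap D_1}\cdots \frak w_m^{\beta\cap D_m}\, y_1(u)^{\partial\beta\cap \text{\bf e}_1}\cdots y_n(u)^{\partial\beta\cap \text{\bf e}_n},
\]
with $\text{\bf p}_+$ taking values in the high-codimension indices $\{m+1,\dots,B\}$ (the codimension-one variables having been summed out into the exponentials $\frak w_i$ through Lemma \ref{deg2divixopr}). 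The scaling part $\sum_{i>m}(1-d_i/2)w_i\partial/\partial w_i$ of $\mathfrak E$ manifestly acts on such a monomial as multiplication by $\sum_{j=1}^{|\text{\bf p}_+|}(1-d_{\text{\bf p}_+(j)}/2)$. Interpreting the second sum via the identification $\frak w_i = \exp w_i$ from \eqref{defoffrakw} (so that the logarithmic operator $\frak w_i\partial/\partial \frak w_i = \partial/\partial w_i$ acts diagonally on each $\frak w_i^{\beta\cap D_i}$), it acts by $\sum_{i=1}^m r_i(\beta\cap D_i)$.

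Two geometric inputs then yield the result. First, the Cho-Oh computation for Maslov indices of basic discs on toric Lagrangian fibers gives $\mu(\beta_i)=2$, hence $\mu(\beta) = 2\sum_{i=1}^m(\beta\cap D_i)$ on all of $H_2(X,L(u);\Z)$; combined with the definition $2r_i = [D_i]\cap \mu_{L(u)}$ this forces $r_i=1$ for every $i\le m$. Second, nonvanishing of $c(\text{\bf p}_+;\beta)$ requires the virtual dimension constraint \eqref{dimensionformula2}, which after substituting $\dim D_i = 2n - d_i$ and using $\text{\bf p}_+(j)>m$ becomes
\[
\mu(\beta) \;=\; 2 - 2|\text{\bf p}_+| + \sum_{j=1}^{|\text{\bf p}_+|} d_{\text{\bf p}_+(j)}.
\]
Putting these together, the total eigenvalue is
\[
\sum_{j=1}^{|\text{\bf p}_+|}\Bigl(1 - \tfrac{d_{\text{\bf p}_+(j)}}{2}\Bigr) \;+\; \sum_{i=1}^m (\beta\cap D_i) \;=\; \sum_{j=1}^{|\text{\bf p}_+|}\Bigl(1 - \tfrac{d_{\text{\bf p}_+(j)}}{2}\Bigr) \;+\; \tfrac{\mu(\beta)}{2} \;=\; 1,
\]
so every surviving monomial is an eigenvector of $\mathfrak E$ with eigenvalue $1$, and the theorem follows by summing.

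The only real subtlety, and thus the place to be careful, is the correct reading of the second sum in $\mathfrak E$: the operator must be applied after taking into account the relation $\frak w_i = e^{w_i}$, i.e., through the flat coordinate $w_i$, for otherwise it would not act diagonally. Once this is accepted, the proof is a pure degree-counting argument in which the weights defining $\mathfrak E$ are precisely calibrated so that the Maslov-index formula and the dimension formula for $\mathcal M^{\text{main}}_{1;\ell}(L(u),\beta;\text{\bf p})$ pin the combined eigenvalue to unity. No further input is required beyond \eqref{94}, Lemma \ref{deg2divixopr}, and the standard toric Maslov-index identity.
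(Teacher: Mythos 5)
Your proof is correct and follows the same line of argument as the paper's: expand $\frak{PO}^u$ via \eqref{94}, read off the eigenvalue of the scaling part from the dimension constraint \eqref{dimensionformula2}, and pair it against the Maslov contribution coming through the $\frak w_i$ factors. The one place you are more explicit than the paper is in reading $\sum r_i \partial/\partial \frak w_i$ as the flat derivative $\sum r_i \partial/\partial w_i$ and in recording that $r_i = 1$ from $\mu(\beta)=2\sum_i(\beta\cap D_i)$ — both are implicit in the paper's ``by definition'' step, so this is a welcome clarification rather than a new route.
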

\begin{proof}
The proof is similar to the proof of a similar identity for the case
of the Gromov-Witten potentials. (See \cite{dub} for example.) Let
$$\aligned
\mathfrak E_1 &= \sum_{i=m+1}^B \left( 1 - \frac{d_i}{2}\right)w_i \frac{\partial}
{\partial w_i}, \\
\mathfrak E_2 &=  \sum_{i=1}^m  \frac{\partial}
{\partial \mathfrak w_i}, \\
\mathfrak{PO}^u_{\beta,1} &= \sum_{\text{\bf p}_+ \in M_+}
\frac{c(\beta;\text{\bf p}_+)}{\vert\text{\bf p}_+\vert !}w_{\text{\bf p}_+(1)}\cdots w_{\text{\bf p}_+(
\vert\text{\bf p}_+\vert)}, \\
\mathfrak{PO}^u_{\beta,2} &= \mathfrak w_1^{\beta\cap D_1}\cdots
\mathfrak w_m^{\beta\cap D_m}.
\endaligned$$
Since $\dim \mathcal M_{1,\vert\text{\bf p}_+\vert}(L(u),\beta;\text{\bf p}_+) = n$,
it follows that
$$
n -2 + \mu_{L(u)}(\beta) + \sum_{i} (2 - \deg \text{\bf p}_+(i)) = n.
$$
Therefore
$$
\mathfrak E_1(\mathfrak{PO}^u_{\beta,1}) = \left(1 - \frac{\mu_{L(u)}(\beta)}{2}\right)
\mathfrak{PO}^u_{\beta,1}.
$$
On the other hand, we have
$$
\mathfrak E_2(\mathfrak{PO}^u_{\beta,2}) = \frac{\mu_{L(u)}(\beta)}{2}\,\mathfrak{PO}^u_{\beta,2}
$$
by definition. Theorem \ref{eulerthem} now follows from (\ref{94}).
\end{proof}
\begin{rem}\label{anounce}
In our recent paper \cite{fooo10}, we prove the ring isomorphism
\begin{equation}\label{Miriso}
\Phi:  (H(X;\Lambda_0),\cup^{\mathfrak b}) \cong \frac{\Lambda_0^P\langle\!\langle y,y^{-1}\rangle\!\rangle}
{\left( y_i\frac{\partial \mathfrak{PO}_{\mathfrak b}}{\partial  y_i}: i=1,\ldots,n\right)},
\end{equation}
for arbitrary compact toric manifold (which is not necessarily Fano).
Here the product $\cup^{\mathfrak b}$ in the left hand side is defined by the formula
$$
\langle \mathfrak a_1 \cup^{\mathfrak b} \mathfrak a_2, \mathfrak a_3 \rangle_{\text{\rm PD}} =
\sum_{\alpha\in H_2(X;\Z)} \sum_{\ell=0}^{\infty}
\frac{T^{\alpha\cap \omega/2\pi}}{\ell!} GW_{\alpha,\ell+3}(\mathfrak
a_1,\mathfrak a_2,\mathfrak a_3,\mathfrak b^{\otimes \ell})
$$
where
$$
GW_{\alpha,m}(\mathfrak c_1,\ldots,\mathfrak c_m) = \int_{\mathcal M_m(\alpha)}
ev^*(\mathfrak c_1 \times \cdots \times \mathfrak c_m),
$$
$\mathcal M_m(\alpha)$ is the moduli space of the stable maps of genus 0
with $m$ marked points in homology class $\alpha$, and $ev:
\mathcal M_m(\alpha) \to X^{m}$ is the evaluation map. ($\langle
,\rangle_{\text{\rm PD}}$ denotes the Poincar\'e duality pairing.)
\par
The isomorphism (\ref{Miriso}) is defined as follows. We choose a lift
$$
\bigoplus_{d\ne 0} H^d(X;\Lambda_0) \cong \mathcal H(\Lambda_0) \subset \mathcal A(\Lambda_0).
$$
Using its basis $\text{\bf f}_a$ we write an element of $\mathcal H(\Lambda_0)$ as
$\sum_a w_a \text{\bf f}_a$. Then (\ref{Miriso}) sends $\text{\bf f}_a$ to
$$
\left[\left(\frac{\partial}{\partial w_a} \mathfrak{PO}\right)(\mathfrak b;y)\right].
$$
We can prove that this map is a ring isomorphism by an argument
which elaborates the discussion outlined in Remark 6.15
\cite{fooo08}. We have worked it out in detail in  \cite{fooo10}.
\par
We also prove in \cite{fooo10} that if $\mathfrak{PO}_{\mathfrak b}$ has
only nondegenerate critical point, then (\ref{Miriso}) sends
Poincar\'e duality to the residue pairing. Here the residue pairing is
defined, in the case when $X$ is nef and $\deg \mathfrak b = 2$, as follows: (See \cite{fooo10}
for the general case.) 
By nondegeneracy assumption we have a ring
isomorphism
\begin{equation}
\frac{\Lambda_0^P\langle\!\langle y,y^{-1}\rangle\!\rangle}
{\left( y_i\frac{\partial \mathfrak{PO}_{\mathfrak b}}{\partial  y_i}: i=1,\ldots,n\right)}
\otimes_{\Lambda_0} \Lambda
\cong \prod_{p\in \text{\rm Crit}(\mathfrak{PO}_{\mathfrak b})} \Lambda.
\end{equation}
(See Proposition 7.10 \cite{fooo08}. It is generalized to the non-Fano case
in \cite{fooo10} Proposition 2.15.) Here $\text{\rm Crit}(\mathfrak{PO}_{\mathfrak b})$  is
the set of critical points of $\mathfrak{PO}_{\mathfrak b}$. Let $1_p$ be the
unit $\in \Lambda$ in the factor corresponding to $p$. We then put
$$
\langle 1_p,1_q \rangle_{\text{res}} =
\begin{cases}
0   &\text{if $p \ne q$,} \\
(\det \text{\rm Hess}_p \mathfrak{PO}^u_{\mathfrak b})^{-1} &\text{if $p=q$.}
\end{cases}
$$
Here
$$
\text{\rm Hess}_p \mathfrak{PO}^u_{\mathfrak b} = \left( \frac{\partial^2 \mathfrak{PO}^u_{\mathfrak b}}
{\partial x_i\partial x_j}\right) (\mathfrak x)
$$
is the Hessian matrix at $\mathfrak x = (\mathfrak x_1,\ldots,\mathfrak x_n)$ with
$e^{\mathfrak x_i} = \mathfrak y_i$, $e^{x_i} = y_i$ and 
$$(T^{u_1}\mathfrak
y_1,\ldots,T^{u_n}\mathfrak y_n) = p.$$
Then we have:
\begin{equation}\label{ringiso}
\langle \mathfrak c, \mathfrak d\rangle_{\text{\rm PD}}
= \langle \Phi(\mathfrak c), \Phi(\mathfrak d) \rangle_{\text{\rm res}}.
\end{equation}
\par
The proof of \eqref{ringiso},
which we give in \cite{fooo10}, uses the moduli space of
pseudo-holomorphic annuli bordered to our Lagrangian fiber $L(u)$.
\par
In the mean time, here we illustrate the identity \eqref{ringiso}
for the simple case $X = \C P^1$, $\mathfrak b = \text{\bf 0}$.
(See \cite{taka}.) Its
moment polytope is $[0,1]$. The potential function is:
$$
\mathfrak{PO}_{\text{\bf 0}}^u(y) = T^{u}y + T^{1-u}y^{-1}.
$$
The critical points are given at $u= 1/2$ and $y = \pm 1$. We denote
them by $p_{+}, p_{-}$ respectively. We have
$$
\text{\rm Hess}_{p_{+}} \mathfrak{PO}^{1/2}_{\text{\bf 0}} = 2T^{1/2},
\quad \text{\rm Hess}_{p_{-}} \mathfrak{PO}^{1/2}_{\text{\bf 0}} = -2T^{1/2}.
$$
(Note we here take $x = \log y$ as a variable.)
Therefore
$$
\langle 1_{p_{+}},1_{p_{+}} \rangle_{\text{\rm res}} = T^{-1/2}/2, \quad
\langle 1_{p_{-}},1_{p_{-}} \rangle_{\text{\rm res}} = -T^{-1/2}/2, \quad
\langle 1_{p_{+}},1_{p_{-}}\rangle_{\text{\rm res}} = 0.
$$
We consider $PD[pt] \in H^2(\C P^1)$ and identify it with
$[\pi^{-1}(0)]$. Then the isomorphism (\ref{Miriso}) sends $PD[pt]$
to $T^{u}y \mod \left( y\frac{\partial \mathfrak{PO}^u_0}{\partial
y}\right)$. At $u = 1/2$, the latter becomes $T^{1/2}(1_{p_+} -
1_{p_-})$ in the Jacobian ring, which can be easily seen from the
identity
$$
T^{1/2}y = \frac{1}{2} \left( (1+y) - (1-y) \right)T^{1/2}.
$$
On the other hand, $PD[\C P^1] \in H^0(\C P^1)$ is the unit and so
becomes $1_{p_+} + 1_{p_-}$. We have
$$
\langle T^{1/2}(1_{p_+} - 1_{p_-}), 1_{p_+} + 1_{p_-}\rangle_{\text{\rm res}} = 1.
$$
This is consistent with the corresponding pairing
$$
\langle PD[pt],PD[\C P^1]\rangle_{\text{\rm
PD}} = (PD[pt] \cup PD[\C P^1]) \cap [\C P^1]
= 1
$$
in the quantum cohomology side.
\par
We recall that collection of a product structure on the tangent
space, residue pairing, Euler vector field, and the unit consists of
the data which determine Saito's flat structure (that is, the
structure of Frobenius manifold) \cite{Sai83}.
\end{rem}
\par
\section{Deformation by $\mathfrak b \in \mathcal A(\Lambda_0)$}
\label{sec:byLambda0}
\par
In Sections \ref {sec:Ellim} and \ref{sec:HFbulk}, we used the bulk
deformation of Lagrangian Floer cohomology by the divisor cycles
$\mathfrak b \in \mathcal A(\Lambda_+)$. Actually using the result of Section \ref{sec:domain}, most of the argument there can be
generalized to the case where $\mathfrak b \in \mathcal A(\Lambda_0)$ by
a minor modification. In this section we discuss this  and some new
phenomena appearing in the deformation by $\mathfrak b \in \mathcal
A(\Lambda_0)$.
\par
In this section we consider the case $R=\C$.  (See
Remark \ref{Rrationality}, however.) We write $\Lambda_0$ etc. in
place of $\Lambda_0(\C)$ etc.. 
\par
We first remark that the potential
function 
$$\mathfrak{PO}^u_{\mathfrak b}(y_1,\ldots,y_n) = \mathfrak{PO}^u(\mathfrak
b;y_1,\ldots,y_n)$$ 
itself 
can be defined for $\mathfrak b \in \mathcal
A(\Lambda_0)$ by the formula 
$$
\mathfrak{PO}^u_{\mathfrak b}(y_1,\ldots,y_n) 
= \sum_{\beta;\ell,k} T^{\beta\cap \omega/2\pi}
\mathfrak q^{{\text{\rm can}}}_{\beta;\ell,k}(\mathfrak b^{\otimes\ell};b,\ldots,b),
$$
where $b = \sum x_i \text{\bf e}_i$, $y_i = e^{x_i}$.
Here the right hand side converges by (\ref{94}).
The convergence means one with respect to 
the topology of combination 
of the adic non-Archimedean topology on $\Lambda_+$ and 
the usual Archimedean topology on $\C$ described in Remark \ref{transsukosi}.
\par
But the definition of the
leading term equation (\ref{LTE}), Definition \ref{LOEdef} need
some minor modification which is in order. 
We put
$$
\mathfrak b = \sum \mathfrak b_a D_a \in \mathcal A(\Lambda_0)
$$
and consider its zero order term
$$
\mathfrak b_a \equiv \overline{\mathfrak b}_a \mod \Lambda_+
$$
where $\overline{\mathfrak b}_a \in \C$. 
We put
\begin{equation}\label{bulkLOE}
(\mathfrak{PO}^u_{\mathfrak b})_l = \sum_{r=1}^{a(l)}
\exp(\overline{\mathfrak b}_{i(l,r)}) y^{\vec v_{l,r}} \in
\C[y_{1,1},\ldots,y_{l,d(l)}^{-1}].
\end{equation}
We define the leading term equation for
\begin{equation}\label{criequ}
y_{l,s}\frac{\partial \mathfrak{PO}^u_{\mathfrak b}}{\partial y_{l,s}} = 0
\end{equation}
for $\mathfrak b\in \mathcal A(\Lambda_0)$ in the same way as Definition
\ref{LOEdef} by using (\ref{bulkLOE}) in place of (\ref{POsital}). 
Namely, the leading term equation is the system of equations 
\begin{equation}\label{LOELambda0}
\frac{\partial (\mathfrak{PO}^u_{\mathfrak b})_l}{\partial y_{l,s}} = 
\frac{\partial}{\partial y_{l,s}} 
\left( 
\sum_{r=1}^{a(l)}
\exp(\overline{\mathfrak b}_{i(l,r)}) y^{\vec v_{l,r}} \right) = 
0 
\end{equation}
with $y_{l,s} \in \C\setminus \{ 0\}$ 
for $l=1,\dots ,K, 
~ s=1,\dots , d(l)$. 
\par
We remark that only $\overline{\mathfrak b}_a$, $a=1,\ldots,m$ appear
in (\ref{bulkLOE}). In other words, coefficients of the cohomology
classes $D_a$ of degree $>2$ do not affect the leading term
equation.
\begin{lem}\label{genrelLTE}
Lemma $\ref{relLTE}$ holds also for $\mathfrak b \in \mathcal
A(\Lambda_0)$.
\end{lem}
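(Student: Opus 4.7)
The plan is to mirror the proof of Lemma \ref{relLTE} from sections 9, 10 of \cite{fooo08}, modifying it only where the $T^0$ part of $\frak b$ contributes additional constant factors. First, I would decompose
\begin{equation*}
\frak b = \overline{\frak b} + \frak b_+, \qquad \overline{\frak b} = \sum_{a} \overline{\frak b}_a D_a \text{ with } \overline{\frak b}_a \in \C, \quad \frak b_+ \in \mathcal A(\Lambda_+),
\end{equation*}
and substitute into the explicit formula (\ref{94}) for $\frak{PO}^u_{\frak b}$, using $\exp(\frak b_a) = \exp(\overline{\frak b}_a)\cdot\exp(\frak b_{+,a})$ with $\exp(\frak b_{+,a}) \in 1+\Lambda_+$. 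The crucial observation, which recovers the appropriate leading behavior, is that the minimal-valuation content of $\frak{PO}^u_{\frak b}$ at each level $T^{S_l}$ takes the shape
\begin{equation*}
\sum_{r=1}^{a(l)} \exp(\overline{\frak b}_{i(l,r)})\, y^{\vec v_{l,r}}
\end{equation*}
modulo higher-valuation corrections. These corrections split into three classes: (i) contributions from $\beta=\beta_i$ with $i>\mathcal K$, lying at $T$-valuation $\ell_i(u)>S_K$; (ii) sphere-bubbling corrections, which by Proposition \ref{copy37.177}(5) and Theorem \ref{weakpotential} raise the $T$-valuation by a positive element of $G_{\text{bulk}}$; and (iii) $\frak b_+$-corrections of $\exp(\frak b_{i(l',r)})$, which live in $\Lambda_+$. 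The dimension count in Corollary \ref{37.187} forces any non-empty $\text{\bf p}_+ \in M_+$ insertion to produce $\mu(\beta)>2$, hence $\beta\neq\beta_i$, so such terms must lie in class (ii).

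Next, I would carry out induction on the level $l$, exactly as in the original proof. The base case $l=1$ is immediate: at valuation $S_1$, only the classes $\beta=\beta_{i(1,r)}$ contribute (since any other $\beta$ has strictly larger $\omega$-area), so after extracting $y_{1,s}\partial/\partial y_{1,s}$ and reducing modulo $\Lambda_+$ one recovers the level-1 equation of (\ref{bulkLOE}) for the reductions $\overline y_{1,s}$. For the inductive step, assume the $\overline y_{l',s}$ for $l'<l$ satisfy the leading-term equations of their levels. At valuation $T^{S_l}$ we substitute these into the critical point equation; by Lemma \ref{stratifiedstr} only $y$-monomials in $y_{l',s}$ with $l'\le l$ appear, and the contributions whose $y$-support lies in $\bigoplus_{l'<l}\operatorname{span}(\vec v_{l',r})$ vanish by the induction hypothesis. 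What survives is the $\beta=\beta_{i(l,r)}$ contribution, namely
\begin{equation*}
\sum_{r=1}^{a(l)} \exp(\overline{\frak b}_{i(l,r)})\, v_{l,r;l,s}\, \overline y^{\,\vec v_{l,r}} = 0,
\end{equation*}
which is precisely the level-$l$ equation derived from (\ref{bulkLOE}).

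The only genuinely new ingredient beyond the proof of Lemma \ref{relLTE} is the presence of the nontrivial multiplicative factor $\exp(\overline{\frak b}_{i(l,r)})$, which is absorbed into the bulk-modified leading polynomial (\ref{bulkLOE}). The main care is in the bookkeeping of class (iii) corrections: one must verify that the $\frak b_+$-expansion of $\exp(\frak b_{i(l',r)})$ for $l'<l$, although it can contribute to $T^{S_l}$ when combined with $\beta=\beta_{i(l',r)}$, produces only monomials in the already-determined $y_{l'',s}$ with $l''\le l'<l$; hence it is killed by the inductive hypothesis and does not interfere with the new level-$l$ equation. This is the same combinatorial observation underlying the original proof, now applied level-by-level to the expanded bulk-deformed potential.
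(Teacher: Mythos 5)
Your proof is correct and follows essentially the same approach as the paper's: both rest on the explicit formula (\ref{94}), from which one reads off that the leading ($T^{S_l}$-coefficient modulo $\Lambda_+$) term of $y^{\vec v_{l,r}}$ in $\frak{PO}^u_{\frak b}$ is $\exp(\overline{\frak b}_{i(l,r)})$, after which the argument reduces to that of Lemma \ref{relLTE}. One small attribution error worth fixing: in your inductive step, the reason lower-level contributions drop out of the level-$l$ equation is not the inductive hypothesis but Lemma \ref{stratifiedstr} itself --- the derivative $y_{l,s}\,\partial/\partial y_{l,s}$ annihilates any monomial depending only on $y_{l'',s}$ with $l''<l$ --- though this does not affect the validity of the overall argument.
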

\begin{proof}
The formula (\ref{94}) implies that the coefficient of $y^{\vec
v_{l,r}}$ $(r = 1,\ldots,a(l))$ in $\mathfrak{PO}^u_{\mathfrak b}$ is
$T^{S_l}\exp(\overline{\mathfrak b}_{i(l,r)})$. The rest of the proof is
the same as the proof of Lemma \ref{relLTE}.
\end{proof}
The leading term equation is of the form
$$
0 = \sum_{r=1}^{a(l)} \exp(\overline{\mathfrak b}_{i(l,r)}) y^{\vec v_{l,r}} \vec v_{l,r}.
$$
We note that $\exp : \C \to \C \setminus \{0\}$ 
is surjective.
\begin{defn}\label{GLOE}
A system of polynomial equations
$$
0 = \sum_{r=1}^{a(l)} C_{i(l,r)} y^{\vec v_{l,r}} \vec v_{l,r}
$$
with $C_{i(l,r)}\in \C \setminus\{0\}$, $l=1,\cdots K$ is called a
{\it generalized leading term equation}.
\end{defn}
Now Theorem \ref{ellim} is generalized as follows.
\begin{prop}\label{gellim}
The following three conditions for $u$ are equivalent to each other.
\begin{enumerate}
\item There exists a generalized leading term equation of $\mathfrak {PO}_0^u$,
which has a solution
$y_{l,j} \in \C \setminus \{0\}$ 
$(l=1,\dots , K, s=1,\dots ,d(l))$.
\item There exists $\mathfrak b \in \mathcal A(\Lambda_0)$ such that $\mathfrak{PO}^u_{\mathfrak b}$
has a critical point on $(\Lambda_{0} \setminus \Lambda_+)^n$. 
\item There exists $\mathfrak b \in \mathcal A(\Lambda_0)$ such that $y_{l,s} \in \C \setminus \{0\}$ 
$(l=1,\dots , K, s=1,\dots ,d(l))$ in the item (1) above is a critical point of $\mathfrak{PO}^u_{\mathfrak b}$. 
\end{enumerate}
\end{prop}
\begin{proof} (3) $\Rightarrow$ (2) is obvious. 
(2) $\Rightarrow$ (1) follows from Lemma \ref{genrelLTE}.
Let us assume that the generalized leading term equation with $C_{i(l,r)}$ as a coefficient has a solution $\mathfrak y_{l,s} \in \C \setminus \{0\}$.
We put $\overline{\mathfrak b}_{i(l,r)} = \log C_{i(l,r)}$. Then we can
add higher order term in the same way as the proof of Theorem \ref{ellim}
to obtain $\mathfrak b$ such that $\mathfrak y_{l,s} $ is a solution of (\ref{criequ}). Thus 
(1) $\Rightarrow$ (3) follows. 
\end{proof}
\begin{prop}\label{lambda0General}
Theorem $\ref{homologynonzero}$ and Proposition $\ref{prof:bal2}$ hold for $\mathfrak b \in
\mathcal A(\Lambda_0)$.
\end{prop}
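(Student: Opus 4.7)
My plan is to reduce the $\mathcal A(\Lambda_0)$-case to the $\mathcal A(\Lambda_+)$-case already handled in sections \ref{sec:HFbulk} and \ref{sec:Potbul} by absorbing the constant part of $\frak b$ into a twist of the filtered $A_\infty$ structure, exactly in the spirit of Cho's trick \cite{cho07} that was used in Definition \ref{rhoyx} to incorporate $\frak x \in H^1(L(u);\Lambda_0)$ via a non-unitary representation $\rho$. Concretely, I would split
$$
\frak b = \overline{\frak b} + \frak b_+,\qquad
\overline{\frak b}=\sum_{a=1}^B \overline{\frak b}_a D_a \in \mathcal A(\C),\quad
\frak b_+ \in \mathcal A(\Lambda_+),
$$
and define twisted operators
$$
\frak q^{can,\overline{\frak b}}_{\beta;\ell,k} = W(\overline{\frak b};\beta)\,\frak q^{can}_{\beta;\ell,k},
$$
where the scalar weight $W(\overline{\frak b};\beta)\in\C$ is dictated by formally carrying out the sum over the degree~$2$ insertions of the divisors $D_1,\dots,D_m$ (which by the divisor-type identity of Lemma \ref{deg2divixopr} produces $\exp\!\bigl(\sum_{a\le m}\overline{\frak b}_a (\beta\cap D_a)\bigr)$) together with the genuinely polynomial contribution from the finitely many possible higher-codimension insertions (finiteness is guaranteed by Lemma \ref{highestimate}).

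Using $\frak q^{can,\overline{\frak b}}$ in the definition (\ref{misq}) with $\frak b_+$ plugged in for the $\Lambda_+$-valued parameter, I obtain a filtered $A_\infty$ structure $\{\frak m^{\frak b,can}_k\}$ on $H(L(u);\Lambda_0)$ whose associated potential agrees, by the calculation (\ref{94}) and Remark \ref{transsukosi}, with the extended potential function $\frak{PO}^u(\frak b;\cdot)$ of Lemma \ref{lem:extend} and Theorem \ref{extendth}. In particular, under the hypothesis (\ref{crit}) or (\ref{crit2}) of Theorem \ref{homologynonzero}, the same argument used there (with the shift-by-$\rho$ version of $\frak m^{\frak b,\frak x}_1$ introduced in Definition \ref{mkbwithtwist}) yields the isomorphisms (\ref{eq:homoiso}) and (\ref{eq:homoiso2}); this establishes the analogue of Theorem \ref{homologynonzero}. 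The analogue of Proposition \ref{prof:bal2} then follows by repeating verbatim the argument of section \ref{sec:HFbulk}: one defines the twisted bimodule operators $\frak r^{can,\overline{\frak b},\rho}$ on $CF((L^{(1)},\rho),(L^{(0)},\rho);\Lambda_0)$ by tensoring the weight $W(\overline{\frak b};B)$ into (\ref{rdRdef}), then runs the chain-map/homotopy arguments built from the operators $\frak f$, $\frak g$, $\frak h$ to obtain the isomorphism of Proposition \ref{calcFloer}; the lower bound (\ref{estimateint}) and Theorem \ref{thmdispeg} follow as before.

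The main obstacle, and essentially the only point that requires checking rather than copying, is verifying that the twisting factor $W(\overline{\frak b};\beta)$ is compatible with all the boundary/gluing identities that underlie the $A_\infty$ axiom (Proposition \ref{qtoric}), the bimodule identity (\ref{rmainformula}), and the boundaries of Lemmata \ref{boundaryN}, \ref{bdrywithpn}, \ref{2bdrywithpn}, \ref{2bdrywithpn2}, \ref{2bdrywithpn6}. This reduces to the observation that for every gluing $\beta=\beta_1\#\beta_2$ appearing in these decompositions one has $\beta\cap D_a = \beta_1\cap D_a + \beta_2\cap D_a$ for $a\le m$, so that
$$
W(\overline{\frak b};\beta) = W(\overline{\frak b};\beta_1)\cdot W(\overline{\frak b};\beta_2),
$$
and similarly at the disc/sphere-bubble components, which is exactly the multiplicative property enjoyed by the holonomy weights $\rho(B)$ and $\text{Comp}(B,\cdot)$ in Lemmata \ref{comphomo} and \ref{comphomo2}. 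Consequently the twist by $\overline{\frak b}$ behaves formally in the same way as the twist by $\rho$, and every structural identity of section \ref{sec:HFbulk} carries over without modification.
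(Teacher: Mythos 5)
There is a genuine gap. You split $\frak b = \overline{\frak b} + \frak b_+$ along the $T$-adic filtration (constant part $\overline{\frak b}\in\mathcal A(\C)$ versus $\frak b_+\in\mathcal A(\Lambda_+)$) and then try to package the \emph{entire} constant part $\overline{\frak b}$ — including its higher-codimension components in $\bigoplus_{k>1}\mathcal A^{2k}(\C)$ — into a scalar weight $W(\overline{\frak b};\beta)\in\C$ multiplying $\frak q^{can}_{\beta;\ell,k}$. That is not possible: inserting a cycle $D_a$ of real codimension $>2$ into the moduli space $\mathcal M^{\text{\rm main}}_{k+1;\ell}(L(u),\beta;\text{\bf p})$ is a genuine geometric constraint which cuts the dimension down by $\dim D_a - 2(n-1)$, not a scalar multiplication. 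It becomes a number only for the specific top-dimensional moduli space relevant to $\frak{PO}^u$ in formula (\ref{94}); for the moduli spaces that feed into $\frak m_1$ (and the higher $\frak m_k$) it outputs forms of various degrees and cannot be peeled off as a $\beta$-dependent constant. Your claim that $W$ satisfies the multiplicativity $W(\overline{\frak b};\beta_1\#\beta_2)=W(\overline{\frak b};\beta_1)W(\overline{\frak b};\beta_2)$ under gluing, which you need for the $A_\infty$ and bimodule identities, is likewise unsupported for the higher-codimension contribution.

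The correct decomposition, used in the paper, is along \emph{cohomological degree}: write $\frak b=\frak b_0+\frak b_{\text{\rm high}}$ with $\frak b_0\in\mathcal A^2(\Lambda_0)$ and $\frak b_{\text{\rm high}}\in\bigoplus_{k>1}\mathcal A^{2k}(\Lambda_0)$. Only the degree-$2$ part is absorbed into a multiplicative weight (the analogue of the divisor axiom), while $\frak b_{\text{\rm high}}$ — with its full $\Lambda_0$ coefficients, not just the positive-order part — is kept as an honest insertion into $\frak q^{can}$, whose convergence follows from the dimension count of Lemma \ref{highestimate}. A second, subtler point you should be aware of: even the degree-$2$ weight cannot be derived from Lemma \ref{deg2divixopr} as you propose, because that lemma is stated only for the top-dimensional moduli spaces computing $\frak{PO}^u$, and the paper explicitly declines to reprove the divisor axiom in other dimensions. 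Instead, the weighted formula
$$
\frak m_k^{\frak b,\frak x,\prime}(x_1,\ldots,x_k)=\sum_{\ell,\beta} T^{\beta\cap\frak b_0/2\pi}\,\rho(\partial\beta)\,\frak q^{can}_{\beta;\ell,*}\bigl(\frak b_{\text{\rm high}}^{\otimes\ell};e^{\frak x_+}x_1 e^{\frak x_+}\cdots e^{\frak x_+}x_k e^{\frak x_+}\bigr)
$$
is taken as a \emph{definition} of a new family of operators, and the $A_\infty$ relation is then verified directly from the fact that $\beta\mapsto e^{\beta\cap\frak b_0}$ is a homomorphism $H_2(X,L;\Z)\to\Lambda_0\setminus\Lambda_+$. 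With that definition in hand, agreement with $\frak m_k^{\frak b,\frak x}$ for $\frak b\in\mathcal A(\Lambda_+)$ (and hence with the potential function) follows from (\ref{94}), and the rest of your outline — running the $\frak r$, $\frak f$, $\frak g$, $\frak h$ machinery to obtain Proposition \ref{calcFloer} and then Proposition \ref{prof:bal2} — does carry over.
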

\begin{proof}
The proof goes in the way similar to those of Theorem $\ref{homologynonzero}$ and Proposition $\ref{prof:bal2}$
using (\ref{94}).
We however need to modify the definition $\mathfrak m_k^{\mathfrak b,b}$ 
so that it converges for $\mathfrak b \in  \mathcal A(\Lambda_0)$ as follows.
\par
We remark that (\ref{94}) and Lemma \ref{deg2divixopr} are stated only in the case of moduli spaces 
$\mathcal M_{1,\vert\text{\bf p}\vert}(L(u);\beta;\text{\bf p})$ of dimension $n$, 
that is the moduli space used to define $\mathfrak{PO}^u(w_1,\ldots,w_B;\mathfrak x)$.
This is because it is cumbersome to appropriately generalize the proofs of Lemma \ref{deg2divixopr}
for other dimensions.
The simplest way to go around this trouble is to make the formula 
similar to (\ref{94}) as a `definition' of modified $\mathfrak q$ operator, 
and proceed as follows.
\par
Let $\mathfrak q_{\beta;\ell,k}^{{\text{\rm can}}}$ be the restriction of (\ref{37.188}) 
to $T^n$ invariant forms.
We divide
$$
\mathfrak b = \mathfrak b_0 + \mathfrak b_{\text{\rm high}}
$$
where $\mathfrak b_0 \in \mathcal A^2(\Lambda_0)$ and $\mathfrak b_{\text{\rm high}} \in 
\bigoplus_{k>1} \mathcal A^{2k}(\Lambda_0)$.  
From the calculation of (\ref{94}) we can observe that 
$$
\mathfrak{PO}_{\mathfrak b}^u(y_1,\dots , y_n) = 
e^{\beta \cap \mathfrak b_0}
\mathfrak{PO}_{\mathfrak b_{\text{\rm high}}}^u(y_1,\dots , y_n)
$$ 
and   
$$
\mathfrak m_k^{\mathfrak b,\mathfrak x}(x_1,\dots ,x_n)=e^{\beta \cap \mathfrak b_0}
\mathfrak m_k^{\mathfrak b_{\text{\rm high}},\mathfrak x}
(x_1,\dots ,x_n)
$$
for $\mathfrak b_0 \equiv 0 \mod \Lambda_+$.
Now we consider  
$\mathfrak b = \mathfrak b_0 + \mathfrak b_{\text{\rm high}}$ 
where $\mathfrak b_0$ is not necessarily assumed as  
$\mathfrak b_0 \equiv 0 \mod \Lambda_+$. 
Then, motivated by the observation above, 
we {\it define}  
\begin{equation}
\mathfrak m_k^{\mathfrak b,\mathfrak x,\prime}(x_1,\ldots,x_k)
= \sum_{\ell,\beta}e^{\beta\cap \mathfrak b_0} \rho(\partial \beta)
\mathfrak q^{{\text{\rm can}}}_{\beta;\ell,*}(\mathfrak b_{\text{\rm high}}^{\otimes\ell};
e^{\mathfrak x_+} x_1
e^{\mathfrak x_+} \cdots e^{\mathfrak x_+} x_k e^{\mathfrak x_+}) T^{\beta\cap \omega/2\pi}
\end{equation}
where we write $\mathfrak x = \mathfrak x_0 + \mathfrak x_+$  and define $\rho$ using $\mathfrak x_0$ as in 
Definition \ref{rhoyx}.
\par
We can prove $A_{\infty}$ relation for $\mathfrak m_k^{\mathfrak b,\mathfrak x,\prime}$ by 
using the fact that $\beta \mapsto e^{\beta \cap \mathfrak b_0}$ 
is a homomorphism from $H_2(X,L;\Z)$ to $\Lambda_0 \setminus \Lambda_+$.
\par
We write $\mathfrak b_0 = \sum_{a=1}^m w_a D_a \in 
\mathcal {A}^2(\Lambda_0)$ with 
$w_a \in \Lambda_0$ and 
$w_a =\overline{w}_a + w_{a+} \in \C \oplus \Lambda_+ = \Lambda_0$. 
As in the calculation of the second equality in (\ref{94}), we have 
$$
e^{\beta \cap \mathfrak b_0} = 
\prod_{a=1}^m 
\left(\sum_{j=0}^{\infty} \frac{(\beta \cap D_a)^j}{j!} \right)
\left(\sum_{j=0}^{\infty} \frac{\overline{w}_a^j}{j!} \right)
\left(\sum_{j=0}^{\infty} \frac{w_{a+}^j}{j!}\right) 
 = 
\prod_{a=1}^m 
e^{\beta \cap D_a} e^{\overline{w}_a}e^{w_{a+}}.
$$
Here as we describe in 
Remark \ref{transsukosi},  
$\left(\sum \frac{(\beta \cap D_a)^j}{j!} \right)
\left(\sum \frac{\overline{w}_a^j}{j!} \right)$ converges to 
$e^{\beta \cap D_a} e^{\overline{w}_a}$ 
in the usual Archimedean topology 
and $\sum \frac{w_{a+}^j}{j!}$ converges to $e^{w_{a+}}=\mathfrak {w}_{a+}$ 
(see (\ref{defoffrakw})) in the adic 
non-Archimedean topology. 
Therefore we obtain  
$$
e^{\beta \cap \mathfrak b_0}
\mathfrak q_{\beta;\ell,*}^{{\text{\rm can}}}
(\mathfrak b_{\text{high}}^{\otimes\ell};e^{\mathfrak x_+} x_1
e^{\mathfrak x_+} \cdots e^{\mathfrak x_+} x_k e^{\mathfrak x_+})
= 
\mathfrak q_{\beta;\ell,*}^{{\text{\rm can}}}
(\mathfrak b^{\otimes\ell};e^{\mathfrak x_+} x_1
e^{\mathfrak x_+} \cdots e^{\mathfrak x_+} x_k e^{\mathfrak x_+}).
$$
In particular, we have 
$$
\mathfrak m_k^{\mathfrak b,\mathfrak x,\prime}(x_1,\ldots,x_k) 
= \sum_{\beta,\ell} T^{\beta\cap\omega/2\pi}\rho(\partial\beta)\mathfrak q_{\beta;\ell,*}^{{\text{\rm can}}}
(\mathfrak b^{\otimes\ell};e^{\mathfrak x_+} x_1
e^{\mathfrak x_+} \cdots e^{\mathfrak x_+} x_k e^{\mathfrak x_+})
$$
for $x_i \in H^1(L(u);\Lambda_+)$. 
Hence we have 
$$
\mathfrak{PO}_{\mathfrak b}^u(\mathfrak x+x) = \sum_{k=0}^{\infty}\mathfrak m_k^{\mathfrak b,\mathfrak x,\prime}(x,\ldots,x).
$$
\par
Moreover $\mathfrak m_k^{\mathfrak b,\mathfrak x,\prime}$ converges for any $\mathfrak b \in \mathcal A(\Lambda_0)$.
The proof of this convergence is similar to the proof of Theorem \ref{weakpotential} 
given in Section \ref{sec:CalPot}.
\par
We can use this operator $\mathfrak m_k^{\mathfrak b,\mathfrak x,\prime}$ in place of $\mathfrak m_k^{\mathfrak b,\mathfrak x}$.
It is then straightforward to see that the proof Theorem $\ref{homologynonzero}$ and Proposition $\ref{prof:bal2}$
go through after minor modification for $\mathfrak b \in \mathcal A(\Lambda_0)$.
\end{proof}
\begin{rem}\label{Rrationality}
Let $R$ be a field such that $\Q \subset R \subset \C$.
Even if we assume
$\mathfrak b = \sum \mathfrak b_a D_a \in \mathcal A(\Lambda_0(R))$,
it does not imply
\begin{equation}\label{bulkLOErat}
(\mathfrak{PO}^u_{\mathfrak b})_l  \in
R[y_{1,1},\ldots,y_{l,d(l)}^{-1}].
\end{equation}
In fact, $\exp(\overline{\mathfrak b}_{i(r,s)})$ may not be an element of $R$.
(This point is related to Remark \ref{transsukosi}.)
An appropriate condition for (\ref{bulkLOErat}) to hold is
$$
\exp(\mathfrak b_i) \in \Lambda_0(R)
$$
for $i=1,\ldots,m$.
\end{rem}
\begin{exm}\label{onepointbu}
We put
$$
P = \{ (u_1,u_2) \mid 0\le u_1,u_2, \,\, u_1+u_2 \le 1, \,\,
u_2 \le 2/3 \},
$$
which is a moment polytope of monotone one point blow up of $\C P^2$.
We consider $u = (1/3,1/3)$. Then $L(u)$ is a monotone
Lagrangian submanifold. 
Now we put
$$
D_2 = \pi^{-1}(\{ (u_1,u_2) \in P\mid u_2 = 0\}).
$$
Let
$
\mathfrak b_c = (\log c)[D_2], 
$
where $c \in \C\setminus \{0\}$. Proposition \ref{POcalcFano} implies
$$
\mathfrak{PO}_{\mathfrak b_c}^{u}(y_1,y_2) =
T^{1/3}(y_1+cy_2 + y_2^{-1} + (y_1y_2)^{-1}).
$$
Thus the critical point is given by
$$
1 - y_1^{-2}y_2^{-1} = 0 = c - y_2^{-2} - y_1^{-1}y_2^{-2}.
$$
The first equation gives $y_2=y_1^{-2}$. Hence the second equation becomes
\begin{equation}\label{4jiequ}
c - y_1^4 - y_1^3 = 0.
\end{equation}
(\ref{4jiequ}) has a nonzero multiple root $y_1 = -3/4$ if
$c = -27/256$.
\par
Namely if $
\mathfrak b = (\log (-27/256))[D_2]
$,
then $\mathfrak{PO}^{u}_{\mathfrak b}$ has a degenerate critical
point of type $A_2$.
\end{exm}
\begin{exm}\label{twopointbu}
We again consider the example of two points blow up in Section \ref{sec:Exam}. 
Namely its moment polytope is (\ref{momentpoly})
with $\beta = \frac{1-\alpha}{2}$. We consider the point $u= (\beta,\beta)$.  
We put $ D_2 = \pi^{-1}(\{ (u_1,u_2) \in P\mid
u_2 = 0\}) $, and consider $ \mathfrak b_c = (\log c)[D_2]. $ 
We have
$$
\mathfrak{PO}_{\mathfrak b_c}^{u}(y_1,y_2)
= T^{\beta}(cy_2+y_2^{-1}+y_1+y_1y_2) + T^{1-\beta}y_1^{-1}y_2^{-1}.
$$
The (generalized) leading term equation is
$$
c-y_2^{-2}+y_1 = 0 = 1+y_2.
$$
It has a nonzero solution $(1-c,-1)$ if $c\ne 1$.
Hence there exists $b$ such that
$$
HF((L(u),(\mathfrak b_c ,b)),(L(u),(\mathfrak b_c ,b));\Lambda)
\ne 0
$$
if and only if $c\ne 1$.
\par
If we deform only by $\mathfrak b \in \Lambda_+$,  then $c=1$. Namely
there is no such $b$ with nontrivial Floer cohomology. We remark
that $L(u)$ is bulk-balanced in the sense of Definition
\ref{balanced} since it is a limit of balanced fibers.
\par
The authors do not know an example of $L(u)$ that carries a pair
$(\mathfrak b,b)$ with $\mathfrak b \in \mathcal A(\Lambda_0)$, $b \in
H^1(L;\Lambda_0)$ for which we have
$$
HF((L(u),(\mathfrak b ,b)),(L(u),(\mathfrak b ,b));\Lambda) \ne 0,
$$
but which is not bulk-balanced in the sense of Definition
\ref{balanced}.
\end{exm}
\par
\section{Appendix: Continuous family of multisections}
\label{sec:integration}
\par
In this section we review the techniques of using a continuous
family of multisections and integration along the fiber on their
zero sets so that smooth correspondence by spaces with Kuranishi
structure induces a map between de Rham complex.
We used it in Section \ref{sec:HFbulk}.
\par
This technique is not new and is known to various people. In fact, 
\cite{Rua99} and Section 16 \cite{fukaya;abel} use a similar technique
and Section 7.5 \cite{fooo06} (= Section 33 \cite{fooo06pre}), \cite{fukaya:loop},
\cite{fukaya;operad}  contain almost the same argument as we
describe below. We include the details here for reader's convenience.
\par
Let $\mathcal M$ be a space with Kuranishi structure and let $ev_s:
\mathcal M \to M_s$, $ev_t: \mathcal M \to M_t$ be strongly
continuous smooth maps. (See Definition 6.6 \cite{FO} and the
description below.) (Here $s$ and $t$ stand for source and target,
respectively.) We assume our smooth manifolds $M_s, M_t$ are compact
and oriented without boundary. We also assume $\mathcal M$ has a
tangent bundle and is oriented in the sense of Kuranishi structure.
(See Definition A1.14 \cite{fooo06} =  Definition A1.14 \cite{fooo06pre} and the description below.)
\begin{rem}
We may relax the orientability assumption above by using local
coefficients in the same way as in Section A2 \cite{fooo06} (= Section A2 \cite{fooo06pre}). We do not
discuss it here since we do not need this generalization in this
paper.
\end{rem}
We include the case when $\mathcal M$ has a boundary or corner.
We assume that $ev_t$ is weakly submersive. (See A1.13 \cite{fooo06}
 (=  A1.13 \cite{fooo06pre}) and the description below.)
In this situation we will construct the map
\begin{equation}\label{smoothcorrespondence}
(\mathcal M;ev_s,ev_t)_*: \Omega^kM_s \to \Omega^{k+\dim M_t - \dim \mathcal M}M_t.
\end{equation}
We call  (\ref{smoothcorrespondence}), the
{\it smooth correspondence map associated to}  $(\mathcal M;ev_s,ev_t)$.
\par
The space ${\mathcal M}$
is covered by a finite number of Kuranishi charts $$(V_{\alpha},E_{\alpha},
\Gamma_{\alpha},\psi_{\alpha},s_{\alpha}), 
\quad \alpha \in \mathfrak A.
$$
They satisfy \cite{fooo08} Condition B.1.
\par
We assume that
$\{(V_{\alpha},E_{\alpha},\Gamma_{\alpha},
\psi_{\alpha},s_{\alpha})\mid \alpha \in \mathfrak A\}$ is a good coordinate system 
in the sense of Definition 6.1  \cite{FO} or Lemma A1.11 \cite{fooo06} (= 
 Lemma A1.11 \cite{fooo06pre}).
This means the following:
The set $\mathfrak A$ has a partial order $<$, where
either $\alpha_1 \le \alpha_2$ or $\alpha_2 \le \alpha_1$ holds
for $\alpha_1, \alpha_2 \in \mathfrak A$
if
$$
\psi_{\alpha_1}(s_{\alpha_1}^{-1}(0)/\Gamma_{\alpha_1}) \cap
\psi_{\alpha_2}(s_{\alpha_2}^{-1}(0)/\Gamma_{\alpha_2}) \ne \emptyset.
$$
Let $\alpha_1, \alpha_2 \in \mathfrak A$ and $\alpha_1 \le
\alpha_2$. Then there exist a $\Gamma_{\alpha_1}$-invariant open
subset $V_{\alpha_2,\alpha_1} \subset V_{\alpha_1}$, a smooth
embedding
$$
\varphi_{\alpha_2,\alpha_1}:  V_{\alpha_2,\alpha_1}
\to V_{\alpha_2}
$$
and a bundle map
$$
\widehat{\varphi}_{\alpha_2,\alpha_1}:  E_{\alpha_1}\vert_{V_{\alpha_2,\alpha_1}}
\to E_{\alpha_2}
$$
which covers  $\varphi_{\alpha_2,\alpha_1}$.
Moreover there exists an injective homomorphism
$$
\widehat{\widehat{\varphi}}_{\alpha_2,\alpha_1}
: \Gamma_{\alpha_1} \to \Gamma_{\alpha_2}.
$$
We require that they satisfy \cite{fooo08} Conditions B.2, B.3.
\par
A strongly continuous smooth map 
$ev_t: \mathcal M \to M_t$ is a
family of $\Gamma_{\alpha}$ invariant smooth maps
\begin{equation}\label{evalpha}
ev_{t;\alpha}: V_{\alpha} \to M_t
\end{equation}
which induces
\begin{equation}
\overline{ev}_{t;\alpha}: V_{\alpha}/\Gamma_{\alpha} \to M_t
\nonumber\end{equation}
such that
$$
\overline{ev}_{t;\alpha_2} \circ \overline{\varphi}_{\alpha_2,\alpha_1}
= \overline{ev}_{t;\alpha_1}
$$
on $V_{\alpha_2,\alpha_1}/\Gamma_{\alpha}$. (Note $\Gamma_{\alpha}$
action on $M_t$ is trivial.) $ev_s: \mathcal M \to M_s$ consists of
a similar family, $ ev_{s;\alpha}: V_{\alpha} \to M_s $.
\par
Our assumption that $ev_t$ is weakly submersive
means that each of ${ev}_{t;\alpha}$ in (\ref{evalpha}) is
a submersion.
\par
We next review on the multisections. (See Section 3 \cite{FO} for detail.) Let
$(V_{\alpha},E_{\alpha},\Gamma_{\alpha},\psi_{\alpha},s_{\alpha})$
be a Kuranishi chart of $\mathcal M$. For $x \in V_{\alpha}$ we
consider the fiber $E_{\alpha,x}$ of the bundle $E_{\alpha}$ at $x$.
We take its $l$ copies and consider the direct product $
E_{\alpha,x}^{l} $. We take the quotient thereof by the action of
symmetric group of order $l!$ and let $\mathcal S^l(E_{\alpha,x})$
be the quotient space. There exists a map
$$
tm_m: \mathcal S^l(E_{\alpha,x})
\to \mathcal S^{lm}(E_{\alpha,x}),
$$
which sends $[a_1,\ldots,a_l]$ to
$$
[\,\underbrace {a_1,\ldots,a_1}_{\text{$m$ copies}},\ldots,
\underbrace {a_l,\ldots,a_l}_{\text{$m$ copies}}].
$$
A smooth {\it multisection} $s$ of the orbibundle
$$
E_{\alpha} \to V_{\alpha}
$$
consists of an open covering
$$
\bigcup_iU_i = V_{\alpha}
$$
and $s_i$ which maps $x\in U_i$ to $s_i(x) \in \mathcal
S^{l_i}(E_{\alpha,x})$. They are required to satisfy \cite{fooo08} Condition B.12.
\par
We identify two multisections $(\{U_i\},\{s_i\},\{l_i\})$,  $(\{U'_i\},\{s'_i\},\{l'_i\})$
if
$$
tm_{l_j}(s_i(x)) = tm_{l'_i}(s'_j(x)) \in \mathcal S^{l_il'_j}(E_{\alpha,\gamma x})
$$
on $U_i \cap U'_j$.
We say $s_{i,j}$ to be a {\it branch} of $s_i$.
\par
We next discuss a continuous family of multisections and their
transversality. Let $W_{\alpha}$  be a finite dimensional smooth
oriented manifold and consider the pull-back bundle
$$
\pi_\alpha^*E_\alpha \to W_\alpha \times V_\alpha
$$
under the projection $\pi_\alpha: W_\alpha \times V_\alpha \to
V_\alpha$. The action of $\Gamma_{\alpha}$ on $W_{\alpha}$ is, by
definition, trivial.
\begin{defn}
\begin{enumerate}
\item
A {\it $W_{\alpha}$-parameterized family $\mathfrak s_{\alpha}$ of
multisections} is by definition a multisection of
$\pi_\alpha^*E_\alpha$.
\item
We fix a metric of our bundle $E_{\alpha}$. We say that $\mathfrak
s_{\alpha}$ is {\it $\epsilon$-close} to $s_{\alpha}$ in $C^0$ topology if
the following holds. Let $(w,x) \in W_{\alpha} \times V_{\alpha}$.
Then for each branch $\mathfrak s_{\alpha,i,j}$ of $\mathfrak s_{\alpha}$ we
have
$$
\vert \mathfrak s_{\alpha,i,j}(w,\ldots) - s_{\alpha}(\ldots)\vert_{C^0} < \epsilon
$$
in a neighborhood of $x$.
\item
$\mathfrak s_{\alpha}$ is said to be {\it transversal to $0$} if each branch
$\mathfrak s_{\alpha,i,j}$ of $\mathfrak s_{\alpha}$ is transversal to $0$.
\item
Let $f_{\alpha}: V_{\alpha} \to M$ be a
$\Gamma_{\alpha}$-equivariant smooth map. We assume that $\mathfrak
s_{\alpha}$ is transversal to $0$. We then say that
$f_{\alpha}\vert_{\mathfrak s_{\alpha}^{-1}(0)}$ is a {\it submersion} if the
following holds: Let $(w,x) \in W_{\alpha} \times V_{\alpha}$. Then
for each branch $\mathfrak s_{\alpha,i,j}$ of $\mathfrak s_{\alpha}$ the
restriction of
$$
f_{\alpha} \circ \pi_{\alpha}: W_{\alpha} \times V_{\alpha} \to M
$$
to
\begin{equation}\label{fras-1(0)}
\{ (w,x) \mid \mathfrak s_{\alpha,i,j}(w,x) = 0\}
\end{equation}
is a submersion. We remark that (\ref{fras-1(0)}) is a smooth manifold by our assumption.
\end{enumerate}
\end{defn}
\begin{rem}
In case $\mathcal M$ has a boundary or a corner, so does
(\ref{fras-1(0)}). In this case we require that the restriction of
$f_{\alpha}$ to each of the stratum of (\ref{fras-1(0)}) is a
submersion.
\end{rem}
\begin{lem}\label{existencesubmersion}
We assume that $f_{\alpha}: V_{\alpha} \to M$ is a submersion. Then
there exists $W_{\alpha}$ such that for any  $\epsilon$ there exists
a $W_{\alpha}$-parameterized family $\mathfrak s_{\alpha}$ of
multisections which is $\epsilon$-close to $s_{\alpha}$, transversal
to $0$ and such that $f_{\alpha}\vert_{\mathfrak s_{\alpha}^{-1}(0)}$ is
a submersion.
\par
If $\mathfrak s_{\alpha}$ is already given and satisfies the required condition
on a neighborhood of a
$\Gamma_{\alpha}$ invariant compact set $K_{\alpha}
\subset V_{\alpha}$, then we may
extend it to the whole $V_{\alpha}$ without changing it on $K_{\alpha}$.
\end{lem}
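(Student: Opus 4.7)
The plan is to build $\frak s_\alpha$ as a finite-dimensional \emph{universal} perturbation family: take $W_\alpha$ to be a finite-dimensional real vector space of $\Gamma_\alpha$-equivariant smooth sections $\sigma: V_\alpha \to E_\alpha$, chosen rich enough so that the evaluation map $\mathrm{ev}_x : W_\alpha \to E_{\alpha,x}$, $\sigma \mapsto \sigma(x)$, is \emph{surjective} for every $x$ in a neighborhood of the compact set $s_\alpha^{-1}(0)$. Such a $W_\alpha$ can be produced by a standard partition-of-unity argument: cover $s_\alpha^{-1}(0)$ by finitely many charts on which $E_\alpha$ is trivialized, pick local sections spanning each fiber on each chart, cut them off, and average them over $\Gamma_\alpha$; the $\Gamma_\alpha$-averaging only redistributes the spanning property over the orbit and the resulting finite collection still spans fibers. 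Then I would define the $W_\alpha$-parameterized multisection
\[
\frak s_\alpha(w,x) \;=\; s_\alpha(x) + \sigma_w(x),
\qquad (w,x) \in W_\alpha \times V_\alpha,
\]
restricted to a sufficiently small neighborhood of $0 \in W_\alpha$ to guarantee $C^0$-closeness to $s_\alpha$ within $\epsilon$; its multivalued branches come from the $\Gamma_\alpha$-orbit structure of the ambient chart in the usual way.

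With this setup, transversality of $\frak s_\alpha$ to the zero section is immediate: at any zero $(w,x)$, the partial derivative $\partial_w \frak s_\alpha(w,x)(v) = \sigma_v(x)$ alone is surjective by the construction of $W_\alpha$, hence so is the total derivative $d\frak s_\alpha(w,x): T_w W_\alpha \oplus T_x V_\alpha \to E_{\alpha,x}$. Consequently the universal zero locus $\mathcal Z := \frak s_\alpha^{-1}(0) \subset W_\alpha \times V_\alpha$ is a smooth $\Gamma_\alpha$-manifold.

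Next I would verify the submersivity of $f_\alpha \circ \pi_\alpha|_{\mathcal Z}$. Fix $(w,x) \in \mathcal Z$ and any tangent vector $\eta \in T_{f_\alpha(x)}M$. Since $f_\alpha$ is a submersion, choose $\xi \in T_x V_\alpha$ with $df_\alpha(\xi) = \eta$. Then I need $v \in T_w W_\alpha$ with $d\frak s_\alpha(v,\xi) = 0$, i.e.\ $\sigma_v(x) = -\bigl(ds_\alpha(x) + d\sigma_w(x)\bigr)(\xi)$; surjectivity of $\mathrm{ev}_x: W_\alpha \to E_{\alpha,x}$ provides such a $v$. The pair $(v,\xi)$ then lies in $T_{(w,x)}\mathcal Z$ and is sent to $\eta$ by $d(f_\alpha \circ \pi_\alpha)$, so the restriction is submersive. (On corners and lower strata one runs the same argument, using that $f_\alpha$ is required to be submersive on each stratum.)

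For the extension clause, assume $\frak s_\alpha^{\mathrm{old}}$ parameterized by some $W_\alpha^{\mathrm{old}}$ is already given and satisfies the conclusions on a $\Gamma_\alpha$-invariant open neighborhood $U$ of $K_\alpha$. I would enlarge the parameter space to $W_\alpha := W_\alpha^{\mathrm{old}} \oplus W_\alpha^{\mathrm{new}}$ where $W_\alpha^{\mathrm{new}}$ is a new finite-dimensional space of $\Gamma_\alpha$-equivariant sections as above, but now cut off by a $\Gamma_\alpha$-invariant smooth function $\chi: V_\alpha \to [0,1]$ with $\chi \equiv 0$ on a smaller neighborhood $U' \subset\subset U$ of $K_\alpha$ and $\chi \equiv 1$ outside $U$. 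Defining
\[
\frak s_\alpha(w^{\mathrm{old}},w^{\mathrm{new}},x) \;=\; \frak s_\alpha^{\mathrm{old}}(w^{\mathrm{old}},x) + \chi(x)\, \sigma_{w^{\mathrm{new}}}(x),
\]
the perturbation is unchanged on $U'$ (so $\frak s_\alpha$ still agrees with $\frak s_\alpha^{\mathrm{old}}$ on $K_\alpha$), and outside $U'$ the new parameter provides the fiberwise surjectivity needed to rerun the transversality and submersion arguments above. The main subtlety I anticipate is ensuring equivariance and the spanning property for the new sections to be compatible with the already transverse behavior on $U$, which is handled by choosing $W_\alpha^{\mathrm{new}}$ after fixing the cutoff $\chi$ and then shrinking the allowed range of $w^{\mathrm{new}}$ to keep everything within $\epsilon$ of $s_\alpha$ in $C^0$.
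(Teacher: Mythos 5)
Your construction of $W_\alpha$ as a finite-dimensional space of $\Gamma_\alpha$-\emph{equivariant} sections with surjective evaluation $\mathrm{ev}_x: W_\alpha \to E_{\alpha,x}$ at every $x$ is where the argument breaks down, and the breakdown occurs exactly at the points that multisections were invented to handle. If $x$ has nontrivial isotropy $\Gamma_{\alpha,x} \subset \Gamma_\alpha$ (which is the generic situation for Kuranishi charts of moduli spaces of stable maps), then every $\Gamma_\alpha$-equivariant section $\sigma$ satisfies $\sigma(x) = \gamma \cdot \sigma(x)$ for all $\gamma \in \Gamma_{\alpha,x}$, so $\sigma(x)$ lies in the fixed subspace $E_{\alpha,x}^{\Gamma_{\alpha,x}}$. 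When this fixed subspace is proper, the evaluation map cannot be surjective no matter how large a finite-dimensional space of equivariant sections you take. Your sentence ``the $\Gamma_\alpha$-averaging only redistributes the spanning property over the orbit and the resulting finite collection still spans fibers'' is precisely the false claim: averaging forces the value at a fixed point into the invariant subspace, destroying the spanning property there. Consequently your appeals to surjectivity of $\partial_w \frak s_\alpha(w,x)$ in the transversality step and again in the submersion step do not hold at such points.

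There is a second, related conceptual issue: if $\sigma_w$ is equivariant, then $s_\alpha + \sigma_w$ is an equivariant \emph{single-valued} section, and forming ``multivalued branches from the $\Gamma_\alpha$-orbit structure'' produces nothing new — you are back to trying to transversalize with equivariant single sections, which is exactly what may be impossible. The paper's proof avoids this trap: it chooses a surjective bundle map $\mathrm{Sur}: W_\alpha \times V_\alpha \to E_\alpha$ that is \emph{not} required to be equivariant, sets $\frak s^{(1)}_\alpha(w,x) = \mathrm{Sur}(w,x) + s_\alpha(x)$, and then defines the genuinely multivalued object $\frak s^{(2)}_\alpha(w,x) = [\gamma_1\frak s^{(1)}_\alpha(w,x),\ldots,\gamma_g\frak s^{(1)}_\alpha(w,x)]$ by enumerating the $\Gamma_\alpha$-translates. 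Each branch is separately transversal (being a group translate of a transversal section), and $\Gamma_\alpha$-equivariance holds only for the unordered collection, which is all that a multisection requires. To repair your proof you should drop the equivariance requirement on the individual sections spanning $W_\alpha$ and instead impose equivariance of the resulting multisection by orbiting, as the paper does; the rest of your transversality and submersivity computations then go through on each branch.
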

In the course of the proof of Lemma \ref{existencesubmersion}, we
need to shrink $V_{\alpha}$ slightly. We do not mention it
explicitly.
\begin{proof}
We may choose $W_{\alpha}$ to be a vector space of sufficiently large dimension
so that there exists a surjective bundle map
\begin{equation}\label{sur}
\text{\rm Sur}: W_{\alpha} \times V_{\alpha}  \to E_{\alpha}.
\end{equation}
We remark that (\ref{sur}) is not necessarily
$\Gamma_{\alpha}$-equivariant. We put
$$
\mathfrak s^{(1)}_\alpha(w,x) = \text{\rm Sur}(w,x) + s_{\alpha}(x)
$$
and
$$
\mathfrak s^{(2)}_\alpha(w,x) = [\gamma_1\mathfrak
s^{(1)}_\alpha(w,x),\ldots,\gamma_g\mathfrak s^{(1)}_\alpha(w,x)],
$$
where $\Gamma_{\alpha} = \{\gamma_1,\ldots,\gamma_g\}$. $\mathfrak
s^{(2)}$ defines a multisection on $W_\alpha \times V_\alpha$ which
is transversal to $0$ by construction. Moreover since $(\mathfrak
s^{(2)}_\alpha)^{-1}(0) \to V_{\alpha}$ is a submersion it follows
from assumption that $f_{\alpha}\vert_{(\mathfrak
s^{(2)}_\alpha)^{-1}(0)}$ is a submersion. By replacing $W_{\alpha}$
to a small neighborhood of $0$, we can choose $\mathfrak s^{(2)}_\alpha$
which is sufficiently close to $s_{\alpha}$.
\par
The last part of the lemma can be proved by using an appropriate
partition of unity in the same way as in Section 3 \cite{FO}.
\end{proof}
Now let $\theta_{\alpha}$ be a smooth differential form of compact
support on $V_{\alpha}$. We assume that $\theta_{\alpha}$ is
$\Gamma_{\alpha}$-invariant. Let $f_{\alpha}: V_{\alpha} \to M$ be
a $\Gamma_{\alpha}$ equivariant submersion. (The $\Gamma_{\alpha}$
action on $M$ is trivial.) Let $\mathfrak s_{\alpha}$ satisfy the
conclusion of Lemma \ref{existencesubmersion}. We put a smooth
measure $\omega_{\alpha}$ on $W_\alpha$ of compact support with
total mass $1$. By fixing an orientation on $W_\alpha$, we regard
$\omega_{\alpha}$ as a differential form of top degree. We have
\begin{equation}\label{totalmass1}
\int_{W_{\alpha}} \omega_{\alpha} = 1.
\end{equation}
We next define {\it integration along the fiber}
$$
((V_{\alpha},E_{\alpha},\Gamma_{\alpha},\psi_{\alpha},s_{\alpha}),
(W_{\alpha},\omega_{\alpha}),\mathfrak s_{\alpha},f_{\alpha})_* (\theta_{\alpha})
\in \Omega^{\deg \theta_{\alpha} + \dim M - \dim \mathcal M}(M).
$$
Let $(U_i,\mathfrak s_{\alpha,i})$ be a representative of $\mathfrak
s_{\alpha}$. Namely $\{U_i\mid i\in I\}$ is an open covering of
$W_{\alpha} \times V_{\alpha}$ and $s_{\alpha}$ is represented by
$\mathfrak s_{\alpha,i}$ on $U_i$. By the definition of the
multisection, $U_i$ is $\Gamma_{\alpha}$-invariant. We may shrink
$U_i$, if necessary, so that there exists a lifting $\tilde{\mathfrak
s}_{\alpha,i} = (\tilde{\mathfrak s}_{\alpha,i,1},\ldots,\tilde{\mathfrak
s}_{\alpha,i,l_i})$ as in \cite{fooo08} (B.10).
\par
Let $\{\chi_i \mid i \in I\}$ be a partition of unity subordinate to
the covering $\{U_i\mid i\in I\}$. By replacing $\chi_i$ with its
average over $\Gamma_{\alpha}$ we may assume $\chi_i$ is
$\Gamma_{\alpha}$-invariant.
We put
\begin{equation}
\tilde{\mathfrak s}_{\alpha,i,j}^{-1}(0)
=  \{(w,x) \in U_i \mid \tilde{\mathfrak s}_{\alpha,i,j}(w,x) = 0\}.
\end{equation}
By assumption $\tilde{\mathfrak s}_{\alpha,i,j}^{-1}(0)$ is a smooth manifold and
\begin{equation}\label{restrictsubmersion}
f_{\alpha} \circ \pi_{\alpha}\vert_{\tilde{\mathfrak s}_{\alpha,i,j}^{-1}(0)}:
\tilde{\mathfrak s}_{\alpha,i,j}^{-1}(0) \to M
\end{equation}
is a submersion.
\begin{defn}
We define
\begin{equation}\label{intfibermainformula}
\aligned
&((V_{\alpha},E_{\alpha},\Gamma_{\alpha},\psi_{\alpha},s_{\alpha}),
(W_{\alpha},\omega_{\alpha}),\mathfrak s_{\alpha},f_{\alpha})_* (\theta_{\alpha})\\
&=
\frac{1}{\#\Gamma_{\alpha}}\sum_{i=1}^I\sum_{j=1}^{l_i} \frac{1}{l_i}
(f_{\alpha} \circ \pi_{\alpha}\vert_{\tilde{\mathfrak s}_{\alpha,i,j}^{-1}(0)})_!((\chi_i
\pi_{\alpha}^*\theta_{\alpha} \wedge \omega_{\alpha})\vert_{\tilde{\mathfrak s}_{\alpha,i,j}^{-1}(0)}).
\endaligned
\end{equation}
Here $(f_{\alpha} \circ \pi_{\alpha}\vert_{\tilde{\mathfrak s}_{\alpha,i,j}^{-1}(0)})_!$ is the
integration along the fiber of the smooth submersion (\ref{restrictsubmersion}).
\end{defn}
\begin{lem}
The right hand side of $(\ref{intfibermainformula})$ depends only on
$(V_{\alpha},E_{\alpha},\Gamma_{\alpha},\psi_{\alpha},s_{\alpha})$,
$(W_{\alpha},\omega_{\alpha})$, $\mathfrak s_{\alpha}$, $f_{\alpha}$,
and $\theta_{\alpha}$ but independent of the following choices:
\begin{enumerate}
\item The choice of representatives $(\{U_i\},\mathfrak s_{\alpha,i})$ of $\mathfrak s_{\alpha}$.
\item The lifting $\tilde{\mathfrak s}_{\alpha,i}$.
\item
The partition of unity $\chi_i$.
\end{enumerate}
\end{lem}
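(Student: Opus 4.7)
The plan is to verify independence under each of the three choices (3), (2), (1) in that order, since each step builds on the previous. Throughout, the key structural inputs are: the symmetry of the sum over branches $j = 1,\ldots,l_i$, the compatibility relation $\mathrm{tm}_{l_j}(\frak s_{\alpha,i}) = \mathrm{tm}_{l_i}(\frak s_{\alpha,j})$ between any two representatives on their overlap, and the linearity of integration along the fiber over a smooth submersion.

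First I would dispose of (3). Given two partitions of unity $\{\chi_i\}$ and $\{\chi'_i\}$ subordinate to the same covering $\{U_i\}$ (both $\Gamma_\alpha$-invariant after averaging), I form the refinement $\chi_i \chi'_{i'}$ and apply linearity of the fiber integral in $\theta_\alpha$. Since $\sum_{i'} \chi'_{i'} \equiv 1$ on each $U_i$, the two sums in (\ref{intfibermainformula}) agree term-by-term. If the two partitions are subordinate to different coverings that represent the same $\frak s_\alpha$, one first passes to a common refinement using (1).

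Second, for (2), the lifting $\tilde{\frak s}_{\alpha,i} = (\tilde{\frak s}_{\alpha,i,1},\ldots,\tilde{\frak s}_{\alpha,i,l_i})$ of $\frak s_{\alpha,i} \in \mathcal{S}^{l_i}(E_\alpha|_{U_i})$ is unique up to the action of the symmetric group $\mathfrak{S}_{l_i}$ permuting the branches. Since the inner sum $\sum_{j=1}^{l_i}(\cdot)$ in (\ref{intfibermainformula}) is manifestly invariant under this permutation, the output is unchanged. The $\Gamma_\alpha$-equivariance of $\frak s_\alpha$ is accounted for by the symmetrization built into the definition of a multisection together with the factor $1/\#\Gamma_\alpha$.

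Third, and this is the main step, for (1) I consider two representatives $(\{U_i\},\frak s_{\alpha,i},l_i)_{i\in I}$ and $(\{U'_j\},\frak s'_{\alpha,j},l'_j)_{j\in J}$ of the same multisection $\frak s_\alpha$. On each overlap $U_i \cap U'_j$ the defining compatibility $\mathrm{tm}_{l'_j}(\frak s_{\alpha,i}) = \mathrm{tm}_{l_i}(\frak s'_{\alpha,j})$ means that after taking liftings $\tilde{\frak s}_{\alpha,i,a}$ ($a=1,\ldots,l_i$) and $\tilde{\frak s}'_{\alpha,j,b}$ ($b=1,\ldots,l'_j$), the multisets
\[
\underbrace{\{\tilde{\frak s}_{\alpha,i,a}\}_{a=1}^{l_i} \sqcup \cdots \sqcup \{\tilde{\frak s}_{\alpha,i,a}\}_{a=1}^{l_i}}_{l'_j \text{ copies}}
\quad \text{and} \quad
\underbrace{\{\tilde{\frak s}'_{\alpha,j,b}\}_{b=1}^{l'_j} \sqcup \cdots \sqcup \{\tilde{\frak s}'_{\alpha,j,b}\}_{b=1}^{l'_j}}_{l_i \text{ copies}}
\]
agree on $U_i \cap U'_j$ up to a permutation. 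Consequently, the disjoint unions of zero sets $\bigsqcup_a \tilde{\frak s}_{\alpha,i,a}^{-1}(0)|_{U_i\cap U'_j}$ counted $l'_j$ times and $\bigsqcup_b (\tilde{\frak s}'_{\alpha,j,b})^{-1}(0)|_{U_i\cap U'_j}$ counted $l_i$ times are diffeomorphic as oriented manifolds compatibly with the evaluation maps. Using the joint partition of unity $\chi_i \chi'_j$ (which is subordinate to both coverings after shrinking) and the prefactor $\frac{1}{l_i}$ or $\frac{1}{l'_j}$ in (\ref{intfibermainformula}), the two sums collapse to the same expression $\frac{1}{\#\Gamma_\alpha}\sum_{i,j,a,b} \frac{1}{l_i l'_j}(\cdots)$ over the refined cover. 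The hard part is bookkeeping the combinatorial weights so that the multiplicities from $\mathrm{tm}_{l_i}$ and $\mathrm{tm}_{l'_j}$ cancel against the factors $1/l_i$ and $1/l'_j$; once this is verified on each piece of the refinement, linearity of fiber integration under the submersion $f_\alpha \circ \pi_\alpha$ (restricted to each branch zero set) finishes the proof.
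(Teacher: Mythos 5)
Your proposal is correct and spells out in full the "straightforward generalization of the proof of well-definedness of integration on manifolds" that the paper delegates to the reader; the treatment of the weights $1/l_i$ via the compatibility relation $\mathrm{tm}_{l'_j}(\frak s_{\alpha,i})=\mathrm{tm}_{l_i}(\frak s'_{\alpha,j})$ on overlaps is exactly the multisection-specific ingredient the textbook argument needs, and your ordering (partition of unity for a fixed covering first, then liftings, then representatives via a common refinement) avoids circularity. This matches the paper's intended approach, just in explicit form.
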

\begin{proof}
The proof is straightforward generalization of the proof of
well-definedness of integration on manifold, which can be found in
text books of manifold theory, and is left to the reader.
\end{proof}
\par
So far we have been working on one Kuranishi chart
$(V_{\alpha},E_{\alpha},\Gamma_{\alpha},\psi_{\alpha},s_{\alpha})$.
We next describe the compatibility conditions among the
$W_\alpha$-parameterized families of multisections for various
$\alpha$. During the construction we need to shrink $V_{\alpha}$ a
bit several times. We will not mention  explicitly this point
henceforth.
\begin{rem}
The discussion below is a $W_{\alpha}$ parametrized version 
of \cite{fooo08} Section C.
\end{rem}
\par
Let $\alpha_1 < \alpha_2$.
For $\alpha_1 < \alpha_2$, we take the normal
bundle $N_{V_{\alpha_2,\alpha_1}}V_{\alpha_2}$ of
$\varphi_{\alpha_2,\alpha_1}(V_{\alpha_2,\alpha_1})$ in
$V_{\alpha_2}$.
We use an appropriate $\Gamma_{\alpha_2}$ invariant Riemannian
metric on $V_{\alpha_2}$ to define
the exponential map
\begin{equation}\label{normalexp}
\text{Exp}_{\alpha_2,\alpha_1} :
B_{\e}N_{V_{\alpha_2,\alpha_1}}V_{\alpha_2} \to V_{\alpha_2}.
\end{equation}
(Here $B_{\e}N_{V_{\alpha_2,\alpha_1}}V_{\alpha_2}$ is the $\epsilon$ neighborhood of the
zero section of $N_{V_{\alpha_2,\alpha_1}}V_{\alpha_2}$.)
\par
Using $\text{Exp}_{\alpha_2,\alpha_1}$, we identify
$B_{\e}N_{V_{\alpha_2,\alpha_1}}V_{\alpha_2}/{\Gamma_{\alpha_1}}
$  to an open subset of $V_{\alpha_1}/{\Gamma_{\alpha_1}}$
and denote it by $U_{\epsilon}({V_{\alpha_2,\alpha_1}}/{\Gamma_{\alpha_1}})$.
\par
Using the projection
$$
\text{\rm Pr}_{V_{\alpha_2,\alpha_1}} :
U_{\epsilon}({V_{\alpha_2,\alpha_1}}/{\Gamma_{\alpha_1}}) \to {V_{\alpha_2,\alpha_1}}/{\Gamma_{\alpha_1}},
$$
we extend the orbibundle $E_{\alpha_1}$ to $U_{\epsilon}({V_{\alpha_2,\alpha_1}}/{\Gamma_{\alpha_1}})$.
Also we extend the embedding $E_{\alpha_1} \to {\widehat{\varphi}_{\alpha_2,\alpha_1}^*E_{\alpha_2}}$,
(which is induced by $\widehat{\varphi}_{\alpha_2,\alpha_1}$) to
$U_{\epsilon}({V_{\alpha_2,\alpha_1}}/{\Gamma_{\alpha_1}})$.
\par
We fix a $\Gamma_{\alpha}$-invariant inner product of the bundles
$E_{\alpha}$. We then have a bundle isomorphism
\begin{equation}\label{bdliso}
E_{\alpha_2} \cong E_{\alpha_1} \oplus \frac{\widehat{\varphi}_{\alpha_2,\alpha_1}^*E_{\alpha_2}}{E_{\alpha_1}}
\end{equation}
on $U_\e(V_{\alpha_2,\alpha_1}/\Gamma_{\alpha_1})$. We can use
\cite{fooo08} Condition B.3  to modify
$\text{Exp}_{\alpha_2,\alpha_1}$ in (\ref{normalexp}) so that 
\cite{fooo08} Condition C.3 is satisfied.
\par
Let $W_{\alpha_1}$ be a finite dimensional manifold and $\mathfrak
s_{\alpha_1}$ a multisection of $\pi_{\alpha_1}^*E_{\alpha_1}$ on
$W_{\alpha_1} \times V_{\alpha_1}$. We put $W_{\alpha_2} =
W_{\alpha_1} \times W'$, where $W'$ will be defined later.
\begin{defn}\label{compdefini}
A multisection $\mathfrak s_{\alpha_2}$ of $W_{\alpha_2} \times V_{\alpha_2}$
is said to be {\it compatible} with $\mathfrak s_{\alpha_1}$ if
the following holds for each
$y = \text{Exp}_{\alpha_2,\alpha_1}(\tilde y) \in U_{\epsilon}({V_{\alpha_2,\alpha_1}}/{\Gamma_{\alpha_1}})$.
\begin{equation}\label{compatibilityforalpha}
\mathfrak s_{\alpha_2}((w,w'),y)
= \mathfrak s_{\alpha_1}(w,\text{\rm Pr}(\tilde y)) \oplus ds_{\alpha_2}(\tilde y \mod TV_{\alpha_1}).
\end{equation}
\end{defn}
We remark that on the right hand side of  
(\ref{compatibilityforalpha}), 
$\mathfrak s_{\alpha_1}(w,\text{\rm Pr}(\tilde y))$ is a
multisection of $\pi_{\alpha_1}^*E_{\alpha_1}$ and
$ds_{\alpha_2}(\tilde y \mod TV_{\alpha_1})$ is a (single valued)
section. Therefore using (\ref{bdliso}) the right hand side of
(\ref{compatibilityforalpha}) is an element of $\mathcal
S^{l_i}(E_{\alpha_2})_x$ ($x = \text{\rm Pr}(\tilde y)$), and hence
is regarded as  a multisection of $\pi_{\alpha_2}^*E_{\alpha_2}$. In
other words, we omit $\widehat{\varphi}_{\alpha_2,\alpha_1}$ in
(\ref{compatibilityforalpha}).
\par
\cite{fooo08} Condition C.3 implies that the original Kuranishi
map $s_{\alpha}$ satisfies the compatibility condition
(\ref{compatibilityforalpha}). We use this and (the proof of) Lemma
\ref{existencesubmersion} and prove the following. Let $ev_t:
\mathcal M \to M_t$ be a weakly submersive strongly smooth map. We
choose a good coordinate system
$(V_{\alpha},E_{\alpha},\Gamma_{\alpha},\psi_{\alpha},s_{\alpha})$
and let $ev_{t,\alpha}: V_{\alpha} \to M_t$ be a local
representative of $ev_t$.
\begin{lem}\label{existsfras}
We have $W_{\alpha}$ such that
for each $\epsilon$ there exists $\mathfrak s_{\alpha}$ which is a $W_{\alpha}$-parameterized
family of multisections with the following properties.
\begin{enumerate}
\item $\mathfrak s_{\alpha}$ is transversal to $0$.
\par
\item $ev_{t,\alpha}\vert_{\mathfrak s_{\alpha}^{-1}(0)}$ is a submersion.
\item $\mathfrak s_{\alpha}$ is $\epsilon$-close to $s_{\alpha}$.
\item $\mathfrak s_{\alpha_2}$ is compatible with $\mathfrak s_{\alpha_1}$
for each $\alpha_1 < \alpha_2$.
\end{enumerate}
If $\{\mathfrak s_{\alpha}\}$ is already defined and satisfies $(1)$ -
$(4)$ on a neighborhood of a compact set $K \subset \mathcal M$,
then we may choose $\mathfrak s_{\alpha}$ without changing it on $K$.
\end{lem}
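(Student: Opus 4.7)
The plan is to construct the family $\{\frak s_\alpha\}$ by induction on the partial order on $\frak A$, starting from minimal elements and moving upward. At each step we will first specify $W_\alpha$ (together with $\omega_\alpha$, if needed) and then produce $\frak s_\alpha$ satisfying (1)--(3), while arranging (4) automatically on a neighborhood of the images $\varphi_{\alpha,\alpha'}(V_{\alpha,\alpha'})$ for all $\alpha'<\alpha$. Since the order on $\frak A$ is finite, finitely many steps suffice.

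For a minimal element $\alpha_0\in\frak A$ there is no compatibility condition to impose, so we simply invoke Lemma \ref{existencesubmersion} with $f_\alpha=ev_{t,\alpha_0}$ to obtain $W_{\alpha_0}$ and an $\epsilon$-small $W_{\alpha_0}$-parameterized multisection $\frak s_{\alpha_0}$ satisfying (1)--(3). For the inductive step, suppose $\frak s_{\alpha_1}$ has already been constructed on $W_{\alpha_1}\times V_{\alpha_1}$ for every $\alpha_1<\alpha_2$, and that these satisfy (1)--(4) among themselves. Set $W_{\alpha_2}=\bigl(\prod_{\alpha_1<\alpha_2}W_{\alpha_1}\bigr)\times W'$, where $W'$ will be chosen shortly. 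On the tubular neighborhood $U_\epsilon(V_{\alpha_2,\alpha_1}/\Gamma_{\alpha_1})$ of $\varphi_{\alpha_2,\alpha_1}(V_{\alpha_2,\alpha_1})$ in $V_{\alpha_2}$, formula (\ref{compatibilityforalpha}) prescribes $\frak s_{\alpha_2}$ uniquely in terms of $\frak s_{\alpha_1}$ and the (single-valued) linearization $ds_{\alpha_2}$ in the normal direction, where we use the bundle splitting (\ref{bdliso}) and the fact, guaranteed by \cite{fooo08} Condition C.3 together with Condition B.3, that the compatibility is consistent with the Kuranishi data itself. On the overlap region $U_\epsilon(V_{\alpha_2,\alpha_1}/\Gamma_{\alpha_1})\cap U_\epsilon(V_{\alpha_2,\alpha_1'}/\Gamma_{\alpha_1'})$ for $\alpha_1,\alpha_1'<\alpha_2$ (both necessarily comparable with a common $\alpha'\leq\min(\alpha_1,\alpha_1')$), the inductive compatibility of $\frak s_{\alpha_1}$ and $\frak s_{\alpha_1'}$ with $\frak s_{\alpha'}$ together with (\ref{bdliso}) ensures that the two prescriptions of $\frak s_{\alpha_2}$ agree.

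Having defined $\frak s_{\alpha_2}$ on the (closed) $\Gamma_{\alpha_2}$-invariant neighborhood $K_{\alpha_2}$ which is the union of $U_{\epsilon/2}(V_{\alpha_2,\alpha_1}/\Gamma_{\alpha_1})$ over $\alpha_1<\alpha_2$, we observe that on $K_{\alpha_2}$ conditions (1)--(3) are inherited from the inductive hypothesis, because the splitting (\ref{bdliso}) is orthogonal and the normal derivative $ds_{\alpha_2}$ is, by the definition of a Kuranishi chart and weak submersivity of $ev_t$, transversal along the normal direction. We then apply the extension clause of Lemma \ref{existencesubmersion} with $f_\alpha=ev_{t,\alpha_2}$ and with the given $\frak s_{\alpha_2}$ on $K_{\alpha_2}$; this produces the additional parameter space $W'$ and extends $\frak s_{\alpha_2}$ to all of $W_{\alpha_2}\times V_{\alpha_2}$ without changing it on $K_{\alpha_2}$, while keeping it $\epsilon$-close to $s_{\alpha_2}$ and making it transversal with $ev_{t,\alpha_2}|_{\frak s_{\alpha_2}^{-1}(0)}$ submersive. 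Shrinking the original $V_{\alpha_2}$ slightly if necessary (this is harmless for a good coordinate system) completes the inductive step.

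The main obstacle in this plan is the verification that the prescription (\ref{compatibilityforalpha}) on different tubular neighborhoods $U_\epsilon(V_{\alpha_2,\alpha_1}/\Gamma_{\alpha_1})$ agrees on their overlaps, and that after this prescription the multisection $\frak s_{\alpha_2}$ is still transversal to $0$ and the evaluation map is submersive on its zero set \emph{near} $K_{\alpha_2}$. Both facts ultimately rely on the compatibility of the exponential maps and bundle splittings built into \cite{fooo08} Condition C.3, together with the fact that the normal summand in (\ref{bdliso}) appears as the single-valued factor $ds_{\alpha_2}(\tilde y\bmod TV_{\alpha_1})$, which is tautologically transversal. Once these compatibilities are in place, the extension step is a routine application of the last assertion of Lemma \ref{existencesubmersion}, and the final assertion of Lemma \ref{existsfras} (extension without modification on a compact set $K\subset\mathcal M$) follows by the same inductive argument, taking $K_{\alpha_2}$ to include the prescribed region.
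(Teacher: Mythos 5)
Your proof follows the same strategy as the paper: induct over the partial order on $\frak A$, use Definition \ref{compdefini} to prescribe $\frak s_{\alpha_2}$ on tubular neighborhoods of $\varphi_{\alpha_2,\alpha_1}(V_{\alpha_2,\alpha_1})$, check agreement on overlaps, then extend via the relative version of Lemma \ref{existencesubmersion}. That is exactly how the paper argues, and the skeleton is right.

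There is, however, a concrete technical gap in your choice $W_{\alpha_2}=\bigl(\prod_{\alpha_1<\alpha_2}W_{\alpha_1}\bigr)\times W'$. Definition \ref{compdefini} requires that for each $\alpha_1<\alpha_2$ the parameter space $W_{\alpha_2}$ split as $W_{\alpha_1}\times(\text{rest})$, and the factors used in the different compatibility constraints must be the \emph{same}. Now suppose $\alpha_1<\alpha_1'<\alpha_2$ (a chain, which is the typical case since comparable charts nest). By the time you reach the step for $\alpha_2$, the earlier step for $\alpha_1'$ has already forced $W_{\alpha_1'}=W_{\alpha_1}\times W_1'$. Your product $\prod_{\alpha_1<\alpha_2}W_{\alpha_1}$ therefore contains two independent copies of $W_{\alpha_1}$: one as the direct $W_{\alpha_1}$-factor and one inside $W_{\alpha_1'}$. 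Compatibility with $\frak s_{\alpha_1}$ uses the first copy, while compatibility with $\frak s_{\alpha_1'}$ (which itself is compatible with $\frak s_{\alpha_1}$ through its own $W_{\alpha_1}$-factor) uses the second. On the triple overlap $U_\epsilon(V_{\alpha_2,\alpha_1})\cap U_\epsilon(V_{\alpha_2,\alpha_1'})$ the two formulas then evaluate $\frak s_{\alpha_1}$ at two \emph{different} parameter values, and they do not agree except on the diagonal — so the ``they coincide on overlaps'' step fails as stated. The fix is to make the parameter spaces genuinely nested rather than multiplied out: e.g.\ take $W_{\alpha_2}=W_{\bar\alpha}\times W'$ where $\bar\alpha$ is the (unique, in a good coordinate system) maximal element below $\alpha_2$ whose chart meets $V_{\alpha_2}$, so that the splittings coming from all smaller $\alpha_1<\alpha_2$ are automatically subordinated to it. (The paper handles the overlap coincidence by citing the cocycle-type Condition B.2 of \cite{fooo08}, which encodes precisely this nesting; your invocation of a ``common $\alpha'\leq\min(\alpha_1,\alpha_1')$'' is not the right mechanism — in a good coordinate system the two elements $\alpha_1,\alpha_1'$ are themselves directly comparable when the overlap is nonempty.) With the nested $W_{\alpha_2}$, the rest of your argument — transversality inherited through (\ref{bdliso}) and $ds_{\alpha_2}$, $\epsilon$-closeness via Condition C.3, and the extension clause of Lemma \ref{existencesubmersion} — goes through and matches the paper.
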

\begin{proof}
The proof is by induction on $\alpha$. (We remark that $\mathfrak A$
(the totality of $\alpha$'s) is partially ordered.) For minimal
$\alpha$ we use Lemma \ref{existencesubmersion} to prove existence
of $\mathfrak s_{\alpha}$. If we have constructed $\mathfrak s_{\alpha'}$
for every $\alpha'$ smaller than $\alpha$, then we use
(\ref{compatibilityforalpha}) to define $\mathfrak s_{\alpha}$ on a
neighborhood of the images of $V_{\alpha,\alpha'}$ for various
$\alpha' <\alpha$. They coincide on the overlapped part by the
induction hypothesis and \cite{fooo08} Condition B.2. \cite{fooo08} Condition
C.3 then implies that this is still 
$\epsilon$-close to $s_{\alpha}$. Therefore we can use Lemma
\ref{existencesubmersion} (the relative version) to extend it and
obtain  $\mathfrak s_{\alpha}$. (We choose $W'$ at this step.)
\par
The proof of the last statement is similar.
\end{proof}
\par
We choose measures $\omega_{\alpha}$ on $W_\alpha$ such that the
measure $\omega_{\alpha_2}$ is a direct product measure
$\omega_{\alpha_1} \times \omega'$ on $W_\alpha \times W'$ if
$\alpha_1 < \alpha_2$.
\par
We next choose a partition of unity $\chi_{\alpha}$ subordinate to
our Kuranishi charts. To define the notion of partition of unity, we
need some notation. For $\alpha_1 < \alpha_2$, we take the normal
bundle $N_{V_{\alpha_2,\alpha_1}}V_{\alpha_2}$ of
$\varphi_{\alpha_2,\alpha_1}(V_{\alpha_2,\alpha_1})$ in
$V_{\alpha_2}$. Let $\text{Pr}_{V_{\alpha_2,\alpha_1}}:
N_{V_{\alpha_2,\alpha_1}}V_{\alpha_2} \to V_{\alpha_2,\alpha_1}$ be
the projection. We fix a $\Gamma_{\alpha_1}$-invariant positive
definite metric of $N_{V_{\alpha_2,\alpha_1}}V_{\alpha_2}$ and let
$\rho_{\alpha_2,\alpha_1}: N_{V_{\alpha_2,\alpha_1}}V_{\alpha_2} \to
[0,\infty)$ be the norm with respect to this metric. 
(Actually we modify it so that it becomes compatible.)
We fix a
sufficiently small $\delta$ and let $\chi^{\delta}: \R \to [0,1]$
be a smooth function such that
$$
\chi^{\delta}(t)
=
\begin{cases}
0  &t \ge \delta \\
1  &t \le \delta/2.
\end{cases}
$$

Let $U_{\delta}(V_{\alpha_2,\alpha_1}/\Gamma_{\alpha_1})$
be the image of the exponential map. Namely
$$
U_{\delta}(V_{\alpha_2,\alpha_1}/\Gamma_{\alpha_1}) = \{\text{Exp}(v) \mid v \in  N_{V_{\alpha_2,\alpha_1}}V_{\alpha_2}/
\Gamma_{\alpha_1} \mid \rho_{\alpha_2,\alpha_1}(v) < \delta\}.
$$
We push out our function $\rho_{\alpha_2,\alpha_1}$ to
$U_{\delta}(V_{\alpha_2,\alpha_1}/\Gamma_{\alpha_1})$  and
denote it by the same symbol.
It is called a {\it tubular distance function}.
Following \cite{Math73} we assume the following compatibility condition for
various tubular neighborhoods and tubular distance functions. 
\par
Let $\alpha_1 < \alpha_2 < \alpha_3$. We assume the 
following equalities hold on the domain where both sides are defined.
\begin{subequations}
\begin{eqnarray}
\text{Pr}_{V_{\alpha_2,\alpha_1}} \circ \text{Pr}_{V_{\alpha_3,\alpha_2}} 
&=& \text{Pr}_{V_{\alpha_3,\alpha_1}} \\
\rho_{\alpha_2,\alpha_1} \circ \text{Pr}_{V_{\alpha_3,\alpha_2}}
&=& \rho_{\alpha_3,\alpha_1}.
\end{eqnarray}
\end{subequations}
 The existence of such system is proved in \cite{Math73} 
 in more difficult situation than ours. (See also Section 35.2 \cite{fooo06pre}.)
\par
Let $x \in V_{\alpha}$.
We put
$$\aligned
\mathfrak A_{x,+} &= \{ \alpha_+  \mid x \in V_{\alpha_+,\alpha}, \,  \alpha_+ > \alpha\} \\
\mathfrak A_{x,-} &= \{ \alpha_-  \mid [x \mod\Gamma_{\alpha}] \in
U_{\delta}(V_{\alpha,\alpha_-}/\Gamma_{\alpha_-}),\, \alpha_- <
\alpha\}.
\endaligned$$
For $\alpha_- \in \mathfrak A_{x,-}$ we take $x_{\alpha_-}$ such that
$\text{Exp}(x_{\alpha_-}) = x$.
\begin{defn}\label{partitionofunity}
A system $\{\chi_{\alpha} \mid \alpha \in \mathfrak A\}$ of
$\Gamma_{\alpha}$-equivariant smooth functions $\chi_{\alpha}:
V_{\alpha} \to [0,1]$ of compact support is said to be a partition
of unity subordinate to our Kuranishi chart if:
$$
\chi_{\alpha}(x) + \sum_{\alpha_- \in \mathfrak A_{x,-}}
\chi^{\delta}(\rho_{\alpha,\alpha_-}(x))
\chi_{\alpha_-}(\text{Pr}_{V_{\alpha,\alpha_-}}
(x_{\alpha -}))
+ \sum_{\alpha_+ \in \mathfrak A_{x,+}} \chi_{\alpha_+}(\varphi_{\alpha_+,\alpha}(x))
= 1.
$$
\end{defn}
In \cite{fooo08} Lemma C.6 we proved the existence of partition of 
unity 
subordinate to our Kuranishi chart.
\par
Now we consider the situation we start with.
Namely we have two strongly continuous smooth maps
$$
ev_{s}: \mathcal M \to M_s, \qquad ev_{t}: \mathcal M \to M_t
$$
and $ev_{t}$ is weakly submersive. Let $h$ be a differential
form on $M_s$. We choose
$((V_{\alpha},E_{\alpha},\Gamma_{\alpha},\psi_{\alpha},s_{\alpha}),
(W_{\alpha},\omega_{\alpha}),\mathfrak s_{\alpha})$ which satisfies
(1) - (4) of Lemma \ref{existsfras}. We also choose a
partition of unity $\chi_{\alpha}$ subordinate to our Kuranishi chart.
We put
\begin{equation}\label{thetaalphadef}
\theta_{\alpha} = \chi_{\alpha}(ev_{s} \circ \pi_{\alpha})^* h
\end{equation}
which is a differential form on $W_{\alpha} \times V_{\alpha}$.
\begin{defn}\label{cordefinitionss}
We define
\begin{equation}\label{correspondfinalformula}
(\mathcal M;ev_s,ev_t)_* (h)
= \sum_{\alpha} ((V_{\alpha},\Gamma_{\alpha},E_{\alpha},\psi_{\alpha},s_{\alpha}),
(W_{\alpha},\omega_{\alpha}),\mathfrak s_{\alpha},ev_{t,\alpha})_* (\theta_{\alpha}).
\end{equation}
This is a smooth differential form on $M_t$.
\end{defn}
\begin{rem}\label{detasdenote}
\begin{enumerate}
\item
Actually the right hand side of (\ref{correspondfinalformula}) {\it depends}
on the choice of $((V_{\alpha},E_{\alpha},\Gamma_{\alpha},\psi_{\alpha},s_{\alpha}),
(W_{\alpha},\omega_{\alpha}),\mathfrak s_{\alpha})$.
We write $\mathfrak s$ to demonstrate this choice and write
$(\mathcal M;ev_s,ev_t,\mathfrak s)_* (h)$.
\item The right hand side of (\ref{correspondfinalformula}) is independent of the choice of
partition of unity. The proof is similar to the
well-definedness of integration on manifolds.
\end{enumerate}
\end{rem}
In case $\mathcal M$ has a boundary $\partial \mathcal M$, the choice
$((V_{\alpha},E_{\alpha},\Gamma_{\alpha},\psi_{\alpha},s_{\alpha}),
(W_{\alpha},\omega_{\alpha}),\mathfrak s_{\alpha})$ on $\mathcal M$ induces one for
$\partial \mathcal M$. We then have the following:
\begin{lem}[Stokes' theorem]\label{stokes}
We have
\begin{equation}\label{stoformula}
d((\mathcal M;ev_s,ev_t,\mathfrak s)_* (h))
= (\mathcal M;ev_s,ev_t,\mathfrak s)_* (dh) +
(\partial\mathcal M;ev_s,ev_t,\mathfrak s)_* (h).
\end{equation}
\end{lem}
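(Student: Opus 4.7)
The plan is to reduce the identity to the classical fiberwise Stokes formula applied chart by chart, and then to show that the interior discrepancies coming from the partition of unity cancel because of the compatibility condition \eqref{compatibilityforalpha} for $\mathfrak s_\alpha$. Throughout, I will write $Z_{\alpha,i,j} = \tilde{\mathfrak s}_{\alpha,i,j}^{-1}(0) \subset W_\alpha \times V_\alpha$ and $F_\alpha = ev_{t,\alpha} \circ \pi_\alpha$; by Lemma \ref{existencesubmersion} (and condition (2) of Lemma \ref{existsfras}) each restriction $F_\alpha|_{Z_{\alpha,i,j}}$ is a proper submersion onto $M_t$, so the classical Stokes formula for integration along the fibers is available:
\[
d\bigl((F_\alpha|_{Z_{\alpha,i,j}})_!\eta\bigr)
= (F_\alpha|_{Z_{\alpha,i,j}})_!(d\eta)
\pm (F_\alpha|_{\partial Z_{\alpha,i,j}})_!\bigl(\eta|_{\partial Z_{\alpha,i,j}}\bigr),
\]
for any compactly supported smooth form $\eta$ on $W_\alpha\times V_\alpha$.

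First I would apply $d$ termwise to the defining formula \eqref{correspondfinalformula} with $\eta = \chi_\alpha\,(ev_s\circ\pi_\alpha)^*h\wedge \omega_\alpha$. Since $d\omega_\alpha = 0$ on the closed manifold $W_\alpha$ (we may assume $\omega_\alpha$ is a top-degree form with total mass $1$), the exterior derivative of $\eta$ splits into two pieces:
\[
d\eta \;=\; \chi_\alpha\,(ev_s\circ\pi_\alpha)^*(dh)\wedge\omega_\alpha
\;+\; d\chi_\alpha\wedge(ev_s\circ\pi_\alpha)^*h\wedge\omega_\alpha.
\]
Summed over $\alpha$, the first piece is exactly $(\mathcal M;ev_s,ev_t,\mathfrak s)_*(dh)$ by Definition \ref{cordefinitionss}. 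The boundary contribution from $\partial Z_{\alpha,i,j}$ has two strata: the stratum coming from the Kuranishi boundary $\partial V_\alpha$, whose total contribution assembles to $(\partial\mathcal M;ev_s,ev_t,\mathfrak s)_*(h)$ because the Kuranishi structure, the family $\mathfrak s_\alpha$, the partition of unity $\chi_\alpha$, and the weight $\omega_\alpha$ all restrict to the corresponding data on $\partial\mathcal M$. The remaining stratum (the part of $\partial Z_{\alpha,i,j}$ lying over $\partial V_\alpha$-free region) is empty by transversality of $\tilde{\mathfrak s}_{\alpha,i,j}$ and the compact support of $\chi_\alpha$.

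What remains, therefore, is to show that the sum of the ``partition-of-unity'' pieces
\[
\sum_\alpha \frac{1}{\#\Gamma_\alpha}\sum_{i,j}\frac{1}{l_i}
(F_\alpha|_{Z_{\alpha,i,j}})_!\bigl(d\chi_\alpha\wedge(ev_s\circ\pi_\alpha)^*h\wedge\omega_\alpha\bigr)
\]
vanishes. The identity in Definition \ref{partitionofunity} can be differentiated to give, on each $V_\alpha$,
\[
d\chi_\alpha \;=\; -\sum_{\alpha_-\in\mathfrak A_{x,-}} d\bigl(\chi^\delta(\rho_{\alpha,\alpha_-})\,\chi_{\alpha_-}\circ\mathrm{Pr}\bigr)
-\sum_{\alpha_+\in\mathfrak A_{x,+}} d\bigl(\chi_{\alpha_+}\circ\varphi_{\alpha_+,\alpha}\bigr),
\]
so the $d\chi_\alpha$-terms from different charts can be paired. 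For each comparable pair $\alpha_1<\alpha_2$, the compatibility \eqref{compatibilityforalpha} between $\mathfrak s_{\alpha_1}$ and $\mathfrak s_{\alpha_2}$, together with the product structure $W_{\alpha_2}=W_{\alpha_1}\times W'$ and the product measure $\omega_{\alpha_2}=\omega_{\alpha_1}\times\omega'$, identifies the fiber integration over $Z_{\alpha_2,i,j}$ of forms pulled back from $V_{\alpha_1}$ with the fiber integration over $Z_{\alpha_1,i',j'}$ of those forms multiplied by the Thom form of the normal bundle $N_{V_{\alpha_2,\alpha_1}}V_{\alpha_2}$ cut off by $\chi^\delta(\rho_{\alpha_2,\alpha_1})$. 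Under this identification the pair of $d\chi_\alpha$-contributions cancels, in exactly the same way that a partition-of-unity argument on an ordinary manifold shows $\sum_\alpha\int d\chi_\alpha\wedge\omega=0$ when $\sum_\alpha\chi_\alpha=1$.

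The main obstacle, and the point requiring care, is this last cancellation across charts. One must check that the compatibility \eqref{compatibilityforalpha}, the good-coordinate-system axioms (Conditions B.2, B.3 and C.3 of \cite{fooo08}), and the product structure of $(W_\alpha,\omega_\alpha)$ all fit together so that the contributions from $d\chi_\alpha$ and from $d(\chi^\delta(\rho_{\alpha_2,\alpha_1})\chi_{\alpha_-}\circ\mathrm{Pr})$ are genuinely equal (up to sign) rather than merely of comparable order. Once this matching is established, the sum telescopes and only the $\partial\mathcal M$-boundary term survives, yielding \eqref{stoformula}. The signs are determined by the standard convention of integration along the fiber for oriented submersions between oriented manifolds with boundary.
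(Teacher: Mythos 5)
Your plan takes the same route as the paper's (very terse) proof: reduce to the classical fiberwise Stokes formula applied chart by chart, using the partition of unity. Where the paper simply asserts ``using the partition of unity it suffices to consider one Kuranishi chart,'' you correctly identify that this reduction has content --- the interior $d\chi_\alpha$ terms must telescope away, and showing this requires the compatibility \eqref{compatibilityforalpha}, the product structure $W_{\alpha_2}=W_{\alpha_1}\times W'$ with product measure, and the coherence conditions on the tubular data. That diagnosis is sound: those hypotheses are exactly what make the chart-wise fiber integrals agree on overlaps (this is the same mechanism behind Remark \ref{detasdenote}(2), the independence of the correspondence map from the choice of partition of unity), and once the $d\chi_\alpha$-contribution from $V_\alpha$ is identified with the one from $V_{\alpha_-}$ or $V_{\alpha_+}$ on the overlap, differentiating the identity in Definition \ref{partitionofunity} forces the sum to vanish. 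You do not fully carry out this matching --- you flag it as the ``main obstacle'' --- so as written this is a correct outline rather than a proof, but the gap sits precisely where the paper's own proof is silent.

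Two smaller points worth tightening. First, your claim that the non-Kuranishi-boundary stratum of $\partial Z_{\alpha,i,j}$ is ``empty by transversality'' is not quite right: transversality makes $Z_{\alpha,i,j}$ a manifold with corners whose boundary lies over $\partial(W_\alpha\times V_\alpha)$; what kills the spurious boundary contributions is that $\chi_\alpha$ and $\omega_\alpha$ both have compact support in the interior, so the integrands vanish near the edges of $U_i$ and of $W_\alpha$. Second, the defining formula \eqref{intfibermainformula} also involves the second partition of unity $\{\chi_i\}$ subordinate to the open cover on which the multisection branches are defined; the $d\chi_i$ terms produced inside a single chart must likewise cancel, which they do because the branches $\tilde{\frak s}_{\alpha,i,j}$ agree (after symmetrization) on $U_i\cap U_{i'}$. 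You should state this explicitly rather than leaving it implicit in the phrase ``chart by chart.''
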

We will discuss the sign at the end of this section.
\begin{proof}
Using the partition of unity $\chi_{\alpha}$ it suffices to consider
the case when $\mathcal M$ has only one Kuranishi chart
$V_{\alpha}$. We use the open covering $U_i$ of $V_{\alpha}$ and the
partition of unity again to see that we need only to study on one
$U_i$. In that case (\ref{stoformula}) is immediate from the usual
Stokes' formula.
\end{proof}
We consider the following situation. We assume $\mathcal M$ is a
space with Kuranishi structure with corners. Let $\partial_{c}
\mathcal M$, $c=1,\cdots,C$ be a decomposition of the boundary
$\partial \mathcal M$ into components. The intersection
$\partial_{c} \mathcal M \cap \partial_{c'} \mathcal M$ is a
codimension 2 stratum of $\mathcal M$ if it is nonempty. We denote
it by $\partial_{cc'} \mathcal M$. (Actually there may be a case
where there is a self intersection of  $\partial_{c}\mathcal M$ with
itself. If it occurs there is a codimension 2 stratum of $\mathcal
M$ corresponding to the self intersection points. We write it as
$\partial_{cc} \mathcal M$.) $\partial_{c} \mathcal M$ is regarded
as a space with Kuranishi structure which we denote by the same
symbol. (This is slightly imprecise in case there is a self
intersection. Since the way to handle it is rather obvious we do not
discuss it here.) The boundary of $\partial_{c} \mathcal M$  is the
union of $\partial_{cc'} \mathcal M$ for various $c'$. (Actually we
include the case $c'=c$. In that case we take two copies of
$\partial_{cc}M$, which become components of the boundary of
$\partial_{c} \mathcal M$.)
\par
Now we have the following:
\begin{lem}\label{cornerextend}
Suppose that there exist data $\mathfrak s_c$ as in Remark $\ref{detasdenote}$
$(1)$ on each of $\partial_{c} \mathcal M$
and the
restriction of $\mathfrak s_c$ to $\partial_{cc'} \mathcal M$ coincides
with the restriction of  $\mathfrak s_{c'}$ to $\partial_{cc'} \mathcal
M$. We also assume a similar compatibility at the self intersection
$\partial_{cc} \mathcal M$.
\par
Then there exists a datum $\mathfrak s$ on $\mathcal M$ whose
restriction to $\partial_{c} \mathcal M$ is $\mathfrak s_c$ for each
$c$.
\end{lem}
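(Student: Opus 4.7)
The plan is to reduce the assertion to the relative (extension) version of Lemma \ref{existsfras}, which allows one to extend a system of families of multisections already defined on a neighborhood of a compact $\Gamma$-invariant subset. The boundary $\partial\mathcal M=\bigcup_c\partial_c\mathcal M$ is compact, and the compatibility hypothesis together with the way boundary Kuranishi charts sit inside interior charts (each boundary chart $V_\alpha^{\partial_c}$ is the intersection of an interior chart $V_\alpha$ with a codimension-one stratum, together with a collar-type identification) lets one promote the $\frak s_c$ to a single datum on an open neighborhood of $\partial\mathcal M$ in $\mathcal M$. Once this is done, Lemma \ref{existsfras} applied to the pair $(\mathcal M, \partial\mathcal M)$ delivers the desired global extension.

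First I would fix a common good coordinate system $\{(V_\alpha,E_\alpha,\Gamma_\alpha,\psi_\alpha,s_\alpha)\}_{\alpha\in\frak A}$ on $\mathcal M$ with the property that for every $\alpha$ meeting the boundary, the chart $V_\alpha$ carries a collar $V_\alpha^{\partial_c}\times[0,\epsilon)$ in the directions normal to each boundary component $\partial_c\mathcal M$ touching it, and that the charts on $\partial_c\mathcal M$ used to define $\frak s_c$ are restrictions of these. The parameter spaces are then unified by putting $W_\alpha:=\prod_c W_{c,\alpha}\times W_\alpha^{\mathrm{int}}$ (with $W_\alpha^{\mathrm{int}}$ to be chosen later, as in the proof of Lemma \ref{existsfras}), and pulling back $\frak s_c$ via the obvious projection; the measure $\omega_\alpha$ is the product of the $\omega_{c,\alpha}$ and an auxiliary measure on $W_\alpha^{\mathrm{int}}$. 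The compatibility of the $\frak s_c$ on $\partial_{cc'}\mathcal M$ (and on $\partial_{cc}\mathcal M$ in the self-intersection case) ensures these pullbacks agree on overlaps in the collar, so they define a multisection on a neighborhood of $\partial\mathcal M$ satisfying items (1)--(4) of Lemma \ref{existsfras} there.

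Next I would extend from the collar to the interior. Let $K\subset\mathcal M$ be a closed neighborhood of $\partial\mathcal M$ on which the above data are defined and satisfy (1)--(4). Apply the final clause of Lemma \ref{existsfras} with this $K$: by the inductive construction over the partial order on $\frak A$, one can fill in $\frak s_\alpha$ over each chart $V_\alpha$ without changing it on $K\cap V_\alpha$, using Lemma \ref{existencesubmersion} (relative form) together with the compatibility rule \eqref{compatibilityforalpha} that propagates definitions from $V_{\alpha,\alpha'}$ for $\alpha'<\alpha$. This yields a global datum $\frak s$ on $\mathcal M$ whose restriction to a neighborhood of $\partial_c\mathcal M$ is the given $\frak s_c$, which in particular restricts to $\frak s_c$ on $\partial_c\mathcal M$.

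The hard part will be the first step: realizing the $\frak s_c$ as restrictions of a single family on a neighborhood of $\partial\mathcal M$ in $\mathcal M$. The subtlety is two-fold. First, each $\frak s_c$ lives on its own parameter space $W_{c,\alpha}$; passing to the product parameter space is harmless for transversality and submersivity because the extra parameters act trivially on the fibers, but one must check that \eqref{compatibilityforalpha} is preserved after pullback and that the resulting $W$-family is still $\epsilon$-close to the unperturbed Kuranishi map. Second, at a self-intersection $\partial_{cc}\mathcal M$ one must choose the collar identifications and the multisection in a way equivariant under the involution exchanging the two local sheets of $\partial_c\mathcal M$, so that the two restrictions to $\partial_{cc}\mathcal M$ tautologically agree; this is handled by a standard averaging over that $\mathbb Z/2$-action, as in the construction of $\Gamma_\alpha$-invariant multisections in section 3 \cite{FO}. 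With these two points addressed, the remainder is a routine application of the machinery already set up in Lemmata \ref{existencesubmersion} and \ref{existsfras}.
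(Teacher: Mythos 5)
Your proof is correct and follows the same route as the paper: use the compatibility hypothesis and collar structure to promote the $\frak s_c$ to a single datum on a neighborhood of $\partial\mathcal M$, then invoke the relative (extension) clause of Lemma \ref{existsfras} to fill in the interior. The paper states this in two sentences; your write-up simply supplies the implicit details (product parameter spaces, collar identifications, and $\mathbb{Z}/2$-averaging at self-intersections).
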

\begin{proof}
Using the compatibility condition we assumed, we can define $\mathfrak s$ in a neighborhood of
the union $\partial_c \mathcal M$ over $c$.
We can then extend it by using Lemma \ref{existsfras}.
\end{proof}
\par
We next discuss composition of smooth correspondences.
We consider the following situation.
Let
$$
ev_{s;st}: \mathcal M_{st} \to M_s, \qquad ev_{t;st}: \mathcal M_{st} \to M_t
$$
be as before such that $ev_{t;st}$ is weakly submersive.
Let
$$
ev_{r;rs}: \mathcal M_{rs} \to M_r, \qquad ev_{s;rs}: \mathcal M_{rs} \to M_s
$$
be a similar diagram such that $ev_{s;rs}$ is weakly submersive.
We use the fact that $ev_{s;rs}$ is weakly submersive to define the fiber product
$$
\mathcal M_{rs}\, {}_{ev_{s;rs}}\times_{ev_{s;st}} \mathcal M_{st}
$$
as a space with Kuranishi structure.
We write it as $\mathcal M_{rt}$. We have a diagram of strongly continuous
smooth maps
$$
ev_{r;rt}: \mathcal M_{rt} \to M_r, \qquad ev_{t;rt}: \mathcal M_{rt} \to M_t.
$$
It is easy to see that $ev_{t;rt}$ is weakly submersive.
\par
We next make choices $\mathfrak s^{st}$, $\mathfrak s^{rs}$ for $\mathcal M_{st}$ and
$\mathcal M_{rs}$. It is easy to see that it determines a
choice $\mathfrak s^{rt}$ for $\mathcal M_{rt}$.
\par
Now we have:
\begin{lem}[Composition formula]\label{compformula}
We have the following formula for each differential form $h$ on $M_r$.
\begin{equation}
\aligned
&(\mathcal M_{rt};ev_{r;rt},ev_{t;,rt},\mathfrak s^{rt})_* (h)\\
&= ((\mathcal M_{st};ev_{s;st},ev_{t;,st},\mathfrak s^{st})_*
\circ (\mathcal M_{rs};ev_{r;rs},ev_{s;,rs},\mathfrak s^{rs})_*)(h).
\endaligned
\end{equation}
\end{lem}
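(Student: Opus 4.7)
The plan is to unfold both sides of the asserted identity to explicit fiber integrals over zero sets of multisections, and then invoke the classical composition/Fubini rule for integration along fibers of smooth submersions. Because every ingredient (the evaluation map $ev_{t;rt}$, the Kuranishi chart data, the chosen multisection $\frak s^{rt}$) on $\mathcal M_{rt}$ is, by construction, the fiber product of the corresponding ingredients on $\mathcal M_{rs}$ and $\mathcal M_{st}$, the identity should become tautological once we have reduced each side to a single chart.

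First I would reduce to a single pair of Kuranishi charts. Choose good coordinate systems $\{(V_\alpha^{rs},\dots,\frak s^{rs}_\alpha)\}$ on $\mathcal M_{rs}$ and $\{(V_\beta^{st},\dots,\frak s^{st}_\beta)\}$ on $\mathcal M_{st}$ together with compatible partitions of unity $\{\chi^{rs}_\alpha\}$, $\{\chi^{st}_\beta\}$. Since $ev_{s;rs}$ is weakly submersive, the products $V_\alpha^{rs} \,{}_{ev_{s;rs,\alpha}}\!\times_{ev_{s;st,\beta}} V_\beta^{st}$ are smooth orbifolds, and together with the pullback bundles and the direct-sum Kuranishi maps they form a good coordinate system on $\mathcal M_{rt}$; the parameter space is $W^{rt}_{\alpha\beta}=W^{rs}_\alpha\times W^{st}_\beta$ with product measure $\omega^{rs}_\alpha\times\omega^{st}_\beta$, and the multisection $\frak s^{rt}_{\alpha\beta}$ is the direct sum of the pullbacks of $\frak s^{rs}_\alpha$ and $\frak s^{st}_\beta$. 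The product $\chi^{rs}_\alpha\cdot (\chi^{st}_\beta\circ\pi_{st})$ furnishes a partition of unity on $\mathcal M_{rt}$ subordinate to this good coordinate system, and Remark \ref{detasdenote}(2) allows us to use it to evaluate the left hand side of the composition formula.

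Next I would carry out the direct computation inside one such product chart. By Definition \ref{cordefinitionss} and formula \eqref{intfibermainformula}, after choosing local branches $\tilde{\frak s}^{rs}_{\alpha,i,j}$ and $\tilde{\frak s}^{st}_{\beta,k,\ell}$, the integrand in $(\mathcal M_{rt};ev_{r;rt},ev_{t;rt},\frak s^{rt})_*(h)$ restricted to this chart becomes, up to combinatorial factors $1/\#\Gamma$ and $1/l_i$ that factor multiplicatively over fiber products, a fiber integral of
$$
\chi^{rs}_\alpha\cdot(\chi^{st}_\beta\circ\pi_{st})\cdot ev_{r;rs,\alpha}^{*}h \wedge \omega^{rs}_\alpha\wedge\omega^{st}_\beta
$$
over the zero locus of $\tilde{\frak s}^{rs}_{\alpha,i,j}\oplus\tilde{\frak s}^{st}_{\beta,k,\ell}$ in $W^{rs}_\alpha\times V^{rs}_\alpha\times W^{st}_\beta\times V^{st}_\beta$, along the map induced by $ev_{t;st,\beta}$. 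Because the zero locus of the direct sum multisection is the fiber product of the two individual zero loci over $M_s$, and because all of these intermediate maps are submersions by our choice of $\frak s^{rs}$ and $\frak s^{st}$, the standard composition rule (i.e. Fubini) for fiber integration along a composable pair of submersions $Z_{rs}\to M_s$ and $Z_{rs}\times_{M_s}Z_{st}\to Z_{st}\to M_t$ converts this single fiber integral into the iterated fiber integral that is exactly the local contribution to $(\mathcal M_{st};\dots)_*\circ(\mathcal M_{rs};\dots)_*(h)$.

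The main technical obstacle I anticipate is bookkeeping rather than substance: making the choice of good coordinate systems, partitions of unity, tubular neighborhood systems (compatible with the maps $\text{Pr}$ and tubular distance functions $\rho$) genuinely fit together under fiber product so that the chart-by-chart identity assembles to the global one, and verifying that the branch/multiplicity factors $1/l_i$ combine correctly under the direct sum $\tilde{\frak s}^{rs}\oplus\tilde{\frak s}^{st}$. Orientation and sign conventions (deferred to the end of the section) must be tracked in parallel, using the convention that a fiber product of oriented submersions carries the product orientation; once this is fixed, Fubini delivers the equality with the correct sign. Summing the chart-local identities against the partition of unity $\chi^{rs}_\alpha\cdot(\chi^{st}_\beta\circ\pi_{st})$ and invoking the independence of the answer on the choice of partition of unity (Remark \ref{detasdenote}(2)) then yields the composition formula.
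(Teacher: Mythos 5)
Your proposal is correct and takes essentially the same route as the paper's (very terse) proof: localize by a partition of unity, reduce to a product Kuranishi chart where everything is a transversal fiber product of ordinary manifolds, and then invoke the classical Fubini/composition rule for fiber integration along submersions. Your version is simply a fleshed-out account of the same argument, including the useful observations that the product $\chi^{rs}_\alpha\cdot(\chi^{st}_\beta\circ\pi_{st})$ serves as a partition of unity on $\mathcal M_{rt}$ (justified by the independence noted in Remark \ref{detasdenote}(2)) and that the zero set of the direct-sum multisection is the fiber product of the individual zero sets.
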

\begin{proof}
Using a partition of unity, it suffices to study locally on $\mathcal
M_{rs}$, $\mathcal M_{st}$. In that case it suffices to consider the
case of usual manifold, which is well-known.
\end{proof}
We finally discuss the signs in Lemmas \ref{stokes} and
\ref{compformula}. It is rather cumbersome to fix appropriate sign
convention and show those lemmata with sign. So, instead, we use the
trick of Subsection 8.10.3 \cite{fooo06} (= Section 53.3 \cite{fooo06pre}) (see also Section 13
\cite{fukaya;operad}) to reduce the orientation problem to the case
which is already discussed in Chapter 8 \cite{fooo06} (= Chapter 9 \cite{fooo06pre}), as follows.
\par
For generic $w \in W_{\alpha}$, the space $\mathfrak
s_{\alpha,i,j}^{-1}(0) \cap (\{w\} \times U_i)$ is a smooth
manifold. Hence the right hand side of (\ref{intfibermainformula})
can be regarded as an average of the correspondence by $\mathfrak
s_{\alpha,i,j}^{-1}(0) \cap (\{w\} \times U_i)$ over $w$. We can
also represent the smooth form $h$ by an appropriate average (with
respect to certain smooth measure) of a family of currents realized
by smooth singular chains. So, as far as sign concerns, it suffices
to consider a current realized by a smooth singular chain. Then the
right hand side of (\ref{intfibermainformula}) turn out to be a
current realized by a smooth singular chain which is obtained from a
smooth singular chain on $M_s$ by a transversal smooth
correspondence. In fact, we may assume that all the fiber products
appearing here are transversal, since it suffices to discuss the
sign at the generic point where the transversality holds. Thus the
problem reduces to find a sign convention (and orientation) for
correspondence of the singular chains by a smooth manifold. In the
situation of our application, such sign convention (singular
homology version) was determined and analyzed in detail in 
Chapter 8 \cite{fooo06} (= Chapter 9 \cite{fooo06pre}). Especially the existence of an appropriate
orientation that is consistent with the sign appearing in
$A_{\infty}$ formulae etc. was proved there. Therefore we can prove
that there is a sign (orientation) convention which induces all the
formulae we need with sign, in our de Rham version, as well. See
Subsection 8.10.3 \cite{fooo06} (= Section 53.3 \cite{fooo06pre}) or Section 13 \cite{fukaya;operad} for
detail.

\bibliographystyle{amsnumber}

\end{document}